\documentclass[11pt,reqno]{amsart}

\usepackage[T1]{fontenc}							
\usepackage[english]{babel}		
\usepackage{csquotes}

\usepackage{cite}					

\usepackage{amssymb, amsmath, amsthm}
\usepackage{mathtools}				      
\usepackage{enumitem}			
\usepackage{bm}							  

\newcommand{\N}{\ensuremath{\mathbb{N}}}   
\newcommand{\Z}{\ensuremath{\mathbb{Z}}}   
   
\newcommand{\R}{\ensuremath{\mathbb{R}}}

\newcommand{\bbV}{\mathbb{V}}
\newcommand{\bbH}{\mathbb{H}}

\newcommand{\rmL}{\mathrm L}
\newcommand{\rmH}{\mathrm H}
\newcommand{\rmW}{\mathrm W}
\newcommand{\rmC}{\mathrm C}
\newcommand{\rmCb}[1]{\mathrm{C}^{#1}_\mathrm{b}}

\newcommand{\BUC}{\mathrm{BUC}}

\newcommand{\calD}{\mathcal{D}}				
\newcommand{\bbT}{{\mathbb{T}^d}}			
\newcommand{\ofbbT}{}                 
\newcommand{\ofbbTd}{}                 

\newcommand{\Grad}{\nabla}
\newcommand{\Lap}{\Delta}
\DeclareMathOperator{\Div}{div}

\newcommand{\ddt}{\frac{\mathrm d}{\mathrm dt}}

\newcommand{\delt}{\partial_t}

\newcommand{\delxk}{\partial_{x_k}}
\newcommand{\D}{\mathrm D}

\newcommand{\dtau}{\;\mathrm d\tau}
\newcommand{\dx}{\;\mathrm dx}

\newcommand{\Id}{\mathbf{I}}									

\newcommand{\tot}{\textnormal{tot}}
\newcommand{\free}{\textnormal{free}}
\newcommand{\kin}{\textnormal{kin}}

\newcommand{\vel}{\mathbf{v}}									
\newcommand{\uel}{\mathbf{u}}									
\newcommand{\w}{\mathbf{w}}										

\newcommand{\Str}{\mathbf{S}}									

\newcommand{\z}{\mathbf{z}}	
\newcommand{\f}{\mathbf{f}}		
		
\newcommand{\F}{\mathbf{F}}		
\newcommand{\G}{\mathbf{G}}		
\newcommand{\Rcal}{\mathcal{R}}		

\newcommand{\tphi}[1]{\tilde\varphi_{#1,0}}		
\newcommand{\tnu}[1]{\tilde\nu_{#1,0}}		
\newcommand{\tlambda}[1]{\tilde\lambda_{#1,0}}		
\newcommand{\trho}[1]{\tilde\rho_{#1,0}}		
\newcommand{\barrho}[1]{\bar\rho_{#1}}		
\newcommand{\ta}[1]{\tilde{a}_{#1,0}}
\newcommand{\tant}{\tilde{a}_0}		
\newcommand{\tb}[1]{\tilde{b}_{#1,0}}
\newcommand{\tbnt}{\tilde{b}_0}		
\newcommand{\tdnt}{\tilde{d}_0}		
\newcommand{\barphi}[1]{\bar\varphi_{#1,0}}		

\newcommand{\prt}[1]{( #1 )}										
\newcommand{\bigprt}[1]{\big( #1 \big)}
\newcommand{\Bigprt}[1]{\Big( #1 \Big)}
\newcommand{\biggprt}[1]{\bigg( #1 \bigg)}
\newcommand{\abs}[1]{| #1 |}										
\newcommand{\bigabs}[1]{\big| #1 \big|}
\newcommand{\Bigabs}[1]{\Big| #1 \Big|}

\newcommand{\norm}[1]{\| #1 \|}										
\newcommand{\bignorm}[1]{\big\| #1 \big\|}
\newcommand{\Bignorm}[1]{\Big\| #1 \Big\|}

\newcommand{\ang}[2]{ \langle #1 , #2  \rangle}			            
\newcommand{\bigang}[2]{ \big< #1 , #2  \big>}

\newcommand{\bigscp}[2]{\big( #1 , #2 \big)}
\newcommand{\mean}[1]{ \langle #1 \rangle}				            

\newcommand{\ssubset}{\subset\joinrel\subset}		                
\newcommand{\trans}[1]{ #1^{\mathrm T} }

\DeclareMathOperator{\supp}{supp}

\newtheorem{theorem}{Theorem}[section]
\newtheorem{lemma}[theorem]{Lemma}
\newtheorem{proposition}[theorem]{Proposition}

\theoremstyle{definition}
\newtheorem{definition}[theorem]{Definition}

\newtheorem{remark}[theorem]{Remark}

\numberwithin{equation}{section}

\usepackage[colorlinks = true, linkcolor = blue, citecolor=green]{hyperref}

\begin{document}

	\title[Local Well-Posedness of a Diffuse Interface Model for Two-Phase Flows]{Local Well-Posedness for a Diffuse Interface Model for Two-Phase Flows from Mixture Theory}
	\author{Helmut Abels}
	\author{Harald Garcke}	
	\author{Julia Wittmann}
	\address{Fakultät für Mathematik, Universität Regensburg \\ D--93040 Regensburg, Deutschland}
	\email{helmut.abels@ur.de}
	\email{harald.garcke@ur.de}
	\email{julia4.wittmann@ur.de}
	\dedicatory{In memory of Hermann Sohr}
	
	\begin{abstract}
        Local-in-time well-posedness is established for a recently proposed diffuse interface model describing incompressible two-phase flows. The result constitutes the first analytical study of a model introduced by ten Eikelder et al.~for the motion of a binary mixture of macroscopically immiscible, viscous, incompressible fluids with unmatched densities. In contrast to classic diffuse interface models based on a single mean velocity, this model is derived within the framework of mixture theory, assigning each phase its own momentum and mass balance, which results in a system of two coupled Navier--Stokes equations and two mass transport equations. The proof of the well-posedness result uses a fixed-point strategy, where the main difficulty lies in the analysis of the principal part of the associated linearized system.

		\medskip
		\noindent \textbf{Keywords:} Two-phase flow, Navier--Stokes equations, diffuse interface model, mixture theory, well-posedness.

		\medskip
		\noindent \textbf{Mathematics Subject Classification (2020):} 
		35Q30, 
		35Q35, 
		35D35, 
		35G61, 
		76D05, 
		76D03, 
		76T06. 
	\end{abstract}

	\maketitle

	\section{Introduction} \label{sec:intro}

    The mathematical modeling of two-phase flows is a fundamental topic in fluid dynamics, motivated by numerous applications in physics, engineering, chemistry, and biology. A key challenge in the derivation of such models is the accurate description of the kinetic energy of the two fluid components.

    Over the years, several diffuse interface models have been proposed to describe the dynamics of two macroscopically immiscible, incompressible fluids, where partial miscibility is assumed within a thin interfacial layer. In the classic setting, these models are based on the kinetic energy associated with a single mean velocity that is obtained by averaging the velocities of the individual fluid phases. Consequently, the momentum evolution is governed by a single linear momentum balance formulated with respect to this mean velocity.
    
    However, this reduction of complexity comes at the cost of neglecting the kinetic energy arising from relative motions of the two fluids. This phenomenon naturally occurs on the diffuse interface, where both phases coexist and their velocity fields can point in different directions.

    To account for this effect, ten Eikelder, van der Zee, and Schillinger~\cite{tenEikelder2024} recently proposed a diffuse interface model that incorporates the full kinetic energy of each fluid constituent. This leads to a system including separate linear momentum balances, one for each component; making it the first diffuse interface model for two-phase flows that features distinct velocity fields. The authors completed their system by adding mass transport equations for each constituent. The resulting model is thermodynamically consistent and---the first of its kind---fully compatible with the continuum theory of mixtures. In this paper, we establish local-in-time well-posedness for this model.

    \medskip
    \noindent\textbf{The model.}
	The model suggested by ten Eikelder et al.~\cite{tenEikelder2024} describes the dynamics of an isothermal mixture composed of viscous, incompressible Newtonian fluids with different densities. It is formulated in a general multi-phase setting, allowing for phase transitions and the presence of external forces. 
    In the present work, we focus on the case of two fluid phases without mass sources. Moreover, for simplicity, we neglect external forces. However, since their inclusion would require only minimal modifications in our analysis, they could readily be taken into account.
    
    Under these assumptions, the original model reduces to a formulation similar to that presented in the review article~\cite{Abels2024}. The resulting system of two coupled Navier--Stokes equations and two mass transport equations reads
    \begin{equation}
		\label{int:eq:system_conservative}
		\arraycolsep=2pt
		\begin{array}{rcll}
			\delt\bigprt{\hat\rho_1(\varphi_1)\vel_1} +\Div\bigprt{\hat\rho_1(\varphi_1)\vel_1 \otimes \vel_1} \qquad\quad\\[0.2ex] 
			- \Div\hat\Str_1(\varphi_1,\D\vel_1)+ \varphi_1\Grad\bigprt{\hat p+\hat\mu_1(\varphi_1,\varphi_2) } &=& \hat\Rcal(\varphi_1,\varphi_2)(\vel_2-\vel_1)
			&\text{ in $(0,T)\times\bbT$}, \\[1ex]
			\delt\bigprt{\hat\rho_2(\varphi_2)\vel_2} +\Div\bigprt{\hat\rho_2(\varphi_2)\vel_2 \otimes \vel_2} \qquad\quad\\[0.2ex] 
			- \Div\hat\Str_2(\varphi_2,\D\vel_2) + \varphi_2\Grad\bigprt{\hat p+\hat\mu_2(\varphi_1,\varphi_2)} &=& \hat\Rcal(\varphi_1,\varphi_2)(\vel_1-\vel_2)
			&\text{ in $(0,T)\times\bbT$}, \\[1ex]
			\delt\hat\rho_1(\varphi_1) + \Div\bigprt{\hat\rho_1(\varphi_1)\vel_1} &=& 0
			&\text{ in $(0,T)\times\bbT$}, \\[1ex]
			\delt\hat\rho_2(\varphi_2) + \Div\bigprt{\hat\rho_2(\varphi_2)\vel_2} &=& 0
			&\text{ in $(0,T)\times\bbT$}, \\[1ex]
			(\vel_1,\vel_2,\varphi_1,\varphi_2) \vert_{t=0} &=& (\vel_{1,0},\vel_{2,0},\varphi_{1,0},\varphi_{2,0})
			&\text{ on $\bbT$},
		\end{array}
	\end{equation}
    where
    \begin{align*}
        \hat\mu_j(\varphi_1,\varphi_2) &= \partial_{\varphi_j}\hat F(\varphi_1,\varphi_2) - \sigma_j\Lap\varphi_j,
        \qquad j=1,2.
	\end{align*}
    We impose periodic boundary conditions throughout. Accordingly, all spatially dependent functions are defined on the $d$-dimensional torus $\bbT = \R^d/2\pi\Z^d$, where $d\in\{2,3\}$. Moreover,~$T>0$ denotes a given final time.
    
    Here and throughout the paper, for $j=1,2$, the index $j$ distinguishes the following quantities associated with the respective component of the mixture. Each constituent is characterized by a velocity field $\vel_j$ and a phase field variable $\varphi_j$, which represents the concentration of the respective fluid and physically takes values between $0$ and $1$. In contrast, the model contains only a single mechanical pressure $\hat p$, which enforces the constraint of no excess volume due to mixing, that is,
	\begin{align}
		\label{int:eq:phi1+phi2=1}
		\varphi_1+\varphi_2=1 
		\qquad\text{ in $(0,T)\times\bbT$}.
	\end{align}
    
    Depending on these unknowns, we define, for $j=1,2$, the determined quantities
	\begin{align*}
		\hat\rho_j(\varphi_j) &= \bar\rho_j\varphi_j, \\[1ex]
        \hat\Str_j(\varphi_j,\D\vel_j) &= 2\hat\nu_j(\varphi_j) \D\vel_j + \hat\lambda_j(\varphi_j) \Div\vel_j \Id.
	\end{align*}
    Here, $\hat\rho_j$ denotes the mass density, which depends linearly on $\varphi_j$ via a constant specific mass density $\bar\rho_j>0$. Furthermore, $\hat\Str_j$ is the viscous stress tensor with viscosity coefficients $\hat\nu_j$ and $\hat\lambda_j$, where $\D\vel_j$ denotes the symmetrized gradient of $\vel_j$ and $\Id$ the identity matrix in~$\R^d$. Moreover, $\hat\mu_j$ describes the chemical potential with a homogeneous free energy density $\hat F$ and a surface tension coefficient $\sigma_j$. Being more general than \cite{tenEikelder2024} and \cite{Abels2024}, we choose a formulation of the model that involves a homogeneous free energy density $\hat F(\varphi_1,\varphi_2)$ instead of an additive form $\hat F_1(\varphi_1) + \hat F_2(\varphi_2)$. Finally, $\hat{\Rcal}>0$ represents the rate of linear momentum exchange induced by the relative motion of the two fluids.
    
	The total energy of the system is given by the sum of the kinetic energy and a free energy of Ginzburg--Landau type
	\begin{align*}
		\hat E_\tot\prt{\vel_1,\vel_2,\varphi_1,\varphi_2} 
		&= \hat E_\kin(\vel_1,\vel_2,\varphi_1,\varphi_2) + \hat E_\free(\varphi_1,\varphi_2) \\
		&= \sum_{j=1,2} \int_\bbT \hat\rho_j(\varphi_j) \frac{\abs{\vel_j}^2}{2} \dx + \int_\bbT \hat F(\varphi_1,\varphi_2) + \sum_{j=1,2} \sigma_j \frac{\abs{\Grad\varphi_j}^2}{2} \dx.
	\end{align*}

    \medskip
	\noindent\textbf{Main result.} 
	 For our analysis, we reformulate the system~\eqref{int:eq:system_conservative} in such a way that $\varphi_2$ no longer appears explicitly, which is feasible in view of the constraint \eqref{int:eq:phi1+phi2=1}. Moreover, for analytical reasons, we replace the transport equation for the mass density $\hat\rho_1(\varphi_1)$ by the equivalent transport equation for the phase field $\varphi_1$. Using~\eqref{int:eq:phi1+phi2=1}, one derives the divergence equation listed as the fourth equation of the system. Finally, rewriting the momentum equations in non-conservative form leads to the equivalent formulation
	\begin{equation}
		\label{int:eq:system}
		\arraycolsep=2pt
		\begin{array}{rcll}
			\rho_1(\varphi_1)\delt\vel_1 +\bigprt{\rho_1(\varphi_1)\vel_1 \cdot \nabla} \vel_1 \qquad\qquad\qquad\\[0.2ex] 
			- \Div\Str_1(\varphi_1,\D\vel_1)+ \varphi_1\Grad\bigprt{p+\mu_1(\varphi_1) } &=& \Rcal(\varphi_1)(\vel_2-\vel_1)
			&\text{ in $(0,T)\times\bbT$}, \\[1ex]
			\rho_2(\varphi_1)\delt\vel_2 +\bigprt{\rho_2(\varphi_1)\vel_2 \cdot \nabla} \vel_2 \qquad\qquad\qquad\\[0.2ex] 
			- \Div\Str_2(\varphi_1,\D\vel_2) + (1-\varphi_1)\Grad\bigprt{p+\mu_2(\varphi_1)} &=& \Rcal(\varphi_1)(\vel_1-\vel_2)
			&\text{ in $(0,T)\times\bbT$}, \\[1ex]
			\delt\varphi_1 + \Div\prt{\varphi_1\vel_1} &=& 0
			&\text{ in $(0,T)\times\bbT$}, \\[1ex]
			\Div\prt{\varphi_1\vel_1 + (1-\varphi_1)\vel_2} &=& 0
			&\text{ in $(0,T)\times\bbT$}, \\[1ex]
			(\vel_1,\vel_2,\varphi_1) \vert_{t=0} &=& (\vel_{1,0},\vel_{2,0},\varphi_{1,0})
			&\text{ on $\bbT$},
		\end{array}
	\end{equation}
	where 
	\begin{align*}
		\rho_1(\varphi_1) &= \bar\rho_1\varphi_1, &\quad
		\rho_2(\varphi_1) &= \bar\rho_2(1-\varphi_1), \\
		\mu_1(\varphi_1) &= F'(\varphi_1) - \sigma_1\Lap\varphi_1, &\quad
		\mu_2(\varphi_1) &= -F'(\varphi_1) + \sigma_2\Lap\varphi_1, 
    \end{align*}
    and
    \begin{align}
        \label{int:eq:S_j}
		\Str_j(\varphi_1,\D\vel_j) &= 2\nu_j(\varphi_1) \D\vel_j + \lambda_j(\varphi_1) \Div\vel_j \Id, \qquad j=1,2.
	\end{align}
    Note carefully that, compared to model~\eqref{int:eq:system_conservative}, we have redefined the homogeneous free energy density and the pressure such that
    \begin{align*}
        F(\varphi_1) &= \frac12 \hat F(\varphi_1,1-\varphi_1), \\
        p &= \hat p + \frac12 \bigprt{ \partial_{\varphi_1}\hat F(\varphi_1,1-\varphi_1) + \partial_{\varphi_2}\hat F(\varphi_1,1-\varphi_1) }.
    \end{align*}
	Moreover, we have changed the notation by setting $\Rcal(\varphi_1)=\hat \Rcal(\varphi_1,1-\varphi_1)=\hat \Rcal(\varphi_1,\varphi_2)$ and
	\begin{alignat*}{2}
		f_1(\varphi_1)&=\hat f_1(\varphi_1) &\qquad\text{for } f_1\in\{\rho_1, \nu_1, \lambda_1 \}, \\
		f_2(\varphi_1)&=\hat f_2(1-\varphi_1)= \hat f_2(\varphi_2) &\qquad\text{for } f_2\in\{\rho_2, \nu_2, \lambda_2 \}.
	\end{alignat*}
	With this notation, the total energy of the system~\eqref{int:eq:system} takes the form
	\begin{align*}
		E_\tot\prt{\vel_1,\vel_2,\varphi_1} 
		&= E_\kin(\vel_1,\vel_2,\varphi_1) + E_\free(\varphi_1) \\
		&= \sum_{j=1,2} \int_\bbT \rho_j(\varphi_1) \frac{\abs{\vel_j}^2}{2} \dx + \int_\bbT 2 F(\varphi_1) + (\sigma_1+\sigma_2) \frac{\abs{\Grad\varphi_1}^2}{2} \dx.
	\end{align*}
	
	The main contribution of this paper is the following local-in-time strong well-posedness result for the system from \cite{tenEikelder2024} in the formulation~\eqref{int:eq:system}.
	 
	\begin{theorem}
	   \label{int:thm:main_res}
	   Let $\vel_{j,0}\in\rmH^1(\bbT)^d$, $j=1,2$, and $\varphi_{1,0}\in\rmH^2(\bbT)$ with $\varphi_{1,0}\in(0,1)$ in $\bbT$ such that $\Div(\varphi_{1,0}\vel_{1,0} + (1-\varphi_{1,0}) \vel_{2,0}) = 0$. Under the assumptions~\eqref{ass:torus}--\eqref{ass:R} below, there exists some $T>0$ such that the system~\eqref{int:eq:system} admits a unique strong solution~$(\vel_1,\vel_2,\varphi_1,p)$ with
	   \begin{align*}
	       \vel_j &\in \rmL^2 \prt{0,T;\rmH^2(\bbT)^d} \cap \rmH^1 \prt{ 0,T;\rmL^2(\bbT)^d}, \qquad j=1,2, \\
	       \varphi_1 &\in \rmL^2 \prt{0,T;\rmH^3(\bbT)} \cap \rmH^1 \prt{ 0,T;\rmH^1(\bbT)} \cap \rmH^2 \prt{ 0,T;\rmH^{-1}(\bbT)}, \\
	       p &\in \rmL^2 \prt{0,T;\rmH^1_{(0)}(\bbT)}.
	   \end{align*}
	\end{theorem}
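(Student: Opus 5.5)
We prove Theorem~\ref{int:thm:main_res} by a contraction (Banach fixed-point) argument built on a linearization of \eqref{int:eq:system} around the initial data. The regularity class already shows the structure. The phase field has two more spatial derivatives than one would expect from pure transport, since $\varphi_1 \in \rmL^2(0,T;\rmH^3)$. This gain must come from the coupling: the capillary terms $\varphi_1\Grad\mu_1$ and $(1-\varphi_1)\Grad\mu_2$ involve $\Grad\Lap\varphi_1$, while the transport equation $\delt\varphi_1 = -\Div(\varphi_1\vel_1)$ involves $\Div\vel_1$. Eliminating the pressure via the divergence constraint should therefore produce a damped-wave or parabolic-type equation for $\varphi_1$. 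This is consistent with $\varphi_1\in\rmH^2(0,T;\rmH^{-1})$.

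\textbf{Step 1: linearized principal part.} Freeze all coefficients at $\varphi_{1,0}$, which is bounded away from $0$ and $1$ because it lies in $\rmH^2\subset\rmC^0$ and takes values in $(0,1)$. Keep only the highest-order terms. This gives a linear system for $(\vel_1,\vel_2,\varphi_1,p)$:
\begin{itemize}
\item two Lam\'e-type parabolic equations in which the coupling term $\varphi_{1,0}\Grad(p-\sigma_1\Lap\varphi_1)$, respectively $(1-\varphi_{1,0})\Grad(p+\sigma_2\Lap\varphi_1)$, enters;
\item the equation $\delt\varphi_1+\varphi_{1,0}\Div\vel_1 = g$;
\item the constraint $\Div(\varphi_{1,0}\vel_1+(1-\varphi_{1,0})\vel_2)=h$.
\end{itemize}
We prove maximal $\rmL^2$-regularity for this system in the stated spaces. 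The approach is an energy estimate. Test the momentum equations with $\delt\vel_j$ and $-\Div\Str_j$, and test the $\varphi_1$-equation with $(\sigma_1+\sigma_2)\Lap^2\varphi_1$-type multipliers. The pressure drops out after multiplication by the weights, because the weighted velocity $\varphi_{1,0}\vel_1+(1-\varphi_{1,0})\vel_2$ is (nearly) solenoidal. The capillary terms cancel against the transport term, mirroring the energy identity for $E_\tot$. The pressure itself is then recovered from a weighted elliptic problem $\Div\big((\varphi_{1,0}/\rho_1+(1-\varphi_{1,0})/\rho_2)\Grad p\big)=\dots$, which is uniformly elliptic, giving $p\in\rmL^2(0,T;\rmH^1_{(0)})$. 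Existence follows by Galerkin approximation, or by a continuity method combined with the a priori estimate. Coefficient variation is handled by the smallness of $\varphi_1-\varphi_{1,0}$ for short times, using $\rmH^1(0,T;\rmH^1)\hookrightarrow\rmC([0,T];\rmH^1)$ together with the interpolation bound $\rmC([0,T];\rmH^2)$.

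\textbf{Step 2: fixed point.} Define $\mathcal{F}$ on a closed ball of the solution space with the prescribed initial data by solving the linear problem with all nonlinear remainders on the right-hand side. These remainders are the convection terms, the differences $\rho_j(\varphi_1)-\rho_j(\varphi_{1,0})$ and $\nu_j$, $\lambda_j$ differences, $F'$, $\Rcal$, and the lower-order parts of $\varphi_1\Grad\mu_1$. Estimate them in $\rmL^2(0,T;\rmL^2)$ by Sobolev embeddings in $d\le3$, gaining a factor $T^{\alpha}$ with $\alpha>0$ from Hölder in time. This shows that $\mathcal{F}$ maps the ball to itself and is a contraction for small $T$. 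Positivity of $\varphi_1$ and of $1-\varphi_1$ persists for small $T$ by continuity in $\rmC^0$. Uniqueness then follows from the contraction property, extended to arbitrary strong solutions by a standard continuation argument.

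\textbf{Main obstacle.} The main obstacle is Step 1. The system is not of standard parabolic type, because $\varphi_1$ is only transported. The $\rmH^3$ regularity must come from the hidden coupling, and the two velocities share a single pressure under a weighted divergence constraint. The first thing to try is to rewrite the system in terms of the weighted mean velocity and the relative velocity $\vel_1-\vel_2$. Then $\delt\varphi_1$ is essentially $\Div$ of a relative flux, and the difference of the two momentum equations gives a parabolic equation for the relative velocity with forcing $\Grad\Lap\varphi_1$. Combined, these yield a strongly damped wave structure for $\varphi_1$, from which the $\rmL^2(0,T;\rmH^3)$ bound follows via the dissipation.
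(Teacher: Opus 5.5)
Your overall architecture matches the paper's fixed-point scheme. There are two minor points in Step~2. First, the remainders in the transport and constraint equations, such as $-\Div((\varphi_1-\varphi_{1,0})\vel_1)$, must be estimated in $\rmL^2(0,T;\rmH^1)\cap\rmH^1(0,T;\rmH^{-1})$, not $\rmL^2(0,T;\rmL^2)$. Second, the $T^\alpha$ gains need norms that contain the initial traces, so that embedding constants and the bound for $\mathcal{L}_T^{-1}$ stay uniform as $T\to0$ (cf.\ Lemma~\ref{mr:lemma:extension_op}).

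The genuine gap is Step~1, where essentially all of the paper's work lies. The multipliers you list do not yield the decisive bound $\Grad\Lap\varphi_1\in\rmL^2(0,T;\rmL^2)$:
\begin{itemize}
\item The transport equation is first order in time. Testing it with $\Lap^2\varphi_1$ gives $\frac12\ddt\norm{\Lap\varphi_1}^2$ plus a coupling term, but no dissipation for $\varphi_1$.
\item Testing the momentum equations with $\delt\vel_j$ makes the capillary terms produce $-2\int\Lap\varphi_1\,\delt^2\varphi_1$. This is not sign-definite, so nothing cancels.
\item $(\Div\Str_1,\Div\Str_2)$ does not satisfy the weighted constraint. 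This fails already for constant coefficients unless $2\nu_1+\lambda_1=2\nu_2+\lambda_2$, so the pressure does not drop out.
\end{itemize}
At best you reach the analogue of Lemma~\ref{Lin:lemma:est_higher}, i.e.\ $\vel_j\in\rmL^2(0,T;\rmH^2)$ and $\varphi_1\in\rmL^\infty(0,T;\rmH^2)$. Without $\Grad\Lap\varphi_1\in\rmL^2(0,T;\rmL^2)$ you cannot get $p\in\rmL^2(0,T;\rmH^1)$ either, because $\Grad\Lap\varphi_1$ enters the elliptic equation for $p$.

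Your last paragraph names the right mechanism but leaves it as something to try. The paper makes it work by testing with $(\tphi1^{-1}\Grad\Psi,-\tphi2^{-1}\Grad\Psi,0)$. This triple satisfies the weighted constraint identically, so the single pressure cancels exactly. Equivalently, divide each momentum equation by its volume fraction before subtracting. The difference is not a closed equation for $\vel_1-\vel_2$, since $\barrho{j},\nu_j,\lambda_j$ differ. But its divergence is a closed scalar equation. For constant $\tphi1$ and $g_2=0$, insert $\Div\vel_j=\pm\tphi{j}^{-1}(g_1-\delt\varphi_1+\varepsilon\Lap\varphi_1)$ to obtain the plate equation $\tant\delt^2\varphi_1-\tbnt\delt\Lap\varphi_1+2\Lap^2\varphi_1=\dots$. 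The paper then adds $-\tant\delt\Lap\varphi_1$ on both sides and chooses $\Psi=\delt\varphi_1-\Lap\varphi_1$. This gives $\frac{\tant}{2}\ddt\norm{\Psi}^2+\ddt\norm{\Lap\varphi_1}^2+2\norm{\Grad\Lap\varphi_1}^2$ on the left. The leftover $\delt\Lap\varphi_1$-term is controlled by the $\rmL^2(0,T;\rmH^1)$-bound for $\delt\varphi_1$ (Lemma~\ref{Lin:lemma:est_grad_Lap_phi1}).

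Your sentence on ``coefficient variation'' concerns $\varphi_1(t)-\varphi_{1,0}$, which belongs to the nonlinear step. It does not address that the linear operator itself has the spatially variable coefficient $\varphi_{1,0}$, which is only in $\rmH^2$. For non-constant $\tphi1$, neither the multipliers $-\partial_{x_k}^2\vel_j$ nor the top-order plate test is free of commutators. Derivatives of $\tphi1$ up to second order (third order before integrating by parts) multiply top-order quantities such as $\delt\vel_j$, $\Grad^2\vel_j$ and $p$, and all these estimates would have to be closed simultaneously. Your proposal gives no such estimate. The continuity method you mention needs exactly this estimate, uniformly along a homotopy to constant coefficients. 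Galerkin would additionally need a basis compatible with the weighted constraint and with these test functions; the paper instead regularizes with $-\varepsilon\Lap\varphi_1$ and applies Proposition~\ref{prelim:prop:APEE}.

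The paper avoids the top-order variable-coefficient estimate altogether:
\begin{itemize}
\item For variable $\tphi1$ it runs the plate argument one order lower, with $\Phi=\tant^{-1}(-\Lap)^{-1}(\delt\varphi_1-\Lap\varphi_1)\in\rmH^2\hookrightarrow\rmL^\infty$ and a modified pressure obtained as a very weak $\rmL^2$-solution (Lemma~\ref{Lin:lemma:pressure_q_gen}). This yields a unique weak solution (Proposition~\ref{Lin:prop:ex_gen_coeff}).
\item It upgrades this solution to $Z_T$ by a Neumann-series perturbation from constant coefficients. This only requires $\norm{\tphi1-\bar\varphi_{1,0}}_{\rmL^\infty}$ small, up to an error $CT^{1/8}\norm{\tphi1}_{\rmH^2}$ (Proposition~\ref{Lin:prop:perturbation}).
\item It combines this with a partition-of-unity localization whose commutators are lower order and controlled by the weak solution (Proposition~\ref{Lin:prop:localization}).
\end{itemize}
Some such device, or a fully worked-out top-order estimate for $\rmH^2$ coefficients, is the missing piece.
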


    \medskip
	\noindent\textbf{Background.}
    The prototypical diffuse interface models for incompressible fluid mixtures are of Navier--Stokes/Cahn--Hilliard type. Their origins can be traced back to the seminal work of Hohenberg \& Halperin~\cite{Hohenberg1977}, who introduced the basic model for fluids with identical constant densities in 1977. This model, commonly referred to as \emph{Model H}, is formulated in terms of a \emph{mean velocity}. A rigorous derivation together with a proof of its thermodynamic consistency was later provided by Gurtin et al.~\cite{Gurtin1996}.
    
    The extension of \emph{Model H} to mixtures with unmatched densities has attracted considerable attention over the past decades. Among the most influential is the thermodynamically consistent model by Lowengrub \& Truskinovsky~\cite{LowengrubTruskinovsky1998}, whose approach is based on a quasi-incompressible formulation, where each pure fluid is incompressible, while the mass-averaged velocity is not divergence-free. A different thermodynamically consistent system was introduced by Abels, Garcke \& Grün~\cite{AbelsGarckeGruen2012}, who employed a volume-averaged velocity to obtain an incompressible model. We also refer to \cite{Ding2007, Boyer2002, Heida2011, ShokrpourRoudbari2018} for further diffuse interface models with unmatched densities. A comprehensive unifying framework and comparison of several Navier--Stokes/Cahn--Hilliard models was recently presented by ten Eikelder et al.~\cite{tenEikelder2023}.

    The common feature of all models discussed above is their formulation for a single mean velocity, such that they consist of \emph{one} Navier--Stokes-type equation coupled to a Cahn--Hilliard system.
    Their derivation is based on the conservation of mass for the individual constituents together with the conservation of linear momentum for the mixture as a whole, while neglecting the momentum generated by the relative motion between the constituents. In the terminology of Bothe \& Dreyer~\cite{BotheDreyer2014}, such models are referred to as \emph{reduced mixture models}.
    
    A conceptually different approach is provided by mixture theory, which postulates a complete system of balance laws for each constituent. For a long time, a rigorous diffuse interface formulation for two- or multi-phase flows within this framework was not available. This gap was recently closed by ten Eikelder, van der Zee \& Schillinger~\cite{tenEikelder2024}, who derived a novel thermodynamically consistent diffuse interface model from mixture theory. Their \emph{(full) mixture model} is a multi-phase system that describes each phase by its own Navier--Stokes equations coupled with a corresponding mass transport equation.

    \medskip
    Concerning the mathematical analysis of the aforementioned diffuse interface models, we refer to \cite{AbelsGarckeGiorgini2023} for an overview of the extensive analytical literature on the model by Abels, Garcke \& Grün~\cite{AbelsGarckeGruen2012}. In particular, the first local well-posedness theorem was established by Abels \& Weber~\cite{AbelsWeber2020}, followed by further well-posedness results by Giorgini \cite{Giorgini2021, Giorgini2022}. We also mention various extensions of this model to more general settings and the associated analytical results, including the recent work by Abels et al.~\cite{AbelsGarckePoiatti2024} on multi-phase flows, for instance. 

    In contrast, the analysis of the model by Lowengrub \& Truskinovsky~\cite{LowengrubTruskinovsky1998} is more involved. The main difficulty stems from the stronger coupling between the Navier--Stokes and Cahn--Hilliard equations compared to volume-averaged models, due to the presence of the pressure in the equation for the chemical potential.
    The first analytical result was obtained by Abels~\cite{Abels2009g}, who proved the existence of weak solutions under a $p$-Laplacian regularization and for regular free energy densities. Later, in \cite{Abels2012}, he established local-in-time strong well-posedness for the unregularized system. Concerning weak solutions, partial progress was made by the authors~\cite{AbelsGarckeWittmann2025}, by showing existence for the quasi-stationary version of the closely related model by Aki et al.~\cite{Aki2014} with singular free energies.
    In a recent breakthrough, Fei et al.~\cite{FeiFeiLiuWu2026} achieved the existence of global weak solutions to the original Lowengrub--Truskinovsky model for a class of singular free energy densities without any spatial regularization, by exploiting the Bresch--Desjardins entropy for a viscosity that depends linearly on the mass density.

    To the best of the authors' knowledge, the mixture model derived by ten Eikelder et al.~\cite{tenEikelder2024} has not yet been investigated from an analytical point of view. Thus, the present work provides the first analytical study, establishing local-in-time well-posedness.

    From a technical perspective, our analysis is related to the theory of the Korteweg equations. The latter were derived by Dunn \& Serrin~\cite{DunnSerrin1985} to describe compressible viscous fluids with capillarity effects. For this model, local existence of strong solutions can be found in a work by Kotschote~\cite{Kotschote2008}.

    \medskip
	\noindent\textbf{Proof strategy and analytical challenges.} Our proof of local-in-time well-posedness for system~\eqref{int:eq:system}, which is stated in Theorem~\ref{int:thm:main_res}, follows a fixed-point approach. More precisely, given $T>0$ and suitable Banach spaces $X_T$ and $Y_T$, we decompose the system into its principal linear part and the remaining nonlinear terms. This leads to a linear operator~$\mathcal{L}_T\colon X_T\to Y_T$ representing the principal part of the linearization of \eqref{int:eq:system}, and a nonlinear operator $\mathcal{F}_T\colon X_T\to Y_T$. The original problem is then reformulated as the search for a fixed point of the operator~$\mathcal{L}_T^{-1} \circ \mathcal{F}_T$ in $X_T$. The Banach fixed-point theorem yields existence and uniqueness once two key ingredients have been established: the invertibility of $\mathcal{L}_T$, and the local Lipschitz continuity of $\mathcal{F}_T$. 

    The main analytical challenge of the paper is to prove the invertibility of $\mathcal{L}_T$, that is, to solve the associated linear system
    \begin{equation}
		\label{int:eq:lin_system}
		\begin{aligned}
			\rho_1(\tphi1)\delt\vel_1 - \Div\Str_1(\tphi1,\D\vel_1) + \tphi1\Grad p - \tphi1 \Grad\Lap\varphi_1 &= \f_1, \\[0.2ex] 
			\rho_2(\tphi1)\delt\vel_2 - \Div\Str_2(\tphi1,\D\vel_2) + (1-\tphi1)\Grad p + (1-\tphi1) \Grad\Lap\varphi_1 &= \f_2, \\[0.2ex] 
			\delt\varphi_1 + \Div(\tphi1\vel_1) &= g_1, \\[0.2ex] 
			\Div( \tphi1\vel_1 + (1-\tphi1)\vel_2) &= g_2.
		\end{aligned}
    \end{equation}
    This system couples the incompressible Stokes equations with the linearization of the (compressible) Korteweg equations 
    \begin{align*}
        \delt(\rho\vel) + \Div(\rho\vel\otimes\vel) + \Grad p(\rho) &= \kappa \rho \Grad\Lap\rho, \\[0.2ex] 
        \delt\rho +\Div(\rho\vel) &= 0.
    \end{align*}
    The main novelty of the present work also lies in the analysis of the linear system~\eqref{int:eq:lin_system}. Our approach is based on a suitable regularization of the system, to which abstract parabolic theory can be applied. Thus, the resulting regularized system admits a unique weak solution. The key step is then to derive sufficiently strong \textit{a priori} estimates that are uniform with respect to the regularization parameter.

    We begin by freezing the coefficient $\tphi1$ and establishing the required estimates in the case of constant coefficients. To obtain an estimate for the most delicate term $\Grad\Lap\varphi_1$, we use a carefully chosen testing procedure that leads to a weakly formulated damped plate equation for $\varphi_1$. Once all uniform estimates are in place, passing to the limit in the regularized system gives a strong solution to~\eqref{int:eq:lin_system} in the constant-coefficient setting.

    We then turn to the case of variable coefficients. By adapting the previous testing procedure, we first derive uniform estimates of lower regularity, which provide the existence of a unique weak solution. Finally, the desired higher regularity of this solution is recovered by combining a perturbation argument with localization techniques.

    \medskip
	\noindent\textbf{Outline.}
    This paper is organized as follows. In the beginning, Section~\ref{sec:preliminaries} lays the groundwork for our analysis by introducing the notation, stating the general assumptions, and gathering the necessary preliminary results. In Section~\ref{sec:mr}, we establish our main result, namely the local well-posedness of system~\eqref{int:eq:system}, by means of a fixed-point argument. The main analytical part of this work is contained in Section~\ref{sec:lin}, where the existence of a unique solution to the principal part of the linearized system is shown. Finally, Section~\ref{sec:Lip} is dedicated to verifying the local Lipschitz continuity of the corresponding nonlinear operator.

	\section{Preliminaries} \label{sec:preliminaries}

    In this section, we introduce the notation, state the standing assumptions, and collect several preliminary results that will be needed in the subsequent analysis.
    
	\subsection{Notation} \label{subsec:notation} 
	
	We begin by fixing the notation used throughout this paper.
	
	\medskip
	\noindent\textbf{Notation for function spaces.}
     As spatial manifold, we always consider $\bbT$, $d\in\N$, where the torus is given by $\mathbb{T}\coloneqq\R/2\pi\Z$. We use the standard notations~$\rmL^p(\bbT)$, $p\in [1,\infty]$, for Lebesgue spaces and $\rmW^{k,p}(\bbT)$, $k\in\N_0$, $p\in [1,\infty]$ for Sobolev spaces. Moreover, the notation~$\rmW^{-k,p}(\bbT) = (\rmW^{k,p'}(\bbT))'$ represents the corresponding dual space, where the dual exponent $p'\in [1,\infty]$ of $p$ is such that~$\frac1p + \frac1{p'}=1$. In addition, we introduce the $\rmL^2$-Bessel potential spaces $\rmH^s(\bbT)$,~$s\in\R$, as well as, for~$\alpha\in (0,1)$, the well-known Hölder~spaces~$\rmC^\alpha(\bbT)$. 
    
    Now, let $X$ be some Banach space and let $0 < T < \infty$ be given. In this setting, we denote by~$\rmL^p(0,T;X)$, $p\in [1,\infty]$, the usual Bochner spaces, and we write~$f \in \rmW^{k,p}(0,T;X)$, $k\in\N_0$, $p\in[1,\infty)$, if and only if $f,\frac{\mathrm{d} f}{\mathrm{d} t}, \dots, \frac{\mathrm{d}^k f}{\mathrm{d} t^k} \in \rmL^p(0,T;X)$, where~$\frac{\mathrm{d}^l f}{\mathrm{d} t^l}$ refers to the $l$th $X$-valued distributional derivative of~$f$. Furthermore, we set $\rmH^k(0,T;X) = \rmW^{k,2}(0,T;X)$, where $k\in\N_0$. In addition, $\rmC^\alpha([0,T];X)$, $\alpha\in (0,1)$, represents the respective Hölder spaces with values in $X$, and $\BUC([0,T];X)$ denotes the space of all bounded and uniformly continuous functions~$f\colon [0,T]\to X$ equipped with the supremum norm. We finally use the notation~$\rmC_0^\infty(0,T;X)$ for the vector space of all smooth functions~$f\colon (0,T) \to X$ with~$\supp f \ssubset (0,T)$. 

    Eventually, we define $\rmCb{k}(\R)$ as the subspace of all bounded functions in $\rmC^k(\R)$.

    \medskip
	\noindent\textbf{Further notation.}
    Introducing further notation, we point out that vector-valued variables and functions are indicated in bold fonts. Moreover, the outer product of $\mathbf{a},\mathbf{b}\in\R^d$ is given by $\mathbf{a}\otimes\mathbf{b}=(a_ib_j)_{i,j=1}^d$. For a function $\uel\in \rmH^1(\bbT)^d$, the notation $\D\uel \coloneqq \frac12 \bigprt{ \Grad\uel + \trans{\Grad\uel} }$ refers to its symmetrized gradient.
	
	For any Banach space $X$ and its dual $X'$, the duality pairing between elements $x'\in X'$ and $x\in X$ is denoted by $\ang{x'}{x}_{X',X}$. If $X$ is a Hilbert space, we write $(\cdot,\cdot)_X$ for its inner product on $X$. 
	Additionally,
	\begin{align*}
		\mean{f} \coloneqq \frac{1}{|\bbT|}\ang{f}{1}_{\rmH^1(\bbT)', \rmH^1(\bbT)} \quad\text{for } f\in \rmH^1(\bbT)'
	\end{align*}
	represents the generalized spatial mean of $f$, where $|\bbT|=(2\pi)^d$ is the $d$-dimensional Lebesgue measure of $\bbT$. With the standard identification $\rmL^2(\bbT) \subset \rmH^1(\bbT)'$, we have the identity~$\mean{f} = \frac{1}{|\bbT|} \int_\bbT f \dx$ if~$f\in \rmL^2(\bbT)$.
	Finally, given $m\in\R$, we set 
	\begin{align*}
		\rmL^p_{(m)}(\bbT) \coloneqq \{u\in \rmL^p(\bbT) \colon \mean{u} = m \}, \quad p\in[1,\infty],
	\end{align*}
	and $\rmH^k_{(m)}(\bbT) \coloneqq \rmH^k(\bbT) \cap \rmL^2_{(m)}(\bbT)$, $k\in\N$.

    For the sake of brevity, from Section~\ref{sec:mr} onward, we omit the notation of the space $\bbT$ in the index of all norms and dual pairings.

	\subsection{General Assumptions.} 
	
	Throughout this paper, we impose the following main assumptions.
	
	\begin{enumerate}[itemsep=0.8ex, label=$(\mathbf{A \arabic*})$, ref = $\mathbf{A \arabic*}$]
		\item \label{ass:torus}
		Any spatial variable is defined on $\bbT$ in dimension $d\in\{2,3\}$, where the torus is given by $\mathbb{T} \coloneqq \R/2\pi\Z$.
		\item \label{ass:pot}
		The potential $F\colon\R\to\R$ is of regularity $F\in \rmC^3(\R)$.
		\item \label{ass:coeffs}
		The viscosity coefficients $\nu_j,\lambda_j\in \rmCb{2}(\R)$, $j=1,2$, are uniformly positive, i.e., there exists a constant~$K_*>0$ such that
		\begin{align*}
			\nu_j(s), \lambda_j(s) \geq K_* \quad\text{ for all } s\in\R. 
		\end{align*}
        \item \label{ass:surface_tension}
		The surface tension coefficients are set to $\sigma_1=\sigma_2=1$ for simplicity.
		\item \label{ass:R}
		The rate of exchange of linear momentum $\mathcal R\colon\R\to\R$ is locally Lipschitz continuous.
	\end{enumerate}

	\subsection{Preliminary Results} \label{subsec:prelim_results}
	
	We conclude this section by collecting several preliminary results that play an important role in our analysis.
	
	\medskip
	\noindent\textbf{Composition of functions.}
	Concerning compositions of functions, we state the following lemma, whose formulation is taken from \cite[Theorem 3]{AbelsWeber2020} or \cite[Theorem 2.1]{AbelsHaselboeck2026}.
	
	\begin{lemma}[Composition with Sobolev functions]
		\label{prelim:lemma:comp_Sobolev}
		Let $\Omega\subset\R^d$ be a bounded domain with boundary of class $\rmC^1$, and let $m,N\in\N$ and $p\in[1,\infty)$ such that $m-\frac{d}{p}>0$. Then, for every $f\in\rmC^m(\R^N)$ and every $R>0$, there exists a constant $C(R)>0$ such that for all~$u\in\rmW^{m,p}(\Omega)^N$ with $\norm{u}_{\rmW^{m,p}(\Omega)^N}\leq R$, it holds $f(u)\in \rmW^{m,p}(\Omega)$ and
		\begin{align*}
			\norm{f(u)}_{\rmW^{m,p}(\Omega)} \leq C(R). 
		\end{align*}
		Moreover, if $f\in\rmC^{m+1}(\R^N)$, then for all $R>0$, there exists a constant $L(R)>0$ such that
		\begin{align*}
			\norm{f(u)-f(v)}_{\rmW^{m,p}(\Omega)} 
			\leq L(R) \norm{u-v}_{\rmW^{m,p}(\Omega)^N} 
		\end{align*}
		for all $u,v\in \rmW^{m,p}(\Omega)^N$ with $\norm{u}_{\rmW^{m,p}(\Omega)^N}, \norm{v}_{\rmW^{m,p}(\Omega)^N} \leq R$.
	\end{lemma}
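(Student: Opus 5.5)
The plan is to prove both parts of Lemma~\ref{prelim:lemma:comp_Sobolev} by induction on the order of derivatives, using the chain rule together with Sobolev embeddings. The embeddings are available because $m-\frac dp>0$ and $\Omega$ is a bounded $\rmC^1$ domain.

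\emph{Step 1: $u$ is bounded and continuous.} Since $mp>d$, the embedding $\rmW^{m,p}(\Omega)\hookrightarrow \rmC(\overline\Omega)$ gives $\norm{u}_{\rmL^\infty}\le C_0R$. Hence all derivatives $\partial^\beta f$ with $|\beta|\le m$ are bounded on the compact set $\{|y|\le C_0R\}$, with bounds depending only on $R$.

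\emph{Step 2: derivatives of $f(u)$.} I first approximate $u$ by smooth functions, using density of $\rmC^\infty(\overline\Omega)$ in $\rmW^{m,p}(\Omega)$ for $\rmC^1$ domains. For smooth $u$, the Fa\`a di Bruno formula writes $\partial^\alpha f(u)$, for $1\le|\alpha|\le m$, as a sum of terms
\[
(\partial^\beta f)(u)\,\partial^{\gamma_1}u_{i_1}\cdots\partial^{\gamma_k}u_{i_k},
\qquad
\gamma_1+\dots+\gamma_k=\alpha,\quad |\gamma_l|\ge1,\quad k=|\beta|.
\]
The main obstacle is estimating each product in $\rmL^p$. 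I use the Gagliardo--Nirenberg inequality on $\Omega$, which holds thanks to the extension operator:
\[
\norm{\partial^{\gamma}u}_{\rmL^{p m/|\gamma|}}\le C\norm{u}_{\rmW^{m,p}}^{|\gamma|/m}\norm{u}_{\rmL^\infty}^{1-|\gamma|/m}\le C(R).
\]
Hölder's inequality with exponents $m/|\gamma_l|$ then applies, because $\sum_l|\gamma_l|/m=|\alpha|/m\le1$. Together with Step~1, this bounds each term by $C(R)$. If one prefers to avoid Gagliardo--Nirenberg, Sobolev embeddings suffice: $\partial^{\gamma}u\in \rmW^{m-|\gamma|,p}\hookrightarrow \rmL^{q_\gamma}$, and one checks $\sum 1/q_{\gamma_l}\le 1/p$ using $m>d/p$. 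Passing to the limit along the smooth approximations extends the formula and the bound to general $u$: $f(u_n)\to f(u)$ uniformly, and the bounds are uniform in $n$. Reflexivity (for $p>1$), or a direct convergence argument for each term, then gives $f(u)\in\rmW^{m,p}$ with $\norm{f(u)}_{\rmW^{m,p}}\le C(R)$.

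\emph{Step 3: Lipschitz estimate.} Assume $f\in\rmC^{m+1}$. I write
\[
f(u)-f(v)=\int_0^1 \Grad f(v+s(u-v))\,\mathrm ds\cdot(u-v)=G\cdot(u-v).
\]
Setting $g(y,z)=\int_0^1\Grad f(z+s(y-z))\,\mathrm ds$, the function $g$ lies in $\rmC^m(\R^{2N})^N$. Applying the first part to $g$ with argument $(u,v)$, whose norm is at most $2R$, gives $\norm{G}_{\rmW^{m,p}}\le C(R)$. Since $m>d/p$, the space $\rmW^{m,p}(\Omega)$ is a Banach algebra; this follows from the same Hölder and embedding estimates as in Step~2, applied to the Leibniz rule. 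Therefore
\[
\norm{f(u)-f(v)}_{\rmW^{m,p}}\le C\norm{G}_{\rmW^{m,p}}\norm{u-v}_{\rmW^{m,p}}\le L(R)\norm{u-v}_{\rmW^{m,p}}.
\]
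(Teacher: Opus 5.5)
Your proof is correct, but for the first assertion it takes a different route from the paper. The paper does not prove the composition bound itself. It cites the general composition-operator theorem of Runst--Sickel (Section 5.2.4, Theorem 1 and the accompanying Lemma), which is stated for Besov and Triebel--Lizorkin scales and contains $\rmW^{m,p}$ with $m>d/p$ as a special case. It then only remarks that the Lipschitz estimate ``can be directly deduced from the first.''

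You instead give a self-contained argument:
\begin{itemize}
\item the embedding into $\rmC(\overline\Omega)$, so that only the values of $f$ on a compact ball matter;
\item the Fa\`a di Bruno formula for smooth approximants;
\item Gagliardo--Nirenberg or plain Sobolev embeddings with H\"older for the products (your exponent count $\sum_l 1/q_{\gamma_l}\le 1/p$ does close, because $m>d/p$);
\item a density and limit argument.
\end{itemize}
Your Step~3 writes $f(u)-f(v)=g(u,v)\cdot(u-v)$ with $g(y,z)=\int_0^1\Grad f(z+s(y-z))\,\mathrm ds\in\rmC^m(\R^{2N})^N$. You apply the first part to $g$ on $\rmW^{m,p}(\Omega)^{2N}$ and then use the Banach algebra property. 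This is presumably exactly the ``direct deduction'' the authors intend, and it correctly uses the extra derivative in $f\in\rmC^{m+1}$.

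The citation buys generality, such as fractional smoothness and broader function spaces, at no cost in length. Your argument buys transparency: an elementary proof in precisely the integer-order setting the paper needs. The only instances used are $m=2$, $p=2$ and $m=1$, $p=q>d$, on the torus, where no extension operator is even required. The price is invoking standard but nontrivial tools: the embeddings, density of smooth functions, and the algebra property of $\rmW^{m,p}$.
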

	
	\begin{proof}
		The first assertion follows from \cite[Section 5.2.4, Theorem 1 and Lemma]{RunstSickel1996}, while the second one can be directly deduced from the first.
	\end{proof}
	
	\begin{remark}
		Lemma~\ref{prelim:lemma:comp_Sobolev} remains true if the domain $\Omega$ is replaced by the torus $\bbT$.
	\end{remark}

	\medskip
	\noindent\textbf{Interpolation results.}
	By $(X_0,X_1)_{\theta,r}$ we denote the real interpolation space between two Banach spaces
	$X_0$ and $X_1$ with exponent $\theta$ and summation index $r$. For a densely injected Banach couple $X_1\hookrightarrow X_0$ and $T>0$, we have the continuous embedding, cf.~\cite[Chapter~III, Theorem~4.10.2]{Amann1995},
	\begin{align}
		\label{prelim:interpol:BUC}
		\rmL^2\prt{0,T;X_1} \cap \rmH^1\prt{ 0,T;X_0} 
		\hookrightarrow \BUC\bigprt{ [0,T]; \prt{ X_0, X_1}_{\frac12, 2} }.
	\end{align}
	Moreover, for any Banach spaces  $X_0 \subset Y \subset X_1$ satisfying for some $\theta\in(0,1)$ and some constant $C>0$ that $\norm{x}_Y \leq C\norm{x}_{X_0}^{1-\theta} \norm{x}_{X_1}^\theta$ for all $x\in X_0$, it holds
	\begin{align}
		\label{prelim:interpol:Hölder}
		\rmC^{\alpha}\bigprt{ [0,T]; X_1 } \cap \rmL^\infty\prt{0,T;X_0}
		\hookrightarrow \rmC^{\alpha\theta}\bigprt{ [0,T]; Y }.
	\end{align}
	This result is well-known, cf.~\cite[Lemma 1]{AbelsWeber2020}.

	\medskip
	\noindent\textbf{Abstract parabolic evolution equations.}
	We state an existence result for abstract parabolic evolution equations, whose formulation is taken from \cite{AbelsDolzmannLiu2014}.
	Let $\bbV$ and $\bbH$ be two separable Hilbert spaces with a continuous and dense embedding $\bbV\hookrightarrow\bbH$, and fix $T\in(0,\infty)$. For all $t\in[0,T]$, suppose that a bilinear form $a(t,\cdot,\cdot)\colon\bbV\times\bbV\to\R$ is given, satisfying the following assumptions for all $v,w\in\bbV$:
	\begin{enumerate}[label=\textnormal{(\alph*)}]
		\item \label{prelim:APEE:ass_a}
		$a(\cdot,v,w)$ is measurable on $[0,T]$,
		\item \label{prelim:APEE:ass_b}
		there exists a constant $C>0$ independent of $t$, $v$, and $w$ such that
		\begin{align*}
			\abs{a(t,v,w)} \leq C\norm{v}_\bbV \norm{w}_\bbV \quad\text{for all } t\in[0,T],
		\end{align*}
		\item \label{prelim:APEE:ass_c}
		there exist constants $C_0,C_1\geq0$ independent of $t$ and $v$ such that
		\begin{align*}
			a(t,v,v) \geq C_0\norm{v}_\bbV^2 - C_1\norm{v}_\bbH^2 \quad\text{for all } t\in[0,T],
		\end{align*}
		\item \label{prelim:APEE:ass_d} 
		$a(\cdot,v,w)$ is diﬀerentiable, $a(\cdot,v,w)$ is continuous in $[0,T]$ and $\delt a(t,v,w)$ is
		measurable on $[0,T]$ with $\abs{\delt^k a(t,v,w)} \leq c\norm{v}_\bbV \norm{w}_\bbV$ for $k\in\{0,1\}$ with $c>0$ independent of $t$.           
	\end{enumerate}

    Under these assumptions, the following result holds true.
	
	\begin{proposition}
		\label{prelim:prop:APEE}
		Let the assumptions \ref{prelim:APEE:ass_a}--\ref{prelim:APEE:ass_c} hold. Then there exists a representation operator $L(t)\colon\bbV\to\bbV'$ with $a(t,v,w) = \ang{L(t)v}{w}_{\bbV',\bbV}$ for all $v,w\in\bbV$, which is continuous and
		linear for fixed $t$. Moreover, for every $f\in\rmL^2(0,T;\bbV')$ and $y_0\in\bbH$, there exists a unique solution 
		\begin{align*}
			y\in \big\{z\colon[0,T]\to\bbH \colon z\in\rmL^2(0,T;\bbV), \delt z\in\rmL^2(0,T;\bbV') \big\}
		\end{align*}
		to the equation
		\begin{align*}
			\delt y + L(t)y = f \quad\text{in } \bbV' \text{ for a.e. } t\in(0,T)
		\end{align*}
		with initial condition $y(0)=y_0$. If additionally \ref{prelim:APEE:ass_d} holds and $y_0\in\bbV$, then the operator~$L\colon \rmH^1(0,T;\bbV)\to \rmH^1(0,T;\bbV')$ is~continuous, and for every $f\in\rmH^1(0,T;\bbV')$ satisfying the compatibility condition~$f(0)-L(0)y_0\in\bbH$, the solution $y$ has the regularity properties
		\begin{align*}
			y\in\rmH^1(0,T;\bbV) \quad\text{and}\quad \partial_{tt} y\in\rmL^2(0,T;\bbV').
		\end{align*}
	\end{proposition}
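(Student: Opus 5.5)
The plan follows the classical Lions approach for the first part and a differentiated energy estimate for the second. First, the representation operator comes from the Riesz representation theorem (equivalently, from the boundedness in \ref{prelim:APEE:ass_b}). For fixed $t$, the map $w\mapsto a(t,v,w)$ is a bounded linear functional on $\bbV$. Hence there is a unique $L(t)v\in\bbV'$ with $a(t,v,w)=\ang{L(t)v}{w}_{\bbV',\bbV}$, and $\norm{L(t)}_{\mathcal{L}(\bbV,\bbV')}\le C$. Linearity in $v$ is immediate, and measurability of $t\mapsto L(t)v$ follows from \ref{prelim:APEE:ass_a} and separability of $\bbV$ via Pettis' theorem.

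For existence, I first substitute $y=e^{C_1t}z$. This replaces $a$ by $a+C_1(\cdot,\cdot)_\bbH$, which is coercive on $\bbV$ by \ref{prelim:APEE:ass_c}. I then use a Faedo--Galerkin scheme. Choose a basis $(e_k)$ of $\bbV$ that is orthonormal in $\bbH$; for instance, take the eigenfunctions of the operator associated with $(\cdot,\cdot)_\bbV$ relative to $\bbH$. Let $P_m$ be the $\bbH$-orthogonal projection onto $\operatorname{span}\{e_1,\dots,e_m\}$. The resulting linear ODE system has measurable, bounded coefficients and is solvable by Carathéodory's theorem. Testing with $y_m$ gives the uniform bound
\[
\sup_t\norm{y_m}_\bbH^2+C_0\int_0^T\norm{y_m}_\bbV^2\le C\Bigl(\norm{y_0}_\bbH^2+\int_0^T\norm{f}_{\bbV'}^2\Bigr).
\]
Together with the equation, this bounds $\delt y_m$ in $\rmL^2(0,T;\bbV')$, since $P_m$ is uniformly bounded on $\bbV'$ for this basis. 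Passing to weak limits yields a solution. The initial condition is recovered from the embedding of $\{z\in\rmL^2(0,T;\bbV),\ \delt z\in\rmL^2(0,T;\bbV')\}$ into $\rmC([0,T];\bbH)$. Uniqueness follows from the Lions--Magenes identity $\ddt\norm{z}_\bbH^2=2\ang{\delt z}{z}_{\bbV',\bbV}$ applied to the difference of two solutions, combined with \ref{prelim:APEE:ass_c} and Gronwall's lemma.

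For the regularity statement, continuity of $L\colon\rmH^1(0,T;\bbV)\to\rmH^1(0,T;\bbV')$ follows from \ref{prelim:APEE:ass_d}, since $\delt(L(t)y)=L'(t)y+L(t)\delt y$ with $\norm{L'(t)}_{\mathcal{L}(\bbV,\bbV')}\le c$. Formally, $w=\delt y$ solves
\[
\delt w+L(t)w=\delt f-L'(t)y,\qquad w(0)=f(0)-L(0)y_0.
\]
The right-hand side lies in $\rmL^2(0,T;\bbV')$ by the first part, and the initial value lies in $\bbH$ by the compatibility condition. The first part then gives $w\in\rmL^2(0,T;\bbV)$ and $\delt w\in\rmL^2(0,T;\bbV')$, which is the claim.

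I expect the main obstacle to be justifying this differentiation rigorously. I would do it at the Galerkin level, where $y_m$ is $\rmH^1$ in time because the coefficients are $\rmC^1$ by \ref{prelim:APEE:ass_d}. Differentiating the ODE and testing with $\delt y_m$ gives the same energy estimate for $\delt y_m$. The delicate point is a uniform bound on $\norm{\delt y_m(0)}_\bbH=\norm{P_m(f(0)-L(0)P_my_0)}_\bbH$. Because $P_m$ commutes with the chosen spectral basis, it is bounded on $\bbV'$ uniformly in $m$. I would split off $P_mL(0)(P_my_0-y_0)$ and control it by $y_0\in\bbV$ and $P_my_0\to y_0$ in $\bbV$; if a bound only in $\bbV'$ is obtainable here, I would instead approximate the data so that compatibility holds exactly at each level. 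The remaining piece, $P_m(f(0)-L(0)y_0)$, is uniformly bounded in $\bbH$ by the compatibility condition. Weak lower semicontinuity then transfers the bounds to $y$, giving $y\in\rmH^1(0,T;\bbV)$ and $\partial_{tt}y\in\rmL^2(0,T;\bbV')$.
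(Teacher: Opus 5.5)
The paper does not prove this proposition; it cites \cite{Wloka1987} (Lemma~26.1, Theorem~27.2). Your self-contained Faedo--Galerkin route is a legitimate alternative, and its overall architecture is sound. Its advantage over the citation is that it shows exactly where the compatibility condition enters: it puts $\delt y(0)$ into $\bbH$, which is the initial datum of the differentiated problem.

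One claim is wrong in the stated generality. The eigenbasis of the operator associated with $(\cdot,\cdot)_{\bbV}$ relative to $\bbH$ exists only if $\bbV\hookrightarrow\bbH$ is compact, and that is not assumed (it does hold in the paper's application on $\bbT$). You do not need it. In the first part, pass to the limit with test functions $\phi(t)e_k$ and read off $\delt y=f-Ly\in\rmL^2(0,T;\bbV')$ from the limit equation. In the second part, once $y\in\rmH^1(0,T;\bbV)$ is known, $\partial_{tt}y\in\rmL^2(0,T;\bbV')$ follows from $\delt y=f-Ly$ and the continuity of $L\colon\rmH^1(0,T;\bbV)\to\rmH^1(0,T;\bbV')$. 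No Galerkin bound on $\partial_{tt}y_m$ is needed.

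The more serious point is the bound on $\delt y_m(0)$, where your primary splitting fails.
\begin{itemize}
\item The remainder $P_mL(0)(y_0-P_my_0)$ is small only in $\bbV'$.
\item Even for a spectral basis, the best available estimate is $\norm{P_mg}_{\bbH}\le\lambda_m^{1/2}\norm{g}_{\bbV'}$.
\item Unless $L(0)$ commutes with $P_m$, $\norm{P_mL(0)(y_0-P_my_0)}_{\bbH}$ need not stay bounded for $y_0\in\bbV$.
\end{itemize}

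So your fallback is not optional; it is the actual argument, and it should be made explicit:
\begin{itemize}
\item Take any $\bbH$-orthonormal $(e_k)$ whose span is dense in $\bbV$.
\item Choose $y_{0,m}\in\mathrm{span}\{e_1,\dots,e_m\}$ with $y_{0,m}\to y_0$ in $\bbV$.
\item Set $f_m\coloneqq f+L(0)(y_{0,m}-y_0)$.
\end{itemize}
Then $\delt f_m=\delt f$, $f_m\to f$ in $\rmH^1(0,T;\bbV')$, and $f_m(0)-L(0)y_{0,m}=f(0)-L(0)y_0$ exactly. By~\ref{prelim:APEE:ass_d} the Galerkin solution is $\rmC^1$ in time, so you may evaluate its equation at $t=0$. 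This gives $\delt y_m(0)=P_m\bigl(f(0)-L(0)y_0\bigr)$, hence $\norm{\delt y_m(0)}_{\bbH}\le\norm{f(0)-L(0)y_0}_{\bbH}$.

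With this, your differentiated energy estimate bounds $\delt y_m$ in $\rmL^\infty(0,T;\bbH)\cap\rmL^2(0,T;\bbV)$ uniformly in $m$. The weak limit solves the problem with data $(f,y_0)$, so it equals $y$ by uniqueness, and it lies in $\rmH^1(0,T;\bbV)$.
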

	
	For a proof, we refer to \cite[Lemma 26.1 and Theorem 27.2]{Wloka1987}.

	\medskip
	\noindent\textbf{Very weak solutions to elliptic equations.}
    As a final preliminary result, we present the following proposition, whose proof is based on a duality argument and which yields a notion of very weak solutions for certain elliptic PDEs on the torus.
	
	\begin{proposition}
		\label{prelim:prop:very_weak_sol}
		Let $p\in (1,\infty)$ and suppose that $a\in\rmW^{1,q}(\bbT)$, where $q>d$ and $q\geq p'$, is such that $a(x)>0$ for all~$x\in\bbT$. Then, for every~$f\in \bigprt{\rmW^{2,p'}_{(0)}(\bbT)}'$, there exists a unique~$u\in \rmL^p_{(0)}(\bbT)$ that satisfies
		\begin{align*}
			(u,-\Div(a\Grad\phi))_{\rmL^2(\bbT)}
			= \ang{f}{\phi}_{\bigprt{\rmW^{2,p'}_{(0)}(\bbT)}', \rmW^{2,p'}_{(0)}(\bbT)}
			\quad\text{for all } \phi\in \rmW^{2,p'}_{(0)}(\bbT),
		\end{align*}
		which we refer to as \textnormal{very weak solution} to the elliptic equation $-\Div(a\Grad u)=f$. Moreover, there exists a constant $C_p>0$ such that
		\begin{align}
			\label{prelim:eq:very_weak_sol_est}
			\norm{u}_{\rmL^p(\bbT)} \leq C_p \norm{f}_{\bigprt{\rmW^{2,p'}_{(0)}(\bbT)}'}.
		\end{align}
	\end{proposition}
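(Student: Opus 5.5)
We plan to prove Proposition~\ref{prelim:prop:very_weak_sol} by duality, first solving the adjoint problem in strong form. Set $X \coloneqq \rmW^{2,p'}_{(0)}(\bbT)$ and $Y \coloneqq \rmL^{p'}_{(0)}(\bbT)$, and consider the operator
\[
A\colon X\to Y, \qquad A\phi \coloneqq -\Div(a\Grad\phi) = -a\Lap\phi - \Grad a\cdot\Grad\phi .
\]
The range of $A$ lies in $Y$ because $\int_\bbT \Div(a\Grad\phi)\dx=0$. For boundedness, note that $a\in\rmW^{1,q}\hookrightarrow\rmC(\bbT)$ since $q>d$. In addition, $\Grad\phi\in\rmW^{1,p'}$ embeds into $\rmL^r$ with $\frac1r=\frac1{p'}-\frac1d$, or into $\rmL^\infty$ when $p'>d$. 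Together with $\Grad a\in\rmL^q$ and $q\ge p'$, $q>d$, Hölder's inequality then gives $\Grad a\cdot\Grad\phi\in\rmL^{p'}$.

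The key step is to show that $A\colon X\to Y$ is an isomorphism. Injectivity is straightforward: if $A\phi=0$, then for $p'\ge2$ we test with $\phi$ to get $\int a|\Grad\phi|^2=0$, and since $a>0$ is continuous on the compact torus, $a\ge a_*>0$, so $\phi$ is constant and hence $0$ by the mean-zero condition. For $p'<2$ we first bootstrap the regularity of $\phi$ and then test in the same way.

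For surjectivity together with an a priori estimate, we will use a method of continuity or freezing argument. Write $-\Div(a\Grad\phi)=-a\Lap\phi-\Grad a\cdot\Grad\phi$. Because the leading coefficient $a$ is continuous and positive, the standard localized $\rmL^{p'}$-estimate for $a\Lap$ applies. The lower-order term satisfies
\[
\|\Grad a\cdot\Grad\phi\|_{\rmL^{p'}}\le \varepsilon\|\phi\|_{\rmW^{2,p'}}+C_\varepsilon\|\phi\|_{\rmL^{p'}},
\]
which follows from Gagliardo--Nirenberg, since $\Grad\phi$ lies in $\rmL^s$ with $s$ strictly below the critical exponent. These combine into
\[
\|\phi\|_{\rmW^{2,p'}}\le C\bigl(\|A\phi\|_{\rmL^{p'}}+\|\phi\|_{\rmL^{p'}}\bigr).
\]
Injectivity and a compactness argument then remove the $\rmL^{p'}$ term. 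Finally, the method of continuity along $a_\tau=(1-\tau)+\tau a$, starting from the Laplacian (which is invertible on mean-zero functions via Fourier series), yields surjectivity. The main obstacle is this step: the $\rmL^{p'}$ regularity with merely $\rmW^{1,q}$ coefficients, especially handling the lower-order term $\Grad a\cdot\Grad\phi$ uniformly along the homotopy. Since all $a_\tau$ share the same positive lower bound and $\rmW^{1,q}$ bound, the constants are uniform in $\tau$.

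With $A$ an isomorphism, we conclude by duality. Given $f\in X'$, the map $\psi\mapsto\langle f,A^{-1}\psi\rangle$ is a bounded functional on $Y$ with norm at most $\|A^{-1}\|\|f\|_{X'}$. Since $(\rmL^{p'}_{(0)})'\cong\rmL^p_{(0)}$, identified modulo constants with the mean-zero representative, there is a unique $u\in\rmL^p_{(0)}$ such that $(u,\psi)_{\rmL^2}=\langle f,A^{-1}\psi\rangle$ for all $\psi\in Y$. Setting $\psi=A\phi$ gives exactly the very weak formulation together with the estimate \eqref{prelim:eq:very_weak_sol_est}. For uniqueness, if $(u,A\phi)=0$ for all $\phi\in X$, then surjectivity of $A$ gives $u\perp Y$, so $u$ is constant, and therefore $u=0$.
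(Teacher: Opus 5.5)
Your proposal is correct and follows the paper's route: the paper likewise takes $A=-\Div(a\Grad\cdot)\colon \rmW^{2,p'}_{(0)}(\bbT)\to\rmL^{p'}_{(0)}(\bbT)$ to be an isomorphism and obtains $u$ by inverting the adjoint $A'\colon\rmL^{p}_{(0)}(\bbT)\to(\rmW^{2,p'}_{(0)}(\bbT))'$. This is exactly your $u=(A^{-1})'f$, built via the Riesz representation on $\rmL^{p'}_{(0)}(\bbT)$. The only difference is that you also sketch why $A$ is an isomorphism (the $\rmW^{2,p'}$ a priori estimate with a subcritical lower-order term, injectivity via bootstrap and testing, and the method of continuity from $-\Lap$), whereas the paper simply recalls this fact without proof.
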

	
	\begin{proof}
		We start the proof by recalling that, under the assumptions made in the Proposition, the operator~$-\Div(a\Grad\cdot)\colon \rmW^{2,p'}_{(0)}(\bbT) \to \rmL^{p'}_{(0)}(\bbT)$ is an isomorphism, and so is its adjoint~$(-\Div(a\Grad\cdot))' \colon \rmL^p_{(0)}(\bbT) \to \bigprt{\rmW^{2,p'}_{(0)}(\bbT)}'$. Thus, for every~$f\in \bigprt{\rmW^{2,p'}_{(0)}(\bbT)}'$, there exists a unique solution $u\in \rmL^p_{(0)}(\bbT)$ to the equation $(-\Div(a\Grad u))'=f$. By the definition of the adjoint, it follows
		\begin{align*}
			\bigprt{u,-\Div(a\Grad\phi)}_{\rmL^2(\bbT)}
			= \bigang{(-\Div(a\Grad u))'}{\phi}_{\bigprt{\rmW^{2,p'}_{(0)}(\bbT)}', \rmW^{2,p'}_{(0)}(\bbT)}
			= \ang{f}{\phi}_{\bigprt{\rmW^{2,p'}_{(0)}(\bbT)}', \rmW^{2,p'}_{(0)}(\bbT)}
		\end{align*}
		for all $\phi\in \rmW^{2,p'}_{(0)}(\bbT)$. Finally, the continuity of $\bigprt{(-\Div(a\Grad\cdot))'}^{-1}$ implies estimate~\eqref{prelim:eq:very_weak_sol_est}.
	\end{proof}

	\section{Proof of the Main Result} \label{sec:mr}

    This section is dedicated to the proof of Theorem~\ref{int:thm:main_res}. The strategy is based on reformulating the system~\eqref{int:eq:system} as a fixed-point problem, for which the existence of a unique solution is equivalent to the unique solvability of the original system.
	
	\medskip
	\noindent \textbf{The spaces.} For the fixed-point argument, it is crucial to determine the function spaces for the solution. First, for any $0<T<\infty$, we set 
    \begin{equation}
        \label{mr:eq:spaces_Z_T}
    	\begin{aligned}
    		Z_T^j &\coloneqq \rmL^2 \prt{0,T;\rmH^2(\bbT)^d} \cap \rmH^1 \prt{ 0,T;\rmL^2(\bbT)^d}, \qquad j=1,2, \\
    		Z_T^3 &\coloneqq \rmL^2 \prt{0,T;\rmH^3(\bbT)} \cap \rmH^1 \prt{ 0,T;\rmH^1(\bbT)} \cap \rmH^2 \prt{ 0,T;\rmH^{-1}(\bbT)}, \\
    		Z_T^4 &\coloneqq \rmL^2 \prt{0,T;\rmH^1_{(0)}(\bbT)},
    	\end{aligned}
    \end{equation}
	and we denote the product space by $Z_T\coloneqq Z_T^1 \times Z_T^2 \times Z_T^3 \times Z_T^4$. The individual spaces are equipped with the norms $\norm{\cdot}_{Z_T^j}$, $\norm{\cdot}_{Z_T^3}$, and $\norm{\cdot}_{Z_T^4}$, respectively, given by
	\begin{align*}
		\norm{\vel}_{Z_T^j} &\coloneqq \norm{\vel}_{\rmL^2 \prt{0,T;\rmH^2\ofbbTd}} 
		+ \norm{\delt\vel}_{\rmL^2 \prt{ 0,T;\rmL^2\ofbbTd}}
		+ \norm{\vel\vert_{t=0}}_{\rmH^1\ofbbTd}, 
		\qquad j=1,2, \\
		\norm{\varphi_1}_{Z_T^3} &\coloneqq \norm{\varphi_1}_{\rmL^2 \prt{0,T;\rmH^3\ofbbT}\!} 
		+ \norm{\delt\varphi_1}_{\rmL^2 \prt{ 0,T;\rmH^1\ofbbT}\!} 
		+ \norm{\delt^2\varphi_1}_{\rmL^2 \prt{ 0,T;\rmH^{-1}\ofbbT}\!} 
		+ \norm{\varphi_1\vert_{t=0}}_{\rmH^2\ofbbT\!}
        + \norm{\delt\varphi_1\vert_{t=0}}_{\rmL^2\ofbbT\!}, \\
		\norm{p}_{Z_T^4} &\coloneqq \norm{p}_{\rmL^2 \prt{0,T;\rmH^1\ofbbT}}.
	\end{align*}
    The choice of these norms is intended to allow for estimates that are uniform in $0<T<\frac{T_0}{2}$, for any fixed~$0<T_0<\infty$. This is ensured by the following result from \cite[Lemma 2]{AbelsWeber2020}.
	
	\begin{lemma}
		\label{mr:lemma:extension_op}
		Fix $0<T_0<\infty$, and let $\mathcal{X}_0$, $\mathcal{X}_1$ be Banach spaces such that $\mathcal{X}_0 \hookrightarrow \mathcal{X}_1$ densely. For every $0<T<\frac{T_0}{2}$, we define
		\begin{align*}
			\mathcal{X}_T \coloneqq \rmL^2 \prt{0,T;\mathcal{X}_1} \cap \rmH^1 \prt{ 0,T;\mathcal{X}_0}, 
		\end{align*}
		endowed with the norm
		\begin{align*}
			\norm{u}_{\mathcal{X}_T} &\coloneqq \norm{u}_{\rmL^2 \prt{0,T;\mathcal{X}_1}} 
			+ \norm{\delt u}_{\rmL^2 \prt{ 0,T;\mathcal{X}_0}}
			+ \norm{u\vert_{t=0}}_{(\mathcal{X}_0,\mathcal{X}_1)_{\frac12,2}}.
		\end{align*}
		 Then there exists an extension operator $E\colon \mathcal{X}_T \to \mathcal{X}_{T_0}$ and some constant $C>0$ independent of $T$ such that $Eu\vert_{(0,T)}=u$ in $\mathcal{X}_T$ and 
		 \begin{align*}
		 	\norm{Eu}_{\mathcal{X}_{T_0}} \leq C\norm{u}_{\mathcal{X}_T}
		 \end{align*}
	 	for all $u\in\mathcal{X}_T$ and every $0<T<\frac{T_0}{2}$. Furthermore, there exists a constant $\tilde C(T_0)>0$ independent of $T$ such that 
	 	\begin{align*}
	 		\norm{u}_{\BUC([0,T]; (\mathcal{X}_0,\mathcal{X}_1)_{\frac12,2})} \leq \tilde C(T_0)\norm{u}_{\mathcal{X}_T}
	 	\end{align*}
	 	for all $u\in\mathcal{X}_T$ and every $0<T<\frac{T_0}{2}$.
	\end{lemma}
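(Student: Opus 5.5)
The plan is to build the extension by reflection in time about $t=T$, and then cut off with a smooth function equal to one near $[0,T_0/2]$. Fix a cutoff $\chi\in\rmC^\infty(\R)$ with $\chi=1$ on $(-\infty,\frac{T_0}{2}]$ and $\chi=0$ on $[\frac34T_0,\infty)$. For $u\in\mathcal{X}_T$ we first extend to $[0,2T]$ by even reflection,
\begin{align*}
  \tilde u(t) \coloneqq \begin{cases} u(t), & t\in[0,T],\\ u(2T-t), & t\in(T,2T]. \end{cases}
\end{align*}
This gives $\tilde u\in\rmL^2(0,2T;\mathcal{X}_1)\cap\rmH^1(0,2T;\mathcal{X}_0)$ with both norms bounded by $2$ times those of $u$. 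The constant is independent of $T$, since the $\rmH^1$-gluing across $t=T$ is harmless: the traces match by continuity.

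Reflection alone only reaches $2T$, which may be smaller than $T_0$. To avoid $T$-dependent constants, I will not use a cutoff at scale $T$, since this would produce a factor $1/T$. Instead I subtract the initial-value part. Let $u_0\coloneqq u|_{t=0}\in(\mathcal{X}_0,\mathcal{X}_1)_{\frac12,2}$. By the trace method for real interpolation, there is $w\in\rmL^2(0,\infty;\mathcal{X}_1)\cap\rmH^1(0,\infty;\mathcal{X}_0)$ with $w(0)=u_0$ and
\begin{align*}
  \norm{w}\le C\norm{u_0}_{(\mathcal{X}_0,\mathcal{X}_1)_{\frac12,2}},
\end{align*}
where $C$ is an absolute constant. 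Then $v\coloneqq u-w|_{(0,T)}$ has $v(0)=0$. Extend $v$ by zero to negative times, reflect about $t=0$ and $t=T$ in turn, or more simply use the following direct construction. For $v$ with $v(0)=0$, the even reflection about $T$, extended by zero for $t>2T$, gives
\begin{align*}
  \bar v(t)=v(2T-t) \text{ on } (T,2T), \qquad \bar v(2T)=v(0)=0.
\end{align*}
Hence $\bar v\in\rmH^1(0,\infty;\mathcal{X}_0)\cap\rmL^2(0,\infty;\mathcal{X}_1)$, with norm at most $2\norm{v}$.

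We then set $Eu\coloneqq(\bar v+w)|_{(0,T_0)}$, possibly multiplied by $\chi$; multiplying by $\chi$ is unnecessary on the bounded interval. By construction $Eu=u$ on $(0,T)$, and
\begin{align*}
  \norm{Eu}_{\mathcal{X}_{T_0}}\le 2\norm{v}+C\norm{w}\le C\norm{u}_{\mathcal{X}_T}.
\end{align*}
Here we use that $\norm{v}\le\norm{u}+\norm{w}$ on $(0,T)$ and that $(Eu)(0)=u_0$, so the trace term of the $\mathcal{X}_{T_0}$-norm is also controlled. Every constant involved is independent of $T$.

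For the second claim we apply the embedding \eqref{prelim:interpol:BUC} on the fixed interval $(0,T_0)$ to $Eu$. This gives
\begin{align*}
  \sup_{[0,T]}\norm{u}\le\sup_{[0,T_0]}\norm{Eu}\le C(T_0)\norm{Eu}_{\mathcal{X}_{T_0}}\le\tilde C(T_0)\norm{u}_{\mathcal{X}_T}.
\end{align*}

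The only delicate point, and the step I expect to be the main obstacle, is the $T$-uniformity. The trick is to handle the nonzero initial value through the fixed lifting $w$, which is independent of $T$, so that reflection of the zero-trace part $v$ produces an $\rmH^1$ function across $t=2T$ without any cutoff at scale $T$. The existence of the lifting $w$ with $w(0)=u_0$ is exactly the trace characterization of $(\mathcal{X}_0,\mathcal{X}_1)_{\frac12,2}$; density of $\mathcal{X}_1$ in $\mathcal{X}_0$ is what makes the couple admissible for this characterization.
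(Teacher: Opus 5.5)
The paper gives no proof of its own and simply quotes \cite[Lemma~2]{AbelsWeber2020}. Your argument is correct and is the standard proof of that lemma. You subtract a $T$-independent lifting $w$ of $u\vert_{t=0}$, which is exactly where the trace term in $\norm{\cdot}_{\mathcal{X}_T}$ is needed. You then reflect the zero-trace remainder about $t=T$, extend it by zero, and apply \eqref{prelim:interpol:BUC} to $Eu$ on the fixed interval $(0,T_0)$. This reads the hypothesis as $\mathcal{X}_1\hookrightarrow\mathcal{X}_0$, as you implicitly do.

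The only point to tighten is that, if $E$ is meant to be linear, the lifting $u_0\mapsto w$ should be fixed as a linear map. For the Sobolev couples used in the paper, $w(t)=e^{-t(1-\Delta)}u_0$ works, although none of the paper's applications relies on linearity.
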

	
	Finally, given initial data $\vel_{j,0}\in\rmH^1(\bbT)^d$, $j=1,2$, and $\varphi_{1,0}\in\rmH^2(\bbT)$, we define the affine subspaces
    \begin{equation}
        \label{mr:eq:spaces_X_T}
	    \begin{aligned}
		  X_T^j &\coloneqq \{\vel_j\in Z_T^j \colon \vel_j\vert_{t=0}=\vel_{j,0}\}, &\qquad j&=1,2, \\
		  X_T^3 &\coloneqq \{\varphi_1\in Z_T^3 \colon \varphi_1\vert_{t=0}=\varphi_{1,0} \}, 
		  &\qquad X_T^4 &\coloneqq Z_T^4,
	   \end{aligned}
    \end{equation}
	inheriting the norms on $Z_T^j$, and we set $X_T\coloneqq X_T^1 \times X_T^2 \times X_T^3 \times X_T^4$. 
    
    Moreover, for any~$0<T<\infty$, we introduce the spaces
	\begin{equation}
        \label{mr:eq:spaces_Y_T}
    	\begin{aligned}
		    Y_T^j &\coloneqq \rmL^2 \prt{0,T;\rmL^2(\bbT)^d}, \qquad j=1,2, \\
		    Y_T^3 &\coloneqq \rmL^2 \prt{0,T;\rmH^1(\bbT)} \cap \rmH^1 \prt{ 0,T;\rmH^{-1}(\bbT)}, \\
            Y_T^4 &\coloneqq \{ g_2\in Y_T^3 \colon g_2\vert_{t=0} = 0 \},
	   \end{aligned}
    \end{equation}
	endowed with the usual norms, as well as $Y_T\coloneqq Y_T^1 \times Y_T^2 \times Y_T^3 \times Y_T^4$. The fact that the initial data $\vel_{j,0}$, $j=1,2$, and $\varphi_{1,0}$ satisfy the condition~$\Div(\varphi_{1,0}\vel_{1,0} + (1-\varphi_{1,0}) \vel_{2,0}) = 0$ requires imposing the constraint $g_2\vert_{t=0} = 0$ in $Y_T^4$.

	\medskip
	\noindent \textbf{The fixed-point problem.}
    Now we determine the linear and the nonlinear operator required for the formulation of the fixed-point problem that is equivalent to the system~\eqref{int:eq:system}. To this end, let initial data $\vel_{j,0}\in\rmH^1(\bbT)^d$, $j=1,2$, and $\varphi_{1,0}\in\rmH^2(\bbT)$ with~$\varphi_{1,0}\in(0,1)$ in~$\bbT$ be given that satisfy~$\Div(\varphi_{1,0}\vel_{1,0} + (1-\varphi_{1,0}) \vel_{2,0}) = 0$. On the one hand, derived from the highest order terms of the linearization of \eqref{int:eq:system}, we define, for~$\z\coloneqq (\vel_1,\vel_2,\varphi_1,p)$ and for any $0<T<\infty$, the linear operator~$\mathcal{L}_T\colon X_T\to Y_T$ by
	\begin{align}
        \label{mr:eq:L_T}
		\mathcal{L}_T(\z) \coloneqq
		\begin{pmatrix}
			\rho_1(\varphi_{1,0})\delt\vel_1 - \Div\Str_1(\varphi_{1,0},\D\vel_1) + \varphi_{1,0}\Grad p - \varphi_{1,0} \Grad\Lap\varphi_1 \\[1ex]
			\rho_2(\varphi_{1,0})\delt\vel_2 - \Div\Str_2(\varphi_{1,0},\D\vel_2) + (1-\varphi_{1,0})\Grad p + (1-\varphi_{1,0})\Grad\Lap\varphi_1 \\[1ex]
			\delt\varphi_1 + \Div(\varphi_{1,0}\vel_1) \\[1ex]
			\Div( \varphi_{1,0}\vel_1 + (1-\varphi_{1,0})\vel_2)
		\end{pmatrix},
	\end{align}
    where $\Str_j$ is defined in \eqref{int:eq:S_j}.
	On the other hand, for any~$0<T<\infty$, the corresponding nonlinear operator $\mathcal{F}_T\colon X_T\to Y_T$ is given by
	\begin{align}
        \label{mr:eq:F_T}
		\mathcal{F}_T(\z) \coloneqq
		\begin{pmatrix}
			\mathcal{F}_T^1(\z) \\[1ex]
			\mathcal{F}_T^2(\z) \\[1ex]
			- \Div\bigprt{(\varphi_1-\varphi_{1,0})\vel_1} \\[1ex]
			- \Div\bigprt{(\varphi_1-\varphi_{1,0})\vel_1 - (\varphi_1-\varphi_{1,0})\vel_2}
		\end{pmatrix},
	\end{align}
	where
	\begin{align*}
		\mathcal{F}_T^1(\z) 
		&\coloneqq -\bigprt{\rho_1(\varphi_1)-\rho_1(\varphi_{1,0})}\delt\vel_1 
		- \bigprt{\rho_1(\varphi_1)\vel_1\cdot\nabla} \vel_1 \\
		&\quad+ \Div\Bigprt{ 2\bigprt{\nu_1(\varphi_1)-\nu_1(\varphi_{1,0})} \D\vel_1 + \bigprt{\lambda_1(\varphi_1)-\lambda_1(\varphi_{1,0})} \Div\vel_1 \Id } 
		- \bigprt{\varphi_1-\varphi_{1,0}}\Grad p \\
		&\quad- \varphi_1 \Grad F'(\varphi_1)
		+ \bigprt{\varphi_1-\varphi_{1,0}} \Grad\Lap\varphi_1
		+ \Rcal(\varphi_1)(\vel_2-\vel_1), \\
		\mathcal{F}_T^2(\z) 
		&\coloneqq -\bigprt{\rho_2(\varphi_1)-\rho_2(\varphi_{1,0})}\delt\vel_2 
		- \bigprt{\rho_2(\varphi_1)\vel_2\cdot\nabla} \vel_2 \\
		&\quad+ \Div\Bigprt{ 2\bigprt{\nu_2(\varphi_1)-\nu_2(\varphi_{1,0})} \D\vel_2 + \bigprt{\lambda_2(\varphi_1)-\lambda_2(\varphi_{1,0})} \Div\vel_2 \Id } 
		+ \bigprt{\varphi_1-\varphi_{1,0}}\Grad p \\
		&\quad+ (1-\varphi_1) \Grad F'(\varphi_1)
		+ \bigprt{\varphi_1-\varphi_{1,0}} \Grad\Lap\varphi_1
		+ \Rcal(\varphi_1)(\vel_1-\vel_2).
	\end{align*}
    The fixed-point problem will then be formulated for the operator $\mathcal{L}_T^{-1} \circ \mathcal{F}_T$ on the solution space~$X_T$.
	
	The following propositions provide the key components for the proof of our main result. First, we state the invertibility of the linear operator $\mathcal{L}_T$, together with the fact that its inverse is bounded independently of $T$, and second, we need local Lipschitz continuity of the nonlinear operator $\mathcal{F}_T$.

	\begin{proposition}[Invertibility of $\mathcal{L}_T$ and uniform bound for $\mathcal{L}_T^{-1}$]
		\label{mr:prop:L_invertible}
        Let $\vel_{j,0}\in\rmH^1(\bbT)^d$, where $j=1,2$, and $\varphi_{1,0}\in\rmH^2(\bbT)$ with~$\varphi_{1,0}\in(0,1)$ in $\bbT$ be given that satisfy the condition~$\Div(\varphi_{1,0}\vel_{1,0} + (1-\varphi_{1,0}) \vel_{2,0}) = 0$.
		Then, for every $0<T<\infty$, the linear operator~$\mathcal{L}_T \colon X_T\to Y_T$ is invertible. Moreover, for any fixed $0<T_0<\infty$, there exists a constant~$C_{\mathcal{L}_{T_0}^{-1}}>0$ independent of $T$ such that
		\begin{align*}
			\bignorm{ \mathcal{L}_T^{-1}(\F) }_{Z_T}
			\leq C_{\mathcal{L}_{T_0}^{-1}} \Bigprt{ \norm{\F}_{Y_T} + \sum_{j=1,2}\norm{\vel_{j,0}}_{\rmH^1\ofbbTd} + \norm{\varphi_{1,0}}_{\rmH^2\ofbbT} }
		\end{align*}
        and 
		\begin{align*}
			\bignorm{ \mathcal{L}_T^{-1}(\F) - \mathcal{L}_T^{-1}(\G) }_{Z_T}
			\leq C_{\mathcal{L}_{T_0}^{-1}} \norm{ \F-\G }_{Y_T}
		\end{align*}
		for all $0<T\leq T_0$ and all $\F,\G\in Y_T$.
	\end{proposition}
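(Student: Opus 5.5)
The plan is to solve the linear system \eqref{int:eq:lin_system}, with $\tphi1=\varphi_{1,0}$, in three stages: constant coefficients, weak solutions for variable coefficients, and then higher regularity. The uniform-in-$T$ bound is obtained at the end from Lemma~\ref{mr:lemma:extension_op}.

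\emph{Constant coefficients.} Freeze $\varphi_{1,0}\equiv a\in(0,1)$. I would first eliminate the pressure by forming the combination $(1-a)\cdot(\text{first equation})-a\cdot(\text{second equation})$. The equation $\Div(a\vel_1+(1-a)\vel_2)=g_2$ then determines $p$ through an elliptic problem. Next I regularize, for instance by adding $\varepsilon\Lap^2$-type terms or $-\varepsilon\Lap\delt\varphi_1$, so that the system becomes an abstract parabolic problem in a Gelfand triple $\bbV\hookrightarrow\bbH$. Proposition~\ref{prelim:prop:APEE} then gives a unique weak solution, and its second part gives time regularity, which requires the compatibility condition $g_2|_{t=0}=0$.

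The uniform estimates come from testing. Taking the momentum equations against $\vel_j$, and the $\varphi_1$-equation against $-\Lap\varphi_1$ suitably weighted, makes the capillarity terms $\mp\varphi_{1,0}\Grad\Lap\varphi_1$ cancel against $\Div(\varphi_{1,0}\vel_1)$. This yields an energy bound in $\rmL^\infty\rmL^2\cap\rmL^2\rmH^1$ for $\vel_j$ and in $\rmL^\infty\rmH^1$ for $\Grad\varphi_1$.

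For the top-order term $\Grad\Lap\varphi_1$, I differentiate the $\varphi_1$-equation in time and substitute $\delt\vel_1$ from the momentum equation. This produces a damped plate equation of the form
\[
\delt^2\varphi_1 + c\Lap^2\varphi_1 - c'\Lap\delt\varphi_1 = \text{lower order} + \text{pressure},
\]
where the pressure contribution is controlled through the divergence constraint. Testing this equation with $\delt\varphi_1$ and with $\varphi_1$ gives $\varphi_1\in\rmL^2\rmH^3\cap\rmH^1\rmH^1\cap\rmH^2\rmH^{-1}$. Standard Stokes-type estimates, via Fourier series on $\bbT$, then give $\vel_j\in\rmL^2\rmH^2\cap\rmH^1\rmL^2$ and $p\in\rmL^2\rmH^1$. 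All of these bounds are uniform in $\varepsilon$, so passing to the limit $\varepsilon\to0$ gives a strong solution.

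\emph{Variable coefficients.} Here $\varphi_{1,0}\in\rmH^2\hookrightarrow\rmC^{1/2}$ and $\varphi_{1,0}\geq\delta>0$. I repeat the testing at lower regularity to obtain a unique weak solution. The pressure is recovered at weak level as a very weak solution of
\[
-\Div\!\big((\varphi_{1,0}^2/\rho_1+(1-\varphi_{1,0})^2/\rho_2)\Grad p\big)=\dots,
\]
using Proposition~\ref{prelim:prop:very_weak_sol}. Higher regularity then follows by localization. I take a partition of unity with small supports, freeze the coefficients at a point, and treat the differences as perturbations; these are small by the Hölder continuity of $\varphi_{1,0}$. Commutators are lower order and are absorbed by interpolation. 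Uniqueness follows from the weak energy estimate, so $\mathcal L_T$ is bijective.

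\emph{Uniformity in $T$.} The second estimate is linearity applied to the first with zero data. For the first, I extend $\F$ to $(0,T_0)$, using an extension that preserves $g_2|_{t=0}=0$. I then solve on $(0,T_0)$ and restrict to $(0,T)$. By uniqueness the restriction equals $\mathcal L_T^{-1}\F$. The $Z_T$-norm includes the initial trace terms, which are fixed by the data, so the constant $C_{\mathcal L_{T_0}^{-1}}$ does not depend on $T$.

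The main obstacle is the $\varepsilon$-uniform estimate for $\Grad\Lap\varphi_1$. Pressure and $\Grad\Lap\varphi_1$ appear in both momentum equations with coupled weights, so the derivation of the plate equation must cancel the pressure exactly. In the variable-coefficient case this is no longer exact: derivatives of $\varphi_{1,0}$ generate commutator terms, and these must be absorbed using only $\varphi_{1,0}\in\rmH^2$.
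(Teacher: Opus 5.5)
Your overall architecture is the paper's: regularize, apply Proposition~\ref{prelim:prop:APEE}, prove $\varepsilon$-uniform energy and damped-plate estimates for frozen coefficients, build a weak solution for variable coefficients with a very weak pressure, upgrade it by localization and perturbation, and get $T$-uniformity by extending the data. The step you compress into ``repeat the testing at lower regularity'' is where the paper's main work lies, and you misplace the difficulty there.

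The pressure still cancels \emph{exactly} for variable coefficients. The pair $(\tphi1^{-1}\Grad\Phi,\,-\tphi2^{-1}\Grad\Phi)$ satisfies $\Div(\tphi1\,\tphi1^{-1}\Grad\Phi-\tphi2\,\tphi2^{-1}\Grad\Phi)=0$ for every $\tphi1$. It therefore lies in $\bbH_1$, and $p$ never appears. Your alternative would substitute $\delt\vel_1$ into the time-differentiated transport equation and eliminate $p$ through the constraint. That would force you to commute $(\Div(\tant\Grad\,\cdot))^{-1}$ with $\rmH^2$-coefficients, which is unnecessary.

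The term that actually threatens closure arises when $\Div\vel_1=\tphi1^{-1}(g_1-\delt\varphi_1+\varepsilon\Lap\varphi_1-\Grad\tphi1\cdot\vel_1)$, and its analogue for $\vel_2$, is inserted into the time-derivative part. This produces terms $\int_\bbT\ta{j}\Grad\tphi{j}\cdot\delt\vel_j\,\Phi\dx$. At the weak level $\delt\vel_j$ is only in $\rmL^2(0,T;\rmH^{-1})$. Its bound passes through the (modified) pressure, whose very weak estimate depends on $\norm{\varphi_1}_{\rmL^2(0,T;\rmH^2)}$, which is precisely the quantity being estimated. The paper closes this loop with the test function $\Phi=\tant^{-1}(-\Lap)^{-1}(\delt\varphi_1-\Lap\varphi_1)$ and a small-parameter absorption (Lemmas~\ref{Lin:lemma:pressure_q_gen}--\ref{Lin:lemma:est_Lap_phi1_gen}). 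Separately, at the strong level, testing the plate equation with $\varphi_1$ does not give $\Grad\Lap\varphi_1\in\rmL^2(0,T;\rmL^2)$. You need $-\Lap\varphi_1$, or $\delt\varphi_1-\Lap\varphi_1$ as in the paper.

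For uniformity in $T$, the restriction $Z_{T_0}\to Z_T$ is trivially norm-nonincreasing, so the trace terms in $\norm{\cdot}_{Z_T}$ are not what makes the constant $T$-independent. What you need is an extension $Y_T\to Y_{T_0}$ of $g_1,g_2\in\rmL^2(0,T;\rmH^1)\cap\rmH^1(0,T;\rmH^{-1})$ with norm bounded uniformly in $T$. Extension by zero is not available for these components. The paper takes the extension from Lemma~\ref{mr:lemma:extension_op}, which is uniform only for a norm that also contains $\norm{g_1|_{t=0}}_{\rmL^2}$; for $g_2$ this trace vanishes. This is unavoidable. The quantity $\norm{\delt\varphi_1|_{t=0}}_{\rmL^2}=\norm{g_1|_{t=0}-\Div(\varphi_{1,0}\vel_{1,0})}_{\rmL^2}$ is part of $\norm{\cdot}_{Z_T}$, whereas $\norm{g_1|_{t=0}}_{\rmL^2}$ is not controlled by $\norm{g_1}_{Y_T^3}$ uniformly as $T\to0$ (take $g_1$ constant in time). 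In the fixed-point argument this is harmless, since $\mathcal F_T^3(\z)|_{t=0}=0$.

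Finally, in the localization step, absorbing commutators by interpolation is an a priori estimate that presupposes $\z\in Z_T$. The paper avoids this circularity in three steps:
\begin{enumerate}
\item It uses the weak-solution bounds to place $[\mathcal A_T(\hat\varphi_{1,0}^k),\eta_k]\z$ in the data space.
\item It invokes invertibility of $\mathcal A_T(\hat\varphi_{1,0}^k)$, obtained by a Neumann series around the constant case plus time-stepping. Time-stepping is needed because the first-order part of $\mathcal A_T(\hat\varphi_{1,0}^k)-\mathcal A_T(\bar\varphi_{1,0})$ is small only for small $T$, not by $\rmL^\infty$-closeness.
\item It identifies the resulting strong solution with $\eta_k\z$ by uniqueness of weak solutions.
\end{enumerate}
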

	
	For a proof, we refer to Section~\ref{sec:lin}, which is devoted entirely to verifying this proposition.
	
	\begin{proposition}[Local Lipschitz continuity of $\mathcal{F}_T$]
		\label{mr:prop:F_Lipschitz}
        Let $\vel_{j,0}\in\rmH^1(\bbT)^d$, where $j=1,2$, and~$\varphi_{1,0}\in\rmH^2(\bbT)$ with~$\varphi_{1,0}\in(0,1)$ in $\bbT$ be given such that~$\Div(\varphi_{1,0}\vel_{1,0} + (1-\varphi_{1,0}) \vel_{2,0}) = 0$.
		Then, for every $R>0$ and every $0<T<\infty$, there exists some constant $C(R,T)>0$ such that~all~$(\vel_1,\vel_2,\varphi_1,p),(\w_1,\w_2,\psi_1,q) \in X_T$ with~$\norm{ (\vel_1,\vel_2,\varphi_1,p)}_{X_T}, \norm{(\w_1,\w_2,\psi_1,q) }_{X_T} \leq R$ satisfy
		\begin{align}
			&\norm{ \mathcal{F}_T(\vel_1,\vel_2,\varphi_1,p) - \mathcal{F}_T(\w_1,\w_2,\psi_1,q) }_{Y_T} \nonumber\\
			&\leq C(R,T) \norm{ (\vel_1,\vel_2,\varphi_1,p) - (\w_1,\w_2,\psi_1,q) }_{Z_T}.
		\end{align}
		Moreover, for every $R>0$, it holds $C(R,T)\to0$ as $T\to0$.
	\end{proposition}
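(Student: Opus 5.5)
The plan is to estimate the four components of $\mathcal{F}_T(\z)-\mathcal{F}_T(\tilde\z)$ one by one, where $\tilde\z=(\w_1,\w_2,\psi_1,q)$. Each difference of products will be split telescopically as $ab-\tilde a\tilde b=(a-\tilde a)b+\tilde a(b-\tilde b)$. The factors will then be estimated using the embeddings available uniformly in $T$.

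\textbf{Step 1: uniform embeddings.} By Lemma~\ref{mr:lemma:extension_op} together with \eqref{prelim:interpol:BUC}, elements of $X_T$ with norm at most $R$ satisfy bounds independent of $0<T<T_0/2$:
\begin{itemize}
\item $\vel_j\in\BUC([0,T];\rmH^1)$;
\item $\varphi_1\in\BUC([0,T];\rmH^2)$, using $(\rmH^1,\rmH^3)_{1/2,2}=\rmH^2$;
\item $\delt\varphi_1\in\BUC([0,T];\rmL^2)$, using $(\rmH^{-1},\rmH^1)_{1/2,2}=\rmL^2$.
\end{itemize}
Since $d\le3$, $\rmH^2\hookrightarrow\rmC^{1/2}$, so $\varphi_1$ is bounded in $\rmL^\infty$. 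Lemma~\ref{prelim:lemma:comp_Sobolev} on $\bbT$ then gives bounds and Lipschitz estimates for $\rho_j,\nu_j,\lambda_j,F'$ composed with $\varphi_1$ in $\rmH^2$. The function $\Rcal(\varphi_1)$ is Lipschitz in $\rmL^\infty$.

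\textbf{Step 2: gaining a positive power of $T$.} The terms that contain no factor of the form $\varphi_1-\varphi_{1,0}$ are:
\begin{itemize}
\item the convection terms $(\rho_j\vel_j\cdot\Grad)\vel_j$;
\item $\varphi_1\Grad F'(\varphi_1)$;
\item the $\Rcal$ terms.
\end{itemize}
These are of lower order, so Hölder in time gives a factor $T^{1/2}$ or $T^{1/4}$. For example,
\[
\|(\vel\cdot\Grad)\vel\|_{\rmL^2(\rmL^2)}\le \|\vel\|_{\rmL^\infty(\rmL^6)}\,\|\Grad\vel\|_{\rmL^2(\rmL^3)}.
\]
Interpolating $\rmH^{3/2}$ between $\rmH^1$ in $\rmL^\infty$ and $\rmH^2$ in $\rmL^2$ yields $\|\Grad\vel\|_{\rmL^2(\rmL^3)}\le CT^{1/4}R$.

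The terms with a factor $\varphi_1-\varphi_{1,0}$ are the highest-order ones:
\begin{itemize}
\item $(\rho_j(\varphi_1)-\rho_j(\varphi_{1,0}))\delt\vel_j$;
\item the viscosity differences;
\item $(\varphi_1-\varphi_{1,0})\Grad p$;
\item $(\varphi_1-\varphi_{1,0})\Grad\Lap\varphi_1$;
\item the third and fourth components.
\end{itemize}
For these, smallness comes from \eqref{prelim:interpol:Hölder}. We have $\varphi_1\in\rmC^{1/2}([0,T];\rmH^1)\cap\rmL^\infty(0,T;\rmH^2)$, which gives
\[
\|\varphi_1-\varphi_{1,0}\|_{\rmL^\infty(0,T;\rmC^0\cap\rmW^{1,4})}\le C R\,T^{\alpha}
\]
for some $\alpha>0$, since $\rmC^0\cap\rmW^{1,4}$ interpolates strictly between $\rmH^1$ and $\rmH^2$ when $d\le3$. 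The same holds for $g(\varphi_1)-g(\varphi_{1,0})$ for smooth $g$. Lipschitz differences such as $(\varphi_1-\psi_1)\Grad\Lap\varphi_1$ carry no factor $\varphi_1-\varphi_{1,0}$. They are handled as $\|\varphi_1-\psi_1\|_{\rmL^\infty(\rmL^\infty)}\|\varphi_1\|_{\rmL^2(\rmH^3)}$. Because $\varphi_1$ and $\psi_1$ share the initial value $\varphi_{1,0}$, the $\rmL^\infty_t$ norm of their difference is again $\le CT^\alpha\|\varphi_1-\psi_1\|_{Z_T^3}$ by the Hölder interpolation. The difference $\vel_j-\w_j$ vanishes at $t=0$, so the same trick applies to it.

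\textbf{Step 3: the third and fourth components in $Y_T^3$.} This is the main obstacle, because it requires control in $\rmL^2(\rmH^1)\cap\rmH^1(\rmH^{-1})$. The $\rmL^2(\rmH^1)$ part of $\Div((\varphi_1-\varphi_{1,0})\vel_1)$ needs $\nabla^2\varphi_1\,\vel_1$ and $(\varphi_1-\varphi_{1,0})\nabla^2\vel_1$ in $\rmL^2_t\rmL^2_x$. I would bound these using $\rmL^4_t\rmH^{3/2}$-type interpolation for $\vel_1$ and the $T^\alpha$ smallness above.

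The $\rmH^1(\rmH^{-1})$ part is harder. Here I would estimate
\[
\delt\bigl((\varphi_1-\varphi_{1,0})\vel_1\bigr)=\delt\varphi_1\,\vel_1+(\varphi_1-\varphi_{1,0})\delt\vel_1
\]
in $\rmL^2(\rmL^2)$. The first term is controlled by $\|\delt\varphi_1\|_{\rmL^2(\rmH^1)}\|\vel_1\|_{\rmL^\infty(\rmH^1)}$ via $\rmL^4\cdot\rmL^4$, with smallness from $\rmL^2_t$ versus $\rmL^\infty_t$ after applying Hölder in time on a suitable splitting. If no power of $T$ appears directly, I would instead use $\delt\varphi_1\in\rmL^4(0,T;\rmH^{1/2})$ by interpolation, combined with $\vel_1\in\rmL^4(\rmL^{3})$-type bounds, to extract $T^{1/4}$. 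The second term is small by the $\rmL^\infty$ smallness of $\varphi_1-\varphi_{1,0}$.

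Finally, $g_2|_{t=0}=0$ holds automatically for the fourth component, because $\varphi_1|_{t=0}=\varphi_{1,0}$. Collecting all estimates gives $C(R,T)\le C(R)T^{\beta}$ for some $\beta>0$, which tends to $0$ as $T\to0$.
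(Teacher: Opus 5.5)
Your proposal is correct and follows the paper's overall scheme. That scheme consists of telescoping the products, the $T$-uniform $\BUC$ embeddings from Lemma~\ref{mr:lemma:extension_op}, Hölder-in-time interpolation to gain a power of $T$ from factors vanishing at $t=0$ ($\varphi_1-\varphi_{1,0}$, $\varphi_1-\psi_1$, $\vel_j-\w_j$), plain Hölder in time for the lower-order terms, and the product rule in time for the $\rmH^1(0,T;\rmH^{-1})$ part of $Y_T^3$.

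The one place you genuinely diverge is the term $\delt(\varphi_1-\psi_1)\,\w_1$ in that last norm, and there your fallback is the argument that actually works. The paper bounds this term by $\norm{\delt(\varphi_1-\psi_1)}_{\rmL^2(0,T;\rmL^4)}\norm{\w_1}_{\rmL^\infty(0,T;\rmL^4)}$ and then takes $T^{1/4}$ from the Hölder continuity of $\w_1$. That step is not legitimate, since $\w_1\vert_{t=0}=\vel_{1,0}\neq0$.

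You instead take the smallness from $\zeta\coloneqq\varphi_1-\psi_1$ itself. This uses that $\delt\zeta$ is bounded in $\BUC([0,T];\rmL^2)$ uniformly in $T$, which is exactly what the term $\norm{\delt\varphi_1\vert_{t=0}}_{\rmL^2}$ in the $Z_T^3$-norm provides. Written out, using $\norm{\delt\zeta}_{\rmH^{1/2}}\le\norm{\delt\zeta}_{\rmL^2}^{1/2}\norm{\delt\zeta}_{\rmH^1}^{1/2}$,
\[
\norm{\delt\zeta\,\w_1}_{\rmL^2(0,T;\rmL^2)}\le\norm{\delt\zeta}_{\rmL^4(0,T;\rmL^3)}\norm{\w_1}_{\rmL^4(0,T;\rmL^6)}\le C\norm{\zeta}_{Z_T^3}\,T^{1/4}\norm{\w_1}_{\rmL^\infty(0,T;\rmH^1)}.
\]
Two adjustments are needed here:
\begin{itemize}
\item The correct pairing is $\rmL^3\cdot\rmL^6$, not $\rmL^3\cdot\rmL^3$.
\item Drop your first attempt (``$\rmL^2_t$ versus $\rmL^\infty_t$''). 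It yields no power of $T$ when the velocity factor does not vanish at $t=0$.
\end{itemize}
The same device handles $\Grad^2\zeta\,\w_1$ in the $\rmL^2(0,T;\rmH^1)$ part: pair $\Grad^2\zeta\in\rmL^4(0,T;\rmL^3)$, bounded uniformly by interpolating $\rmL^\infty(\rmL^2)$ and $\rmL^2(\rmL^6)$, against $\w_1\in\rmL^4(0,T;\rmL^6)$, which gives $T^{1/4}$.

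Two smaller precision points:
\begin{itemize}
\item Lemma~\ref{prelim:lemma:comp_Sobolev} gives $\rmH^2$-Lipschitz bounds only for $\rmC^3$ outer functions, which $\nu_j,\lambda_j\in\rmCb{2}(\R)$ and $F'\in\rmC^2(\R)$ are not. You never actually need them, though: the Lipschitz bounds you use are in $\rmC^0\cap\rmW^{1,4}$, for which $\rmC^2$ suffices.
\item In $\Grad^2\varphi_{1,0}\,(\vel_1-\w_1)$, the factor $\Grad^2\varphi_{1,0}$ is merely in $\rmL^2$. An $\rmH^{3/2}$-level interpolation of the velocity does not reach $\rmL^\infty$ when $d=3$. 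Use Agmon's inequality instead, as the paper does:
\[
\norm{\vel_1-\w_1}_{\rmL^2(0,T;\rmL^\infty)}\le CT^{1/4}\norm{\vel_1-\w_1}_{\rmL^\infty(0,T;\rmH^1)}^{1/2}\norm{\vel_1-\w_1}_{\rmL^2(0,T;\rmH^2)}^{1/2}.
\]
\end{itemize}
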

	
	The proof of this proposition is carried out in Section~\ref{sec:Lip}.

    \medskip
	With these propositions at hand, we are now in a position to prove our main result, following the line of reasoning in \cite{AbelsWeber2020} and \cite{AbelsHaselboeck2026}.
	
	\begin{proof}[Proof of Theorem~\ref{int:thm:main_res}]
		The operators $\mathcal{L}_T$, $\mathcal{F}_T$ and the spaces $X_T$, $Y_T$ have been tailored such that an equivalence is established between solving the system~\eqref{int:eq:system} and the equation
		\begin{align}
			\label{mr:eq:fixed_point_eq}
			\z
			= \mathcal{L}_T^{-1} \bigprt{ \mathcal{F} (\z) }
			\eqqcolon \mathcal{I}_T (\z)
			\qquad\text{in } X_T,
		\end{align}
		where $\z\coloneqq (\vel_1,\vel_2,\varphi_1,p)$, which is well-defined by the invertibility of $\mathcal{L}_T$ from Proposition~\ref{mr:prop:L_invertible}. Thus, we aim at finding a solution to this fixed-point problem, which will be realized by means of a contraction principle.
		
		First, for some arbitrarily fixed $\bar\z \coloneqq (\bar\vel_1,\bar\vel_2,\bar\varphi_1,\bar p) \in X_{\bar T}$ for some $\bar T>0$, we define
		\begin{align*}
			M
			\coloneqq C_{\mathcal{L}_{\bar T}^{-1}} \Bigprt{ \norm{\mathcal{F}_{\bar T} (\bar \z) }_{Y_{\bar T} }+ \sum_{j=1,2}\norm{\vel_{j,0}}_{\rmH^1\ofbbTd} + \norm{\varphi_{1,0}}_{\rmH^2\ofbbT} }
			< \infty.
		\end{align*}
		Then we choose $R>0$ satisfying $\bar\z \in \overline{B_R^{Z_{\bar T}}(0)} \cap X_{\bar T}$ as well as $R>2M$. For this $R$ and for each $0<T<\infty$, Proposition~\ref{mr:prop:F_Lipschitz} gives rise to a constant $C(R,T)>0$ such that
		\begin{align}
			\label{mr:eq:F_Lipsch}
			\bignorm{ \mathcal{F}_T(\z_1) - \mathcal{F}_T(\z_2) }_{Y_T} 
			\leq C(R,T) \norm{ \z_1-\z_2 }_{Z_T}
		\end{align}
		for all $\z_i\in X_T$ with $\norm{ \z_i }_{X_T} \leq R$, $i=1,2$, along with $C(R,T)\to0$ as $T\to0$. Since we know from Proposition~\ref{mr:prop:L_invertible} that the constant $C_{\mathcal{L}_{\bar T}^{-1}}$ is independent of $T\in(0,\bar T]$, it is possible to find some sufficiently small $0<T<\infty$ satisfying
		\begin{align}
			\label{mr:eq:constants_small}
			C_{\mathcal{L}_{\bar T}^{-1}} C(R,T) < \frac14.
		\end{align}
		
		In order to ultimately apply the Banach fixed-point theorem, we will, in a next step, identify a suitable space for this purpose and check the respective assumptions of the theorem. Taking careful account of the fact that \eqref{mr:eq:F_Lipsch} is a property valid only on $\overline{B_R^{Z_T}(0)} \cap X_T$, we choose $B_R^{X_T}(0) \coloneqq B_R^{Z_T}(0) \cap X_T$ as the space in question. Then the first claim to be verified is that $\mathcal{I}_T$ maps $\overline{B_R^{X_T}(0)}$ into the same space. From the characteristics of $R$, Proposition~\ref{mr:prop:L_invertible}, and the monotonicity of the relevant norms with respect to time, we deduce for the restriction~$\bar\z\vert_{[0,T]} \in \overline{B_R^{X_T}(0)}$, that
		\begin{align*}
			\bignorm{ \mathcal{I}_T(\bar\z) }_{Z_T}
			= \bignorm{ \mathcal{L}_T^{-1} \bigprt{ \mathcal{F} (\bar\z)} }_{Z_T}
			\leq M< \frac{R}{2}.
		\end{align*}
		Considering the Lipschitz property~\eqref{mr:eq:F_Lipsch}, the smallness~\eqref{mr:eq:constants_small}, and the above estimate, we directly obtain
		\begin{align*}
			\bignorm{ \mathcal{I}_T(\z) }_{Z_T}
			&\leq \bignorm{ \mathcal{L}_T^{-1} \bigprt{ \mathcal{F} (\z)} - \mathcal{L}_T^{-1} \bigprt{ \mathcal{F} (\bar\z)} }_{Z_T} 
			+ \bignorm{ \mathcal{I}_T(\bar\z) }_{Z_T} \\
			&\leq C_{\mathcal{L}_{\bar T}^{-1}} \bignorm{ \mathcal{F}_T(\z) - \mathcal{F}_T(\bar\z) }_{Y_T} + \frac{R}{2} 
			\leq C_{\mathcal{L}_{\bar T}^{-1}} C(R,T) \norm{ \z-\bar\z }_{Z_T} + \frac{R}{2}
			< R
		\end{align*}
		for every $\z\in \overline{B_R^{X_T}(0)}$. Observing that, due to the definition of $\mathcal{L}_T^{-1}$, the initial condition is satisfied as well, the self-mapping property of $\mathcal{I}_T$ is established.
		
		The second assumption to be checked is that $\mathcal{I}_T$ is a contraction on $\overline{B_R^{X_T}(0)}$, which follows immediately in light of
		\begin{align*}
			\bignorm{ \mathcal{I}_T(\z_1) - \mathcal{I}_T(\z_2) }_{Z_T} 
			&\leq C_{\mathcal{L}_{\bar T}^{-1}} C(R,T) \norm{ \z_1-\z_2 }_{Z_T} 
			\leq \frac14 \norm{ \z_1-\z_2 }_{Z_T}
		\end{align*}
		for every $\z_1,\z_2\in \overline{B_R^{X_T}(0)}$.
	
		With these verified properties in place, we apply the Banach fixed-point theorem, which guarantees the existence of a unique solution $\z^* \in \overline{B_R^{X_T}(0)}$ to the fixed-point equation~\eqref{mr:eq:fixed_point_eq}, and equivalently, to the system~\eqref{int:eq:system}.
		
		Finally, it remains to show that the solution is unique in the whole of $X_T$. Assuming that $\hat\z\in X_T$ is another solution, we find, by the above reasoning, some $\hat T\in(0,T]$ and some $\hat R\geq R$ such that \eqref{mr:eq:fixed_point_eq} is uniquely solvable in $\overline{B_{\hat R}^{X_{\hat T}}(0)}$. Consequently, $\z^*$ and $\hat\z$ already coincide on the interval $[0,\hat T]$ and by a standard continuation argument, we conclude that this is also true for the entire interval $[0,T]$.
	\end{proof}

	\section{Existence and Uniqueness for the Linear System} \label{sec:lin}

    As the proof of Theorem~\ref{int:thm:main_res} in the previous section illustrates, the reduction of the well-posedness problem to a fixed-point formulation depends essentially on the invertibility of the linear operator~$\mathcal L_T$ and the uniform boundedness of its inverse. Therefore, our objective is to uniquely solve the following system corresponding to $\mathcal L_T$, see~\eqref{mr:eq:L_T}, which is derived by linearizing the system~\eqref{int:eq:system} around $\tphi1$. The relevant principal part of this linearization is given by
	\begin{subequations}
		\label{Lin:eq:lin_system}
		\begin{alignat}{2}
			\trho1\delt\vel_1 - \Div\bigprt{2\tnu1 \D\vel_1 + \tlambda1 \Div\vel_1 \Id} + \tphi1\Grad p - \tphi1 \Grad\Lap\varphi_1 &= \f_1 
			&\;\;\text{in }(0,T)\times\bbT,
			\label{Lin:eq:lin_system_eq_deltv1}\\[0.2ex] 
			\trho2\delt\vel_2 - \Div\bigprt{2\tnu2 \D\vel_2 + \tlambda2 \Div\vel_2 \Id} + \tphi2\Grad p + \tphi2 \Grad\Lap\varphi_1 &= \f_2
			&\text{in }(0,T)\times\bbT,
			\label{Lin:eq:lin_system_eq_deltv2}\\[0.2ex] 
			\delt\varphi_1 + \Div(\tphi1\vel_1) &= g_1
			&\text{in }(0,T)\times\bbT,
			\label{Lin:eq:lin_system_eq_deltphi1}\\[0.2ex] 
			\Div( \tphi1\vel_1 + \tphi2\vel_2) &= g_2
			&\text{in }(0,T)\times\bbT,
			\label{Lin:eq:lin_system_eq_div}
		\end{alignat}
		along with the initial condition
		\begin{align}
            \label{Lin:eq:reg_system_eq_initial}
			(\vel_1,\vel_2,\varphi_1)\vert_{t=0} &= (\vel_{1,0},\vel_{2,0},\varphi_{1,0}) 
			\quad\text{on }\bbT.
		\end{align}
	\end{subequations}
    For brevity, we have denoted $\tphi2 \coloneqq 1-\tphi1$ here, and similarly, for $j=1,2$, we have written~$\tilde\rho_{j,0} \coloneqq \rho_j(\tphi1)$ as well as $\tnu{j} \coloneqq \nu_j(\tphi1)$ and $\tlambda{j} \coloneqq \lambda_j(\tphi1)$.

	\subsection{Weak Solvability of the Regularized System}
	To apply abstract parabolic theory, we regularize the linear system~\eqref{Lin:eq:lin_system} with a term $- \varepsilon\Lap\varphi_1$, $\varepsilon>0$, in the third equation. Moreover, we reduce to a vanishing right-hand side in the fourth equation. The regularized linear parabolic system then reads

	\begin{subequations}
		\label{Lin:eq:reg_system}
		\begin{alignat}{2}
			\trho1\delt\vel_1 - \Div\bigprt{2\tnu1 \D\vel_1 + \tlambda1 \Div\vel_1 \Id} + \tphi1\Grad p - \tphi1 \Grad\Lap\varphi_1 &= \f_1
			&\;\;\text{in }(0,T)\times\bbT, 
			\label{Lin:eq:reg_system_eq_deltv1}\\[0.2ex]
			\trho2\delt\vel_2 - \Div\bigprt{2\tnu2 \D\vel_2 + \tlambda2 \Div\vel_2 \Id} + \tphi2\Grad p + \tphi2 \Grad\Lap\varphi_1 &= \f_2
			&\;\;\text{in }(0,T)\times\bbT, 
			\label{Lin:eq:reg_system_eq_deltv2}\\[0.2ex]
			\delt\varphi_1 + \Div(\tphi1\vel_1) - \varepsilon\Lap\varphi_1 &= g_1
			&\;\;\text{in }(0,T)\times\bbT,
			\label{Lin:eq:reg_system_eq_deltphi1}\\[0.2ex]
			\Div( \tphi1\vel_1 + \tphi2\vel_2) &= 0
			&\;\;\text{in }(0,T)\times\bbT,
			\label{Lin:eq:reg_system_eq_div}
		\end{alignat}
		together with the initial condition
		\begin{align}
			(\vel_1,\vel_2,\varphi_1)\vert_{t=0} &= (\vel_{1,0},\vel_{2,0},\varphi_{1,0}) 
			\quad\text{on }\bbT.
		\end{align}
	\end{subequations}

	\medskip
	\noindent\textbf{Abstract function spaces.}
    We introduce an abstract framework that allows the application of abstract parabolic theory. To this end, let $\bbV$ and $\bbH$ denote the Hilbert spaces defined as follows. On the one hand, let $\bbH = \bbH_1 \times \bbH_2$, where
	\begin{align*}
		\bbH_1 &= \big\{ (\vel_1,\vel_2) \in \rmL^2(\bbT)^d \times \rmL^2(\bbT)^d \colon \Div( \tphi1\vel_1 + \tphi2\vel_2)=0 \text{ in } \calD'(\bbT) \big\}, \\
		\bbH_2 &= \rmH_{(0)}^1(\bbT) = \big\{ \varphi \in \rmH^1(\bbT) \colon \mean\varphi= 0 \big\},
	\end{align*}
	equipped with the inner products
	\begin{align*}
		\bigprt{(\vel_1,\vel_2),(\w_1,\w_2)}_{\bbH_1} &= \sum_{j=1,2} \int_\bbT \trho{j}\vel_j\cdot\w_j \dx, \\
		(\varphi,\psi)_{\bbH_2} &= 2\int_\bbT \Grad\varphi\cdot\Grad\psi \dx.
	\end{align*}
	On the other hand, we define $\bbV=\bbV_1\times\bbV_2$ by
	\begin{align*}
		\bbV_1 &= \big\{ (\vel_1,\vel_2) \in \rmH^1(\bbT)^d \times \rmH^1(\bbT)^d \colon \Div( \tphi1\vel_1 + \tphi2\vel_2)=0 \text{ in } \rmL^2(\bbT) \big\}, \\
		\bbV_2 &= \rmH_{(0)}^2(\bbT) = \big\{ \varphi \in \rmH^2(\bbT) \colon \mean\varphi=0 \big\},
	\end{align*}
	with the inner products
	\begin{align*}
		\bigprt{(\vel_1,\vel_2),(\w_1,\w_2)}_{\bbV_1} &= \sum_{j=1,2} \int_\bbT \trho{j}\vel_j\cdot\w_j + 2\tnu{j}\D\vel_j:\D\w_j \dx, \\
		(\varphi,\psi)_{\bbV_2} &= 2\int_\bbT \Grad\varphi\cdot\Grad\psi + \Lap\varphi\Lap\psi \dx.
	\end{align*}
	Since the embedding $\bbV\hookrightarrow\bbH$ is continuous and dense, $\bbV\hookrightarrow\bbH \cong \bbH'\hookrightarrow\bbV'$ forms a Gelfand triple. Note carefully that the Riesz isomorphisms on $\bbH_1$ and $\bbH_2$ are given by
	\begin{align*}
		(\vel_1,\vel_2) &\mapsto \bigprt{(\vel_1,\vel_2), \cdot}_{\bbH_1} = \bigprt{ (\trho1\vel_1, \trho2\vel_2), \cdot }_{\rmL^2(\bbT)^d \times \rmL^2(\bbT)^d}, \\ 
		\varphi &\mapsto (\varphi,\cdot)_{\bbH_2} = 2\bigprt{ \Grad\varphi, \Grad\cdot }_{\rmL^2(\bbT)},
	\end{align*}
	respectively. In particular, the identity $\ang{\varphi}{\psi}_{\bbV_2',\bbV_2} = -2\int_\bbT \varphi\Lap\psi \dx$ for all $\varphi,\psi\in \bbV_2$ shows that the Riesz isomorphism on $\bbH_2$ corresponds to the operator $-2\Lap$.

	\medskip
	\noindent\textbf{Abstract weak formulation.}
	In the following, we write the regularized linear system~\eqref{Lin:eq:reg_system} as an abstract evolution equation. To this end, let $0<T<\infty$, $\varepsilon>0$, and let~$\tphi1\in\rmH^2(\bbT)$ with $\tphi1\in(0,1)$ in $\bbT$. For all $(\f_1 / \trho1, \f_2 / \trho2, g_1) \in \rmL^2(0,T;\bbV')$ and $(\vel_{1,0},\vel_{2,0},\varphi_{1,0}) \in \bbH$, we consider the abstract problem
	\begin{equation}
		\label{Lin:eq:abstract_eq}
		\begin{aligned}
			\delt(\vel_1,\vel_2,\varphi_1) + A_\varepsilon(\vel_1,\vel_2,\varphi_1) &= \prt{ \f_1 / \trho1, \f_2 / \trho2, g_1 } \quad\text{in } \bbV' \text{ a.e. in } (0,T), \\
			(\vel_1,\vel_2,\varphi_1)\vert_{t=0} &= (\vel_{1,0},\vel_{2,0},\varphi_{1,0}),
		\end{aligned}
	\end{equation}
	for $(\vel_1,\vel_2,\varphi_1)\in \rmL^2(0,T;\bbV) \cap \rmH^1(0,T;\bbV')$, with an operator $A_\varepsilon\colon \bbV\to\bbV'$ given by
	\begin{align*}
		&\bigang{A_\varepsilon(\vel_1,\vel_2,\varphi_1)}{(\w_1,\w_2,\psi)}_{\bbV',\bbV}
		\coloneqq \sum_{j=1,2} \int_\bbT 2\tnu{j}\D\vel_j:\D\w_j + \tlambda{j}\Div\vel_j\Div\w_j \dx \\
		&+\! \int_\bbT \Lap\varphi_1 \Div(\tphi1\w_1) - \Lap\varphi_1 \Div(\tphi2\w_2) \dx 
		- 2\!\int_\bbT \Div(\tphi1\vel_1)\Lap\psi \dx + 2\varepsilon\! \int_\bbT \Lap\varphi_1\Lap\psi \dx
	\end{align*}
	for all $(\vel_1,\vel_2,\varphi_1),(\w_1,\w_2,\psi)\in\bbV$. 
	
	\begin{definition}
        \label{Lin:def:abstract_eq_weak}
		A triple $(\vel_1,\vel_2,\varphi_1) \in \rmL^2(0,T;\bbV) \cap \rmH^1(0,T;\bbV')$ is called a weak solution to \eqref{Lin:eq:abstract_eq} if it satisfies
		\begin{align}
			\label{Lin:eq:abstract_eq_weak}
			&\bigang{\delt(\vel_1,\vel_2,\varphi_1)}{(\w_1,\w_2,\psi)}_{\bbV',\bbV} 
			+ \bigang{A_\varepsilon(\vel_1,\vel_2,\varphi_1)}{(\w_1,\w_2,\psi)}_{\bbV',\bbV} \nonumber\\
			&= \bigang{(\f_1 / \trho1, \f_2 / \trho2, g_1)}{(\w_1,\w_2,\psi)}_{\bbV',\bbV}
            \qquad \text{ a.e. in } (0,T)
		\end{align}
		for all $(\w_1,\w_2,\psi)\in \rmL^2(0,T;\bbV)$, as well as the initial condition
        \begin{align*}
            (\vel_1,\vel_2,\varphi_1)\vert_{t=0} &= (\vel_{1,0},\vel_{2,0},\varphi_{1,0}) 
            \quad\text{in } \rmL^2(\bbT)^d \times \rmL^2(\bbT)^d \times \rmH^1(\bbT).
        \end{align*}
	\end{definition}
	
	\begin{remark}
		\label{Lin:rmk:projection}
		We point out the following connection of $\bbH_1$ with the well-known divergence-free space~$\rmL^2_\sigma(\bbT) \coloneqq \{\vel\in\rmL^2(\bbT)^d \colon \Div \vel=0\}$ used to analyze the standard Stokes equations.
		To this end, we define $\mathbb{W}\coloneqq \{ (\vel_1,\vel_2,\varphi_1)\in\bbV \colon \vel_1,\vel_2\in\rmH^2(\bbT)^d, \varphi_1\in\rmH^3(\bbT) \}$. If we have a higher regularity $(\vel_1,\vel_2,\varphi_1)\in\mathbb{W}$, we can write the first and second components of the operator~$A_\varepsilon$ as $A_1\colon \bbV\to\bbV_1'$ given by
		\begin{align*}
			&\bigang{A_1(\vel_1,\vel_2,\varphi_1)}{(\w_1,\w_2)}_{\bbV_1',\bbV_1} \\
			&= \int_\bbT 2\tnu{1}\D\vel_1:\D\w_1 + \tlambda{1}\Div\vel_1\Div\w_1 + \Lap\varphi_1 \Div(\tphi1\w_1)\dx \\
			&\quad+ \int_\bbT 2\tnu{2}\D\vel_2:\D\w_2 + \tlambda{2}\Div\vel_2\Div\w_2  - \Lap\varphi_1 \Div(\tphi2\w_2) \dx \\
			&= - \int_\bbT \trho1 \trho1^{-1} \bigprt{ \Div\bigprt{2\tnu{1}\D\vel_1 + \tlambda{1}\Div\vel_1\Id } + \tphi1\Grad\Lap\varphi_1 } \w_1 \dx \\
			&\quad- \int_\bbT \trho2 \trho2^{-1} \bigprt{ \Div\bigprt{2\tnu{2}\D\vel_2 + \tlambda{2}\Div\vel_2\Id } - \tphi2\Grad\Lap\varphi_1 } \w_2 \dx \\
			&= \biggprt{ \mathbb{P} 
			\begin{pmatrix}
				- \trho1^{-1} \bigprt{ \Div\bigprt{2\tnu{1}\D\vel_1 + \tlambda{1}\Div\vel_1\Id } + \tphi1\Grad\Lap\varphi_1 } \\
				- \trho2^{-1} \bigprt{ \Div\bigprt{2\tnu{2}\D\vel_2 + \tlambda{2}\Div\vel_2\Id } - \tphi2\Grad\Lap\varphi_1 } \\
			\end{pmatrix},
			\begin{pmatrix}
				\w_1 \\
				\w_2 \\
			\end{pmatrix}
			}_{\bbH_1}
		\end{align*}  
		for all $(\w_1,\w_2)\in\bbV_1$. Here, $\mathbb{P}\colon \rmL^2(\bbT)^d \times \rmL^2(\bbT)^d \to \bbH_1$ is the orthogonal projection onto~$\bbH_1$ with respect to the inner product $(\cdot,\cdot)_{\bbH_1}$, which corresponds to the Helmholtz projection onto~$\rmL^2_\sigma(\bbT)$ for the Stokes equations. This reasoning reveals that $A_1(\vel_1,\vel_2,\varphi_1) \in \bbH_1$ for~$(\vel_1,\vel_2,\varphi_1)\in\mathbb{W}$.
	\end{remark}

	\medskip
	\noindent\textbf{Abstract existence result.}
    In the above setting, we establish the following existence and regularity result for the abstract evolution equation~\eqref{Lin:eq:abstract_eq}.
	
	\begin{lemma}
		\label{Lin:lemma:existence_abstr}
        Let $0<T<\infty$ and $\varepsilon>0$, and let $\tphi1\in\rmH^2(\bbT)$ satisfy $\tphi1\in(0,1)$ in $\bbT$.
		\begin{enumerate}[label=(\roman*)]
			\item \label{Lin:item:existence_abstr_i}
			For every $(\f_1 / \trho1, \f_2 / \trho2, g_1) \in \rmL^2(0,T;\bbV')$ and $(\vel_{1,0},\vel_{2,0},\varphi_{1,0}) \in \bbH$, equation~\eqref{Lin:eq:abstract_eq} admits a unique solution 
			\begin{align*}
				(\vel_1,\vel_2,\varphi_1) \in \rmL^2(0,T;\bbV) \cap \rmH^1(0,T;\bbV').
			\end{align*}
			\item  \label{Lin:item:existence_abstr_ii}
			Let $(\f_1 / \trho1, \f_2 / \trho2, g_1) \in \rmH^1(0,T;\bbV')$ with $(\f_1 / \trho1, \f_2 / \trho2, g_1) \vert_{t=0}\in\bbH$ and suppose that the initial data~$(\vel_{1,0},\vel_{2,0},\varphi_{1,0}) \in \bbV$ are of regularity $\vel_{j,0}\in\rmH^2(\bbT)^d$, $j=1,2$, and $\varphi_{1,0}\in\rmH^3(\bbT)$.
			Then, with respect to time, the solution to \eqref{Lin:eq:abstract_eq} has the higher regularity 
			\begin{align*}
				(\vel_1,\vel_2,\varphi_1) \in \rmH^1(0,T;\bbV) \cap \rmH^2(0,T;\bbV').
			\end{align*}
			\item \label{Lin:item:existence_abstr_iii}
			If in addition to the assumptions of \ref{Lin:item:existence_abstr_ii}, the right-hand side $(\f_1 / \trho1, \f_2 / \trho2, g_1)$ is in~$\rmL^2(0,T;\rmL^2(\bbT)^d \times \rmL^2(\bbT)^d \times \rmH^1(\bbT))$, the solution has the additional spatial regularity 
			\begin{align*}
				(\vel_1,\vel_2,\varphi_1) \in  \rmL^2(0,T;\rmH^2(\bbT)^d \times \rmH^2(\bbT)^d \times \rmH^3(\bbT)).
			\end{align*}
		\end{enumerate}
	\end{lemma}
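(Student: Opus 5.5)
The plan is to apply Proposition~\ref{prelim:prop:APEE} with the time-independent bilinear form $a(t,\cdot,\cdot)\coloneqq \ang{A_\varepsilon\,\cdot}{\cdot}_{\bbV',\bbV}$ on the Gelfand triple $\bbV\hookrightarrow\bbH\cong\bbH'\hookrightarrow\bbV'$. Assumptions \ref{prelim:APEE:ass_a} and \ref{prelim:APEE:ass_d} are trivial because the form does not depend on $t$. For \ref{prelim:APEE:ass_b}, use $\tphi1\in\rmH^2(\bbT)\hookrightarrow \rmW^{1,6}(\bbT)\cap\rmL^\infty(\bbT)$ for $d\le 3$. This gives $\norm{\Div(\tphi{j}\w_j)}_{\rmL^2}\le C\norm{\w_j}_{\rmH^1}$, and the remaining terms are bounded by Cauchy--Schwarz.

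The key point is the Gårding inequality \ref{prelim:APEE:ass_c}, which rests on a cancellation of the coupling terms on $\bbV$. For $(\vel_1,\vel_2,\varphi_1)\in\bbV$ the constraint gives $\Div(\tphi2\vel_2)=-\Div(\tphi1\vel_1)$. Hence
\begin{align*}
\int_\bbT \Lap\varphi_1\Div(\tphi1\vel_1) - \Lap\varphi_1\Div(\tphi2\vel_2)\dx - 2\int_\bbT\Div(\tphi1\vel_1)\Lap\varphi_1\dx = 0 .
\end{align*}
What remains is
\begin{align*}
\sum_{j=1,2}\int_\bbT 2\tnu{j}\abs{\D\vel_j}^2+\tlambda{j}\abs{\Div\vel_j}^2\dx + 2\varepsilon\norm{\Lap\varphi_1}_{\rmL^2}^2 .
\end{align*}
By \eqref{ass:coeffs} and Korn's inequality on the torus, this is bounded below by $C_0\norm{\cdot}_\bbV^2 - C_1\norm{\cdot}_\bbH^2$. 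The $\Grad\varphi_1$-part of the $\bbV_2$-norm and the $\rmL^2$-parts of the velocities are absorbed into the $\bbH$-norm. Part~\ref{Lin:item:existence_abstr_i} then follows directly from the first statement of Proposition~\ref{prelim:prop:APEE}.

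For part~\ref{Lin:item:existence_abstr_ii}, I will use the second statement of Proposition~\ref{prelim:prop:APEE}. It remains to check the compatibility condition, for which it suffices to show $A_\varepsilon y_0\in\bbH$ when $y_0\in\mathbb W$. For the velocity components this is exactly Remark~\ref{Lin:rmk:projection}. For the third component, observe that $\psi\mapsto -2\int_\bbT(\Div(\tphi1\vel_{1,0})-\varepsilon\Lap\varphi_{1,0})\Lap\psi\dx$ is the Riesz image, under the identification via $-2\Lap$, of $h\coloneqq\Div(\tphi1\vel_{1,0})-\varepsilon\Lap\varphi_{1,0}$. Since $\vel_{1,0}\in\rmH^2$, $\tphi1\in\rmH^2$ and $\varphi_{1,0}\in\rmH^3$, we get $h\in\rmH^1(\bbT)$, and $h$ has mean zero because it is a divergence plus a Laplacian. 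Thus $h\in\bbH_2$. Together with $f(0)\in\bbH$ this gives $f(0)-A_\varepsilon y_0\in\bbH$.

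Part~\ref{Lin:item:existence_abstr_iii} is a stationary regularity statement and I expect it to be the main obstacle. By~\ref{Lin:item:existence_abstr_ii} we have $\delt y\in\rmL^2(0,T;\bbV)\subset\rmL^2(0,T;\bbH)$. Therefore, for a.e.\ $t$, $y(t)\in\bbV$ solves $A_\varepsilon y = G\coloneqq (\f_1/\trho1,\f_2/\trho2,g_1)-\delt y\in \rmL^2\times\rmL^2\times\rmH^1$, and I need the estimate
\begin{align*}
\norm{y}_{\rmH^2\times\rmH^2\times\rmH^3}\le C\bigprt{\norm{G}+\norm{y}_\bbV}.
\end{align*}
Simple bootstrapping fails, because the regularity of $\varphi_1$ and of $\vel_1$ are coupled through $\Div(\tphi1\vel_1)$ and $\Grad\Lap\varphi_1$. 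I therefore plan to argue with difference quotients $\partial^h_k$ on $\bbT$ and test with (a corrected version of) $-\partial^{-h}_k\partial^h_k y$. The difficulty is that $\partial^h_k(\vel_1,\vel_2)$ violates the variable-coefficient constraint by a commutator term $\Div\bigprt{(\partial_k^h\tphi1)\vel_1(\cdot+he_k)+(\partial_k^h\tphi2)\vel_2(\cdot+he_k)}$. This term is bounded in $\rmL^2$ uniformly in $h$, since $\Grad\tphi1\in\rmL^6$, $\Lap\tphi1\in\rmL^2$ and $\vel_j\in\rmH^1$. I would remove it by subtracting a correction $(\Grad\chi,0)$, where $\chi$ solves the elliptic problem $\Div(\tphi1\Grad\chi)=$ commutator. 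This correction is bounded in $\rmH^1$ by elliptic theory for $\tphi1\in\rmH^2$ bounded away from $0$. The same cancellation as in the coercivity step then yields uniform bounds on $\norm{\partial_k^h\vel_j}_{\rmH^1}$ and $\sqrt\varepsilon\norm{\partial_k^h\Lap\varphi_1}_{\rmL^2}$. All commutators are of lower order and can be absorbed by Young's inequality. Letting $h\to0$ gives $\vel_j\in\rmH^2$ and $\varphi_1\in\rmH^3$ with a square-integrable bound in $t$.
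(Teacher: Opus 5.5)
Your parts (i) and (ii) match the paper's proof. It uses the same time-independent form, the same cancellation of the coupling terms on $\bbV$ for the G\aa rding inequality, and the same compatibility check via Remark~\ref{Lin:rmk:projection}; your explicit check of the third component is a useful addition.

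For (iii) the paper argues componentwise. It first applies elliptic regularity to the third component to get $\varphi_1\in\rmL^2(0,T;\rmH^3(\bbT))$, then reconstructs the pressure via Lemma~\ref{Lin:lemma:pressure_fixed_t} and applies elliptic regularity to $\vel_j$. Your suspicion of this is justified. The third component only gives $\varepsilon\Lap\varphi_1=\Div(\tphi1\vel_1)-g_1+\delt\varphi_1$ up to a constant. So $\Lap\varphi_1\in\rmH^1(\bbT)$ needs $\Div(\tphi1\vel_1)\in\rmH^1(\bbT)$, which is what the velocity step is meant to deliver. That step in turn uses $\Grad\Lap\varphi_1\in\rmL^2$. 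A different argument is therefore needed, and difference quotients combined with the coercivity cancellation is a reasonable choice.

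As written, however, your argument does not close, because the commutators are lower order in derivatives but not in integrability.
\begin{itemize}
\item The constraint defect $\Div\bigprt{(\partial_k^h\tphi1)(\vel_1-\vel_2)(\cdot+he_k)}$ is not bounded in $\rmL^2$ uniformly in $h$. Here $\partial_k^h\tphi1$ is bounded only in $\rmH^1(\bbT)\hookrightarrow\rmL^6(\bbT)$, and $\vel_j$ only in $\rmH^1(\bbT)\hookrightarrow\rmL^6(\bbT)$. So for $d=3$ both $\Grad\partial_k^h\tphi1\cdot\vel_j$ and $\partial_k^h\tphi1\,\Div\vel_j$ are controlled only in $\rmL^{3/2}$. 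Consequently $\Grad\chi$ is not bounded in $\rmH^1$, and your corrected test function is not uniformly bounded in $\bbV$.
\item The remainders you plan to absorb have the same problem. Examples are $\int_\bbT 2(\partial_k^h\tnu{j})\,\D\vel_j(\cdot+he_k):\D\partial_k^h\vel_j\dx$ and the coupling remainders pairing $\partial_k^h\Lap\varphi_1$ with $\Div\bigprt{(\partial_k^h\tphi1)\vel_j(\cdot+he_k)}$. Bounding them requires $\D\vel_j\in\rmL^3$ and $\vel_j\in\rmL^\infty$. Neither is known, and for fixed $h$ these norms need not even be finite. So Young's inequality produces nothing that can be absorbed.
\end{itemize}

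With $\tphi1$ merely in $\rmH^2$, you first need an intermediate gain such as $\vel_j\in\rmW^{1,3}\cap\rmL^\infty$. After that every commutator is in $\rmL^2$ by H\"older and your argument goes through. Two ways to obtain this gain:
\begin{itemize}
\item Use difference quotients of fractional order, iterated. At order $\frac12$ the commutators do close, since $\tphi1\in\rmC^{1/2}(\bbT)$ and $|h|^{-1/2}\norm{\Grad\tphi1(\cdot+he_k)-\Grad\tphi1}_{\rmL^3}\le C$.
\item Smooth $\tphi1$ and derive bounds depending only on $\norm{\tphi1}_{\rmH^2}$, using $\norm{\D\vel_j}_{\rmL^3}\le C\norm{\vel_j}_{\rmH^1}^{1/2}\norm{\vel_j}_{\rmH^2}^{1/2}$. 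The subsequent limit passage must account for the $\tphi1$-dependence of $\bbV$.
\end{itemize}
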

	
	\begin{remark}
		\label{Lin:rmk:ptwise}
		Under the assumptions of Lemma~\ref{Lin:lemma:existence_abstr}~\ref{Lin:item:existence_abstr_ii}, the weak solution $(\vel_1,\vel_2,\varphi_1)$ of the abstract equation~\eqref{Lin:eq:abstract_eq} is sufficiently regular to solve the equations~\eqref{Lin:eq:reg_system_eq_deltphi1} and \eqref{Lin:eq:reg_system_eq_div} even a.e.~in $(0,T)\times\bbT$. While this is immediate for \eqref{Lin:eq:reg_system_eq_div}, it follows for \eqref{Lin:eq:reg_system_eq_deltphi1} from testing the weak formulation~\eqref{Lin:eq:abstract_eq_weak} with some arbitrary $(0,0,\psi)\in\bbV$ and invoking the above regularity properties for~$(\vel_1,\vel_2,\varphi_1)$, the form of the Riesz isomorphism on $\bbH_2$, and the fact that~$-2\Lap \colon \rmH^2_{(0)}(\bbT) \to \rmL^2_{(0)}(\bbT)$ is an isomorphism. 
	\end{remark}

    Having established the required framework, the proof of Lemma~\ref{Lin:lemma:existence_abstr}~\ref{Lin:item:existence_abstr_i}--\ref{Lin:item:existence_abstr_ii} is now a straightforward application of Proposition~\ref{prelim:prop:APEE}.
	
	\begin{proof}[Proof of Lemma~\ref{Lin:lemma:existence_abstr}~\ref{Lin:item:existence_abstr_i}--\ref{Lin:item:existence_abstr_ii}]
		In order to prove \ref{Lin:item:existence_abstr_i} and \ref{Lin:item:existence_abstr_ii}, we aim at applying the abstract existence result Proposition~\ref{prelim:prop:APEE}. To this end, we denote the time-independent bilinear form associated to $A_\varepsilon$ by $a_\varepsilon(t,\cdot,\cdot)\colon \bbV\times\bbV\to\R$, where
		\begin{align*}
			a_\varepsilon(t,(\vel_1,\vel_2,\varphi_1),(\w_1,\w_2,\psi)) 
            \coloneqq \ang{A_\varepsilon(\vel_1,\vel_2,\varphi_1)}{(\w_1,\w_2,\psi)}_{\bbV',\bbV} \quad \text{for all } t\in[0,T],
		\end{align*}
		and verify the assumptions \ref{prelim:APEE:ass_a}--\ref{prelim:APEE:ass_d} on the bilinear form. With $a_\varepsilon$ not depending on time, \ref{prelim:APEE:ass_a} and \ref{prelim:APEE:ass_d} are immediate. Regarding \ref{prelim:APEE:ass_c}, $a_\varepsilon(t,\cdot,\cdot)$ is coercive on $\bbV$ due to the regularization term, since
		\begin{align*}
			\bigang{A_\varepsilon(\vel_1,\vel_2,\varphi_1)}{(\vel_1,\vel_2,\varphi_1)}_{\bbV',\bbV} 
			&= \sum_{j=1,2} \int_\bbT 2\tnu{j} \abs{\D\vel_j}^2 + \tlambda{j}\prt{\Div\vel_j}^2\dx
			+ 2\varepsilon\! \int_\bbT \! \prt{\Lap\varphi_1}^2 \dx \\
			&\geq \norm{(\vel_1,\vel_2)}_{\bbV_1}^2 - \norm{(\vel_1,\vel_2)}_{\bbH_1}^2 + \varepsilon\norm{\varphi_1}_{\bbV_2}^2 - \varepsilon\norm{\varphi}_{\bbH_2}^2 \\
			&\geq C_0\norm{(\vel_1,\vel_2,\varphi_1)}_\bbV^2 - C_1\norm{(\vel_1,\vel_2,\varphi_1)}_\bbH^2
		\end{align*}
		for all $(\vel_1,\vel_2,\varphi_1)\in\bbV$ with constants $C_0,C_1>0$ depending on $\varepsilon$. Moreover, Hölder's theorem yields the required boundedness \ref{prelim:APEE:ass_b} of $a_\varepsilon(t,\cdot,\cdot)$ in the sense that
		\begin{align*}
			\bigabs{ \bigang{A_\varepsilon(\vel_1,\vel_2,\varphi_1)}{(\w_1,\w_2,\psi)}_{\bbV',\bbV} }
			\leq C \norm{(\vel_1,\vel_2,\varphi_1)}_\bbV \norm{(\w_1,\w_2,\psi)}_\bbV
		\end{align*}
		for all $(\vel_1,\vel_2,\varphi_1),(\w_1,\w_2,\psi)\in\bbV$ with a constant $C>0$ depending on $\varepsilon$. 
		It remains to check the compatibility condition $(\f_1 / \trho1, \f_2 / \trho2, g_1) \vert_{t=0} - A_\varepsilon (\vel_{1,0},\vel_{2,0},\varphi_{1,0}) \in \bbH$, where the first part is satisfied directly via assumption. The assumed additional regularity of the initial data, combined with the projection arguments of Remark~\ref{Lin:rmk:projection}, shows that $A_\varepsilon (\vel_{1,0},\vel_{2,0},\varphi_{1,0})$ is sufficiently regular and satisfies the divergence condition, and thus belongs to $\bbH$.
		By Proposition~\ref{prelim:prop:APEE}, we therefore conclude \ref{Lin:item:existence_abstr_i} and \ref{Lin:item:existence_abstr_ii}.
	\end{proof}

	Before we can prove the higher regularity stated in Lemma~\ref{Lin:lemma:existence_abstr}~\ref{Lin:item:existence_abstr_iii}, we need to reconstruct the pressure for fixed $t$, which enables us to treat the first and second equation of the system~\eqref{Lin:eq:reg_system} separately.	 
	
	\begin{lemma}[Reconstruction of $p$ for fixed time] 
		\label{Lin:lemma:pressure_fixed_t}
		Let $0<T<\infty$ and $\varepsilon>0$, and let~$\tphi1\in\rmH^2(\bbT)$ satisfy $\tphi1\in(0,1)$ in $\bbT$. Suppose that the assumptions of Lemma~\ref{Lin:lemma:existence_abstr}~\ref{Lin:item:existence_abstr_ii} hold. Denoting by~$(\vel_1,\vel_2,\varphi_1)$ the weak solution to the regularized system~\eqref{Lin:eq:reg_system} in the sense of Definition~\ref{Lin:def:abstract_eq_weak}, additionally assume that $\Grad\Lap\varphi_1\in\rmL^2(0,T;\rmL^2(\bbT))$. Then, for almost every~$t\in(0,T)$, there exists a unique~$p(t)\in\rmH^1_{(0)}(\bbT)$ such that $(\vel_1,\vel_2,\varphi_1,p)$ solves each of the equations \eqref{Lin:eq:eq_without_div_cond_fixed_t_1} and \eqref{Lin:eq:eq_without_div_cond_fixed_t_2} below weakly a.e.~in~$(0,T)$.
	\end{lemma}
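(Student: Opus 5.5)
Since the equations \eqref{Lin:eq:eq_without_div_cond_fixed_t_1} and \eqref{Lin:eq:eq_without_div_cond_fixed_t_2} are not displayed above, I take them to be the two momentum equations \eqref{Lin:eq:reg_system_eq_deltv1}--\eqref{Lin:eq:reg_system_eq_deltv2} at fixed $t$, each tested with arbitrary $\w_j\in\rmH^1(\bbT)^d$ separately, without the divergence coupling. The plan is a de Rham-type argument for the weighted divergence constraint, that is, a Helmholtz decomposition adapted to $\bbH_1$. Fix $t$ outside a null set such that $\delt\vel_j(t)\in\rmL^2(\bbT)^d$ (available by Lemma~\ref{Lin:lemma:existence_abstr}~\ref{Lin:item:existence_abstr_ii}, since $\rmH^1(0,T;\bbV)$ gives $\delt\vel_j\in\rmL^2(0,T;\rmH^1)$), $\Grad\Lap\varphi_1(t)\in\rmL^2$, $\f_j(t)\in\rmL^2$, and the weak equation \eqref{Lin:eq:abstract_eq_weak} holds at $t$ for all test functions in $\bbV$.

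First define the residual functional $\ell\colon \rmH^1(\bbT)^d\times\rmH^1(\bbT)^d\to\R$ by
\begin{align*}
\ell(\w_1,\w_2) \coloneqq \sum_{j=1,2}\int_\bbT \bigprt{\trho{j}\delt\vel_j - \f_j}\cdot\w_j + 2\tnu{j}\D\vel_j:\D\w_j + \tlambda{j}\Div\vel_j\Div\w_j \dx - \int_\bbT \Grad\Lap\varphi_1\cdot\bigprt{\tphi1\w_1-\tphi2\w_2}\dx .
\end{align*}
The $\bbH_1$-inner product and the Riesz isomorphism turn $\ang{\delt(\vel_1,\vel_2)}{(\w_1,\w_2)}$ into $\sum_j\int\trho{j}\delt\vel_j\cdot\w_j$. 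Moreover, $\int\Lap\varphi_1\Div(\tphi1\w_1)=-\int\tphi1\Grad\Lap\varphi_1\cdot\w_1$, which uses the extra assumption on $\Grad\Lap\varphi_1$. Testing the weak formulation with $(\w_1,\w_2,0)\in\bbV$ therefore shows $\ell=0$ on $\bbV_1$. The functional $\ell$ is bounded on $\rmH^1\times\rmH^1$, and in fact on $\rmL^2\times\rmL^2$ apart from the viscous terms.

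Second, represent $\ell$ as a pressure term. Define $B\colon \rmH^1(\bbT)^d\times\rmH^1(\bbT)^d\to\rmL^2_{(0)}(\bbT)$ by $B(\w_1,\w_2)=\Div(\tphi1\w_1+\tphi2\w_2)$; its range has mean zero by periodicity. $B$ is surjective: given $h\in\rmL^2_{(0)}$, solve $-\Div(a\Grad q)=h$ with $a=\tphi1^2+\tphi2^2>0$, $a\in\rmH^2\subset\rmW^{1,q}$ with $q>d$. Elliptic theory on the torus then gives $q\in\rmH^2$, and $\w_j=-\tilde\varphi_{j,0}\Grad q\in\rmH^1$ is a bounded preimage. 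By the closed range theorem, $\ker B=\bbV_1$ and $\ell\in(\ker B)^\perp$ yield a unique $p(t)\in\rmL^2_{(0)}$ with $\ell(\w_1,\w_2)=\int p\,\Div(\tphi1\w_1+\tphi2\w_2)\dx$, with $\norm{p}_{\rmL^2}\le C\norm{\ell}$. Uniqueness follows from injectivity of $B'$ on mean-zero functions.

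Third, upgrade to $p\in\rmH^1$ and decouple. Testing with $(\w_1,0)$, $\w_1\in\rmC^\infty$, shows that the distribution $\tphi1\Grad p$ equals an $\rmH^{-1}$ element built from $\trho1\delt\vel_1-\f_1-\tphi1\Grad\Lap\varphi_1\in\rmL^2$ and $-\Div\Str_1$; similarly for $\tphi2\Grad p$. Adding the two gives $\Grad p\in\rmH^{-1}$ only, so more is needed, and this is the main obstacle, since $\vel_j(t)$ is a priori only $\rmH^1$. I would first try to obtain $\vel_j\in\rmH^2$ at fixed $t$ by difference quotients in the Stokes-type problem for $\ell$ restricted to $\bbV_1$. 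The difference quotient of $\vel_j$ does not lie in $\bbV_1$, so it must be corrected by adding $-\tilde\varphi_{j,0}\Grad q$ from the surjectivity construction, and the correction is controlled by $\Grad\tphi1\in\rmL^4$. If this step is too delicate, I would instead accept $p\in\rmL^2$ for now and recover $p\in\rmH^1$ together with \ref{Lin:item:existence_abstr_iii} later. In either case, once $\Grad p\in\rmL^2$, each momentum equation holds separately for all $\w_j\in\rmH^1(\bbT)^d$, which is the claim. Measurability in $t$ follows from the linear, bounded dependence of $p$ on $\ell$.
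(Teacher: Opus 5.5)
Your Steps 1 and 2 are correct. The map $B(\w_1,\w_2)=\Div(\tphi{1}\w_1+\tphi{2}\w_2)$ sends $\rmH^1(\bbT)^d\times\rmH^1(\bbT)^d$ onto $\rmL^2_{(0)}(\bbT)$ with kernel $\bbV_1$. So the closed range theorem gives, for a.e.\ $t$, a unique $p(t)\in\rmL^2_{(0)}(\bbT)$ for which both momentum equations hold separately for all $\w_j\in\rmH^1(\bbT)^d$, with pressure term $-\int_\bbT p\,\Div(\tphi{j}\w_j)\dx$. This is a valid alternative to the paper's explicit splitting $\w_j=\hat\w_j+\barrho{j}^{-1}\Grad\xi$.

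The gap is Step~3. The lemma asserts $p(t)\in\rmH^1_{(0)}(\bbT)$, and you do not prove it. The difference-quotient route is only announced, and as sketched it does not close for $\tphi{1}\in\rmH^2$. The divergence defect of $\partial^h\vel_j$ is $-\Div\bigl(\partial^h\tphi{1}\,(\vel_1-\vel_2)(\cdot+h)\bigr)$. This contains difference quotients of $\Grad\tphi{1}$ multiplied by $\vel_j\in\rmH^1$, and it is only uniformly bounded in $\rmL^{3/2}(\bbT)$ for $d=3$. So you get no uniform $\rmH^1$-bound for your corrector, and knowing $\Grad\tphi{1}\in\rmL^4$ does not supply one. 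Postponing $p\in\rmH^1$ to part~(ii)'s sequel (iii) only moves the difficulty, and within the paper it is circular: the proof of (iii) gets $\vel_j\in\rmH^2$ from the Lam\'e-type equations precisely because this lemma puts $\tphi{j}\Grad p\in\rmL^2$ on the right-hand side.

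The missing idea is the one the paper uses. Do not add the two momentum equations; instead take the divergence of their $\barrho{j}^{-1}$-weighted sum, i.e.\ test with $(\barrho{1}^{-1}\Grad\xi,\barrho{2}^{-1}\Grad\xi)$. This has three effects.

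\begin{itemize}
\item Since $\trho{j}\barrho{j}^{-1}=\tphi{j}$, the time-derivative terms combine to $-\int_\bbT\delt\Div(\tphi{1}\vel_1+\tphi{2}\vel_2)\,\xi\dx=0$.
\item The pressure appears as $\int_\bbT\tant\Grad p\cdot\Grad\xi\dx$ with $\tant=\barrho{1}^{-1}\tphi{1}+\barrho{2}^{-1}\tphi{2}>0$; in your formulation it appears through $\int_\bbT p\,\Div(\tant\Grad\xi)\dx$.
\item After one integration by parts, the viscous terms enter only through $\Grad\Div\vel_j$ (plus terms carrying derivatives of the coefficients), not through the full Hessian of $\vel_j$.
\end{itemize}

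The divergences are better behaved than generic first derivatives. By the pointwise regularized phase equation, $\Div(\tphi{1}\vel_1)=g_1-\delt\varphi_1+\varepsilon\Lap\varphi_1\in\rmH^1(\bbT)$ for a.e.\ $t$. This uses $\delt\varphi_1\in\rmL^2(0,T;\rmH^2(\bbT))$ from part~(ii) and $\Lap\varphi_1\in\rmL^2(0,T;\rmH^1(\bbT))$ from the extra hypothesis on $\Grad\Lap\varphi_1$. That is the real purpose of the hypothesis, not just the integration by parts in $\ell$. The constraint then gives the same regularity for $\Div(\tphi{2}\vel_2)$.

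For constant coefficients the right-hand side is therefore $\int_\bbT\F\cdot\Grad\xi\dx$ with $\F(t)\in\rmL^2(\bbT)^d$, and Lax--Milgram gives $p(t)\in\rmH^1_{(0)}(\bbT)$. Your $\rmL^2$-pressure is a very weak solution of the same weighted Poisson problem. By uniqueness of very weak solutions (Proposition~\ref{prelim:prop:very_weak_sol}) it coincides with this $p$, and your Steps 1--2 then complete the proof.

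For non-constant $\tphi{1}\in\rmH^2$, lower-order terms such as $\trans{(\nabla\vel_j)}\Grad\tnu{j}$ are a priori only in $\rmL^{3/2}$ for $d=3$. A fully rigorous version therefore needs a short $\rmL^q$-bootstrap through the elliptic equations. The paper does not dwell on this, but the mechanism is the one above.
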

	
	\begin{proof}
		With the regularity properties provided by Lemma~\ref{Lin:lemma:existence_abstr}~\ref{Lin:item:existence_abstr_ii}, equation~\eqref{Lin:eq:reg_system_eq_deltphi1}, which is satisfied pointwise, cf.~Remark~\ref{Lin:rmk:ptwise}, shows
		\begin{align*}
			\Div(\tphi1\vel_1) = g_1 - \delt\varphi_1 + \varepsilon\Lap\varphi_1 \in \rmH^1(\bbT)
		\end{align*}
		a.e.~in $(0,T)$. Combined with the same observation for \eqref{Lin:eq:reg_system_eq_div}, it follows for $j=1,2$ that~$\Div(\tphi{j}\vel_j) \in \rmH^1(\bbT)$ and consequently $\Div\vel_j \in \rmH^1(\bbT)$.
		
		Moreover, in light of the assumed regularities, the weak formulation~\eqref{Lin:eq:abstract_eq_weak} tested with arbitrary $(\hat\w_1,\hat\w_2,0)\in\bbV$ may be written as
		\begin{align}
			\label{Lin:eq:eq_for_hatwj_fixed_t}
			&\sum_{j=1,2} \int_\bbT \trho{j}\delt\vel_j \cdot \hat\w_j \dx 
			+ \sum_{j=1,2} \int_\bbT 2\tnu{j}\D\vel_j:\D\hat\w_j + \tlambda{j}\Div\vel_j\Div\hat\w_j \dx \nonumber\\
			&\quad- \int_\bbT (  \tphi1 \hat\w_1 -  \tphi2 \hat\w_2) \cdot \Grad\Lap\varphi_1 \dx 
			= \sum_{j=1,2} \int_\bbT \f_j \cdot \hat\w_j \dx
		\end{align}
		a.e.~in $(0,T)$, for all $(\hat\w_1,\hat\w_2)\in\bbV_1$, i.e., for all $(\hat\w_1,\hat\w_2)\in \rmH^1(\bbT)^d \times \rmH^1(\bbT)^d$ satisfying the divergence condition $\Div(\tphi1\hat\w_1+\tphi2\hat\w_2)=0$ in $\rmL^2(\bbT)$. Now, for fixed $t\in(0,T)$, we aim to reconstruct the pressure by formally summing~\eqref{Lin:eq:reg_system_eq_deltv1} tested with $\barrho1^{-1}\Grad\xi$ and \eqref{Lin:eq:reg_system_eq_deltv2} tested with $\barrho2^{-1}\Grad\xi$ for arbitrary $\xi\in\rmH^1(\bbT)$. Abbreviating $\tant\coloneqq \barrho1^{-1}\tphi1 + \barrho2^{-1}\tphi2$ and $\tdnt\coloneqq \barrho1^{-1}\tphi1 - \barrho2^{-1}\tphi2$, this means that we determine $p(t)\in\rmH^1_{(0)}(\bbT)$ as the unique solution to the equation
		\begin{align}
			\label{Lin:eq:eq_for_pressure_fixed_t}
			&\int_\bbT \tant \Grad p\cdot\Grad\xi\dx
			= \sum_{j=1,2} \int_\bbT \barrho{j}^{-1} \bigprt{ 2\tnu{j}\D\vel_j:\D\Grad\xi + \tlambda{j}\Div\vel_j\Div\Grad\xi } \dx  \nonumber \\
			&\quad+ \int_\bbT \tdnt \Grad\Lap\varphi_1 \cdot \Grad\xi \dx
			+ \sum_{j=1,2} \int_\bbT  \barrho{j}^{-1} \f_j \cdot \Grad\xi\dx
			- \sum_{j=1,2} \int_\bbT \tphi{j}\delt\vel_j \cdot \Grad\xi \dx 
		\end{align}
		a.e.~in $(0,T)$, for all $\xi\in\rmH^2(\bbT)$ to begin with. To solve this equation, we study the regularity of the right-hand side. For the first term there, a suitable integration by parts along with~$\ang{\Lap\vel_j}{ \Grad\xi }_{\rmH^{-1}\ofbbT,\rmH^1\ofbbT} = -(\Grad\vel_j,\Grad^2\xi)_{\rmL^2\ofbbT} = (\Grad\Div\vel_j, \Grad\xi)_{\rmL^2\ofbbT} $ reveals
		\begin{align*}
			&\int_\bbT \barrho{j}^{-1} \bigprt{ 2\tnu{j}\trans{(\nabla\vel_j)}:\Grad^2\xi + \tlambda{j}\Div\vel_j\Div\Grad\xi } \dx \\
			&= -\bigang{ \barrho{j}^{-1}\Div\bigprt{ 2\tnu{j}\trans{(\nabla\vel_j)} + \tlambda{j}\Div\vel_j\Id } }{ \Grad\xi }_{\rmH^{-1}\ofbbT,\rmH^1\ofbbT}  \\
			&= -\int_\bbT \barrho{j}^{-1} ( 2\tnu{j}+\tlambda{j} ) \Grad\Div\vel_j \cdot \Grad\xi \dx
			- \int_\bbT \barrho{j}^{-1} \bigprt{ 2\trans{(\nabla\vel_j)}\Grad\tnu{j} + \Div\vel_j\Grad\tlambda{j} } \cdot \Grad\xi \dx.
		\end{align*}
		With this identity, \eqref{Lin:eq:eq_for_pressure_fixed_t} takes the form of an equation
		\begin{align*}
			\int_\bbT \tant \Grad p\cdot\Grad\xi \dx
			= \int_\bbT \mathbf F\cdot\Grad\xi \dx
		\end{align*}
		 a.e.~in $(0,T)$, for all $\xi\in\rmH^2(\bbT)$, which is also valid for all $\xi\in\rmH^1(\bbT)$. Since this is a weak elliptic equation on $\bbT$ with $\tant>0$ in $\bbT$ and right-hand side $\F(t)\in\rmL^2(\bbT)^d$ for almost every $t\in(0,T)$, the Lax--Milgram theorem and Poincaré's inequality yield the existence of a unique solution $p(t)\in\rmH^1_{(0)}(\bbT)$ for almost every $t\in(0,T)$.
		
		Now it remains to verify that, regarding this pressure $p$, the equations~\eqref{Lin:eq:reg_system_eq_deltv1} and~\eqref{Lin:eq:reg_system_eq_deltv2} are satisfied weakly with independent test functions.
		To this end, let $\w_j\in\rmH^1(\bbT)^d$, $j=1,2$, be arbitrary and decompose $\w_j$ into~$\w_j=\hat\w_j + \barrho{j}^{-1}\Grad\xi$ such that $\Div(\tphi1\hat\w_1+\tphi2\hat\w_2)=0$ (that is, $\xi$ is the solution to~$\Div\bigprt{\tant \Grad\xi } = \Div(\tphi1\w_1+\tphi2\w_2)$). Then, with $p$ from above, we have
		\begin{align}
			\label{Lin:eq:eq_without_div_cond_fixed_t}
			&\sum_{j=1,2} \int_\bbT \trho{j}\delt\vel_j \cdot \w_j \dx
			+ \sum_{j=1,2} \int_\bbT 2\tnu{j}\D\vel_j:\D\w_j + \tlambda{j}\Div\vel_j\Div\w_j \dx  \nonumber\\
			&+ \sum_{j=1,2} \int_\bbT \tphi{j}\Grad p\cdot\w_j \dx 
			- \int_\bbT (  \tphi1 \w_1 -  \tphi2 \w_2) \cdot \Grad\Lap\varphi_1 \dx 
			= \sum_{j=1,2} \int_\bbT \f_j \cdot \w_j \dx
		\end{align}
		a.e.~in $(0,T)$, which follows from \eqref{Lin:eq:eq_for_hatwj_fixed_t} for the terms including $\hat\w_j$, \eqref{Lin:eq:eq_for_pressure_fixed_t} for the terms with~$\Grad\xi$, and from
		\begin{align*}
			\sum_{j=1,2} \int_\bbT \tphi{j}\Grad p\cdot\hat\w_j \dx 
			= - \int_\bbT \Div(\tphi1\hat\w_1 + \tphi2\hat\w_2) p \dx
			= 0.
		\end{align*}
		On the one hand, setting $\w_2=0$ in \eqref{Lin:eq:eq_without_div_cond_fixed_t} shows that
		\begin{subequations}
			\begin{align}
				\label{Lin:eq:eq_without_div_cond_fixed_t_1}
				&\int_\bbT \trho1\delt\vel_1 \cdot \w_1 \dx
				+ \int_\bbT 2\tnu1\D\vel_1:\D\w_1 + \tlambda1\Div\vel_1\Div\w_1 \dx \nonumber \\
				&\quad+ \int_\bbT \tphi1\Grad p\cdot\w_1 \dx
				- \int_\bbT  \tphi1 \w_1 \cdot \Grad\Lap\varphi_1 \dx
				= \int_\bbT \f_1 \cdot \w_1 \dx
			\end{align}
			a.e.~in $(0,T)$ is satisfied for all $\w_1\in\rmH^1(\bbT)^d$, which corresponds to the weak formulation of equation~\eqref{Lin:eq:reg_system_eq_deltv1} a.e.~in $(0,T)$. On the other hand, choosing $\w_1=0$ provides the according weak equation for~\eqref{Lin:eq:reg_system_eq_deltv2}, given by
			\begin{align}
				\label{Lin:eq:eq_without_div_cond_fixed_t_2}
				&\int_\bbT \trho2\delt\vel_2 \cdot \w_2 \dx
				+ \int_\bbT 2\tnu2\D\vel_2:\D\w_2 + \tlambda2\Div\vel_2\Div\w_2 \dx \nonumber \\
				&\quad+ \int_\bbT \tphi2\Grad p\cdot\w_2 \dx
				+ \int_\bbT  \tphi2 \w_2 \cdot \Grad\Lap\varphi_1 \dx
				= \int_\bbT \f_2 \cdot \w_2 \dx
			\end{align}
			a.e.~in $(0,T)$ for all $\w_2\in\rmH^1(\bbT)^d$.
		\end{subequations}
		This finishes the claimed reconstruction of the pressure.
	\end{proof}

	Now we are finally in a position to prove the third assertion of Lemma~\ref{Lin:lemma:existence_abstr}.
	
	\begin{proof}[Proof of Lemma~\ref{Lin:lemma:existence_abstr}~\ref{Lin:item:existence_abstr_iii}]
	   Under the assumptions of \ref{Lin:item:existence_abstr_iii}, the weak solution satisfies the equation
	   \begin{align*}
	       A_\varepsilon(\vel_1,\vel_2,\varphi_1) = (\f_1 / \trho1, \f_2 / \trho2, g_1) - \delt(\vel_1,\vel_2,\varphi_1) \quad\text{in } \bbV' \text{ a.e. in } (0,T),
	   \end{align*}
	   where the right-hand side is in $\rmL^2(0,T;\rmL^2(\bbT)^d \times \rmL^2(\bbT)^d \times \rmH^1(\bbT))$. The idea is to use elliptic regularity theory for each component to conclude the claimed higher regularity in space.
		
	   Starting with the third component, we observe that it corresponds to the equation
	   \begin{align*}
		  - 2\int_\bbT \Div(\tphi1\vel_1)\Lap\psi \dx + 2\varepsilon \int_\bbT \Lap\varphi_1\Lap\psi \dx
		  &= - 2\int_\bbT g_1\Lap\psi \dx + 2\int_\bbT \delt\varphi_1\Lap\psi \dx
	   \end{align*}
	   a.e.~in $(0,T)$ for all $\psi\in\bbV_2$, which is equivalent to
	   \begin{align*}
		  \varepsilon \ang{\Grad\Lap\varphi_1}{\Grad\psi}_{\rmH^{-1}\ofbbT,\rmH^1\ofbbT}
		  = \bigang{\Grad\Div(\tphi1\vel_1) - \Grad g_1 + \Grad\delt\varphi_1}{\Grad\psi}_{\rmH^{-1}\ofbbT,\rmH^1\ofbbT}.
	   \end{align*}
	   By means of elliptic regularity theory and after an integration over $(0,T)$, we conclude that~$\Lap\varphi_1\in\rmL^2\prt{0,T;\rmH^1(\bbT)}$ and thus $\varphi_1\in\rmL^2\prt{0,T;\rmH^3(\bbT)}$.
		
	   To study the first and second component, we note that the assumptions of Lemma~\ref{Lin:lemma:pressure_fixed_t} are met. Thus, the reconstruction of a unique pressure~$p\in\rmH^1_{(0)}(\bbT)$ a.e.~in $(0,T)$ is possible such that $(\vel_1,\vel_2,\varphi_1,p)$ solves the equation~\eqref{Lin:eq:eq_without_div_cond_fixed_t_1} a.e.~in $(0,T)$ for all $\w_1\in\rmH^1(\bbT)^d$. Rearranging leads to
	   \begin{align*}
	       &\int_\bbT 2\tnu1\D\vel_1:\D\w_1 + \tlambda1\Div\vel_1\Div\w_1 \dx \\
           &= \int_\bbT \f_1 \cdot \w_1 \dx
			- \int_\bbT \trho1\delt\vel_1 \cdot \w_1 \dx
			- \int_\bbT \tphi1\Grad p\cdot\w_1 \dx
			+ \int_\bbT \tphi1 \w_1 \cdot \Grad\Lap\varphi_1 \dx
	   \end{align*}
	   a.e.~in $(0,T)$ for all $\w_1\in\rmH^1(\bbT)^d$. Since this is a weakly formulated elliptic equation with right-hand side in $\rmL^2(\bbT)^d$, it follows by elliptic regularity theory that $\vel_1\in\rmH^2(\bbT)^d$, and integrating with respect to time yields the claimed regularity. The corresponding equation for $\vel_2$ accordingly proves that~$\vel_2\in\rmL^2\prt{0,T;\rmH^2(\bbT)^d}$.
	\end{proof}

	\begin{remark}
		\label{Lin:rmk:weak_formulation_rewritten}
		Under the assumptions of Lemma~\ref{Lin:lemma:existence_abstr}~\ref{Lin:item:existence_abstr_iii}, the weak solution $(\vel_1,\vel_2,\varphi_1)$ to the abstract equation~\eqref{Lin:eq:abstract_eq} is sufficiently regular to rewrite the weak formulation~\eqref{Lin:eq:abstract_eq_weak} such that testing with less regular triples~$(\w_1,\w_2,\psi)\in\bbH$ is possible. This reformulated weak formulation then reads
		\begin{align}
			\label{Lin:eq:weak_rewritten}
			&\sum_{j=1,2} \int_\bbT \trho{j}\delt\vel_j \cdot \w_j \dx
			+ 2\int_\bbT  \Grad\delt\varphi_1 \cdot \Grad\psi \dx \nonumber\\
			&\quad - \sum_{j=1,2} \int_\bbT \Div\bigprt{ 2\tnu{j}\D\vel_j + \tlambda{j}\Div\vel_j\Id } \cdot \w_j \dx 
			- 2\varepsilon \int_\bbT  \Grad\Lap\varphi_1 \cdot\Grad\psi \dx \nonumber\\
			&\quad- \int_\bbT (  \tphi1 \w_1 -  \tphi2 \w_2) \cdot \Grad\Lap\varphi_1 \dx 
			+ 2\int_\bbT  \Lap(\tphi1\vel_1) \cdot \Grad\psi \dx \nonumber\\
			&= \sum_{j=1,2} \int_\bbT \f_j \cdot \w_j \dx + 2\int_\bbT  \Grad g_1\cdot\Grad\psi \dx.
		\end{align}
	\end{remark}

	\medskip
	\noindent\textbf{First uniform regularity estimates.}
    By means of a typical energy estimate, we obtain the first regularity estimates for the weak solution, which are uniform with respect to the regularization parameter $\varepsilon$.
	
	\begin{lemma}[Energy estimate]
		\label{Lin:lemma:energy_est}
		Let $0<T<\infty$ and $\varepsilon>0$, and let $\tphi1\in\rmH^2(\bbT)$ satisfy~$\tphi1\in(0,1)$ in $\bbT$. Suppose that
        \begin{equation*}
            \begin{alignedat}{4}
                \f_j &\in\rmL^2(0,T;\rmL^2(\bbT)^d), &\quad j&=1,2, &\qquad
                g_1&\in\rmL^2(0,T;\rmH^1(\bbT)) \cap \rmH^1(0,T;\rmH^{-1}(\bbT)), \\
                \vel_{j,0}&\in \rmH^2(\bbT)^d, &\quad j&=1,2, &\qquad
                \varphi_{1,0}&\in\rmH^3(\bbT), 
            \end{alignedat}
        \end{equation*}
        and that the assumptions of Lemma~\ref{Lin:lemma:existence_abstr}~\ref{Lin:item:existence_abstr_iii} hold. Then the weak solution~$(\vel_1,\vel_2,\varphi_1)$ to the regularized system~\eqref{Lin:eq:reg_system} in the sense of Definition~\ref{Lin:def:abstract_eq_weak} satisfies the uniform \textit{a priori} estimate
		\begin{align}
			\label{Lin:eq:a_priori_est}
			&\sum_{j=1,2} \! \bigprt{\norm{\vel_j}_{\rmL^\infty(0,T;\rmL^2\ofbbTd)\!}
				+ \norm{\vel_j}_{\rmL^2(0,T;\rmH^1\ofbbTd)} } 
			 + \norm{\varphi_1}_{\rmL^\infty(0,T;\rmH^1\ofbbT)\!} 
			+ \norm{\delt\varphi_1}_{\rmL^2(0,T;\rmL^2\ofbbT)\!} 
			+ \sqrt\varepsilon \norm{\varphi_1}_{\rmL^2(0,T;\rmH^2\ofbbT)\!} \nonumber\\
			&\leq \hat C \Bigprt{ \sum_{j=1,2} \norm{\f_j}_{\rmL^2(0,T;\rmL^2\ofbbTd)} + \norm{g_1}_{\rmL^2(0,T;\rmH^1\ofbbT)} 
			+ \sum_{j=1,2} \norm{\vel_{j,0}}_{\rmL^2\ofbbTd} + \norm{\varphi_{1,0}}_{\rmH^1\ofbbT} }
		\end{align}
		with a constant $\hat C>0$ depending on $\norm{\tphi1}_{\rmH^2\ofbbT}$ and $T$.
	\end{lemma}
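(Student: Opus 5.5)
The plan is to test the rewritten weak formulation \eqref{Lin:eq:weak_rewritten} of Remark~\ref{Lin:rmk:weak_formulation_rewritten} with the solution itself and integrate in time. Under the assumptions of Lemma~\ref{Lin:lemma:existence_abstr}~\ref{Lin:item:existence_abstr_iii}, the triple $(\vel_1,\vel_2,\varphi_1)$ is admissible as a test function: it lies in $\bbH$ a.e.\ in time, since \eqref{Lin:eq:reg_system_eq_div} holds and $\mean{\varphi_1}$ is zero. More conveniently, I would test the abstract equation \eqref{Lin:eq:abstract_eq_weak} directly with $(\vel_1,\vel_2,\varphi_1)\in\rmL^2(0,T;\bbV)$ and use $\ang{\delt y}{y}_{\bbV',\bbV}=\frac12\ddt\norm{y}_\bbH^2$.

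The first step is to identify the cross terms. They are
\[
\int_\bbT \Lap\varphi_1\Div(\tphi1\vel_1)\dx-\int_\bbT\Lap\varphi_1\Div(\tphi2\vel_2)\dx-2\int_\bbT\Div(\tphi1\vel_1)\Lap\varphi_1\dx .
\]
The divergence constraint \eqref{Lin:eq:reg_system_eq_div} gives $\Div(\tphi2\vel_2)=-\Div(\tphi1\vel_1)$, so the first two terms combine to $2\int_\bbT\Lap\varphi_1\Div(\tphi1\vel_1)\dx$. This cancels exactly against the third term. This cancellation is the structural reason for the choices of inner products on $\bbH_1$ and $\bbH_2$: the factor $2$ and the Riesz map $-2\Lap$.

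The remaining identity reads
\begin{align*}
&\frac12\ddt\Bigl(\sum_{j=1,2}\int_\bbT\trho{j}\abs{\vel_j}^2\dx+2\int_\bbT\abs{\Grad\varphi_1}^2\dx\Bigr)
+\sum_{j=1,2}\int_\bbT 2\tnu{j}\abs{\D\vel_j}^2+\tlambda{j}(\Div\vel_j)^2\dx
+2\varepsilon\int_\bbT(\Lap\varphi_1)^2\dx\\
&=\sum_{j=1,2}\int_\bbT\f_j\cdot\vel_j\dx+2\int_\bbT\Grad g_1\cdot\Grad\varphi_1\dx .
\end{align*}
The right-hand side is estimated by Young's inequality. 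Korn's inequality on the torus gives $\norm{\Grad\vel_j}_{\rmL^2}^2\le 2\norm{\D\vel_j}_{\rmL^2}^2$, and together with $\tnu{j}\ge K_*$ this controls $\norm{\vel_j}_{\rmH^1}^2$ up to $\norm{\vel_j}_{\rmL^2}^2$. The weights $\trho{j}=\bar\rho_j\tphi{j}$ are bounded below by a positive constant because $\tphi1\in\rmH^2\hookrightarrow\rmC(\bbT)$ with values in $(0,1)$. Gronwall's lemma then bounds $\vel_j$ in $\rmL^\infty(\rmL^2)\cap\rmL^2(\rmH^1)$, $\Grad\varphi_1$ in $\rmL^\infty(\rmL^2)$ and $\sqrt\varepsilon\,\Lap\varphi_1$ in $\rmL^2(\rmL^2)$. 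Since $\varphi_1$ has mean zero, Poincaré's inequality upgrades the gradient bounds to full $\rmH^1$ and $\rmH^2$ norms. The resulting constant depends on $T$ and on the lower bound of $\tphi1$, hence on $\tphi1$ through its $\rmH^2$ norm and positivity.

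The second step bounds $\delt\varphi_1$. By Remark~\ref{Lin:rmk:ptwise}, equation \eqref{Lin:eq:reg_system_eq_deltphi1} holds pointwise, so $\delt\varphi_1=g_1-\Div(\tphi1\vel_1)+\varepsilon\Lap\varphi_1$. The term $\Div(\tphi1\vel_1)$ is controlled in $\rmL^2(\rmL^2)$ by $\norm{\tphi1}_{\rmW^{1,4}}\norm{\vel_1}_{\rmH^1}$, which is bounded by the first step. The term $\varepsilon\Lap\varphi_1$ is bounded by $\sqrt\varepsilon\cdot\sqrt\varepsilon\norm{\Lap\varphi_1}_{\rmL^2(\rmL^2)}$. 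This is the main obstacle: that estimate is uniform only for $\varepsilon\le 1$, so I would restrict to $\varepsilon\in(0,1]$, which is harmless since $\varepsilon\to0$ later. If full uniformity in $\varepsilon$ is required, I would instead test \eqref{Lin:eq:reg_system_eq_deltphi1} with $\delt\varphi_1$. The term $\varepsilon\int\Grad\varphi_1\cdot\Grad\delt\varphi_1=\frac\varepsilon2\ddt\norm{\Grad\varphi_1}_{\rmL^2}^2$ then has a good sign, and the initial contribution $\varepsilon\norm{\Grad\varphi_{1,0}}^2$ is bounded by the $\rmH^1$ norm of the data for $\varepsilon\le1$.

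Only $\norm{g_1}_{\rmL^2(\rmH^1)}$ is used on the right-hand side, consistent with \eqref{Lin:eq:a_priori_est}.
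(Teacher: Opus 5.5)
Your proposal is correct and follows the paper's proof essentially step for step. You test \eqref{Lin:eq:abstract_eq_weak} with the solution itself, cancel the cross terms via $\Div(\tphi2\vel_2)=-\Div(\tphi1\vel_1)$, apply Korn, Poincar\'e and Gronwall, and finally read off $\delt\varphi_1$ from \eqref{Lin:eq:reg_system_eq_deltphi1}.

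Your observation about the term $\varepsilon\Lap\varphi_1$ is accurate: it is only controlled uniformly for $\varepsilon\le 1$. The paper's final display writes $\sqrt\varepsilon\norm{\Lap\varphi_1}_{\rmL^2(0,T;\rmL^2)}$ where $\varepsilon\norm{\Lap\varphi_1}_{\rmL^2(0,T;\rmL^2)}$ belongs, tacitly assuming $\varepsilon\le1$, so restricting to $\varepsilon\in(0,1]$ is the right fix. Your fallback of testing with $\delt\varphi_1$ gains nothing: it also needs $\varepsilon\le 1$ for the initial term, and in any case no bound uniform in all $\varepsilon>0$ can hold.
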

	
	\begin{proof}
		Testing the weak formulation~\eqref{Lin:eq:abstract_eq_weak} with the weak solution $(\vel_1,\vel_2,\varphi_1)$ itself, we get a.e.~in~$(0,T)$ that
		\begin{align*}
			&\sum_{j=1,2} \int_\bbT \trho{j}\delt\vel_j \cdot \vel_j \dx
			- 2\int_\bbT   \delt\varphi_1\Lap\varphi_1 \dx \\
			&\quad + \sum_{j=1,2} \int_\bbT 2\tnu{j}\abs{\D\vel_j}^2+ \tlambda{j}\prt{\Div\vel_j}^2 \dx 
			+ 2\varepsilon \int_\bbT \prt{\Lap\varphi_1}^2 \dx \\
			&\quad+ \int_\bbT  \Div(\tphi1\vel_1) \Lap\varphi_1 \dx - \int_\bbT  \Div(\tphi2\vel_2) \Lap\varphi_1 \dx 
			- 2\int_\bbT  \Div(\tphi1\vel_1)\Lap\varphi_1 \dx \\
			&= \sum_{j=1,2} \int_\bbT \f_j \cdot \vel_j \dx - 2\int_\bbT  g_1\Lap\varphi_1 \dx.
		\end{align*}
		Equivalently, recalling $\Div( \tphi1\vel_1 + \tphi2\vel_2)=0$, we find
		\begin{align*}
			&\sum_{j=1,2} \int_\bbT \trho{j}\delt\vel_j \cdot \vel_j \dx
			+ 2\int_\bbT  \delt\Grad\varphi_1\cdot\Grad\varphi_1 \dx 
			 + \sum_{j=1,2} \int_\bbT 2\tnu{j}\abs{\D\vel_j}^2+ \tlambda{j}\prt{\Div\vel_j}^2 \dx \\
			&\quad+ 2\varepsilon \int_\bbT \prt{\Lap\varphi_1}^2 \dx 
			= \sum_{j=1,2} \int_\bbT \f_j \cdot \vel_j \dx + 2\int_\bbT  \Grad g_1\cdot\Grad\varphi_1 \dx.
		\end{align*}
		In view of Hölder's and Young's inequalities, it follows
		\begin{align*}
			&\sum_{j=1,2} \frac12 \ddt \int_\bbT \trho{j} \abs{\vel_j}^2 \dx
			+ \ddt\int_\bbT  \abs{\Grad\varphi_1}^2 \dx \\
			&\quad + \sum_{j=1,2} \int_\bbT 2\tnu{j}\abs{\D\vel_j}^2+ \tlambda{j}\prt{\Div\vel_j}^2 \dx 
			+ 2\varepsilon \int_\bbT \prt{\Lap\varphi_1}^2 \dx \\
			&\leq C \Bigprt{ \sum_{j=1,2} \int_\bbT \abs{\f_j}^2 \dx + \int_\bbT\abs{\vel_j}^2 \dx + \int_\bbT \abs{\Grad g_1}^2 \dx + \int_\bbT \abs{\Grad\varphi_1}^2 \dx}
		\end{align*}
		a.e.~in $(0,T)$. Eventually, Korn's and Poincaré's inequalities and Gronwall's lemma yield the \textit{a priori} estimates
		\begin{align*}
			&\sum_{j=1,2} \norm{\vel_j}_{\rmL^\infty(0,T;\rmL^2\ofbbTd)}
			+ \norm{\varphi_1}_{\rmL^\infty(0,T;\rmH^1\ofbbT)} 
			+ \sum_{j=1,2} \norm{\vel_j}_{\rmL^2(0,T;\rmH^1\ofbbTd)}
			+ \sqrt{\varepsilon} \norm{\varphi_1}_{\rmL^2(0,T;\rmH^2\ofbbT)} \\
			&\leq \hat C \Bigprt{ \sum_{j=1,2} \norm{\f_j}_{\rmL^2(0,T;\rmL^2\ofbbTd)} + \norm{g_1}_{\rmL^2(0,T;\rmH^1\ofbbT)} 
			+ \sum_{j=1,2} \norm{\vel_{j,0}}_{\rmL^2\ofbbTd} + \norm{\varphi_{1,0}}_{\rmH^1\ofbbT} }.
		\end{align*}
		with a constant $\hat C>0$ depending on $\norm{\tphi1}_{\rmH^2\ofbbT}$ and $T$. Moreover, with regard to equation~\eqref{Lin:eq:reg_system_eq_deltphi1}, these estimates together with $\tphi1\in\rmH^2(\bbT)$ allow controlling
		\begin{align*}
			\norm{\delt\varphi_1}_{\rmL^2(0,T;\rmL^2\ofbbT)}
			&\leq \norm{g_1}_{\rmL^2(0,T;\rmL^2\ofbbT)} + \norm{\Div(\tphi1\vel_1)}_{\rmL^2(0,T;\rmL^2\ofbbT)} + \sqrt{\varepsilon} \norm{\Lap\varphi_1}_{\rmL^2(0,T;\rmL^2\ofbbT)} \nonumber\\
			&\leq \hat C \Bigprt{ \sum_{j=1,2} \norm{\f_j}_{\rmL^2(0,T;\rmL^2\ofbbTd)} + \norm{g_1}_{\rmL^2(0,T;\rmH^1\ofbbT)} 
			+ \sum_{j=1,2} \norm{\vel_{j,0}}_{\rmL^2\ofbbTd} + \norm{\varphi_{1,0}}_{\rmH^1\ofbbT} }.
		\end{align*}
	\end{proof}

	\subsection{Uniform Regularity Estimates for Constant Coefficients} \label{subsec:lin:est_const_coeffs}

    In order to pass to the limit $\varepsilon\to 0$ in the regularized system~\eqref{Lin:eq:reg_system}, we need \textit{a priori} estimates for the weak solution that are uniform in $\varepsilon>0$. Since these are challenging to prove, we first freeze the linearization coefficient $\tphi1$ and deduce regularity estimates for the case of constant coefficients $\tphi{j}$, $\trho{j}$, $\tnu{j}$ and $\tlambda{j}$, $j=1,2$. To this end, we assume for this entire subsection that $\tphi1\in(0,1)$ is constant.

    We begin with formally differentiating the system with respect to the spatial variable to obtain a higher energy estimate.
	
	\begin{lemma}[Higher regularity estimates in space]
		\label{Lin:lemma:est_higher}
        Let $0<T<\infty$ and $\varepsilon>0$, and let~$\tphi1\in(0,1)$ be constant. Suppose that
        \begin{equation*}
            \begin{alignedat}{4}
                \f_j &\in\rmL^2(0,T;\rmL^2(\bbT)^d), &\quad j&=1,2, &\qquad
                g_1&\in\rmL^2(0,T;\rmH^1(\bbT)) \cap \rmH^1(0,T;\rmH^{-1}(\bbT)), \\
                \vel_{j,0}&\in \rmH^2(\bbT)^d, &\quad j&=1,2, &\qquad
                \varphi_{1,0}&\in\rmH^3(\bbT), 
            \end{alignedat}
        \end{equation*}
        and that the assumptions of Lemma~\ref{Lin:lemma:existence_abstr}~\ref{Lin:item:existence_abstr_iii} hold. Then the weak solution~$(\vel_1,\vel_2,\varphi_1)$ to the regularized system~\eqref{Lin:eq:reg_system} in the sense of Definition~\ref{Lin:def:abstract_eq_weak} satisfies the uniform \textit{a priori} estimate
		\begin{align}
			\label{Lin:eq:a_priori_est_higher}
			&\sum_{j=1,2} \! \bigprt{\norm{\vel_j}_{\rmL^\infty(0,T;\rmH^1\ofbbTd)\!}
			+ \norm{\vel_j}_{\rmL^2(0,T;\rmH^2\ofbbTd)\!} } 
			+ \norm{\varphi_1}_{\rmL^\infty(0,T;\rmH^2\ofbbT)\!} 
			+ \norm{\delt\varphi_1}_{\rmL^2(0,T;\rmH^1\ofbbT)\!}
			+ \sqrt\varepsilon \norm{\varphi_1}_{\rmL^2(0,T;\rmH^3\ofbbT)\!} \nonumber \\
			&\leq C \Bigprt{ \sum_{j=1,2} \norm{\f_j}_{\rmL^2(0,T;\rmL^2\ofbbTd)} + \norm{g_1}_{\rmL^2(0,T;\rmH^1\ofbbT)} + \norm{g_1}_{\rmL^2(0,T;\rmH^1\ofbbT)}^{\frac12} \norm{\varphi_1}_{\rmL^2(0,T;\rmH^3\ofbbT)}^{\frac12} \nonumber\\
			&\qquad+ \sum_{j=1,2} \norm{\vel_{j,0}}_{\rmH^1\ofbbTd} + \norm{\varphi_{1,0}}_{\rmH^2\ofbbT} }
		\end{align}
		with a constant $C>0$ depending on $T$.
	\end{lemma}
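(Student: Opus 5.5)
We test the rewritten weak formulation \eqref{Lin:eq:weak_rewritten} from Remark~\ref{Lin:rmk:weak_formulation_rewritten} with $(\w_1,\w_2,\psi)=(-\Lap\vel_1,-\Lap\vel_2,-\Lap\varphi_1)$. This is the rigorous form of differentiating the system once in space and taking the energy.

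\textbf{Admissibility of the test function.} By Lemma~\ref{Lin:lemma:existence_abstr}~\ref{Lin:item:existence_abstr_iii}, we have $\vel_j\in\rmL^2(0,T;\rmH^2(\bbT)^d)$ and $\varphi_1\in\rmL^2(0,T;\rmH^3(\bbT))$. Hence $-\Lap\vel_j\in\rmL^2(\bbT)^d$ and $-\Lap\varphi_1\in\rmH^1_{(0)}(\bbT)$ for a.e.\ $t$. Since $\tphi1,\tphi2$ are constants, $\Lap$ commutes with multiplication by them, so
\[
\Div(\tphi1\Lap\vel_1+\tphi2\Lap\vel_2)=\Lap\Div(\tphi1\vel_1+\tphi2\vel_2)=0
\]
in $\calD'(\bbT)$. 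Thus the triple lies in $\bbH$ and is admissible.

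\textbf{The individual terms.} We then compute term by term, integrating by parts on the torus.
\begin{itemize}
\item The time derivatives give
\[
\frac12\ddt\sum_j\trho{j}\norm{\Grad\vel_j}_{\rmL^2}^2+\ddt\norm{\Lap\varphi_1}_{\rmL^2}^2 .
\]
For $\delt\varphi_1$ we use $\delt\varphi_1\in\rmL^2(0,T;\rmH^2)$, which follows from $\rmH^1(0,T;\bbV)$.
\item With constant coefficients, the viscous terms give
\[
\sum_j\int_\bbT \tnu{j}\abs{\Lap\vel_j}^2+(\tnu{j}+\tlambda{j})\abs{\Grad\Div\vel_j}^2\dx \ \geq\ c\sum_j\norm{\Lap\vel_j}_{\rmL^2}^2 .
\]
\item The regularization term gives $2\varepsilon\norm{\Grad\Lap\varphi_1}_{\rmL^2}^2$.
\end{itemize}

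\textbf{Cancellation of the coupling terms.} The crucial point is that the coupling terms cancel. The third-order coupling contributes $\int_\bbT(\tphi1\Lap\vel_1-\tphi2\Lap\vel_2)\cdot\Grad\Lap\varphi_1\dx$. The transport term contributes $-2\tphi1\int_\bbT\Lap\vel_1\cdot\Grad\Lap\varphi_1\dx$. Their sum is
\[
-\int_\bbT\Lap(\tphi1\vel_1+\tphi2\vel_2)\cdot\Grad\Lap\varphi_1\dx=\int_\bbT\Lap\Div(\tphi1\vel_1+\tphi2\vel_2)\,\Lap\varphi_1\dx=0 .
\]
The last integration by parts is justified by density, or by using $\Lap\varphi_1\in\rmH^1$ and interpreting the divergence of an $\rmH^2$ field appropriately. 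This is exactly the same structural cancellation as in Lemma~\ref{Lin:lemma:energy_est}.

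\textbf{Right-hand side and Gronwall.} The force terms are bounded by $\norm{\f_j}_{\rmL^2}\norm{\Lap\vel_j}_{\rmL^2}$, which we absorb with Young's inequality. The term $2\int_\bbT\Grad g_1\cdot\Grad(-\Lap\varphi_1)\dx$ is estimated only by $2\norm{g_1}_{\rmH^1}\norm{\varphi_1}_{\rmH^3}$. We integrate it in time without absorbing it, using Cauchy--Schwarz in time; this is precisely the origin of the mixed term $\norm{g_1}^{1/2}_{\rmL^2(0,T;\rmH^1)}\norm{\varphi_1}^{1/2}_{\rmL^2(0,T;\rmH^3)}$ after taking square roots. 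Integrating over $(0,t)$ then gives bounds for $\vel_j$ in $\rmL^\infty(0,T;\rmH^1)\cap\rmL^2(0,T;\rmH^2)$, for $\varphi_1$ in $\rmL^\infty(0,T;\rmH^2)$, and for $\sqrt\varepsilon\,\Grad\Lap\varphi_1$ in $\rmL^2(0,T;\rmL^2)$. Here we use elliptic regularity on $\bbT$ to pass from $\Lap\vel_j$ to the full $\rmH^2$ norm. The lower-order parts of the norms are supplied by Lemma~\ref{Lin:lemma:energy_est}, and Gronwall's lemma handles any remaining zero-order terms.

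\textbf{The bound on $\delt\varphi_1$.} Finally, we read off $\delt\varphi_1=g_1-\tphi1\Div\vel_1+\varepsilon\Lap\varphi_1$ pointwise (Remark~\ref{Lin:rmk:ptwise}). This gives
\[
\norm{\delt\varphi_1}_{\rmL^2(0,T;\rmH^1)}\le\norm{g_1}_{\rmL^2(0,T;\rmH^1)}+C\norm{\vel_1}_{\rmL^2(0,T;\rmH^2)}+\sqrt\varepsilon\cdot\sqrt\varepsilon\norm{\varphi_1}_{\rmL^2(0,T;\rmH^3)} .
\]
This is controlled by the previous bounds, provided $\varepsilon\le1$, which we assume since only $\varepsilon\to0$ matters.

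\textbf{Main obstacle.} The step I expect to be most delicate is the rigorous justification of the test and of the cancellation at the available regularity. In particular, the pairing between $\Lap\vel_j$ and $\Grad\Lap\varphi_1$ appears at the borderline of integrability. Everything else is routine once the cancellation is in place.
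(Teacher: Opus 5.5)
Your proposal is correct and is essentially the paper's argument. The paper tests \eqref{Lin:eq:weak_rewritten} with $(-\partial_{x_k}^2\vel_1,-\partial_{x_k}^2\vel_2,-\partial_{x_k}^2\varphi_1)$ for each $k$ separately and sums, applying Korn's inequality to $2\tnu{j}\abs{\D\partial_{x_k}\vel_j}^2+\tlambda{j}(\Div\partial_{x_k}\vel_j)^2$; this is your $-\Lap$-test written coordinatewise, where you instead obtain $\tnu{j}\abs{\Lap\vel_j}^2+(\tnu{j}+\tlambda{j})\abs{\Grad\Div\vel_j}^2$ and use elliptic regularity. The rest matches: the coupling terms cancel via $\Div(\tphi1\vel_1+\tphi2\vel_2)=0$, the non-absorbed $\Grad g_1$ term produces the mixed product, $\delt\varphi_1$ is recovered from \eqref{Lin:eq:reg_system_eq_deltphi1}, and your restriction $\varepsilon\le 1$ there is tacitly needed in the paper as well.
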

	
	\begin{proof}
		In the following, for each $k\in\{1, \dots, d\}$, we use $(-\delxk^2\vel_1, -\delxk^2\vel_2, -\delxk^2\varphi_1)\in\bbH$ as a test function in~\eqref{Lin:eq:weak_rewritten} to obtain a higher regularity with respect to the spatial variable. With~$\tphi{j}$ being constant, this is admissible, since
		\begin{align*}
			\Div\bigprt{\tphi1(-\delxk^2\vel_1) + \tphi2(-\delxk^2\vel_2)}
			= -\delxk^2 \Div(\tphi1\vel_1 + \tphi2\vel_2)
			= 0 
			\quad\text{in }\calD'(\bbT).
		\end{align*}
		Proceeding similarly to the energy estimate, see Lemma~\ref{Lin:lemma:energy_est}, inserting the test function described above gives
		\begin{align*}
			&-\sum_{j=1,2}  \int_\bbT \trho{j}\delt\vel_j \cdot \delxk^2\vel_j \dx
			- 2\int_\bbT  \Grad\delt\varphi_1 \cdot \Grad\delxk^2\varphi_1 \dx \\
			&\quad + \sum_{j=1,2} \int_\bbT \Div\bigprt{ 2\tnu{j}\D\vel_j + \tlambda{j}\Div\vel_j\Id } \cdot \delxk^2\vel_j \dx 
			+ 2\varepsilon \int_\bbT  \Grad\Lap\varphi_1 \cdot\Grad\delxk^2\varphi_1 \dx \\
			&\quad+ \int_\bbT \bigprt{  \tphi1 \delxk^2\vel_1 -  \tphi2 \delxk^2\vel_2} \cdot \Grad\Lap\varphi_1 \dx 
			- 2\int_\bbT  \Lap(\tphi1\vel_1) \cdot \Grad\delxk^2\varphi_1 \dx \\
			&= - \sum_{j=1,2} \int_\bbT \f_j \cdot \delxk^2\vel_j \dx 
			- 2\int_\bbT  \Grad g_1\cdot\Grad\delxk^2\varphi_1 \dx,
		\end{align*}
		which is equivalent to
		\begin{align*}
			&\sum_{j=1,2}  \int_\bbT \trho{j}\delt\delxk\vel_j \cdot \delxk\vel_j \dx
			+ 2\int_\bbT  \delt\Grad\delxk\varphi_1 \cdot \Grad\delxk\varphi_1 \dx \\
			&\quad + \sum_{j=1,2} \int_\bbT 2\tnu{j}\abs{\D\delxk\vel_j}^2 + \tlambda{j}\prt{\Div\delxk\vel_j}^2 \dx 
			+ 2\varepsilon \int_\bbT  \prt{\Lap\delxk\varphi_1}^2 \dx \\
			&\quad+ \int_\bbT \bigprt{  \Div(\tphi1 \delxk\vel_1) -  \Div(\tphi2 \delxk\vel_2)} \Lap\delxk\varphi_1 \dx 
			- 2\int_\bbT  \Div(\tphi1\delxk\vel_1) \Lap\delxk\varphi_1 \dx \\
			&= - \sum_{j=1,2} \int_\bbT \f_j \cdot \delxk^2\vel_j \dx 
			- 2\int_\bbT  \Grad g_1\cdot\Grad\delxk^2\varphi_1 \dx,
		\end{align*}
		where the third line vanishes due to $-\Div(\tphi2 \delxk\vel_2) = \Div(\tphi1 \delxk\vel_1)$. Hence, this leads to
		\begin{align*}
			&\sum_{j=1,2} \frac12 \ddt \int_\bbT \trho{j} \abs{\delxk\vel_j}^2 \dx
			+ \ddt\int_\bbT  \abs{\Grad\delxk\varphi_1}^2 \dx \nonumber\\
			&\quad + \sum_{j=1,2} \int_\bbT 2\tnu{j}\abs{\D\delxk\vel_j}^2+ \tlambda{j}\prt{\Div\delxk\vel_j}^2 \dx 
			+ 2\varepsilon \int_\bbT \prt{\Lap\delxk\varphi_1}^2 \dx \nonumber\\
			&\leq C \Bigprt{ \sum_{j=1,2} C_\delta\int_\bbT \abs{\f_j}^2 \dx + \delta\int_\bbT\abs{\delxk^2\vel_j}^2 \dx 
				+ \int_\bbT  \abs{\Grad g_1}\abs{\Grad\delxk^2\varphi_1} \dx },
		\end{align*}
		using Young's inequality, followed by an absorption after choosing $\delta$ sufficiently small. Subsequently, we apply Korn's and Poincaré's inequalities as well as Gronwall's lemma as in the energy estimate. Gathering the obtained estimates for all $k\in\{1,\dots,d\}$ and invoking the energy estimate from Lemma~\ref{Lin:lemma:energy_est}, we end up with
		\begin{align*}
			&\sum_{j=1,2} \norm{\vel_j}_{\rmL^\infty(0,T;\rmH^1\ofbbTd)}
			+ \norm{\varphi_1}_{\rmL^\infty(0,T;\rmH^2\ofbbT)} 
			+ \sum_{j=1,2} \norm{\vel_j}_{\rmL^2(0,T;\rmH^2\ofbbTd)}
			+ \sqrt\varepsilon \norm{\varphi_1}_{\rmL^2(0,T;\rmH^3\ofbbT)} \\
			&\leq C \Bigprt{ \sum_{j=1,2} \norm{\f_j}_{\rmL^2(0,T;\rmL^2\ofbbTd)} 
			+ \norm{g_1}_{\rmL^2(0,T;\rmH^1\ofbbT)}^{\frac12} \norm{\varphi_1}_{\rmL^2(0,T;\rmH^3\ofbbT)}^{\frac12} 
			+ \sum_{j=1,2} \norm{\vel_{j,0}}_{\rmH^1\ofbbTd} + \norm{\varphi_{1,0}}_{\rmH^2\ofbbT} }
		\end{align*}
		with a constant $C>0$ depending on $T$. In addition, combining these estimates with equation~\eqref{Lin:eq:reg_system_eq_deltphi1} reveals
		\begin{align*}
			\norm{\delt\varphi_1}_{\rmL^2(0,T;\rmH^1\ofbbT)} 
			&\leq \norm{g_1}_{\rmL^2(0,T;\rmH^1\ofbbT)} + \norm{\Div(\tphi1\vel_1)}_{\rmL^2(0,T;\rmH^1\ofbbT)} + \sqrt\varepsilon \norm{\Lap\varphi_1}_{\rmL^2(0,T;\rmH^1\ofbbT)} \nonumber\\
			&\leq C \Bigprt{ \sum_{j=1,2} \norm{\f_j}_{\rmL^2(0,T;\rmL^2\ofbbTd)} + \norm{g_1}_{\rmL^2(0,T;\rmH^1\ofbbT)} + \norm{g_1}_{\rmL^2(0,T;\rmH^1\ofbbT)}^{\frac12} \norm{\varphi_1}_{\rmL^2(0,T;\rmH^3\ofbbT)}^{\frac12} \\
			&\qquad+ \sum_{j=1,2} \norm{\vel_{j,0}}_{\rmH^1\ofbbTd} + \norm{\varphi_{1,0}}_{\rmH^2\ofbbT} }.
		\end{align*}
	\end{proof}

	The main difficulty of this subsection is to show a uniform \textit{a priori} estimate for $\Grad\Lap\varphi_1$. To achieve this, we use a specific testing procedure that leads to a weak damped plate equation for $\varphi_1$. The choice of a suitable test function is then crucial for establishing the desired estimate.
	
	\begin{lemma}[Estimate for $\Grad\Lap\varphi_1$]
		\label{Lin:lemma:est_grad_Lap_phi1}
		Let $0<T<\infty$ and $\varepsilon>0$, and let $\tphi1\in(0,1)$ be constant. Suppose that
        \begin{equation*}
            \begin{alignedat}{4}
                \f_j &\in\rmL^2(0,T;\rmL^2(\bbT)^d), &\quad j&=1,2, &\qquad
                g_1&\in\rmL^2(0,T;\rmH^1(\bbT)) \cap \rmH^1(0,T;\rmH^{-1}(\bbT)), \\
                \vel_{j,0}&\in \rmH^2(\bbT)^d, &\quad j&=1,2, &\qquad
                \varphi_{1,0}&\in\rmH^3(\bbT), 
            \end{alignedat}
        \end{equation*}
        and that the assumptions of Lemma~\ref{Lin:lemma:existence_abstr}~\ref{Lin:item:existence_abstr_iii} hold. Then the weak solution~$(\vel_1,\vel_2,\varphi_1)$ to the regularized system~\eqref{Lin:eq:reg_system} in the sense of Definition~\ref{Lin:def:abstract_eq_weak} satisfies the uniform \textit{a priori} estimate
		\begin{align}
			\label{Lin:eq:a_priori_est_phi1}
			&\norm{\delt\varphi_1}_{\rmL^\infty(0,T;\rmL^2\ofbbT)}
			+ \norm{\Lap\varphi_1}_{\rmL^\infty(0,T;\rmL^2\ofbbT)} 
			+ \norm{\Grad\Lap\varphi_1}_{\rmL^2(0,T;\rmL^2\ofbbTd)}
			\!+ \!\sqrt\varepsilon \norm{\Grad\delt\varphi_1}_{\rmL^2(0,T;\rmL^2\ofbbT)} \nonumber \\
			&\leq C \Bigprt{ \sum_{j=1,2} \norm{\f_j}_{\rmL^2(0,T;\rmL^2\ofbbTd)} + \norm{g_1}_{\rmL^2(0,T;\rmH^1\ofbbT)} + \norm{g_1}_{\rmH^1(0,T;\rmH^{-1}\ofbbT)} \nonumber \\
			&\qquad+ \sum_{j=1,2} \norm{\vel_{j,0}}_{\rmH^1\ofbbTd} + \norm{\varphi_{1,0}}_{\rmH^2\ofbbT} 
			+ \norm{\delt\varphi_1\vert_{t=0}}_{\rmL^2\ofbbT} } 
		\end{align}
		with a constant $C>0$ depending on $T$.
	\end{lemma}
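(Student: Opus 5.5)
The plan is to eliminate $\vel_1,\vel_2$ and $p$ and derive a scalar, weakly formulated damped plate equation for $\varphi_1$. With constant coefficients, this equation can be tested like an energy identity. Because the coefficients are constant, all operations below commute with derivatives. The regularity from Lemma~\ref{Lin:lemma:existence_abstr}~(ii)--(iii) ($\varphi_1\in\rmH^1(0,T;\rmH^2)\cap\rmH^2(0,T;\rmH^{-2})\cap \rmL^2(0,T;\rmH^3)$, $\vel_j\in\rmL^2(0,T;\rmH^2)\cap\rmH^1(0,T;\rmH^1)$) makes each step rigorous in a weak (duality) sense. The pressure is taken from Lemma~\ref{Lin:lemma:pressure_fixed_t}.

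\emph{Step 1 (eliminating the pressure).} Divide \eqref{Lin:eq:reg_system_eq_deltv1}--\eqref{Lin:eq:reg_system_eq_deltv2} by $\trho{j}$ and take the divergence, using $\Div\Div\Str_j=(2\tnu{j}+\tlambda{j})\Lap\Div\vel_j$. Writing $\kappa_j=(2\tnu{j}+\tlambda{j})/\trho{j}$, this gives
\begin{align*}
\delt\Div\vel_1 &= \kappa_1\Lap\Div\vel_1 - \barrho1^{-1}\Lap p + \barrho1^{-1}\Lap^2\varphi_1 + \trho1^{-1}\Div\f_1,\\
\delt\Div\vel_2 &= \kappa_2\Lap\Div\vel_2 - \barrho2^{-1}\Lap p - \barrho2^{-1}\Lap^2\varphi_1 + \trho2^{-1}\Div\f_2 .
\end{align*}
Multiply by $\tphi{j}$ and add, using $\tphi1\Div\vel_1+\tphi2\Div\vel_2=0$. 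This yields $\tant\Lap p=\tdnt\Lap^2\varphi_1+\kappa_1\tphi1\Lap\Div\vel_1+\kappa_2\tphi2\Lap\Div\vel_2+(\text{div}\,\f\text{-terms})$.

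\emph{Step 2 (the plate equation).} Differentiate \eqref{Lin:eq:reg_system_eq_deltphi1} in time and insert the first identity and $\Lap p$ from Step 1. Then replace $\tphi1\Div\vel_1=-\tphi2\Div\vel_2=g_1-\delt\varphi_1+\varepsilon\Lap\varphi_1$. The outcome has the form
\begin{align*}
\delt^2\varphi_1 - (\varepsilon+\kappa)\Lap\delt\varphi_1 + c\,\Lap^2\varphi_1 = \partial_t g_1 - \kappa\Lap g_1 + \varepsilon\kappa\Lap^2\varphi_1\text{-type terms} + \Div\mathbf h,
\end{align*}
with the following ingredients:
\begin{itemize}
\item constants $\kappa>0$ (a positive combination of $\kappa_1,\kappa_2$);
\item $c=\tphi1\barrho1^{-1}\bigl(1-\tdnt/\tant\bigr)=2\tphi1\tphi2/(\barrho1\barrho2\tant)>0$;
\item $\mathbf h$ a linear combination of $\f_1,\f_2$.
\end{itemize}
The $\varepsilon\kappa\Lap^2\varphi_1$-contribution has a sign that only increases the plate coefficient, or it is absorbed; I would check this carefully. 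The key structural facts are positivity of $c$ (true plate stiffness) and the appearance of the strong damping $-\kappa\Lap\delt\varphi_1$ with $\kappa$ uniform in $\varepsilon$. The latter comes from the viscous terms via the transport equation.

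\emph{Step 3 (testing).} First test with $\delt\varphi_1$. This gives $\tfrac12\ddt(\norm{\delt\varphi_1}^2+c\norm{\Lap\varphi_1}^2)+(\varepsilon+\kappa)\norm{\Grad\delt\varphi_1}^2$. On the right, $\langle\delt g_1,\delt\varphi_1\rangle_{\rmH^{-1},\rmH^1}$, the $\Lap g_1$ term and the $\Div\mathbf h$ term are controlled by the damping and Young's inequality. Second, test with $-\Lap\varphi_1$:
\begin{itemize}
\item $\langle\delt^2\varphi_1,-\Lap\varphi_1\rangle=\ddt(\Grad\delt\varphi_1,\Grad\varphi_1)-\norm{\Grad\delt\varphi_1}^2$, where the last term is already controlled by the first test;
\item the damping term becomes $\tfrac{\kappa+\varepsilon}2\ddt\norm{\Lap\varphi_1}^2$;
\item the plate term yields $c\norm{\Grad\Lap\varphi_1}^2$;
\item $\Div\mathbf h$ tested against $\Lap\varphi_1$ gives $\mathbf h\cdot\Grad\Lap\varphi_1$, which is absorbed.
\end{itemize}
Adding a large multiple of the first estimate to the second, integrating in time, and using Gronwall gives \eqref{Lin:eq:a_priori_est_phi1}. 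The cross term $(\Grad\delt\varphi_1,\Grad\varphi_1)$ at times $0$ and $t$ is bounded by $\norm{\delt\varphi_1}\norm{\Lap\varphi_1}$ plus Lemma~\ref{Lin:lemma:est_higher}. The initial quantities produce $\norm{\delt\varphi_1|_{t=0}}_{\rmL^2}$ and $\norm{\varphi_{1,0}}_{\rmH^2}$.

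\emph{Main obstacle.} I expect the main difficulty to be Step 2: doing the algebra correctly and verifying the sign of $c$ and of the $\varepsilon$-dependent terms, so that all constants stay uniform in $\varepsilon$. It must also be made rigorous in the weak setting, i.e., at the level of \eqref{Lin:eq:weak_rewritten} with test functions $\trho{j}^{-1}\Grad\xi$ rather than by pointwise differentiation. If the direct elimination is awkward, I would instead test \eqref{Lin:eq:weak_rewritten} and the time-differentiated transport equation with the pair built from $\Grad\Lap^{-1}$ of the test function.
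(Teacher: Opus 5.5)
Your proposal is correct. It departs from the paper's proof mainly in the testing step. You flagged the derivation of the plate equation as your main obstacle; it is essentially the paper's, and the algebra works out.

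\textbf{Step 2.} The paper never reconstructs the pressure. It tests \eqref{Lin:eq:weak_rewritten} with $(\tphi1^{-1}\Grad\Psi,-\tphi2^{-1}\Grad\Psi,0)\in\bbH$, which annihilates $p$. Your elimination of $\Lap p$ produces a positive multiple of the same combination of the momentum equations. Indeed, the paper obtains
$\tant\delt^2\varphi_1-(\tbnt+\varepsilon\tant)\delt\Lap\varphi_1+(2+\varepsilon\tbnt)\Lap^2\varphi_1=\tant\delt g_1-\tbnt\Lap g_1-\tphi1^{-1}\Div\f_1+\tphi2^{-1}\Div\f_2$,
with $\tant=\sum_j\barrho{j}\tphi{j}^{-1}$ and $\tbnt=\sum_j(2\tnu{j}+\tlambda{j})\tphi{j}^{-2}$. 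Your equation is exactly this divided by $\tant$: $c=2/\tant$ and $\kappa=\tbnt/\tant$. The $\varepsilon$-term enters with the favourable sign, so the plate coefficient is $c+\varepsilon\kappa$ and nothing needs absorbing.

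One technical point: testing \eqref{Lin:eq:weak_rewritten} itself with $\trho{j}^{-1}\Grad\xi$ is not admissible, since that pair violates the constraint defining $\bbH_1$. You must work with the pressure formulation of Lemma~\ref{Lin:lemma:pressure_fixed_t}, whose hypotheses hold here. Alternatively, use the paper's pressure-free pair, which is essentially the fallback you mention at the end.

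\textbf{Step 3.} The paper moves the viscous damping $\tbnt\delt\Lap\varphi_1$ to the right-hand side, adds $-\tant\delt\Lap\varphi_1$ to both sides, and tests once with $\delt\varphi_1-\Lap\varphi_1$. The damping thus becomes an error term $C\norm{\Grad\delt\varphi_1}^2_{\rmL^2}$. This is controlled by Lemma~\ref{Lin:lemma:est_higher}, whose bound contains $\norm{g_1}^{1/2}_{\rmL^2(0,T;\rmH^1)}\norm{\varphi_1}^{1/2}_{\rmL^2(0,T;\rmH^3)}$, so a final absorption is needed.

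You instead keep the $\varepsilon$-independent damping $\kappa$ on the left. The test with $\delt\varphi_1$ then already bounds $\delt\varphi_1$ and $\Lap\varphi_1$ in $\rmL^\infty(0,T;\rmL^2)$, and $\Grad\delt\varphi_1$ in $\rmL^2(0,T;\rmL^2)$. In the test with $-\Lap\varphi_1$, the data terms are absorbed into $c\norm{\Grad\Lap\varphi_1}^2_{\rmL^2}$. The only other bad terms are $\norm{\Grad\delt\varphi_1}^2_{\rmL^2}$ and the boundary term $(\Grad\delt\varphi_1,\Grad\varphi_1)=-(\delt\varphi_1,\Lap\varphi_1)$, and both are covered by the first test. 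So Lemma~\ref{Lin:lemma:est_higher} is not needed at all, not even for the cross term as you suggest.

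\textbf{Comparison.} Your route is self-contained and gives the stronger bound on $\norm{\Grad\delt\varphi_1}_{\rmL^2(0,T;\rmL^2)}$ without the weight $\sqrt\varepsilon$. The paper's route needs no pressure and a single multiplier, at the cost of relying on the previous lemma and an absorption argument.
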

	
	\begin{proof}
		The strategy of this proof is to write the divergence of a suitably weighted sum of~\eqref{Lin:eq:reg_system_eq_deltv1} and \eqref{Lin:eq:reg_system_eq_deltv2} as a damped plate equation
		\begin{align*}
			\alpha\delt^2\varphi_1 - (\beta+\varepsilon \alpha) \Lap\delt\varphi_1 + (\gamma+\varepsilon \beta) \Lap^2\varphi_1 = f,
		\end{align*}
		with coefficients $\alpha,\beta,\gamma>0$ and some right-hand side $f$. Therein, we add and subtract a term~$-\alpha \Lap\delt\varphi_1$ and test with $\delt\varphi_1 - \Lap\varphi_1$.
		
		In order to achieve this goal on a rigorous level, we test the weak formulation~\eqref{Lin:eq:weak_rewritten} with~$\bigprt{\tphi1^{-1} \Grad\Psi, -\tphi2^{-1} \Grad\Psi, 0}$, where $\Psi= \delt\varphi_1 - \Lap\varphi_1$, noting that this is an admissible triple of test functions in $\bbH$. The key identities, which we employ several times in this proof, are
		\begin{align*}
			\Div \vel_1 &= \tphi1^{-1} \prt{-\delt\varphi_1 + \varepsilon\Lap\varphi_1 + g_1}, \\
			\Div \vel_2 &= -\tphi2^{-1} \prt{-\delt\varphi_1 + \varepsilon\Lap\varphi_1 + g_1}
		\end{align*}
		a.e.~in~$(0,T)\times\bbT$, derived by the equations~\eqref{Lin:eq:reg_system_eq_deltphi1} and \eqref{Lin:eq:reg_system_eq_div}. Moreover, we recall the identity $\trho{j}=\barrho{j}\tphi{j}$ and abbreviate notation by 
		\begin{alignat*}{1}
			\tant = \sum_{j=1,2}\barrho{j}\tphi{j}^{-1}, \quad
			\tbnt = \sum_{j=1,2} (2\tnu{j}+\tlambda{j}) \tphi{j}^{-2}.
		\end{alignat*}
		
		First, we consider some arbitrary test function $\Psi\in\rmC^\infty(\bbT)$ in the resulting tested equation~\eqref{Lin:eq:weak_rewritten}. A.e.~in $(0,T)$, we study the individual parts separately, beginning with
		\begin{align*}
			&\int_\bbT \trho1 \delt \vel_1 \cdot \tphi1^{-1} \Grad\Psi \dx
			- \int_\bbT \trho2 \delt \vel_2 \cdot \tphi2^{-1} \Grad\Psi \dx \\
			&= - \int_\bbT \barrho1 \delt \Div\vel_1 \Psi \dx
			+ \int_\bbT \barrho2 \delt \Div\vel_2 \Psi \dx 
			= \bigang{ \tant \delt^2\varphi_1 
			- \varepsilon \tant \delt\Lap\varphi_1 
			- \tant \delt g_1 }{ \Psi }_{\rmH^{-1}\ofbbT,\rmH^1\ofbbT}.
		\end{align*}
		For the second part, we exploit, obtained by an integration by parts and using Schwarz's theorem, the two identities~$(\Lap\vel_j, \Grad\Psi)_{\rmL^2\ofbbT} = -(\Grad\vel_j, \Grad^2\Psi)_{\rmL^2\ofbbT} = (\Grad\Div\vel_j, \Grad\Psi)_{\rmL^2\ofbbT} $ to compute
		\begin{align*}
			&-\int_\bbT \Div\bigprt{ 2\tnu{j}\D\vel_j + \tlambda{j}\Div\vel_j\Id } \cdot \tphi{j}^{-1} \Grad\Psi \dx \\
			&= -\int_\bbT \bigprt{\tnu{j}\Lap\vel_j + (\tnu{j}+ \tlambda{j})\Grad\Div\vel_j} \cdot \tphi{j}^{-1} \Grad\Psi \dx \\
			&= -\int_\bbT (2\tnu{j}+\tlambda{j}) \tphi{j}^{-1} \Grad\Div\vel_j \cdot \Grad\Psi\dx,
		\end{align*}
		which leads to the expression
		\begin{align*}
			&-\int_\bbT (2\tnu1+\tlambda1) \tphi1^{-1} \Grad\Div\vel_1 \cdot \Grad\Psi\dx 
			+ \int_\bbT (2\tnu2+\tlambda2) \tphi2^{-1} \Grad\Div\vel_2 \cdot \Grad\Psi\dx \\
			&= \bigang{ - \tbnt \delt\Lap\varphi_1 
			+ \varepsilon \tbnt \Lap^2\varphi_1 
			+ \tbnt \Lap g_1 }{\Psi}_{\rmH^{-1}\ofbbT,\rmH^1\ofbbT}.
		\end{align*}
		Moreover, the third part is given by
		\begin{align*}
			- \int_\bbT \bigprt{  \tphi1 \tphi1^{-1} \Grad\Psi +  \tphi2 \tphi2^{-1} \Grad\Psi } \cdot \Grad\Lap\varphi_1 \dx 
			= \bigang{ 2 \Lap^2\varphi_1 }{\Psi}_{\rmH^{-1}\ofbbT,\rmH^1\ofbbT},
		\end{align*}
		while the right-hand side reads
		\begin{align*}
			 \int_\bbT \f_1 \cdot \tphi1^{-1} \Grad\Psi \dx
			- \int_\bbT \f_2 \cdot \tphi2^{-1} \Grad\Psi \dx 
			= \bigang{ -\tphi1^{-1} \Div\f_1 + \tphi2^{-1} \Div\f_2 }{\Psi }_{\rmH^{-1}\ofbbT,\rmH^1\ofbbT}.
		\end{align*}
		Altogether, these computations reveal the damped plate equation 
		\begin{align*}
			&\bigang{ \tant \delt^2\varphi_1 - (\tbnt + \varepsilon \tant) \delt\Lap\varphi_1 + (2+\varepsilon \tbnt)\Lap^2\varphi_1}{\Psi}_{\rmH^{-1}\ofbbT,\rmH^1\ofbbT} \\
			&= \bigang{ \tant\delt g_1 - \tbnt\Lap g_1 - \tphi1^{-1}\Div\f_1 + \tphi2^{-1}\Div\f_2 }{\Psi}_{\rmH^{-1}\ofbbT,\rmH^1\ofbbT}
		\end{align*}
		a.e.~in $(0,T)$. In a next step, moving the term $-\ang{ \tbnt\delt\Lap\varphi_1}{\Psi}_{\rmH^{-1}\ofbbT,\rmH^1\ofbbT}$ to the right-hand side and adding a term $-\ang{ \tant\delt\Lap\varphi_1}{\Psi}_{\rmH^{-1}\ofbbT,\rmH^1\ofbbT}$ on both sides, this is equivalent to 
		\begin{align*}
			&\bigang{ \tant \delt(\delt\varphi_1 - \Lap\varphi_1) - \varepsilon \tant \delt\Lap\varphi_1 + (2+\varepsilon \tbnt)\Lap^2\varphi_1}{\Psi}_{\rmH^{-1}\ofbbT,\rmH^1\ofbbT} \\
			&= \bigang{ (\tbnt - \tant) \delt\Lap\varphi_1 + \tant\delt g_1 - \tbnt\Lap g_1 - \tphi1^{-1}\Div\f_1 + \tphi2^{-1}\Div\f_2 }{\Psi}_{\rmH^{-1}\ofbbT,\rmH^1\ofbbT}.
		\end{align*}
		Since, by a density argument, this equation holds for all $\Psi\in\rmH^1$, we insert $\Psi= \delt\varphi_1 - \Lap\varphi_1$, which yields
		\begin{align*}
			&\tant \bigang{ \delt(\delt\varphi_1 - \Lap\varphi_1)}{\delt\varphi_1 - \Lap\varphi_1}_{\rmH^{-1}\ofbbT,\rmH^1\ofbbT} 
			+ 2 \bigang{ \Lap^2\varphi_1}{\delt\varphi_1}_{\rmH^{-1}\ofbbT,\rmH^1\ofbbT} \\
			&\quad- 2 \bigang{ \Lap^2\varphi_1}{\Lap\varphi_1}_{\rmH^{-1}\ofbbT,\rmH^1\ofbbT} 
			- \varepsilon \tant \bigang{ \delt\Lap\varphi_1 }{\delt\varphi_1}_{\rmH^{-1}\ofbbT,\rmH^1\ofbbT} \\
			&\quad+ \varepsilon ( \tant + \tbnt) \bigang{ \delt\Lap\varphi_1 }{\Lap\varphi_1}_{\rmH^{-1}\ofbbT,\rmH^1\ofbbT} 
			- \varepsilon \tbnt \bigang{ \Lap^2\varphi_1 }{\Lap\varphi_1}_{\rmH^{-1}\ofbbT,\rmH^1\ofbbT} \\
			&= \bigang{ (\tbnt - \tant) \delt\Lap\varphi_1 - \tbnt\Lap g_1 - \tphi1^{-1}\Div\f_1 + \tphi2^{-1}\Div\f_2 }{\delt\varphi_1 - \Lap\varphi_1}_{\rmH^{-1}\ofbbT,\rmH^1\ofbbT} \\
			&\quad+ \tant \bigang{ \delt g_1 }{\delt\varphi_1 - \Lap\varphi_1}_{\rmH^{-1}\ofbbT,\rmH^1\ofbbT}.
		\end{align*}
		Performing some suitable integrations by parts, combined with Hölder's and Young's inequalities, gives
		\begin{align*}
			&\frac{\tant}{2}\ddt \int_\bbT \abs{\delt\varphi_1 - \Lap\varphi_1}^2 \dx
			+ \ddt \int_\bbT \abs{\Lap\varphi_1 }^2 \dx
			+ 2\int_\bbT  \abs{\Grad\Lap\varphi_1}^2 \dx \\
			&\quad+ \varepsilon \tant \int_\bbT \abs{\Grad\delt\varphi_1}^2 \dx 
			+ \varepsilon \frac{\tant + \tbnt}{2} \ddt \int_\bbT \abs{\Lap\varphi_1}^2 \dx
			+ \varepsilon \tbnt \int_\bbT \abs{\Grad\Lap\varphi_1}^2 \dx \\
			&\leq C_\delta \int_\bbT \abs{\Grad\delt\varphi_1}^2 + \abs{\Grad g_1}^2 + \sum_{j=1,2} \abs{\f_j}^2  \dx 
			+ \delta \int_\bbT \abs{ \Grad(\delt\varphi_1 - \Lap\varphi_1)}^2 \dx \\
			&\quad+ \bigang{ \tant\delt g_1 }{\delt\varphi_1 - \Lap\varphi_1}_{\rmH^{-1}\ofbbT,\rmH^1\ofbbT}
		\end{align*}
		a.e.~in $(0,T)$. Estimating the last line against $C_\delta \bigprt{\norm{\delt g_1}_{\rmH^{-1}\ofbbT}^2 + \norm{\delt\varphi_1}_{\rmH^1\ofbbT}^2} +  \delta \norm{\Lap\varphi_1}_{\rmH^1\ofbbT}^2$, we absorb the latter term as well as the $\Grad\Lap\varphi_1$-term on the right-hand side of the equation. In view of Gronwall's lemma, we eventually find
		\begin{align*}
			&\norm{\delt\varphi_1-\Lap\varphi_1}_{\rmL^\infty(0,T;\rmL^2\ofbbT)}
			+ \norm{\Lap\varphi_1}_{\rmL^\infty(0,T;\rmL^2\ofbbT)} 
			+ \norm{\Grad\Lap\varphi_1}_{\rmL^2(0,T;\rmL^2\ofbbTd)}\\
			&\quad+ \sqrt\varepsilon \norm{\Grad\delt\varphi_1}_{\rmL^2(0,T;\rmL^2\ofbbT)}
			+ \sqrt\varepsilon \norm{\Lap\varphi_1}_{\rmL^\infty(0,T;\rmL^2\ofbbT)}
			+ \sqrt\varepsilon \norm{\Grad\Lap\varphi_1}_{\rmL^2(0,T;\rmL^2\ofbbTd)} \\
			&\leq C \Bigprt{ \sum_{j=1,2} \norm{\f_j}_{\rmL^2(0,T;\rmL^2\ofbbTd)} + \norm{g_1}_{\rmL^2(0,T;\rmH^1\ofbbT)} + \norm{\delt g_1}_{\rmL^2(0,T;\rmH^{-1}\ofbbT)}  \\
			&\qquad+ \norm{\delt\varphi_1}_{\rmL^2(0,T;\rmH^1\ofbbT)}
			+ \sum_{j=1,2} \norm{\vel_{j,0}}_{\rmH^1\ofbbTd} + \norm{\varphi_{1,0}}_{\rmH^2\ofbbT}  
			+ \norm{\delt\varphi_1\vert_{t=0}}_{\rmL^2\ofbbT} }.
		\end{align*}
		Finally, for the $\delt\varphi_1$ term on the right-hand side, the \textit{a priori} estimate \eqref{Lin:eq:a_priori_est_higher} from Lemma~\ref{Lin:lemma:est_higher} gives rise to the product~$\norm{g_1}_{\rmL^2(0,T;\rmH^1\ofbbT)}^{\frac12} \norm{\varphi_1}_{\rmL^2(0,T;\rmH^3\ofbbT)}^{\frac12}$, for which another application of Hölder and Young allows for a further absorption of the term $\norm{\varphi_1}_{\rmL^2(0,T;\rmH^3\ofbbT)}$. This concludes the desired estimate.
	\end{proof}

	\medskip
	\noindent\textbf{Reconstruction of the pressure.}	
    In the weak formulation of the regularized system, we aim to allow for arbitrary independent test functions that are not required to satisfy the divergence condition. To this end, we reconstruct the pressure as follows.
    
	\begin{lemma}[Existence of $p$ and estimate for $p$] 
		\label{Lin:lemma:pressure}
        Let $0<T<\infty$, $\varepsilon>0$, and let~$\tphi1\in(0,1)$ be constant. Suppose that
        \begin{equation*}
            \begin{alignedat}{4}
                \f_j &\in\rmL^2(0,T;\rmL^2(\bbT)^d), &\quad j&=1,2, &\qquad
                g_1&\in\rmL^2(0,T;\rmH^1(\bbT)) \cap \rmH^1(0,T;\rmH^{-1}(\bbT)), \\
                \vel_{j,0}&\in \rmH^2(\bbT)^d, &\quad j&=1,2, &\qquad
                \varphi_{1,0}&\in\rmH^3(\bbT), 
            \end{alignedat}
        \end{equation*}
        and that the assumptions of Lemma~\ref{Lin:lemma:existence_abstr}~\ref{Lin:item:existence_abstr_iii} hold. Then, denoting by $(\vel_1,\vel_2,\varphi_1)$ the weak solution to the regularized system~\eqref{Lin:eq:reg_system} in the sense of Definition~\ref{Lin:def:abstract_eq_weak}, there exists a unique~$p\in\rmL^2(0,T;\rmH^1_{(0)}(\bbT))$ such that $(\vel_1,\vel_2,\varphi_1,p)$ solves \eqref{Lin:eq:reg_system_eq_deltv1}--\eqref{Lin:eq:reg_system_eq_div} a.e.~in~$(0,T)\times\bbT$. Moreover, $p$ satisfies the uniform \textit{a priori} estimate
		\begin{align}
			\label{Lin:eq:a_priori_est_pressure}
			\norm{p}_{\rmL^2(0,T;\rmH^1\ofbbT)}
			&\leq C \Bigprt{ \sum_{j=1,2} \norm{\f_j}_{\rmL^2(0,T;\rmL^2\ofbbTd)} + \norm{g_1}_{\rmL^2(0,T;\rmH^1\ofbbT)} + \norm{g_1}_{\rmH^1(0,T;\rmH^{-1}\ofbbT)} \nonumber \\
			&\qquad+ \sum_{j=1,2} \norm{\vel_{j,0}}_{\rmH^1\ofbbTd} + \norm{\varphi_{1,0}}_{\rmH^2\ofbbT} 
			+ \norm{\delt\varphi_1\vert_{t=0}}_{\rmL^2\ofbbT} } 
		\end{align}
		with a constant $C>0$ depending on $T$.
	\end{lemma}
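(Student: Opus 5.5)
The plan is to combine the pointwise-in-time reconstruction of Lemma~\ref{Lin:lemma:pressure_fixed_t} with the uniform estimates of Lemmas~\ref{Lin:lemma:energy_est}, \ref{Lin:lemma:est_higher} and \ref{Lin:lemma:est_grad_Lap_phi1}. First I verify the extra hypothesis of Lemma~\ref{Lin:lemma:pressure_fixed_t}: by Lemma~\ref{Lin:lemma:existence_abstr}~\ref{Lin:item:existence_abstr_iii}, $\varphi_1\in\rmL^2(0,T;\rmH^3(\bbT))$, so $\Grad\Lap\varphi_1\in\rmL^2(0,T;\rmL^2(\bbT))$. Hence for a.e.\ $t$ there is a unique $p(t)\in\rmH^1_{(0)}(\bbT)$ solving \eqref{Lin:eq:eq_for_pressure_fixed_t}, and $(\vel_1,\vel_2,\varphi_1,p)$ satisfies \eqref{Lin:eq:eq_without_div_cond_fixed_t_1}--\eqref{Lin:eq:eq_without_div_cond_fixed_t_2}. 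Measurability of $t\mapsto p(t)$ follows because $p(t)$ is the image of the right-hand side $\F(t)$ under the bounded, time-independent solution operator of the elliptic problem with coefficient $\tant$. Uniqueness in $\rmL^2(0,T;\rmH^1_{(0)})$ is inherited from the pointwise uniqueness. Since $\vel_j\in\rmL^2(0,T;\rmH^2)$ and $\varphi_1\in\rmL^2(0,T;\rmH^3)$, integrating by parts in the weak equations shows that \eqref{Lin:eq:reg_system_eq_deltv1}--\eqref{Lin:eq:reg_system_eq_deltv2} hold a.e.\ in $(0,T)\times\bbT$. Equations \eqref{Lin:eq:reg_system_eq_deltphi1}--\eqref{Lin:eq:reg_system_eq_div} hold pointwise by Remark~\ref{Lin:rmk:ptwise}.

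For the estimate I use the constant-coefficient structure. With $\tphi1$ constant, all gradients of the coefficients vanish, so the pressure equation reduces to $\tant\Lap p=\Div \F$ with
\[
\F=\sum_{j}\barrho{j}^{-1}\bigl(\f_j-\tphi{j}\barrho j\delt\vel_j+(2\tnu j+\tlambda j)\Grad\Div\vel_j\bigr)+\tdnt\Grad\Lap\varphi_1
\]
(up to signs). The standard $\rmL^2$ gradient bound gives $\norm{\Grad p}_{\rmL^2}\le C\norm{\F}_{\rmL^2}$, and Poincaré's inequality gives $\norm{p}_{\rmH^1}\le C\norm{\F}_{\rmL^2}$. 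The terms $\f_j$ are controlled directly. The term $\Grad\Lap\varphi_1$ is controlled in $\rmL^2(0,T;\rmL^2)$ by \eqref{Lin:eq:a_priori_est_phi1}. The term $\Grad\Div\vel_j$ is bounded by $\norm{\vel_j}_{\rmL^2(0,T;\rmH^2)}$ from \eqref{Lin:eq:a_priori_est_higher}. In that bound, the mixed term $\norm{g_1}^{1/2}\norm{\varphi_1}_{\rmL^2\rmH^3}^{1/2}$ is handled by Young's inequality together with the bound on $\norm{\varphi_1}_{\rmL^2(0,T;\rmH^3)}$, which follows from \eqref{Lin:eq:a_priori_est_phi1} and \eqref{Lin:eq:a_priori_est_higher}.

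The main obstacle is $\delt\vel_j$, for which no uniform $\rmL^2(0,T;\rmL^2)$ bound has been established yet. My first attempt avoids estimating it separately. Only the combination $\sum_j\tphi j\delt\vel_j$ enters the pressure, and only through its gradient part. By \eqref{Lin:eq:reg_system_eq_div}, $\Div\sum_j\tphi j\delt\vel_j=0$ for constant coefficients, so testing against $\Grad\xi$ annihilates this term: $\int\tphi j\delt\vel_j\cdot\Grad\xi$ summed over $j$ equals $-\int\delt\Div(\tphi1\vel_1+\tphi2\vel_2)\,\xi=0$. The pressure equation therefore needs no bound on $\delt\vel_j$ at all.

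If a bound on $\delt\vel_j$ is nevertheless needed afterwards (for the $Z_T$ norm), I would obtain it a posteriori from \eqref{Lin:eq:reg_system_eq_deltv1}--\eqref{Lin:eq:reg_system_eq_deltv2}, using the bounds on $p$, $\vel_j$ and $\varphi_1$ just derived. Collecting everything yields \eqref{Lin:eq:a_priori_est_pressure} with a constant depending only on $T$ and the frozen constants.
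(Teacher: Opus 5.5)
Your proposal is correct and is essentially the paper's proof. In both, $p$ comes from the weighted sum of \eqref{Lin:eq:reg_system_eq_deltv1}--\eqref{Lin:eq:reg_system_eq_deltv2} tested with $\barrho{j}^{-1}\Grad\xi$; the $\delt\vel_j$ terms drop out because $\Div(\tphi1\vel_1+\tphi2\vel_2)=0$; the right-hand side is bounded in $\rmL^2(0,T;\rmL^2)$ by Lemmas~\ref{Lin:lemma:est_higher} and \ref{Lin:lemma:est_grad_Lap_phi1}; and Lax--Milgram with Poincaré finishes the estimate. The only differences are cosmetic: you invoke Lemma~\ref{Lin:lemma:pressure_fixed_t} rather than redoing the decomposition $\w_j=\hat\w_j+\barrho{j}^{-1}\Grad\xi$, and you spell out two points the paper leaves implicit, namely measurability in $t$ and the Young absorption of the mixed term $\norm{g_1}^{1/2}\norm{\varphi_1}_{\rmL^2(0,T;\rmH^3)}^{1/2}$.
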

	
	\begin{proof}
		The idea is to redo the pressure reconstruction in the proof of Lemma~\ref{Lin:lemma:pressure_fixed_t} and to additionally establish a uniform estimate for $p$.
		
		We first note that testing the weak formulation~\eqref{Lin:eq:weak_rewritten} with arbitrary $(\hat\w_1,\hat\w_2,0)\in\bbV$ shows that $(\vel_1,\vel_2,\varphi_1)$ solves 
		\begin{align}
			\label{Lin:eq:eq_for_hatwj}
			&\sum_{j=1,2} \int_\bbT \trho{j}\delt\vel_j \cdot \hat\w_j \dx
			- \sum_{j=1,2} \int_\bbT \Div\bigprt{ 2\tnu{j}\D\vel_j + \tlambda{j}\Div\vel_j\Id } \cdot \hat\w_j \dx \nonumber \\
			&\quad- \int_\bbT (  \tphi1 \hat\w_1 -  \tphi2 \hat\w_2) \cdot \Grad\Lap\varphi_1 \dx
			= \sum_{j=1,2} \int_\bbT \f_j \cdot \hat\w_j \dx
		\end{align}
		a.e.~in $(0,T)$ for all $(\hat\w_1,\hat\w_2)\in\bbV_1$, i.e., for all $(\hat\w_1,\hat\w_2)\in \rmH^1(\bbT)^d \times \rmH^1(\bbT)^d$ satisfying the condition $\Div(\tphi1\hat\w_1+\tphi2\hat\w_2)=0$. Our goal now is to prove the existence of a unique solution~$p\in\rmH^1_{(0)}(\bbT)$ to the following equation, which is formally obtained by summing~\eqref{Lin:eq:reg_system_eq_deltv1} tested with $\barrho1^{-1}\Grad\xi$ and \eqref{Lin:eq:reg_system_eq_deltv2} tested with $\barrho2^{-1}\Grad\xi$ for arbitrary $\xi\in\rmH^1(\bbT)$, and using $\int_\bbT \delt\Div(\tphi1\vel_1+\tphi2\vel_2) \xi \dx=0$, that is
		\begin{align}
			\label{Lin:eq:eq_for_pressure}
			&\int_\bbT \tant \Grad p\cdot\Grad\xi\dx
			= \sum_{j=1,2} \int_\bbT \barrho{j}^{-1}\Div\bigprt{ 2\tnu{j}\D\vel_j + \tlambda{j}\Div\vel_j\Id } \cdot \Grad\xi \dx \nonumber \\
			&\quad+ \int_\bbT \tant \Grad\Lap\varphi_1 \cdot \Grad\xi \dx
			+ \sum_{j=1,2} \int_\bbT  \barrho{j}^{-1} \f_j \cdot \Grad\xi\dx
		\end{align}
		a.e.~in $(0,T)$ for all $\xi\in\rmH^1(\bbT)$, where $\tant\coloneqq \barrho1^{-1}\tphi1 + \barrho2^{-1}\tphi2 >0$. Writing the right-hand side as $\int_\bbT \mathbf F\cdot\Grad\xi \dx$, the uniform estimates from Lemma~\ref{Lin:lemma:est_higher} for $\norm{\vel_j}_{\rmL^2(0,T;\rmH^2\ofbbTd)}$, $j=1,2$, and from Lemma~\ref{Lin:lemma:est_grad_Lap_phi1} for~$\norm{\Grad\Lap\varphi_1}_{\rmL^2(0,T;\rmL^2\ofbbTd)}$ reveal
		\begin{align*}
			\norm{\mathbf F}_{\rmL^2(0,T;\rmL^2\ofbbTd)}
			&\leq C \Bigprt{ \sum_{j=1,2} \norm{\f_j}_{\rmL^2(0,T;\rmL^2\ofbbTd)} + \norm{g_1}_{\rmL^2(0,T;\rmH^1\ofbbT)} + \norm{g_1}_{\rmH^1(0,T;\rmH^{-1}\ofbbT)} \\
			&\qquad+ \sum_{j=1,2} \norm{\vel_{j,0}}_{\rmH^1\ofbbTd} + \norm{\varphi_{1,0}}_{\rmH^2\ofbbT} 
			+ \norm{\delt\varphi_1\vert_{t=0}}_{\rmL^2\ofbbT} }.
		\end{align*}
		For this weak elliptic equation on $\bbT$ with $\tant>0$, the Lax--Milgram theorem and Poincaré's inequality imply that \eqref{Lin:eq:eq_for_pressure} has a unique solution $p\in\rmH^1_{(0)}(\bbT)$, which satisfies
		\begin{align*}
			\norm{\Grad p}_{\rmL^2\ofbbT}
			\leq C \norm{\mathbf F}_{\rmL^2\ofbbT}.
		\end{align*}
		Thus, invoking Poincaré's inequality, we have $p\in\rmL^2(0,T;\rmH^1_{(0)}(\bbT))$ with
		\begin{align*}
			\norm{p}_{\rmL^2(0,T;\rmH^1\ofbbT)}
			&\leq C \Bigprt{ \sum_{j=1,2} \norm{\f_j}_{\rmL^2(0,T;\rmL^2\ofbbTd)} + \norm{g_1}_{\rmL^2(0,T;\rmH^1\ofbbT)} + \norm{g_1}_{\rmH^1(0,T;\rmH^{-1}\ofbbT)} \\
			&\qquad+ \sum_{j=1,2} \norm{\vel_{j,0}}_{\rmH^1\ofbbTd} + \norm{\varphi_{1,0}}_{\rmH^2\ofbbT} 
			+ \norm{\delt\varphi_1\vert_{t=0}}_{\rmL^2\ofbbT} }.
		\end{align*}
		
		Now let the couple $(\w_1,\w_2)\in\rmH^1(\bbT)^d \times \rmH^1(\bbT)^d$ be arbitrary and decompose $\w_j$ into~$\w_j=\hat\w_j + \barrho{j}^{-1}\Grad\xi$ such that $\Div(\tphi1\hat\w_1+\tphi2\hat\w_2)=0$ (that is, $\xi$ is the solution of~$\Div\bigprt{\prt{\barrho1^{-1}\tphi1 + \barrho2^{-1}\tphi2}  \Grad\xi } = \Div(\tphi1\w_1+\tphi2\w_2)$). Then, with $p$ from above, it holds
		\begin{align}
			\label{Lin:eq:eq_without_div_cond}
			&\sum_{j=1,2} \int_\bbT \trho{j}\delt\vel_j \cdot \w_j \dx
			- \sum_{j=1,2} \int_\bbT \Div\bigprt{ 2\tnu{j}\D\vel_j + \tlambda{j}\Div\vel_j\Id } \cdot \w_j \dx \nonumber\\
			&- \int_\bbT (  \tphi1 \w_1 -  \tphi2 \w_2) \cdot \Grad\Lap\varphi_1 \dx
			+ \sum_{j=1,2} \int_\bbT \tphi{j}\Grad p\cdot\w_j \dx  
			= \sum_{j=1,2} \int_\bbT \f_j \cdot \w_j \dx
		\end{align}
		a.e.~in $(0,T)$, due to \eqref{Lin:eq:eq_for_hatwj} for the terms including $\hat\w_j$, \eqref{Lin:eq:eq_for_pressure} for the terms with $\Grad\xi$, and since
		\begin{align*}
			\sum_{j=1,2} \int_\bbT \tphi{j}\Grad p\cdot\hat\w_j \dx 
			= - \int_\bbT \Div(\tphi1\hat\w_1 + \tphi2\hat\w_2) p \dx
			= 0.
		\end{align*}
		The choice of $(\w_1,0)$ in \eqref{Lin:eq:eq_without_div_cond} implies that
		\begin{align*}
			&\int_\bbT \trho1\delt\vel_1 \cdot \w_1 \dx
			- \int_\bbT \Div\bigprt{ 2\tnu1\D\vel_1 + \tlambda1\Div\vel_1\Id } \cdot \w_1 \dx \\
			&\quad- \int_\bbT  \tphi1 \w_1 \cdot \Grad\Lap\varphi_1 \dx
			+ \int_\bbT \tphi1\Grad p\cdot\w_1 \dx
			= \int_\bbT \f_1 \cdot \w_1 \dx
		\end{align*}
		a.e.~in $(0,T)$ is satisfied for all $\w_1\in\rmH^1(\bbT)^d$ and therefore, equation~\eqref{Lin:eq:reg_system_eq_deltv1} holds pointwise a.e.~in $(0,T)\times\bbT$. Choosing $(0,\w_2)$ with $\w_2\in\rmH^1(\bbT)^d$ proves the corresponding result for~\eqref{Lin:eq:reg_system_eq_deltv2}.
	\end{proof}

    The uniform estimates established previously provide the basis for the derivation of further estimates.
	
	\begin{lemma}[Estimates for $\delt\vel_j$ and $\delt^2\varphi_1$]
		\label{Lin:lemma:est_deltvj}
        Let $0<T<\infty$ and $\varepsilon>0$, and let $\tphi1\in(0,1)$ be constant. Suppose that
        \begin{equation*}
            \begin{alignedat}{4}
                \f_j &\in\rmL^2(0,T;\rmL^2(\bbT)^d), &\quad j&=1,2, &\qquad
                g_1&\in\rmL^2(0,T;\rmH^1(\bbT)) \cap \rmH^1(0,T;\rmH^{-1}(\bbT)), \\
                \vel_{j,0}&\in \rmH^2(\bbT)^d, &\quad j&=1,2, &\qquad
                \varphi_{1,0}&\in\rmH^3(\bbT), 
            \end{alignedat}
        \end{equation*}
        and that the assumptions of Lemma~\ref{Lin:lemma:existence_abstr}~\ref{Lin:item:existence_abstr_iii} hold. Then the strong solution~$(\vel_1,\vel_2,\varphi_1,p)$ to the regularized system~\eqref{Lin:eq:reg_system} from Lemma~\ref{Lin:lemma:pressure} satisfies the uniform \textit{a priori} estimate
		\begin{align}
			\label{Lin:eq:a_priori_est_deltv}
			&\sum_{j=1,2} \norm{\delt\vel_j}_{\rmL^2(0,T;\rmL^2\ofbbTd)} + \norm{\delt^2\varphi_1}_{\rmL^2(0,T;\rmH^{-1}\ofbbT)} 
			\leq C \Bigprt{ \sum_{j=1,2} \norm{\f_j}_{\rmL^2(0,T;\rmL^2\ofbbTd)} + \norm{g_1}_{\rmL^2(0,T;\rmH^1\ofbbT)}  \nonumber \\
			&\qquad\qquad+ \norm{g_1}_{\rmH^1(0,T;\rmH^{-1}\ofbbT)} 
            + \sum_{j=1,2} \norm{\vel_{j,0}}_{\rmH^1\ofbbTd} + \norm{\varphi_{1,0}}_{\rmH^2\ofbbT} 
			+ \norm{\delt\varphi_1\vert_{t=0}}_{\rmL^2\ofbbT} } 
		\end{align}
		with a constant $C>0$ depending on $T$.
	\end{lemma}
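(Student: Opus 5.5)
The plan is to read off both quantities directly from the equations, now that all spatial terms are controlled uniformly in $\varepsilon$ by Lemmas~\ref{Lin:lemma:est_higher}, \ref{Lin:lemma:est_grad_Lap_phi1} and \ref{Lin:lemma:pressure}. No new energy estimate is needed.

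\emph{Step 1: the time derivatives $\delt\vel_j$.} By Lemma~\ref{Lin:lemma:pressure}, equations \eqref{Lin:eq:reg_system_eq_deltv1}--\eqref{Lin:eq:reg_system_eq_deltv2} hold a.e.\ in $(0,T)\times\bbT$. Dividing by the positive constant $\trho{j}$ gives
\begin{align*}
\delt\vel_j = \trho{j}^{-1}\Bigprt{\f_j + \Div\bigprt{2\tnu{j}\D\vel_j + \tlambda{j}\Div\vel_j\Id} - \tphi{j}\Grad p \pm \tphi{j}\Grad\Lap\varphi_1}.
\end{align*}
Each term on the right is bounded in $\rmL^2(0,T;\rmL^2)$ by a uniform estimate. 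The viscous term is controlled by $\norm{\vel_j}_{\rmL^2(0,T;\rmH^2)}$ from \eqref{Lin:eq:a_priori_est_higher}. The pressure term is controlled by \eqref{Lin:eq:a_priori_est_pressure}, and the term $\Grad\Lap\varphi_1$ by \eqref{Lin:eq:a_priori_est_phi1}.

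The right-hand side of \eqref{Lin:eq:a_priori_est_higher} contains the mixed term $\norm{g_1}^{1/2}_{\rmL^2(0,T;\rmH^1)}\norm{\varphi_1}^{1/2}_{\rmL^2(0,T;\rmH^3)}$. To handle it, I bound $\norm{\varphi_1}_{\rmL^2(0,T;\rmH^3)}$ by combining $\norm{\Grad\Lap\varphi_1}_{\rmL^2(0,T;\rmL^2)}$ from \eqref{Lin:eq:a_priori_est_phi1} with the lower-order norms from Lemma~\ref{Lin:lemma:est_higher}. Young's inequality then splits the product into the data norms, so all bounds are by the data collected in \eqref{Lin:eq:a_priori_est_deltv}.

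\emph{Step 2: the second time derivative $\delt^2\varphi_1$.} I differentiate \eqref{Lin:eq:reg_system_eq_deltphi1} in time in the distributional sense:
\begin{align*}
\delt^2\varphi_1 = \delt g_1 - \tphi1\Div\delt\vel_1 + \varepsilon\Lap\delt\varphi_1.
\end{align*}
The first term lies in $\rmL^2(0,T;\rmH^{-1})$ by assumption. The second is bounded in $\rmL^2(0,T;\rmH^{-1})$ by $\norm{\delt\vel_1}_{\rmL^2(0,T;\rmL^2)}$ from Step~1, since $\Div\colon\rmL^2\to\rmH^{-1}$ is bounded.

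For the last term I use $\norm{\varepsilon\Lap\delt\varphi_1}_{\rmH^{-1}}\le\varepsilon\norm{\Grad\delt\varphi_1}_{\rmL^2}$. Assuming without loss of generality $\varepsilon\le1$, this is at most $\sqrt\varepsilon\,\norm{\Grad\delt\varphi_1}_{\rmL^2}$, which is controlled by \eqref{Lin:eq:a_priori_est_phi1}. Alternatively, $\norm{\delt\varphi_1}_{\rmL^2(0,T;\rmH^1)}$ from \eqref{Lin:eq:a_priori_est_higher} already suffices for bounded $\varepsilon$.

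To make the time differentiation rigorous, I note that Lemma~\ref{Lin:lemma:existence_abstr}~\ref{Lin:item:existence_abstr_ii} gives $\varphi_1\in\rmH^2(0,T;\bbV_2')$ and $\delt\vel_j\in\rmL^2(0,T;\rmH^1)$. Hence the identity holds in $\rmL^2(0,T;\rmH^{-1})$, at least after testing against mean-free functions; the mean of $\varphi_1$ is harmless, since $\mean{\delt\varphi_1}=\mean{g_1}$.

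\emph{Main obstacle.} The only delicate point is bookkeeping. I must make sure that the self-referential term $\norm{\varphi_1}_{\rmL^2(0,T;\rmH^3)}$ in \eqref{Lin:eq:a_priori_est_higher} is closed through \eqref{Lin:eq:a_priori_est_phi1}. I must also make sure that the $\varepsilon$-term in $\delt^2\varphi_1$ is bounded uniformly in $\varepsilon$, via the $\sqrt\varepsilon$-weighted bound for $\varepsilon\le1$. Apart from these two points the argument is a direct combination of the earlier estimates.
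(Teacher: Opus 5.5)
Your proposal is correct and is essentially the paper's proof. You solve the momentum equations for $\delt\vel_j$ and use the uniform bounds on $\vel_j$ in $\rmL^2(0,T;\rmH^2)$, on $p$, and on $\Grad\Lap\varphi_1$; then you bound $\delt^2\varphi_1$ in $\rmL^2(0,T;\rmH^{-1})$ from the time-differentiated phase equation, controlling the $\varepsilon$-term by $\sqrt\varepsilon\norm{\Grad\delt\varphi_1}_{\rmL^2(0,T;\rmL^2)}$ for $\varepsilon\le 1$. Your explicit closing of the mixed term $\norm{g_1}^{1/2}\norm{\varphi_1}^{1/2}_{\rmL^2(0,T;\rmH^3)}$ from Lemma~\ref{Lin:lemma:est_higher}, and the explicit restriction $\varepsilon\le 1$, spell out steps the paper uses tacitly.
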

	
	\begin{proof}
		As established in the previous Lemma~\ref{Lin:lemma:pressure}, the quadruple $(\vel_1,\vel_2,\varphi_1,p)$ solves the equations~\eqref{Lin:eq:reg_system_eq_deltv1}--\eqref{Lin:eq:reg_system_eq_div} a.e.~in~$(0,T)\times\bbT$. Therefore, after writing \eqref{Lin:eq:reg_system_eq_deltv1} in the form
		\begin{align*}
			\trho1\delt\vel_1 
			= \Div\bigprt{2\tnu1 \D\vel_1 + \tlambda1 \Div\vel_1 \Id} - \tphi1\Grad p +  \tphi1 \Grad\Lap\varphi_1 + \f_1,
		\end{align*}
		we deduce from the Lemmas~\ref{Lin:lemma:est_higher}, \ref{Lin:lemma:est_grad_Lap_phi1}, and \ref{Lin:lemma:pressure} that
		\begin{align*}
			\norm{\delt\vel_1}_{\rmL^2(0,T;\rmL^2\ofbbTd)} 
			&\leq C \Bigprt{ \norm{\vel_1}_{\rmL^2(0,T;\rmH^2\ofbbTd)} + \norm{p}_{\rmL^2(0,T;\rmH^1\ofbbT)} + \norm{\Grad\Lap\varphi}_{\rmL^2(0,T;\rmL^2\ofbbTd)} + \norm{\f_1}_{\rmL^2(0,T;\rmL^2\ofbbTd)}} \\
			&\leq C \Bigprt{ \sum_{j=1,2} \norm{\f_j}_{\rmL^2(0,T;\rmL^2\ofbbTd)} + \norm{g_1}_{\rmL^2(0,T;\rmH^1\ofbbT)} + \norm{g_1}_{\rmH^1(0,T;\rmH^{-1}\ofbbT)} \\
			&\qquad+ \sum_{j=1,2} \norm{\vel_{j,0}}_{\rmH^1\ofbbTd} + \norm{\varphi_{1,0}}_{\rmH^2\ofbbT} 
			+ \norm{\delt\varphi_1\vert_{t=0}}_{\rmL^2\ofbbT} }.
		\end{align*}
		Analogously, the estimate for $\delt\vel_2$ is derived from \eqref{Lin:eq:reg_system_eq_deltv2}.
		
		Furthermore, differentiating equation~\eqref{Lin:eq:reg_system_eq_deltphi1} with respect to time reveals that for any test function~$\psi\in\rmH^1(\bbT)$, it holds
		\begin{align*}
			\langle \delt^2\varphi_1, \psi \rangle_{\rmH^{-1}\ofbbT, \rmH^1\ofbbT} 
			&= \langle \delt g_1, \psi \rangle_{\rmH^{-1}\ofbbT, \rmH^1\ofbbT} 
			+ \int_\bbT \tphi1 \delt \vel_1\cdot\Grad\psi \dx
			- \varepsilon \int_\bbT \Grad\delt\varphi_1 \cdot \Grad\psi \dx \\
			&\leq \Bigprt{ \norm{\delt g_1}_{\rmH^{-1}\ofbbT} 
			+ \norm{\delt\vel_1}_{\rmL^2\ofbbT} 
			+ \varepsilon \norm{\Grad\delt\varphi_1}_{\rmL^2\ofbbT} }
			\norm{\psi}_{\rmH^1\ofbbT}
		\end{align*}
		a.e.~in $(0,T)$, and hence
		\begin{align*}
			\norm{\delt^2\varphi_1}_{\rmL^2(0,T;\rmH^{-1}\ofbbT)} 
			\leq \norm{\delt g_1}_{\rmL^2(0,T;\rmH^{-1}\ofbbT)} 
			+ \norm{\delt\vel_1}_{\rmL^2(0,T;\rmL^2\ofbbTd)} 
			+ \sqrt\varepsilon \norm{\Grad\delt\varphi_1}_{\rmL^2(0,T;\rmL^2\ofbbTd)}.
		\end{align*}
		From this, together with Lemma~\ref{Lin:lemma:est_grad_Lap_phi1}, the claimed estimate finally follows.
	\end{proof}

	\subsection{Strong Solvability of the Linear System for Constant Coefficients}
    \label{subsec:lin:solution_const_coeffs}
    With the uniform regularity estimates proven in Subsection~\ref{subsec:lin:est_const_coeffs}, we are in a position to pass to the limit $\varepsilon\to0$ in the regularized system~\eqref{Lin:eq:reg_system}. This yields a unique strong solution to the linear system~\eqref{Lin:eq:lin_system} in the case of constant coefficients.
	
	\begin{proposition}
		\label{Lin:prop:ex_const_coeff}
        Let $0<T<\infty$ and let $\tphi1\in(0,1)$ be constant. Suppose that
        \begin{equation*}
            \begin{alignedat}{4}
                \f_j &\in\rmL^2(0,T;\rmL^2(\bbT)^d), &\quad j&=1,2, &\quad
                g_j &\in\rmL^2(0,T;\rmH^1(\bbT)) \cap \rmH^1(0,T;\rmH^{-1}(\bbT)), &\quad j&=1,2,\\
                \vel_{j,0}&\in \rmH^1(\bbT)^d, &\quad j&=1,2, &\quad
                \varphi_{1,0}&\in\rmH^2(\bbT), 
            \end{alignedat}
        \end{equation*}
        such that $\Div(\tphi1\vel_{1,0} + \tphi2\vel_{2,0})=g_2\vert_{t=0}$ holds. Then there exists a unique strong solution~$(\vel_1, \vel_2, \varphi_1, p)$ to the linear system~\eqref{Lin:eq:lin_system} with
		\begin{align*}
			\vel_j &\in \rmL^2 \prt{0,T;\rmH^2(\bbT)^d} \cap \rmH^1 \prt{ 0,T;\rmL^2(\bbT)^d}, \\
			\varphi_1 &\in \rmL^2 \prt{0,T;\rmH^3(\bbT)} \cap \rmH^1 \prt{ 0,T;\rmH^1(\bbT)} \cap \rmH^2 \prt{ 0,T;\rmH^{-1}(\bbT)}, \\
			p &\in \rmL^2 \prt{0,T;\rmH^1_{(0)}(\bbT)}.
		\end{align*}
	\end{proposition}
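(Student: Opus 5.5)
We will obtain the solution as a limit $\varepsilon\to0$ of solutions of the regularized system~\eqref{Lin:eq:reg_system}, after two preliminary reductions: removing the inhomogeneity $g_2$ in the divergence equation, and approximating the data by smoother data satisfying the hypotheses of Lemma~\ref{Lin:lemma:existence_abstr}~\ref{Lin:item:existence_abstr_iii}.

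\textbf{Reduction of $g_2$.} Since $\tphi1$ is constant, we set $\vel_2 = \tilde\vel_2 + \tphi2^{-1}\Grad\Lap^{-1}(g_2-\mean{g_2})$. The spatial mean $\mean{g_2}$ must vanish, because $g_2$ is a divergence. The correction term lies in $\rmL^2(0,T;\rmH^2)\cap\rmH^1(0,T;\rmL^2)$ by the regularity of $g_2$, since $\Grad\Lap^{-1}\colon\rmH^{-1}\to\rmL^2$ and $\rmH^1\to\rmH^2$. Its contribution enters $\f_2$ as an $\rmL^2(0,T;\rmL^2)$ term, and the compatibility condition $\Div(\tphi1\vel_{1,0}+\tphi2\vel_{2,0})=g_2|_{t=0}$ becomes $\Div(\tphi1\vel_{1,0}+\tphi2\tilde\vel_{2,0})=0$. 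We may therefore assume $g_2=0$.

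\textbf{Smoothing the data.} We approximate the data by mollification in space and time. We choose $\f_j^n,g_1^n$ smooth, with $\vel_{j,0}^n\in\rmH^2$ and $\varphi_{1,0}^n\in\rmH^3$ converging in the stated norms. The divergence constraint is preserved because mollification in space commutes with $\Div$ when the coefficients are constant. For the mean of $\varphi_1$ we split $\varphi_1=\mean{\varphi_1}+(\varphi_1-\mean{\varphi_1})$. The mean solves the ODE $\ddt\mean{\varphi_1}=\mean{g_1}$, and the remaining problem is posed in $\bbH_2=\rmH^1_{(0)}$. We then apply Lemma~\ref{Lin:lemma:existence_abstr} to get $(\vel_1^{\varepsilon},\vel_2^\varepsilon,\varphi_1^\varepsilon)$, and Lemma~\ref{Lin:lemma:pressure} to get $p^\varepsilon$.

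\textbf{Main obstacle.} The key point is that the right-hand sides of \eqref{Lin:eq:a_priori_est_higher}, \eqref{Lin:eq:a_priori_est_phi1}, \eqref{Lin:eq:a_priori_est_pressure} and \eqref{Lin:eq:a_priori_est_deltv} contain $\norm{\delt\varphi_1^\varepsilon|_{t=0}}_{\rmL^2}$. We must bound this uniformly in $\varepsilon$ and $n$. By \eqref{Lin:eq:reg_system_eq_deltphi1},
\[
\delt\varphi_1^\varepsilon|_{t=0} = g_1|_{t=0} - \tphi1\Div\vel_{1,0} + \varepsilon\Lap\varphi_{1,0}.
\]
The first two terms are bounded in $\rmL^2$ via $g_1\in\rmL^2(0,T;\rmH^1)\cap\rmH^1(0,T;\rmH^{-1})\hookrightarrow\BUC([0,T];\rmL^2)$ (see \eqref{prelim:interpol:BUC}) and $\vel_{1,0}\in\rmH^1$. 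The last term is bounded because $\varphi_{1,0}\in\rmH^2$ and $\varepsilon\le1$. A further check is the compatibility condition of Lemma~\ref{Lin:lemma:existence_abstr}~\ref{Lin:item:existence_abstr_ii}, namely $(\f_j/\trho j,g_1)|_{t=0}\in\bbH$. This needs $\f_j|_{t=0}$ to satisfy the divergence constraint, which is not automatic. To arrange it, we first apply the projection $\mathbb{P}$ of Remark~\ref{Lin:rmk:projection} to the smoothed forcing $(\f_1/\trho1,\f_2/\trho2)$. The gradient part removed this way is absorbed into the pressure: since $\trho j=\barrho j\tphi j$, the complement of $\bbH_1$ consists of pairs $(\tphi1\Grad q/\trho1,\tphi2\Grad q/\trho2)$, which is exactly the form the pressure enters the momentum equations. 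With this, the uniform estimates give bounds in $Z_T$ independent of $\varepsilon$ and $n$, also covering the $\sqrt\varepsilon$-terms.

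\textbf{Passage to the limit and uniqueness.} Weak(-$*$) compactness gives a limit $(\vel_1,\vel_2,\varphi_1,p)$ in the stated spaces along $\varepsilon\to0$, $n\to\infty$. Since the system is linear, we pass to the limit in each equation in distributional form; the term $\varepsilon\Lap\varphi_1^\varepsilon$ vanishes because $\sqrt\varepsilon\,\varphi_1^\varepsilon$ is bounded in $\rmL^2(0,T;\rmH^2)$. The initial data are preserved by the continuity $Z_T\hookrightarrow\BUC$ from \eqref{prelim:interpol:BUC}. For uniqueness, the difference of two solutions solves the problem with zero data. We rerun the energy identity of Lemma~\ref{Lin:lemma:energy_est} directly with $\varepsilon=0$, which is admissible at this strong regularity: testing with $(\vel_1,\vel_2)$ and with $-2\Lap\varphi_1$ in the $\varphi_1$-equation makes the coupling and pressure terms cancel. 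Gronwall's lemma then forces the difference to vanish, and the pressure difference is zero because it has mean zero and its gradient vanishes.
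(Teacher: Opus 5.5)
Your proposal is correct and follows essentially the same route as the paper: lift $g_2$, reduce to mean-zero $\varphi_1$ and to forcing compatible with $\bbH_1$ (absorbed into the pressure), approximate the data, apply the uniform constant-coefficient estimates, pass to the limit $\varepsilon\to0$, and prove uniqueness via the energy identity. Your explicit uniform bound on $\delt\varphi_1^\varepsilon\vert_{t=0}$, read off from the third equation at $t=0$, usefully supplies a uniformity check that the paper's proof leaves implicit.
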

	
	\begin{proof}
		Without loss of generality, we restrict our attention to right-hand sides satisfying more specific assumptions.
		First, we reduce to the case $g_2=0$ by replacing $\vel_j$ with $\vel_j-\Grad q_1$, where $q_1\in \rmL^2(0,T;\rmH^3(\bbT)) \cap \rmH^1(0,T;\rmH^1(\bbT))$ is a solution to $\Lap q_1=g_2$. Correspondingly, we replace the initial velocity $\vel_{j,0}$ by $\vel_{j,0}-\Grad q_1$. This modification leads to additional terms on the right-hand side of \eqref{Lin:eq:lin_system_eq_deltv1}--\eqref{Lin:eq:lin_system_eq_div}, which then takes the form $(\tilde\f_1, \tilde\f_2, \tilde g_1,0)$, where
		\begin{align*}
			\tilde\f_j &\coloneqq \f_j - \trho{j} \delt\Grad q_1 + \Div\bigprt{2\tnu{j}\D\Grad q_1 + \tlambda{j}\Div\Grad q_1\Id} \in \rmL^2(0,T;\rmL^2(\bbT)^d), \quad j=1,2,\\
			\tilde g_1 &\coloneqq g_1 + \tphi1 g_2 \in \rmL^2(0,T;\rmH^1(\bbT)) \cap \rmH^1(0,T;\rmH^{-1}(\bbT)).
		\end{align*}
		For notational convenience, we again write $(\f_1, \f_2, g_1,0)$ in place of $(\tilde\f_1, \tilde\f_2, \tilde g_1,0)$. 
		
		Next, integrating \eqref{Lin:eq:lin_system_eq_deltphi1} with respect to the spatial variable, we observe
		\begin{align*}
			\ddt \int_\bbT \varphi_1 \dx
			= \int_\bbT \delt\varphi_1 \dx
			= \int_\bbT g_1 \dx - \int_\bbT \Div(\tphi1 \vel_1) \dx
			= \int_\bbT g_1 \dx,
		\end{align*}
		for almost every $t\in[0,T]$, which implies that $\mean{\varphi_1}$ is determined by $g_1$ via
		\begin{align*}
			\mean{\varphi_1(t)}
			= \int_0^t \mean{g_1(\tau)} \dtau + \mean{\varphi_{1,0}}.
		\end{align*}
		Therefore, by replacing $(\varphi_1, \varphi_{1,0}, g_1)$ by $(\varphi_1-\mean{\varphi_1}, \varphi_{1,0}-\mean{\varphi_{1,0}}, g_1-\mean{g_1})$, it is sufficient to study the case $(\mean{\varphi_1}, \mean{\varphi_{1,0}}, \mean{g_1})= (0,0,0)$.
		
		Finally, we may also impose the condition
		\begin{align*}
			\Div( \barrho1^{-1}\f_1+ \barrho2^{-1}\f_2) =0 \quad\text{in } \mathcal D'(\bbT) \quad\text{for a.e.~}t\in[0,T] \text{ including } t=0,
		\end{align*}
		by now replacing $\f_j$ by $\f_j-\Grad q_2$, $j=1,2$, with $q_2\in\rmL^2(0,T;\rmH^1(\bbT))$ solving the equation~$(\barrho1^{-1}+\barrho2^{-1})\Lap q_2 = \Div( \barrho1^{-1}\f_1 + \barrho2^{-1}\f_2)$.
		
		To apply the abstract existence result Lemma~\ref{Lin:lemma:existence_abstr}, we approximate the data suitably. First, we consider functions~$\f_{j,\varepsilon}\in\rmL^2(0,T;\rmL^2(\bbT)^d) \cap \rmH^1(0,T;(\rmH^1(\bbT)^d)')$ such that, for each $\varepsilon>0$, it holds $\Div( \barrho1^{-1}\f_{1,\varepsilon}+ \barrho2^{-1}\f_{2,\varepsilon}) =0$ in $\mathcal D'(\bbT)$ for a.e.~$t\in[0,T]$ including~$t=0$, and such that $\f_{j,\varepsilon}\to\f_j$ in $\rmL^2(0,T;\rmL^2(\bbT)^d)$ as $\varepsilon\to0$. Moreover, the initial data are approximated by $\vel_{j,0,\varepsilon}\in \rmH^2(\bbT)^d$, $j=1,2$, such that $\Div(\tphi1\vel_{1,0,\varepsilon} + \tphi2\vel_{2,0,\varepsilon})=0$ and~$\vel_{j,0,\varepsilon}\to\vel_{j,0}$ in~$\rmH^1(\bbT)^d$. Subtracting some suitable $\Grad\xi_\varepsilon$ from $\vel_{j,0,\varepsilon}$, if necessary, ensures that the divergence condition is satisfied. Finally, we choose $\varphi_{1,0,\varepsilon}\in\rmH^3(\bbT)$ with $\varphi_{1,0,\varepsilon}\to\varphi_{1,0}$ in~$\rmH^2(\bbT)$.
		
		With these approximations, the data $\f_{j,\varepsilon},g_1$ and $\vel_{j,0,\varepsilon}, \varphi_{1,0,\varepsilon}$ satisfy the assumptions of Lemma~\ref{Lin:lemma:existence_abstr}--Lemma~\ref{Lin:lemma:est_deltvj}. Hence, for each $\varepsilon>0$, the regularized linear system~\eqref{Lin:eq:reg_system} with right-hand side $(\f_{1,\varepsilon}, \f_{2,\varepsilon},g_1,0)$ admits a strong solution $(\vel_{1,\varepsilon}, \vel_{2,\varepsilon}, \varphi_{1,\varepsilon}, p_{\varepsilon})$ with the regularity properties from Lemma~\ref{Lin:lemma:existence_abstr}.
		
		The uniform \textit{a priori} estimates from the Lemmas~\ref{Lin:lemma:est_higher}, \ref{Lin:lemma:est_grad_Lap_phi1}, \ref{Lin:lemma:pressure}, and \ref{Lin:lemma:est_deltvj} allow us to pass to non-relabeled subsequences converging weakly (or weakly-*, respectively) as $\varepsilon\to0$ as follows:
		\begin{alignat*}{3}
			\vel_{j,\varepsilon} &\overset{(\ast)\;\,}{\rightharpoonup} \vel_j &&\quad\text{in } \rmL^\infty \prt{ 0,T;\rmH^1(\bbT)^d} \cap \rmL^2 \prt{0,T;\rmH^2(\bbT)^d}, &\qquad j=1,2,\\
			\delt\vel_{j,\varepsilon} &\rightharpoonup \delt\vel_j &&\quad\text{in } \rmL^2 \prt{0,T;\rmL^2(\bbT)^d}, &\qquad j=1,2,\\
			\varphi_{1,\varepsilon} &\overset{(\ast)\;\,}{\rightharpoonup} \varphi_1 &&\quad\text{in } \rmL^\infty \prt{ 0,T;\rmH^2(\bbT)} \cap \rmL^2 \prt{0,T;\rmH^3(\bbT)}, &\\
			\delt\varphi_{1,\varepsilon} &\overset{(\ast)\;\,}{\rightharpoonup} \delt\varphi_1 &&\quad\text{in } \rmL^\infty \prt{ 0,T;\rmL^2(\bbT)} \cap \rmL^2 \prt{0,T;\rmH^1(\bbT)}, &\\
			\delt^2\varphi_{1,\varepsilon} &\rightharpoonup \delt^2\varphi_1 &&\quad\text{in } \rmL^2 \prt{0,T;\rmH^{-1}(\bbT)}, &\\
			p_\varepsilon &\rightharpoonup p &&\quad\text{in } \rmL^2 \prt{0,T;\rmH^1_{(0)}(\bbT)}. &
		\end{alignat*}
		In particular, each term in \eqref{Lin:eq:reg_system_eq_deltv1}--\eqref{Lin:eq:reg_system_eq_div} converges weakly in $\rmL^2(0,T;\rmL^2(\bbT))$ in the respective dimension, which suffices to pass to the limit in the system. Thus, the limit quadruple~$(\vel_1,\vel_2,\varphi_1,p)$ is a strong solution to \eqref{Lin:eq:lin_system_eq_deltv1}--\eqref{Lin:eq:lin_system_eq_div}.
		
		Finally, to prove uniqueness, let $(\hat\vel_1,\hat\vel_2,\hat\varphi_1,\hat p)$ be another solution to system~\eqref{Lin:eq:lin_system}, and set~$(\w_1,\w_2,\psi,q)$ to be the difference of the two solutions. Then $(\w_1,\w_2,\psi,q)$ solves the corresponding homogeneous system. The associated weak formulation, which coincides with~\eqref{Lin:eq:weak_rewritten} without the regularization term, is then tested with $(\w_1,\w_2,\psi)$ as in the proof of the energy estimate in Lemma~\ref{Lin:lemma:energy_est}. Accordingly, we obtain 
		\begin{align*}
			\sum_{j=1,2} \bigprt{\norm{\w_j}_{\rmL^\infty(0,T;\rmL^2\ofbbTd)}
				+ \norm{\w_j}_{\rmL^2(0,T;\rmH^1\ofbbTd)} }
			+ \norm{\psi}_{\rmL^\infty(0,T;\rmH^1\ofbbT)} 
			= 0,
		\end{align*}
		and therefore $(\w_1,\w_2,\psi)=(0,0,0)$. Due to $q\in \rmL^2(0,T;\rmH^1_{(0)}(\bbT))$, equation~\eqref{Lin:eq:lin_system_eq_deltv1} eventually reveals that $q$ vanishes as well.
	\end{proof}

	\subsection{Uniform Regularity Estimates for General Coefficients}
    \label{subsec:lin:est_gen_coeffs}

    Returning to the regularized system with general (nonconstant) coefficients, our next goal is to derive further uniform estimates beyond the energy estimate. In contrast to the constant-coefficient case, these estimates are of one lower spatial order.

	\medskip
	\noindent\textbf{Reconstruction of the modified pressure.}	
    In a first step, we reconstruct a modified pressure $q$ depending on the original pressure $p$ and $\Lap\varphi_1$. For $q$, we prove a uniform estimate whose right-hand side depends on $\varphi_1$.
    
	\begin{lemma}[Existence of $q$ and estimate for $q$ depending on $\varphi_1$] 
		\label{Lin:lemma:pressure_q_gen}
        Let $0<T<\infty$ and~$\varepsilon>0$, and let~$\tphi1\in\rmH^2(\bbT)$ satisfy $\tphi1\in(0,1)$ in $\bbT$. Suppose that
        \begin{equation*}
            \begin{alignedat}{4}
                \f_j &\in\rmL^2(0,T;\rmL^2(\bbT)^d), &\quad j&=1,2, &\qquad
                g_1&\in\rmL^2(0,T;\rmH^1(\bbT)) \cap \rmH^1(0,T;\rmH^{-1}(\bbT)), \\
                \vel_{j,0}&\in \rmH^2(\bbT)^d, &\quad j&=1,2, &\qquad
                \varphi_{1,0}&\in\rmH^3(\bbT), 
            \end{alignedat}
        \end{equation*}
        and that the assumptions of Lemma~\ref{Lin:lemma:existence_abstr}~\ref{Lin:item:existence_abstr_iii} hold. Then, denoting by $(\vel_1,\vel_2,\varphi_1)$ the weak solution to the regularized system~\eqref{Lin:eq:reg_system} in the sense of Definition~\ref{Lin:def:abstract_eq_weak}, there exists a unique~$q\in\rmL^2(0,T;\rmL^2_{(0)}(\bbT))$ such that $(\vel_1,\vel_2,\varphi_1,q)$ solves \eqref{Lin:eq:eq_deltv1_with_q}--\eqref{Lin:eq:eq_deltv2_with_q} below as well as \eqref{Lin:eq:reg_system_eq_deltphi1}--\eqref{Lin:eq:reg_system_eq_div} a.e.~in $(0,T)\times\bbT$. Moreover, $q$ satisfies the \textit{a priori} estimate
		\begin{align}
			\label{Lin:eq:a_priori_est_pressure_q_gen}
			\norm{q}_{\rmL^2(0,T;\rmL^2\ofbbT)} 
			&\leq \hat C \Bigprt{ \norm{ \varphi_1 }_{\rmL^2(0,T;\rmH^2\ofbbT)} + \sum_{j=1,2} \norm{\f_j}_{\rmL^2(0,T;\rmL^2\ofbbTd)} + \norm{g_1}_{\rmL^2(0,T;\rmH^1\ofbbT)} \nonumber\\
			&\qquad+ \sum_{j=1,2} \norm{\vel_{j,0}}_{\rmL^2\ofbbTd} + \norm{\varphi_{1,0}}_{\rmH^1\ofbbT} }
		\end{align}
		with a constant $\hat C>0$ depending on $\norm{\tphi1}_{\rmH^2\ofbbT}$ and $T$.
	\end{lemma}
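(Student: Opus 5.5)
The plan is to repeat the reconstruction of Lemma~\ref{Lin:lemma:pressure_fixed_t}, but at one lower spatial order, so that only the energy estimate of Lemma~\ref{Lin:lemma:energy_est} and $\norm{\varphi_1}_{\rmL^2(0,T;\rmH^2)}$ enter. The idea is to absorb the third-order capillary term into the pressure. I would set $q \coloneqq p - \Lap\varphi_1 + c(t)$, with $c(t)$ fixing the mean. Then \eqref{Lin:eq:eq_deltv1_with_q}--\eqref{Lin:eq:eq_deltv2_with_q} read weakly, for all $\w_j\in\rmH^1(\bbT)^d$:
\begin{align*}
&\int_\bbT \trho1\delt\vel_1\cdot\w_1 + 2\tnu1\D\vel_1:\D\w_1 + \tlambda1\Div\vel_1\Div\w_1 - q\Div(\tphi1\w_1)\dx = \int_\bbT \f_1\cdot\w_1\dx,\\
&\int_\bbT \trho2\delt\vel_2\cdot\w_2 + 2\tnu2\D\vel_2:\D\w_2 + \tlambda2\Div\vel_2\Div\w_2 - q\Div(\tphi2\w_2) - 2\Lap\varphi_1\Div(\tphi2\w_2)\dx = \int_\bbT \f_2\cdot\w_2\dx.
\end{align*}
Here the leftover $2\tphi2\Grad\Lap\varphi_1$ in the second equation has been integrated by parts, so that only $\Lap\varphi_1\in\rmL^2$ appears. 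If the paper's $q$ uses different weights, the argument below is unchanged.

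\textbf{Step 1 (definition of $q$).} Since the solution is strong by Lemma~\ref{Lin:lemma:existence_abstr}~\ref{Lin:item:existence_abstr_iii}, Lemma~\ref{Lin:lemma:pressure_fixed_t} provides $p(t)\in\rmH^1_{(0)}$, and so $q$ is well defined. However, any $\rmL^2$ bound on $q$ obtained from the $\rmH^1$ bound on $p$ would involve $\Grad\Lap\varphi_1$. I therefore characterize $q$ independently as a very weak solution, which also gives uniqueness. Sum the two weak equations with $\w_j=\barrho{j}^{-1}\Grad\xi$ for $\xi\in\rmH^2(\bbT)$; this is admissible since $\Grad\xi\in\rmH^1$. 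The time-derivative terms cancel because
\begin{align*}
\sum_j \int_\bbT \tphi{j}\delt\vel_j\cdot\Grad\xi\dx = -\int_\bbT \delt\Div(\tphi1\vel_1+\tphi2\vel_2)\,\xi\dx = 0
\end{align*}
by \eqref{Lin:eq:reg_system_eq_div}. What remains is
\begin{align*}
\bigl(q,-\Div(\tant\Grad\xi)\bigr)_{\rmL^2} = \ang{G}{\xi},
\end{align*}
where $\tant=\barrho1^{-1}\tphi1+\barrho2^{-1}\tphi2$ and $G$ collects the viscous terms $\int\barrho{j}^{-1}(2\tnu{j}\D\vel_j:\D\Grad\xi+\tlambda{j}\Div\vel_j\Lap\xi)$, the term $\int 2\barrho2^{-1}\Lap\varphi_1\Div(\tphi2\Grad\xi)$ and the forcing terms.

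\textbf{Step 2 (bound on $q$).} We have $\tant>0$ and $\tant\in\rmH^2\hookrightarrow\rmW^{1,6}$, so Proposition~\ref{prelim:prop:very_weak_sol} with $p=2$ and $q=6>d$ yields a unique $q(t)\in\rmL^2_{(0)}$ with
\begin{align*}
\norm{q}_{\rmL^2} \le C\norm{G}_{(\rmW^{2,2}_{(0)})'}.
\end{align*}
To bound $G$, use Hölder's inequality together with $\tnu{j},\tlambda{j}\in\rmL^\infty$ and $\norm{\Div(\tphi2\Grad\xi)}_{\rmL^2}\le C(\norm{\tphi1}_{\rmH^2})\norm{\xi}_{\rmH^2}$, where $\Grad\tphi2\in\rmL^6$ and $\Grad\xi\in\rmL^3$. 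This gives
\begin{align*}
\norm{G}\le C\Bigl(\sum_j\norm{\vel_j}_{\rmH^1}+\norm{\varphi_1}_{\rmH^2}+\sum_j\norm{\f_j}_{\rmL^2}\Bigr).
\end{align*}
Squaring, integrating in time and inserting \eqref{Lin:eq:a_priori_est} for $\norm{\vel_j}_{\rmL^2(0,T;\rmH^1)}$ gives \eqref{Lin:eq:a_priori_est_pressure_q_gen}. Measurability in $t$ follows from the linearity and continuity of the solution map.

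\textbf{Step 3 (equations hold with independent test functions).} As in Lemma~\ref{Lin:lemma:pressure_fixed_t}, decompose $\w_j=\hat\w_j+\barrho{j}^{-1}\Grad\xi$ with $\Div(\tant\Grad\xi)=\Div(\tphi1\w_1+\tphi2\w_2)$. For $\w_j\in\rmH^1$, elliptic regularity with the $\rmW^{1,6}$ coefficient gives $\xi\in\rmH^2$, so $\hat\w_j\in\rmH^1$ and $(\hat\w_1,\hat\w_2)\in\bbV_1$. The $\hat\w$-part is \eqref{Lin:eq:eq_for_hatwj_fixed_t}, since $\int q\Div(\tphi1\hat\w_1+\tphi2\hat\w_2)=0$; the $\Grad\xi$-part is Step 1. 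Choosing $(\w_1,0)$ and $(0,\w_2)$ separates the two equations, and \eqref{Lin:eq:reg_system_eq_deltphi1}--\eqref{Lin:eq:reg_system_eq_div} hold pointwise by Remark~\ref{Lin:rmk:ptwise}.

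The main obstacle is Step 2: one must check that every term of $G$ is bounded by $\norm{\xi}_{\rmH^2}$ with only $\tphi1\in\rmH^2$, and that the $\delt\vel_j$ terms really cancel. That cancellation relies on the weights $\barrho{j}^{-1}$, which reproduce exactly the constraint $\Div(\tphi1\vel_1+\tphi2\vel_2)=0$.
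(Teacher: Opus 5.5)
Your argument is correct and follows the paper's proof. You weight the momentum equations by $\barrho{j}^{-1}$ so that the $\delt\vel_j$ terms drop out via the constraint, and solve very weakly with Proposition~\ref{prelim:prop:very_weak_sol} using $\tant\in\rmH^2(\bbT)\hookrightarrow\rmW^{1,6}(\bbT)$. You then bound the right-hand side in $\bigl(\rmW^{2,2}_{(0)}(\bbT)\bigr)'$ by $\norm{\varphi_1}_{\rmH^2}$, $\norm{\vel_j}_{\rmH^1}$, $\norm{\f_j}_{\rmL^2}$ together with the energy estimate, and recover the separate momentum equations through the splitting $\w_j=\hat\w_j+\barrho{j}^{-1}\Grad\xi$.

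The only difference is the choice of modified pressure. The paper takes $q=p-\frac{\tdnt}{\tant}\Lap\varphi_1$, which removes the third-order term from the summed equation up to $\tant\Lap\varphi_1\Grad(\tdnt/\tant)$. Your $q=p-\Lap\varphi_1$ (for which $c(t)=0$, since $\mean{\Lap\varphi_1}=0$) leaves $2\barrho2^{-1}\tphi2\Grad\Lap\varphi_1$ and shifts its derivatives onto $\xi$. So your $q$ satisfies a variant of \eqref{Lin:eq:eq_deltv1_with_q}--\eqref{Lin:eq:eq_deltv2_with_q} rather than those equations verbatim. As you anticipated, the argument carries over unchanged to the paper's weight. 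In this very weak framework one could even estimate $p$ itself directly, since $\int_\bbT\tdnt\Grad\Lap\varphi_1\cdot\Grad\xi\dx=-\int_\bbT\Lap\varphi_1\Div(\tdnt\Grad\xi)\dx$.
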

	
	\begin{proof}
		The strategy of this proof is similar to that in Lemma~\ref{Lin:lemma:pressure} and differs in the regularity of the (modified) pressure. First, we recall that the weak solution $(\vel_1,\vel_2,\varphi_1)$ has the regularity stated in the abstract existence result Lemma~\ref{Lin:lemma:existence_abstr}.
		For the sake of brevity, we write in the following
		\begin{align*}
			\tant \coloneqq \barrho1^{-1} \tphi1 + \barrho2^{-1} \tphi2,
			\qquad  \tdnt \coloneqq \barrho1^{-1} \tphi1 - \barrho2^{-1} \tphi2.
		\end{align*}		
		In the sense of distributions, multiplying \eqref{Lin:eq:reg_system_eq_deltv1} by $\barrho1^{-1}$ and \eqref{Lin:eq:reg_system_eq_deltv2} by $\barrho2^{-1}$ and considering the sum, followed by taking the negative divergence such that the $\delt\vel_j$-terms vanish, we observe that the weak solution solves the equation
		\begin{align*}
			-\Div\bigprt{\tant \Grad p - \tdnt \Grad\Lap\varphi_1 }
			= \Div \Bigprt{- \sum_{j=1,2} \Bigprt{ \barrho{j}^{-1} \Div \bigprt{2\tnu{j}\D\vel_j + \tlambda{j} \Div\vel_j\Id} + \barrho{j}^{-1}\f_j }}
		\end{align*}
		in $\calD'(\bbT)$. Defining the modified pressure
		\begin{align*}
			q\coloneqq p - \frac{\tdnt }{\tant } \Lap\varphi_1 ,
		\end{align*}
		this is equivalent to
		\begin{align*}
			-\Div \bigprt{ \tant  \Grad q } 
			= \Div \biggprt{ \tant  \Grad\frac{\tdnt}{\tant} \Lap\varphi_1 - \sum_{j=1,2} \Bigprt{ \barrho{j}^{-1} \Div \bigprt{2\tnu{j}\D\vel_j + \tlambda{j} \Div\vel_j\Id} + \barrho{j}^{-1}\f_j } }
			\eqqcolon G 
		\end{align*}
		in $\calD'(\bbT)$. Since $\tant>0$ in $\bbT$ and $\tant\in\rmH^2(\bbT)\hookrightarrow\rmW^{1,6}(\bbT)$, Proposition~\ref{prelim:prop:very_weak_sol} yields that this equation has a unique very weak solution $q\in \rmL^2_{(0)}(\bbT)$ satisfying 
		\begin{align*}
			\norm{q}_{\rmL^2\ofbbT}
			\leq C \norm{G}_{\rmH^{-2}\ofbbT}.
		\end{align*}
		The corresponding very weak formulation reads
		\begin{align*}
			&- \int_\bbT q \Div \bigprt{ \tant  \Grad \xi } \dx
			= \ang{G}{\xi}_{\rmH^{-2}\ofbbT, \rmH^2\ofbbT} \\
			&= \int_\bbT \biggprt{ \bigprt{ \Grad \tant  \cdot \Grad\varphi_1 } \Grad\frac{\tdnt}{\tant} 
			+ \tant  \Grad^2 \frac{\tdnt }{\tant }  \Grad\varphi_1 
			+ \sum_{j=1,2} \barrho{j}^{-1}\f_j } \cdot \Grad\xi \dx \\
			&\quad + \int_\bbT \biggprt{ \tant  \Bigprt{ \Grad\varphi_1 \otimes \Grad\frac{\tdnt}{\tant} }
			- \sum_{j=1,2} \barrho{j}^{-1} \bigprt{2\tnu{j}\D\vel_j + \tlambda{j} \Div\vel_j\Id} } : \Grad^2\xi \dx
		\end{align*}
		for all $\xi\in\rmH^2(\bbT)$, and the individual terms of $\norm{G}_{\rmH^{-2}\ofbbT}$ are estimated as follows. For the first line, we obtain
		\begin{align*}
			\Bigabs{ \int_\bbT \bigprt{ \Grad \tant  \cdot \Grad\varphi_1 } \Grad\frac{\tdnt}{\tant} \cdot \Grad\xi \dx } 
			&\leq \norm{ \Grad \tant  }_{\rmL^6\ofbbT} \norm{ \Grad\varphi_1 }_{\rmL^6\ofbbT} \Bignorm{ \Grad\frac{\tdnt}{\tant} }_{\rmL^6\ofbbT} \norm{ \Grad\xi }_{\rmL^2\ofbbT} \\
			&\leq C \norm{ \tant  }_{\rmH^2\ofbbT} \norm{ \varphi_1 }_{\rmH^2\ofbbT} \Bignorm{ \frac{\tdnt }{\tant } }_{\rmH^2\ofbbT} \norm{ \xi }_{\rmH^2\ofbbT} 
			\leq \hat C \norm{ \varphi_1 }_{\rmH^2\ofbbT} \norm{ \xi }_{\rmH^2\ofbbT},
		\end{align*}
		with a constant $\hat C=\hat C(\norm{\tphi1}_{\rmH^2\ofbbT})$, as well as
		\begin{align*}
			\Bigabs{ \int_\bbT \Bigprt{ \tant  \Grad^2 \frac{\tdnt }{\tant }  \Grad\varphi_1 } \cdot \Grad\xi \dx } 
			&\leq \norm{ \Grad \tant  }_{\rmL^6\ofbbT} \Bignorm{ \Grad^2 \frac{\tdnt }{\tant }  }_{\rmL^2\ofbbT} \norm{ \Grad\varphi_1 }_{\rmL^6\ofbbT} \norm{ \Grad\xi }_{\rmL^6\ofbbT} \\
			&\leq C \norm{ \tant  }_{\rmH^2\ofbbT}  \Bignorm{ \frac{\tdnt }{\tant } }_{\rmH^2\ofbbT} \norm{ \varphi_1 }_{\rmH^2\ofbbT} \norm{ \xi }_{\rmH^2\ofbbT} 
			\leq \hat C \norm{ \varphi_1 }_{\rmH^2\ofbbT} \norm{ \xi }_{\rmH^2\ofbbT} 
		\end{align*}
		and, for $j=1,2$, 
		\begin{align*}
			\Bigabs{ \int_\bbT \barrho{j}^{-1}\f_j \cdot \Grad\xi \dx }
			\leq  C \norm{ \f_j }_{\rmL^2\ofbbT} \norm{ \Grad\xi }_{\rmL^2\ofbbT} 
			\leq C \norm{ \f_j }_{\rmL^2\ofbbT} \norm{ \xi }_{\rmH^2\ofbbT} .
		\end{align*}
		For the second line, we have the estimates
		\begin{align*}
			\Bigabs{ \int_\bbT \tant  \Bigprt{ \Grad\varphi_1 \otimes \Grad\frac{\tdnt}{\tant} } : \Grad^2\xi \dx }
			&\leq  \norm{ \tant }_{\rmL^6\ofbbT} \norm{ \Grad\varphi_1 }_{\rmL^6\ofbbT} \Bignorm{ \Grad\frac{\tdnt}{\tant} }_{\rmL^6\ofbbT} \norm{ \Grad^2\xi }_{\rmL^2\ofbbT} \\
			&\leq  C \norm{ \tant }_{\rmH^2\ofbbT} \norm{ \varphi_1 }_{\rmH^2\ofbbT} \Bignorm{ \frac{\tdnt }{\tant } }_{\rmH^2\ofbbT} \norm{ \xi }_{\rmH^2\ofbbT} 
			\leq \hat C \norm{ \varphi_1 }_{\rmH^2\ofbbT} \norm{ \xi }_{\rmH^2\ofbbT} 
		\end{align*}
		and, for $j=1,2$, 
		\begin{align*}
			& \Bigabs{ \int_\bbT \barrho{j}^{-1} \bigprt{2\tnu{j}\D\vel_j + \tlambda{j} \Div\vel_j\Id} : \Grad^2\xi \dx } \\
			&\leq  C \Bigprt{ \norm{ \tnu{j} }_{\rmL^\infty\ofbbT} \norm{ \D\vel_j }_{\rmL^2\ofbbT} + \norm{ \tlambda{j} }_{\rmL^\infty\ofbbT} \norm{ \Div\vel_j }_{\rmL^2\ofbbT} } \norm{ \Grad^2\xi }_{\rmL^2\ofbbT} \\
			&\leq  C \Bigprt{ \norm{ \tnu{j} }_{\rmH^2\ofbbT} + \norm{ \tlambda{j} }_{\rmH^2\ofbbT} } \norm{ \vel_j }_{\rmH^1\ofbbT} \norm{ \xi }_{\rmH^2\ofbbT} 
			\leq \hat C \norm{ \vel_j }_{\rmH^1\ofbbT} \norm{ \xi }_{\rmH^2\ofbbT}.
		\end{align*}
		Altogether, taking the supremum over all $\xi\in\rmH^2(\bbT)$ with $\norm{\xi}_{\rmH^2\ofbbT} \leq 1$, the modified pressure is controlled by
		\begin{align*}
			\norm{q}_{\rmL^2\ofbbT}
			\leq C \norm{G}_{\rmH^{-2}\ofbbT}
			\leq \hat C \Bigprt{ \norm{ \varphi_1 }_{\rmH^2\ofbbT} + \sum_{j=1,2} \bigprt{ \norm{ \f_j }_{\rmL^2\ofbbT} + \norm{ \vel_j }_{\rmH^1\ofbbT} } },
		\end{align*}
		from where integrating in time and invoking the energy estimate from Lemma~\ref{Lin:lemma:energy_est} lead to
		\begin{align*}
			&\norm{q}_{\rmL^2(0,T;\rmL^2\ofbbT)} 
			\leq \hat C \Bigprt{ \norm{ \varphi_1 }_{\rmL^2(0,T;\rmH^2\ofbbT)} + \sum_{j=1,2} \bigprt{ \norm{ \f_j }_{\rmL^2(0,T;\rmL^2\ofbbT)} + \norm{ \vel_j }_{\rmL^2(0,T;\rmH^1\ofbbT)} } } \\
			&\leq \hat C \Bigprt{ \norm{ \varphi_1 }_{\rmL^2(0,T;\rmH^2\ofbbT)} + \sum_{j=1,2} \norm{ \f_j }_{\rmL^2(0,T;\rmL^2\ofbbT)} + \norm{ g_1 }_{\rmL^2(0,T;\rmH^1\ofbbT)} 
				+ \sum_{j=1,2} \norm{\vel_{j,0}}_{\rmL^2\ofbbTd} + \norm{\varphi_{1,0}}_{\rmH^1\ofbbT} }.
		\end{align*}
		
		Next, similarly to the proof of Lemma~\ref{Lin:lemma:pressure}, we fix some $(\w_1,\w_2)\in\rmH^1(\bbT)^d \times \rmH^1(\bbT)^d$ and decompose $\w_j$ into $\w_j=\hat\w_j + \barrho{j}^{-1}\Grad\xi$ such that $\Div(\tphi1\hat\w_1+\tphi2\hat\w_2)=0$. With $q$ defined above, it consequently holds
		\begin{align}
			\label{Lin:eq:eq_with_q_without_div_cond}
			&\sum_{j=1,2} \int_\bbT \trho{j}\delt\vel_j \cdot \w_j \dx
			- \sum_{j=1,2} \int_\bbT \Div\bigprt{ 2\tnu{j}\D\vel_j + \tlambda{j}\Div\vel_j\Id } \cdot \w_j \dx \nonumber\\
			&\quad- \int_\bbT (  \tphi1 \w_1 -  \tphi2 \w_2) \cdot \Grad\Lap\varphi_1 \dx
			- \sum_{j=1,2} \int_\bbT q \Div( \tphi{j}\w_j) \dx \nonumber \\
			&\quad+ \sum_{j=1,2} \int_\bbT \tphi{j} \Grad \biggprt{ \frac{\tdnt }{\tant } \Lap\varphi_1 } \cdot \w_j \dx
			= \sum_{j=1,2} \int_\bbT \f_j \cdot \w_j \dx
		\end{align}
		a.e.~in $(0,T)$. The choice of $(\w_1,0)$ and $(0,\w_2)$, respectively, reveals that $(\vel_1,\vel_2,\varphi_1,q)$ solves
		\begin{subequations}
			\begin{align}
				\label{Lin:eq:eq_deltv1_with_q}
				&\int_\bbT \trho1\delt\vel_1 \cdot \w_1 \dx
				- \int_\bbT \Div\bigprt{ 2\tnu1\D\vel_1 + \tlambda1\Div\vel_1\Id } \cdot \w_1 \dx
				- \int_\bbT  \tphi1 \w_1 \cdot \Grad\Lap\varphi_1 \dx \nonumber\\
				&\quad- \int_\bbT q \Div( \tphi1\w_1) \dx
				+ \int_\bbT \tphi1 \Grad \biggprt{ \frac{\tdnt }{\tant } \Lap\varphi_1 } \cdot \w_1 \dx
				= \int_\bbT \f_1 \cdot \w_1 \dx
			\end{align}
			a.e.~in $(0,T)$ for all $\w_1\in\rmH^1(\bbT)^d$, and
			\begin{align}
				\label{Lin:eq:eq_deltv2_with_q}
				&\int_\bbT \trho2\delt\vel_2 \cdot \w_2 \dx
				- \int_\bbT \Div\bigprt{ 2\tnu2\D\vel_2 + \tlambda2\Div\vel_2\Id } \cdot \w_2 \dx
				+ \int_\bbT  \tphi2 \w_2 \cdot \Grad\Lap\varphi_1 \dx \nonumber \\
				&\quad- \int_\bbT q \Div( \tphi2\w_2) \dx
				+ \int_\bbT \tphi2 \Grad \biggprt{ \frac{\tdnt }{\tant } \Lap\varphi_1 } \cdot \w_2 \dx
				= \int_\bbT \f_2 \cdot \w_2 \dx
			\end{align}
		\end{subequations}
		a.e.~in $(0,T)$ for all $\w_2\in\rmH^1(\bbT)^d$, as well as \eqref{Lin:eq:reg_system_eq_deltphi1} and \eqref{Lin:eq:reg_system_eq_div} a.e.~in $(0,T)\times\bbT$, cf.~Remark~\ref{Lin:rmk:ptwise}.
	\end{proof}

    As a direct consequence of the above estimate for the modified pressure, we can also control $\delt\vel_j$ in dependence on $\varphi_1$.
	
	\begin{lemma}[Estimate for $\delt\vel_j$ depending on $\varphi_1$] 
		\label{Lin:lemma:est_delt_velj_dep_phi1_gen}
        Let $0<T<\infty$ and $\varepsilon>0$, and let~$\tphi1\in\rmH^2(\bbT)$ satisfy $\tphi1\in(0,1)$ in $\bbT$. Suppose that
        \begin{equation*}
            \begin{alignedat}{4}
                \f_j &\in\rmL^2(0,T;\rmL^2(\bbT)^d), &\quad j&=1,2, &\qquad
                g_1&\in\rmL^2(0,T;\rmH^1(\bbT)) \cap \rmH^1(0,T;\rmH^{-1}(\bbT)), \\
                \vel_{j,0}&\in \rmH^2(\bbT)^d, &\quad j&=1,2, &\qquad
                \varphi_{1,0}&\in\rmH^3(\bbT), 
            \end{alignedat}
        \end{equation*}
        and that the assumptions of Lemma~\ref{Lin:lemma:existence_abstr}~\ref{Lin:item:existence_abstr_iii} hold. Then the weak solution~$(\vel_1,\vel_2,\varphi_1,q)$ to the regularized system~\eqref{Lin:eq:reg_system} from Lemma~\ref{Lin:lemma:pressure_q_gen} satisfies the uniform \textit{a priori} estimate
		\begin{align}
			\label{Lin:eq:est_deltvj_dep_phi1_gen_coeffs}
			\sum_{j=1,2} \norm{\delt\vel_j}_{\rmL^2(0,T;\rmH^{-1}\ofbbT)} 
			&\leq \hat C \Bigprt{ \norm{ \varphi_1 }_{\rmL^2(0,T;\rmH^2\ofbbT)} + \sum_{j=1,2} \norm{\f_j}_{\rmL^2(0,T;\rmL^2\ofbbTd)} + \norm{g_1}_{\rmL^2(0,T;\rmH^1\ofbbT)} \nonumber\\
			&\qquad+ \sum_{j=1,2} \norm{\vel_{j,0}}_{\rmL^2\ofbbTd} + \norm{\varphi_{1,0}}_{\rmH^1\ofbbT} }
		\end{align}
		with a constant $\hat C>0$ depending on $\norm{\varphi_{1,0}}_{\rmH^2\ofbbT}$ and $T$.
	\end{lemma}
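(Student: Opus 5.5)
The estimate follows from a duality argument applied to the weak momentum equations \eqref{Lin:eq:eq_deltv1_with_q} and \eqref{Lin:eq:eq_deltv2_with_q} of Lemma~\ref{Lin:lemma:pressure_q_gen}. These hold for every $\w_j\in\rmH^1(\bbT)^d$. For $j=1$ we solve for the time-derivative term:
\begin{align*}
\int_\bbT \trho1\delt\vel_1\cdot\w_1\dx
&= \int_\bbT \f_1\cdot\w_1\dx - \int_\bbT \bigprt{2\tnu1\D\vel_1+\tlambda1\Div\vel_1\Id}:\Grad\w_1\dx
+ \int_\bbT q\Div(\tphi1\w_1)\dx\\
&\quad - \int_\bbT \Lap\varphi_1\Div(\tphi1\w_1)\dx
+ \int_\bbT \frac{\tdnt}{\tant}\Lap\varphi_1\Div(\tphi1\w_1)\dx .
\end{align*}
The two third-order terms $\tphi1\w_1\cdot\Grad\Lap\varphi_1$ and $\tphi1\Grad\bigprt{\frac{\tdnt}{\tant}\Lap\varphi_1}\cdot\w_1$ have been integrated by parts, so only $\Lap\varphi_1$ appears. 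Each term on the right is bounded by $\hat C\bigprt{\norm{\f_1}_{\rmL^2}+\norm{\vel_1}_{\rmH^1}+\norm{q}_{\rmL^2}+\norm{\varphi_1}_{\rmH^2}}\norm{\w_1}_{\rmH^1}$. This uses $\norm{\Div(\tphi1\w_1)}_{\rmL^2}\le C\norm{\tphi1}_{\rmH^2}\norm{\w_1}_{\rmH^1}$, which holds since $\rmH^2\hookrightarrow \rmW^{1,6}\cap\rmL^\infty$ for $d\le3$, together with $\frac{\tdnt}{\tant}\in\rmH^2\hookrightarrow\rmL^\infty$ and the boundedness of $\tnu1,\tlambda1$.

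Next we pass from $\trho1\delt\vel_1$ to $\delt\vel_1$. Choose $\w_1=\trho1^{-1}\mathbf{w}$ with $\mathbf{w}\in\rmH^1(\bbT)^d$. This needs $\trho1^{-1}=(\barrho1\tphi1)^{-1}\in\rmH^2(\bbT)$, so that multiplication by $\trho1^{-1}$ is bounded on $\rmH^1$. That holds because $\tphi1\in\rmH^2\hookrightarrow\rmC(\bbT)$ is continuous with values in $(0,1)$ on the compact torus, hence bounded below by a positive constant. The same argument applies to $\tphi2=1-\tphi1$. Taking the supremum over $\norm{\mathbf{w}}_{\rmH^1}\le1$ gives the pointwise-in-time bound
\begin{align*}
\norm{\delt\vel_j}_{\rmH^{-1}}\le \hat C\bigprt{\norm{\f_j}_{\rmL^2}+\norm{\vel_j}_{\rmH^1}+\norm{q}_{\rmL^2}+\norm{\varphi_1}_{\rmH^2}},
\end{align*}
and the analogous argument for $j=2$ uses \eqref{Lin:eq:eq_deltv2_with_q}.

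Finally, square and integrate over $(0,T)$. The term $\norm{q}_{\rmL^2(0,T;\rmL^2)}$ is controlled by \eqref{Lin:eq:a_priori_est_pressure_q_gen}, and $\norm{\vel_j}_{\rmL^2(0,T;\rmH^1)}$ by the energy estimate \eqref{Lin:eq:a_priori_est}. Together these yield \eqref{Lin:eq:est_deltvj_dep_phi1_gen_coeffs}; the constant depends on $\tphi1=\varphi_{1,0}$ through its $\rmH^2$ norm and on a positive lower bound for $\tphi1$ and $1-\tphi1$. The only delicate point is the multiplier estimate for $\trho1^{-1}$ and $\frac{\tdnt}{\tant}$ in $\rmH^2$, which follows from Lemma~\ref{prelim:lemma:comp_Sobolev} applied to a smooth function equal to $s\mapsto 1/s$ on the range of $\tphi1$. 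The remaining steps are routine Hölder and Sobolev estimates.
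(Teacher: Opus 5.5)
Your proposal is correct and follows essentially the same route as the paper. Both test the $q$-form of the momentum equations, integrate the third-order terms by parts so that only $\Lap\varphi_1$ appears, bound everything by duality against $\norm{\w_j}_{\rmH^1}$, and integrate in time using the energy estimate and the estimate for $q$. Your choice $\w_j=\trho{j}^{-1}\mathbf{w}$ makes explicit the passage from $\trho{j}\delt\vel_j$ to $\delt\vel_j$, which the paper leaves implicit, and you rightly note that this step makes $\hat C$ depend on a positive lower bound for $\tphi1$ and $1-\tphi1$.
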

	
	\begin{proof}
		As shown in Lemma~\ref{Lin:lemma:pressure_q_gen}, $(\vel_1,\vel_2,\varphi_1,q)$ solves the weak equation~\eqref{Lin:eq:eq_deltv1_with_q}, which gives, after a suitable integration by parts,
		\begin{align*}
			&\bigabs{ \ang{\trho1\delt\vel_1}{\w_1}_{\rmH^{-1}\ofbbT,\rmH^1\ofbbT} } 
			\leq \norm{ 2\tnu1\D\vel_1 + \tlambda1\Div\vel_1\Id }_{\rmL^2\ofbbT} \norm{\Grad\w_1}_{\rmL^2\ofbbT} 
			+ \norm{\f_1}_{\rmL^2\ofbbT} \norm{\w_1}_{\rmL^2\ofbbT} \\
			&\qquad+  \Bigprt{ \norm{\Lap\varphi_1}_{\rmL^2\ofbbT} + \norm{q}_{\rmL^2\ofbbT} + \Bignorm{ \frac{\tdnt }{\tant } \Lap\varphi_1}_{\rmL^2\ofbbT} }  \norm{\Div(\tphi1\w_1)}_{\rmL^2\ofbbT}  \\
			&\leq C \Bigprt{ 
			\bigprt{ \norm{ \tnu1 }_{\rmL^\infty\ofbbT} + \norm{\tlambda1}_{\rmL^\infty\ofbbT} } \norm{\vel_1}_{\rmH^1\ofbbT} \norm{\w_1}_{\rmH^1\ofbbT} 
			+ \norm{\f_1}_{\rmL^2\ofbbT} \norm{\w_1}_{\rmL^2\ofbbT} \\
			&\qquad+ \Bigprt{ \norm{\Lap\varphi_1}_{\rmL^2\ofbbT} + \norm{q}_{\rmL^2\ofbbT} + \Bignorm{ \frac{\tdnt }{\tant } }_{\rmL^\infty\ofbbT} \norm{\Lap\varphi_1}_{\rmL^2\ofbbT} } 
			\bigprt{ \norm{\Grad\tphi1}_{\rmL^4\ofbbT} \norm{\w_1}_{\rmL^4\ofbbT} + \norm{\tphi1}_{\rmL^\infty\ofbbT} \norm{\Grad\w_1}_{\rmL^2\ofbbT} } }
		\end{align*}
		and therefore
		\begin{align*}
			\norm{ \trho1\delt\vel_1 }_{\rmH^{-1}\ofbbT} 
			\leq \hat C \bigprt{ \norm{\vel_1}_{\rmH^1\ofbbT} + \norm{\Lap\varphi_1}_{\rmL^2\ofbbT}  + \norm{q}_{\rmL^2\ofbbT} +  \norm{\f_1}_{\rmL^2\ofbbT} },
		\end{align*}
		with $\hat C= \hat C(\norm{\tphi1}_{\rmH^2\ofbbT})$.
		Eventually, integrating with respect to time and using Lemma~\ref{Lin:lemma:energy_est} to estimate the $\vel_1$-term and Lemma~\ref{Lin:lemma:pressure_q_gen} for the $q$-term proves the claimed estimate. From~\eqref{Lin:eq:eq_deltv2_with_q}, we conclude the corresponding result for $\delt\vel_2$.
	\end{proof}

    Our next goal is to derive an estimate for $\Lap\varphi_1$ in $\rmL^2(\bbT)$, again by exploiting the damped plate equation satisfied by $\varphi_1$. In contrast to the frozen-coefficient case, additional lower order terms have to be controlled.
	
	\begin{lemma}[Estimate for $\Lap\varphi_1$] 
		\label{Lin:lemma:est_Lap_phi1_gen}
        Let $0<T<\infty$ and $\varepsilon>0$, and let $\tphi1\in\rmH^2(\bbT)$ satisfy~$\tphi1\in(0,1)$ in $\bbT$. Suppose that
        \begin{equation*}
            \begin{alignedat}{4}
                \f_j &\in\rmL^2(0,T;\rmL^2(\bbT)^d), &\quad j&=1,2, &\qquad
                g_1&\in\rmL^2(0,T;\rmH^1(\bbT)) \cap \rmH^1(0,T;\rmH^{-1}(\bbT)), \\
                \vel_{j,0}&\in \rmH^2(\bbT)^d, &\quad j&=1,2, &\qquad
                \varphi_{1,0}&\in\rmH^3(\bbT), 
            \end{alignedat}
        \end{equation*}
        and that the assumptions of Lemma~\ref{Lin:lemma:existence_abstr}~\ref{Lin:item:existence_abstr_iii} hold. Then the weak solution~$(\vel_1,\vel_2,\varphi_1)$ to the regularized system~\eqref{Lin:eq:reg_system} in the sense of Definition~\ref{Lin:def:abstract_eq_weak} satisfies the uniform \textit{a priori} estimate
		\begin{align}
			\label{Lin:eq:a_priori_est_Lap_phi1_gen_coeffs}
			&\norm{\delt\varphi_1}_{\rmL^\infty(0,T;\rmH^{-1}\ofbbT)}
			+ \norm{\Grad\varphi_1}_{\rmL^\infty(0,T;\rmL^2\ofbbTd)}
			+ \norm{\Lap\varphi_1}_{\rmL^2(0,T;\rmL^2\ofbbT)}
			+ \sqrt\varepsilon \norm{\delt\varphi_1}_{\rmL^2(0,T;\rmL^2\ofbbT)} \nonumber \\
			&\leq \hat C \Bigprt{ \sum_{j=1,2} \norm{\f_j}_{\rmL^2(0,T;\rmL^2\ofbbTd)} + \norm{g_1}_{\rmL^2(0,T;\rmH^1\ofbbT)} + \norm{g_1}_{\rmH^1(0,T;\rmH^{-1}\ofbbT)} \nonumber \\
			&\qquad+ \sum_{j=1,2} \norm{\vel_{j,0}}_{\rmL^2\ofbbTd} + \norm{\varphi_{1,0}}_{\rmH^1\ofbbT} + \norm{\delt\varphi_1\vert_{t=0}}_{\rmH^{-1}\ofbbT} + \norm{\Lap\varphi_1\vert_{t=0}}_{\rmH^{-1}\ofbbT} }
		\end{align}
		with a constant $\hat C>0$ depending on $\norm{\tphi1}_{\rmH^2\ofbbT}$ and $T$.
	\end{lemma}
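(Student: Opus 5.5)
We will mimic the proof of Lemma~\ref{Lin:lemma:est_grad_Lap_phi1}, but one spatial order lower and keeping track of the variable coefficients. Concretely, we test the rewritten weak formulation \eqref{Lin:eq:weak_rewritten}, or equivalently the pressure-free formulation \eqref{Lin:eq:eq_deltv1_with_q}--\eqref{Lin:eq:eq_deltv2_with_q}, with the pair $\bigprt{\tphi1^{-1}\Grad\Psi,\,-\tphi2^{-1}\Grad\Psi}$. Here $\Psi$ is chosen one order lower than before: instead of $\delt\varphi_1-\Lap\varphi_1$ we take
\begin{align*}
\Psi=(-\Lap)^{-1}\delt\varphi_1+\varphi_1 ,
\end{align*}
after reducing to mean-zero $\varphi_1$ and $g_1$ as in Proposition~\ref{Lin:prop:ex_const_coeff}.

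The test pair is admissible without any divergence condition, because we use the formulation containing $q$. Since $\tphi1\in\rmH^2\hookrightarrow\rmW^{1,6}\cap\rmL^\infty$ and $\tphi1$ is bounded away from $0$ and $1$, the factors $\tphi{j}^{-1}$ are harmless multipliers. The pressure terms give $-\int q\,\Div(\Grad\Psi-\Grad\Psi)\dx=0$, so $q$ drops out exactly, as it did in the constant case.

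Next we use the identities
\begin{align*}
\Div\vel_1&=\tphi1^{-1}\bigprt{-\delt\varphi_1+\varepsilon\Lap\varphi_1+g_1-\vel_1\cdot\Grad\tphi1},\\
\Div\vel_2&=-\tphi2^{-1}\bigprt{-\delt\varphi_1+\varepsilon\Lap\varphi_1+g_1+\vel_2\cdot\Grad\tphi2},
\end{align*}
which follow from \eqref{Lin:eq:reg_system_eq_deltphi1} and \eqref{Lin:eq:reg_system_eq_div}. With them we rewrite the tested momentum equations as a damped plate equation with variable coefficients,
\begin{align*}
\Div(\tant\Grad\delt^2\!\cdot)\text{-type terms}\;\sim\; \tant\delt^2\varphi_1-\Div(\tbnt\Grad\delt\varphi_1)+\Div(2\Grad\Lap\varphi_1)+\varepsilon(\dots)=\mathcal{R},
\end{align*}
now with $\tant,\tbnt\in\rmH^2(\bbT)$ and positive. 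The remainder $\mathcal R$ collects:
\begin{itemize}
\item the data terms $\Div\f_j$, $\delt g_1$ and $\Lap g_1$;
\item commutator terms in which derivatives fall on $\tphi{j}$, $\tnu{j}$, $\tlambda{j}$;
\item the terms $\delt(\vel_j\cdot\Grad\tphi{j})$.
\end{itemize}

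Pairing this equation with $\Psi$ mirrors the constant-coefficient calculation two orders lower. It produces
\begin{align*}
\tfrac{d}{dt}\Bigprt{\norm{\delt\varphi_1+(-\Lap)\varphi_1}_{\rmH^{-1}}^2+\norm{\Grad\varphi_1}_{\rmL^2}^2}\text{-type quantities}
+2\norm{\Lap\varphi_1}_{\rmL^2}^2+\varepsilon\norm{\delt\varphi_1}_{\rmL^2}^2 .
\end{align*}
The weighted versions with coefficient $\tant$ are equivalent norms, and the error terms involve $\Grad\tant$ and are controlled by the same quantities. The initial values entering the energy are $\norm{\delt\varphi_1\vert_{t=0}}_{\rmH^{-1}}$, $\norm{\Lap\varphi_1\vert_{t=0}}_{\rmH^{-1}}$ and $\norm{\varphi_{1,0}}_{\rmH^1}$, which matches the right-hand side of the statement.

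We bound the remaining terms one group at a time.
\begin{itemize}
\item The data terms are bounded by $C_\delta\bigprt{\norm{\f_j}_{\rmL^2}^2+\norm{g_1}_{\rmH^1}^2+\norm{\delt g_1}_{\rmH^{-1}}^2}+\delta\norm{\Psi}_{\rmH^1}^2$. The quantity $\norm{\Psi}_{\rmH^1}$ is in turn bounded by $\norm{\delt\varphi_1}_{\rmL^2}+\norm{\varphi_1}_{\rmH^1}$.
\item Viscous terms with derivatives on the coefficients, such as $\int\Grad\tnu{j}\otimes\Grad\vel_j:\Grad^2\Psi$, are handled by integrating by parts once more. They become products of $\norm{\vel_j}_{\rmH^1}$ and $\norm{\Psi}_{\rmH^1}$, using $\rmH^2\hookrightarrow\rmW^{1,6}$ for the coefficients. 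Lemma~\ref{Lin:lemma:energy_est} controls $\vel_j$ in $\rmL^2(0,T;\rmH^1)$.
\item The time-derivative terms $\int\trho{j}\delt\vel_j\cdot\tphi{j}^{-1}\Grad\Psi\dx$ contain a commutator $\Grad(\barrho{j})$, which vanishes, but also $\delt(\vel_j\cdot\Grad\tphi{j})$. We move the time derivative onto $\Psi$, or bound these terms directly by $\norm{\delt\vel_j}_{\rmH^{-1}}\norm{\Psi}_{\rmH^1}$-type products with a coefficient factor.
\end{itemize}

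The step I expect to be the main obstacle is this last class of lower order terms, because they require $\delt\vel_j$, or $\delt\varphi_1$ in $\rmL^2$, which the left-hand side controls only through $\varepsilon$. Lemma~\ref{Lin:lemma:est_delt_velj_dep_phi1_gen} bounds $\norm{\delt\vel_j}_{\rmL^2(0,T;\rmH^{-1})}$ by $\norm{\varphi_1}_{\rmL^2(0,T;\rmH^2)}$ plus data. The estimates therefore close by Young's inequality only if $\norm{\varphi_1}_{\rmH^2}$ can be absorbed into the dissipation $2\norm{\Lap\varphi_1}_{\rmL^2}^2$. My first attempt is to arrange the terms so that every occurrence of $\Lap\varphi_1$ is multiplied by a lower order quantity, namely $\norm{\Grad\varphi_1}_{\rmL^2}$, $\norm{\delt\varphi_1}_{\rmH^{-1}}$, $\norm{\vel_j}_{\rmL^2}$, or data. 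Then $\delta\norm{\Lap\varphi_1}^2_{\rmL^2}$ can be absorbed and the rest goes into Gronwall's lemma. Where $\delt\varphi_1$ appears in $\rmL^2$, I would replace it via \eqref{Lin:eq:reg_system_eq_deltphi1} by $g_1-\Div(\tphi1\vel_1)+\varepsilon\Lap\varphi_1$, which is controlled by the energy estimate and the dissipation.

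Gronwall's lemma then gives \eqref{Lin:eq:a_priori_est_Lap_phi1_gen_coeffs}, with $\hat C$ depending on $\norm{\tphi1}_{\rmH^2}$ and $T$. The bound on $\norm{\delt\varphi_1}_{\rmL^\infty(0,T;\rmH^{-1})}$ follows from the first energy quantity together with $\norm{\Grad\varphi_1}_{\rmL^\infty(0,T;\rmL^2)}$.
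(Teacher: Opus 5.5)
Your plan follows the paper in outline. You use the same test pair $(\tphi1^{-1}\Grad\Phi,-\tphi2^{-1}\Grad\Phi,0)$; it satisfies $\Div(\tphi1\w_1+\tphi2\w_2)=0$ identically, so \eqref{Lin:eq:weak_rewritten} applies directly and neither $p$ nor $q$ enters. You also use the same identities for $\Div\vel_j$, the same variable-coefficient plate equation, and the same closing device: a term $\delta_1\norm{\delt\vel_j}_{\rmH^{-1}}^2$ absorbed after time integration via Lemma~\ref{Lin:lemma:est_delt_velj_dep_phi1_gen}.

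The gap is at the central energy step, and it comes from dropping the weight $\tant^{-1}$ from the test function. The leading term of the plate equation is $\tant\,\delt w$, with $w=\delt\varphi_1-\Lap\varphi_1$ and a \emph{variable} coefficient $\tant$. Since $\tant(-\Lap)^{-1}$ is not symmetric, $\ang{\tant\delt w}{(-\Lap)^{-1}w}$ is not the time derivative of any quadratic form in $w$. Writing it as $\frac12\ddt\ang{\tant w}{(-\Lap)^{-1}w}$ leaves the defect $\frac12\ang{\delt w}{[\tant,(-\Lap)^{-1}]w}$, which contains $\delt^2\varphi_1$. So the error is not ``$\Grad\tant$ times quantities already controlled'', as you claim. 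The only handle on $\delt^2\varphi_1$ is in $\rmH^{-2}$, through the time-differentiated transport equation, i.e.\ through $\norm{\delt\vel_1}_{\rmH^{-1}}$. Lemma~\ref{Lin:lemma:est_delt_velj_dep_phi1_gen} bounds that only by a non-small multiple of $\norm{\varphi_1}_{\rmL^2(0,T;\rmH^2)}$. With the straightforward bound $[\tant,(-\Lap)^{-1}]\colon\rmL^2\to\rmH^2$, pairing $\delt^2\varphi_1$ with the commutator applied to the $\Lap\varphi_1$-part of $w$ gives $\norm{\delt\vel_1}_{\rmH^{-1}}\norm{\Lap\varphi_1}_{\rmL^2}$. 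That is a product of two top-order quantities with an $O(1)$ constant, and Young's inequality cannot absorb it.

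The paper avoids this by testing with $\Phi=\tant^{-1}(-\Lap)^{-1}(\delt\varphi_1-\Lap\varphi_1)$. Then $\ang{\tant\delt w}{\Phi}=\frac12\ddt\norm{w}_{\rmH^{-1}}^2$ exactly. The commutators move to the constant-coefficient term $2\Lap^2\varphi_1$, where they only produce $\Lap\varphi_1$ times lower-order norms of $\delt\varphi_1$ and $\varphi_1$ (the terms $J_{3,k}$), all absorbable.

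Your unweighted $\Psi$ can be rescued, but only by an argument you do not give. One option is a finer commutator bound (e.g.\ $[\tant,(-\Lap)^{-1}]$ maps $\rmH^{-1/2+s}$, $s>0$, into $\rmH^2$), combined with interpolation of $\norm{\varphi_1}_{\rmH^{3/2+s}}$. Another is integration by parts in time in that $\delt^2\varphi_1$-term.

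Two smaller points. The obstacle you anticipate is not one: Lemma~\ref{Lin:lemma:energy_est} already bounds $\norm{\delt\varphi_1}_{\rmL^2(0,T;\rmL^2)}$ uniformly in $\varepsilon$, and the paper uses this throughout. In the $\delt\vel_j$-terms, the factor $\Grad\tphi{j}$ lies only in $\rmH^1$, so you need $\varphi_1\in\rmL^\infty$ via Agmon. This gives $\norm{\varphi_1}_{\rmH^1}^{1/2}\norm{\varphi_1}_{\rmH^2}^{1/2}$ rather than $\norm{\Psi}_{\rmH^1}$, which still closes with a second small parameter, as in the paper.
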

	
	\begin{proof}
		The idea is to redo the proof of Lemma~\ref{Lin:lemma:est_grad_Lap_phi1} at a level one order lower than before, once again making use of the structure of a damped plate equation. The corresponding test function for the weak formulation \eqref{Lin:eq:weak_rewritten} is thus chosen as~$\bigprt{\tphi1^{-1} \Grad\Phi, -\tphi2^{-1} \Grad\Phi, 0}$, where later~$\Phi = \tant^{-1} (-\Lap)^{-1}(\delt\varphi_1 - \Lap\varphi_1)$. To ensure that $\mean{\delt\varphi_1}=0$, one may reduce to the case~$\mean{g_1}=0$, using \eqref{Lin:eq:reg_system_eq_deltphi1}, as in the proof of Proposition~\ref{Lin:prop:ex_const_coeff}. As before, the derivation of a damped plate equation relies crucially on the identities
		\begin{align*}
			\Div \vel_1 &= \tphi1^{-1} \prt{-\delt\varphi_1 + \varepsilon\Lap\varphi_1 + g_1 - \Grad\tphi1\cdot\vel_1}, \\
			\Div \vel_2 &= -\tphi2^{-1} \prt{-\delt\varphi_1 + \varepsilon\Lap\varphi_1 + g_1 + \Grad\tphi2\cdot\vel_2},
		\end{align*}
		valid a.e.~in $(0,T)\times\bbT$, which are inferred by the equations~\eqref{Lin:eq:reg_system_eq_deltphi1} and \eqref{Lin:eq:reg_system_eq_div}. In addition, we use the abbreviations
		\begin{equation*}
			\arraycolsep=2pt
			\begin{array}{rlrl}
				\ta{j} &= \barrho{j}\tphi{j}^{-1}, &
				\tb{j} &= (2\tnu{j}+\tlambda{j}) \tphi{j}^{-2},
				\quad j=1,2, \\[1ex]
				\tant&= \ta1 + \ta2, \quad &
				\tbnt &= \tb1 + \tb2.
			\end{array}
		\end{equation*}
		
		For now, let the test function $\Phi\in\rmC^\infty(\bbT)$ be smooth. Analyzing the individual parts of the equation tested as described above, one by one, we first obtain
		\begin{align*}
			&\int_\bbT \trho1 \delt \vel_1 \cdot \tphi1^{-1} \Grad\Phi \dx
			- \int_\bbT \trho2 \delt \vel_2 \cdot \tphi2^{-1} \Grad\Phi \dx \\
			&= - \int_\bbT \barrho1 \delt \Div\vel_1 \Phi \dx
			+ \int_\bbT \barrho2 \delt \Div\vel_2 \Phi \dx \\
			&= \bigang{ \tant \delt^2\varphi_1 
				- \varepsilon \tant \delt\Lap\varphi_1 
				- \tant\delt g_1}{ \Phi }_{\rmH^{-1}\ofbbT,\rmH^1\ofbbT} + L_1,
		\end{align*}
		where $L_1$ is a lower order term given by
		\begin{align*}
			L_1 = \int_\bbT \prt{\ta1\Grad\tphi1 \cdot \delt\vel_1 - \ta2\Grad\tphi2 \cdot \delt\vel_2} \Phi \dx.
		\end{align*}
		To study the second part, for $j=1,2$, we start by computing
		\begin{align*}
			&-\int_\bbT \tnu{j}\Lap\vel_j \cdot \tphi{j}^{-1} \Grad\Phi \dx 
			= \int_\bbT \tnu{j}\tphi{j}^{-1}\Grad\vel_j : \Grad^2\Phi \dx 
			- \int_\bbT \bigprt{ \Grad(\tnu{j}\tphi{j}^{-1}) \cdot \nabla } \vel_j \cdot \Grad\Phi \dx \\
			&= - \int_\bbT \tnu{j}\tphi{j}^{-1} \Grad\Div\vel_j \cdot \Grad\Phi \dx
			- 2\int_\bbT \bigprt{ \Grad(\tnu{j}\tphi{j}^{-1}) \cdot \nabla } \vel_j \cdot \Grad\Phi \dx
		\end{align*}
		and find
		\begin{align*}
			&- \int_\bbT \Div\bigprt{ 2\tnu{j}\D\vel_j + \tlambda{j}\Div\vel_j\Id } \cdot \tphi{j}^{-1} \Grad\Phi \dx \\
			&= - \int_\bbT \Bigprt{ \bigprt{\tnu{j}\Lap\vel_j + (\tnu{j}+ \tlambda{j})\Grad\Div\vel_j} + \bigprt{2\D\vel_j \Grad\tnu{j} + \Div\vel_j\Grad\tlambda{j}} }\cdot \tphi{j}^{-1} \Grad\Phi \dx \\
			&= -\int_\bbT (2\tnu{j}+\tlambda{j}) \tphi{j}^{-1} \Grad\Div\vel_j \cdot \Grad\Phi\dx
			- 2\int_\bbT \bigprt{ \Grad(\tnu{j}\tphi{j}^{-1}) \cdot \nabla } \vel_j \cdot \Grad\Phi \dx \\
			&\quad- \int_\bbT \tphi{j}^{-1} \bigprt{2\D\vel_j \Grad\tnu{j} + \Div\vel_j\Grad\tlambda{j}} \cdot \Grad\Phi \dx  \\
			&= \bigang{ (2\tnu{j}+\tlambda{j}) \tphi{j}^{-1} \Lap\Div\vel_j }{ \Phi }_{\rmH^{-1}\ofbbT,\rmH^1\ofbbT} 
			+ L_2^j,
		\end{align*}
		where $L_2^j = \sum_{k=1}^3 L_{2,k}^j$ with
		\begin{align*}
			L_{2,1}^j &= \int_\bbT \Grad\bigprt{(2\tnu{j}+\tlambda{j}) \tphi{j}^{-1} } \cdot \Grad\Div\vel_j  \Phi \dx, \\
			L_{2,2}^j &= - 2\int_\bbT \bigprt{ \Grad(\tnu{j}\tphi{j}^{-1}) \cdot \nabla } \vel_j \cdot \Grad\Phi \dx, \\
			L_{2,3}^j &= - \int_\bbT \tphi{j}^{-1} \bigprt{2\D\vel_j \Grad\tnu{j} + \Div\vel_j\Grad\tlambda{j}} \cdot \Grad\Phi \dx.
		\end{align*}
		Thus, we get for the second part 
		\begin{align*}
			&-\int_\bbT \Div\bigprt{2 \tnu1\D\vel_1 + \tlambda1\Div\vel_1\Id } \cdot \tphi1^{-1} \Grad\Phi \dx \\
			&\quad+ \int_\bbT \Div\bigprt{ 2\tnu2\D\vel_2 + \tlambda2\Div\vel_2\Id } \cdot \tphi2^{-1} \Grad\Phi \dx \\
			&= \bigang{ (2\tnu1+\tlambda1) \tphi1^{-1} \Lap\Div\vel_1 }{ \Phi }_{\rmH^{-1}\ofbbT,\rmH^1\ofbbT} + L_2^1 
			 - \bigang{ (2\tnu2+\tlambda2) \tphi2^{-1} \Lap\Div\vel_2 }{ \Phi }_{\rmH^{-1}\ofbbT,\rmH^1\ofbbT} - L_2^2 \\
			&= \bigang{ -\tbnt \delt\Lap\varphi_1 + \varepsilon \tbnt \Lap^2\varphi_1 +\tbnt \Lap g_1 }{ \Phi }_{\rmH^{-1}\ofbbT,\rmH^1\ofbbT} 
			+ L_2 + L_3,
		\end{align*}
		with $L_2= L_2^1 - L_2^2$ and $L_3= \sum_{j=1,2} (L_{3,1}^j + L_{3,2}^j) + L_{3,3}$, where 
        \begin{align*}
            L_{3,1}^j &= \int_\bbT 2(2\tnu{j}+\tlambda{j}) \tphi{j}^{-1} \Grad(\tphi{j}^{-1}) \cdot \Grad \prt{-\delt\varphi_1 + \varepsilon\Lap\varphi_1 + g_1 \mp \Grad\tphi{j}\cdot\vel_{j} } \Phi \dx, \\
            L_{3,2}^j &= \int_\bbT (2\tnu{j}+\tlambda{j}) \tphi{j}^{-1} \Lap(\tphi{j}^{-1}) \prt{-\delt\varphi_1 + \varepsilon\Lap\varphi_1 + g_1 \mp \Grad\tphi{j}\cdot\vel_{j} } \Phi \dx, \\
            L_{3,3} &= - \int_\bbT \bigprt{ \tb1 \Lap(\Grad\tphi1 \cdot \vel_1) - \tb2 \Lap(\Grad\tphi2 \cdot \vel_2) } \Phi \dx.
        \end{align*}
		Moreover, the third part simply reads
		\begin{align*}
			&- \int_\bbT \bigprt{  \tphi1 \tphi1^{-1} \Grad\Phi +  \tphi2 \tphi2^{-1} \Grad\Phi } \cdot \Grad\Lap\varphi_1 \dx
			= \bigang{ 2 \Lap^2\varphi_1 }{\Phi }_{\rmH^{-1}\ofbbT,\rmH^1\ofbbT}
		\end{align*}
		and we write the right-hand side of the tested equation~\eqref{Lin:eq:weak_rewritten} as
		\begin{align*}
			\int_\bbT \f_1 \cdot \tphi1^{-1} \Grad\Phi \dx
			- \int_\bbT \f_2 \cdot \tphi2^{-1} \Grad\Phi \dx
			= \bigang{ -\tphi1^{-1} \Div\f_1 + \tphi2^{-1} \Div\f_2 }{\Phi }_{\rmH^{-1}\ofbbT,\rmH^1\ofbbT}
			- L_4,
		\end{align*}
		where
		\begin{align*}
			L_4
			= \bigang{ -\Grad(\tphi1^{-1}) \cdot \f_1 + \Grad(\tphi2^{-1}) \cdot \f_2 }{\Phi }_{\rmH^{-1}\ofbbT,\rmH^1\ofbbT}.
		\end{align*}
		Altogether, this leads to the damped plate equation 
		\begin{align*}
			&\bigang{ \tant \delt^2\varphi_1 
				- (\tbnt +\varepsilon \tant) \delt\Lap\varphi_1 
				+ (2 +\varepsilon \tbnt) \Lap^2\varphi_1}{ \Phi }_{\rmH^{-1}\ofbbT,\rmH^1\ofbbT}  \\
			&= \bigang{ \tant \delt g_1 - \tbnt \Lap g_1 - \tphi1^{-1}\Div\f_1 + \tphi2^{-1}\Div\f_2 }{\Phi}_{\rmH^{-1}\ofbbT,\rmH^1\ofbbT}
			- \sum_{i=1}^4 L_i
		\end{align*}
		with some additional lower order terms on the right-hand side.
		Therein, we move the expression~$-\ang{ \tbnt \delt\Lap\varphi_1}{\Phi}_{\rmH^{-1}\ofbbT,\rmH^1\ofbbT}$ to the right-hand side as well, followed by the addition of a term~$-\ang{ \tant\delt\Lap\varphi_1}{\Phi}_{\rmH^{-1}\ofbbT,\rmH^1\ofbbT}$ on both sides. As a result, we end up with the equivalent equation 
		\begin{align}
			\label{Lin:eq:est_Lap_phi1_tested_eq}
			&\bigang{ \tant \delt(\delt\varphi_1 - \Lap\varphi_1) - \varepsilon \tant \delt\Lap\varphi_1 + (2+\varepsilon b)\Lap^2\varphi_1}{\Phi}_{\rmH^{-1}\ofbbT,\rmH^1\ofbbT} \nonumber \\
			&= \bigang{ (\tbnt-\tant) \delt\Lap\varphi_1 + \tant \delt g_1 - \tbnt\Lap g_1 - \tphi1^{-1}\Div\f_1 + \tphi2^{-1}\Div\f_2 }{\Phi}_{\rmH^{-1}\ofbbT,\rmH^1\ofbbT}
			- \sum_{i=1}^4 L_i \nonumber \\
            &\eqqcolon \sum_{i=1}^4 K_i - \sum_{i=1}^4 L_i,
		\end{align}
		where we denote the terms in the dual pairing on the right-hand side by $K_i$, $i\in\{1,\dots,4\}$, combining two the terms that include $\f_1$ and $\f_2$ into $K_4$. 
        
        In light of the density of $\rmC^\infty(\bbT)$ in $\rmH^1(\bbT)$, it is now possible to insert into \eqref{Lin:eq:est_Lap_phi1_tested_eq} our determined test function~$\Phi = \tant^{-1}(-\Lap)^{-1}(\delt\varphi_1 - \Lap\varphi_1)$, starting with the left-hand side terms
		\begin{align*}
			J_1
			&\coloneqq \bigang{ \tant \delt (\delt\varphi_1-\Lap\varphi_1)}{ \tant^{-1}(-\Lap)^{-1}(\delt\varphi_1 - \Lap\varphi_1) }_{\rmH^{-1}\ofbbT,\rmH^1\ofbbT} 
			= \frac12 \ddt \norm{ \delt\varphi_1-\Lap\varphi_1 }_{\rmH^{-1}\ofbbT}^2
		\end{align*}
		and
		\begin{align*}
			J_2
			&\coloneqq - \bigang{ \varepsilon \tant \delt\Lap\varphi_1 }{ \tant^{-1}(-\Lap)^{-1}(\delt\varphi_1 - \Lap\varphi_1) }_{\rmH^{-1}\ofbbT,\rmH^1\ofbbT} \\
			&= \varepsilon \int_\bbT \prt{\delt\varphi_1}^2 \dx
			+ \varepsilon \int_\bbT \delt\Grad\varphi_1 \cdot \Grad\varphi_1 \dx 
			= \varepsilon \norm{ \delt\varphi_1 }_{\rmL^2\ofbbT}^2 
			+ \varepsilon \frac12 \ddt \norm{ \Grad\varphi_1 }_{\rmL^2\ofbbT}^2. 
		\end{align*}
		Continuing with the third and the fourth summand, we extract the highest order terms to obtain
		\begin{align*}
			J_3 
			&\coloneqq \bigang{ 2 \Lap^2\varphi_1 }{ \tant^{-1}(-\Lap)^{-1}(\delt\varphi_1 - \Lap\varphi_1) }_{\rmH^{-1}\ofbbT\!,\rmH^1\ofbbT\!} 
			= \ddt \bignorm{ \tant^{-\frac12} \Grad\varphi_1}_{\rmL^2\ofbbT\!}^2
			+  2\bignorm{ \tant^{-\frac12} \Lap\varphi_1}_{\rmL^2\ofbbT\!}^2
			+ \sum_{k=1}^5 \! J_{3,k}
		\end{align*}
		and
		\begin{align*}
			J_4 
			&\coloneqq \varepsilon \bigang{ \tbnt \Lap^2\varphi_1 }{ \tant^{-1}(-\Lap)^{-1}(\delt\varphi_1 - \Lap\varphi_1) }_{\rmH^{-1}\ofbbT,\rmH^1\ofbbT} \\
			&= \varepsilon \frac12 \ddt \bignorm{ \bigprt{\tbnt\tant^{-1}}^{\frac12} \Grad\varphi_1 }_{\rmL^2\ofbbT}^2 + \varepsilon \bignorm{ \bigprt{\tbnt\tant^{-1}}^{\frac12} \Lap\varphi_1 }_{\rmL^2\ofbbT}^2 + \sum_{k=1}^5 J_{4,k},
		\end{align*}
        where the respective lower order terms are of similar structure, namely
        \begin{equation*}
            \arraycolsep=0.4pt
    		\begin{array}{rlrl}
    			J_{3,1} &= -2 \bigscp{ \Grad(\tant^{-1}) \cdot \Grad\varphi_1 }{ \delt\varphi_1 }_{\rmL^2\ofbbT}, 
                &J_{4,1} &= -\varepsilon \bigscp{ \Grad\bigprt{\tbnt\tant^{-1}} \cdot \Grad\varphi_1 }{ \delt\varphi_1 }_{\rmL^2\ofbbT}, \\[1ex]
    			J_{3,2} &= 4 \bigang{ \Grad(\tant^{-1}) \Lap\varphi_1 }{ (-\Lap)^{-1}\Grad\delt\varphi_1 }_{\rmH^{-1}\ofbbT,\rmH^1\ofbbT},\; 
                &J_{4,2} &= 2\varepsilon \bigang{ \Grad\bigprt{\tbnt\tant^{-1}} \Lap\varphi_1 }{ (-\Lap)^{-1}\Grad\delt\varphi_1 }_{\rmH^{-1}\ofbbT,\rmH^1\ofbbT}, \\[1ex]
    			J_{3,3} &= 2 \bigang{ \Lap(\tant^{-1}) \Lap\varphi_1 }{ (-\Lap)^{-1}\delt\varphi_1 }_{\rmH^{-1}\ofbbT,\rmH^1\ofbbT}, 
                &J_{4,3} &= \varepsilon \bigang{ \Lap\bigprt{\tbnt\tant^{-1}} \Lap\varphi_1 }{ (-\Lap)^{-1}\delt\varphi_1 }_{\rmH^{-1}\ofbbT,\rmH^1\ofbbT}, \\[1ex]
    			J_{3,4} &= 4 \bigscp{ \Grad(\tant^{-1}) \Lap\varphi_1 }{ \Grad\varphi_1 }_{\rmL^2\ofbbT}, 
                &J_{4,4} &= 2\varepsilon \bigscp{ \Grad\bigprt{\tbnt\tant^{-1}} \Lap\varphi_1 }{ \Grad\varphi_1 }_{\rmL^2\ofbbT}, \\[1ex]
    			J_{3,5} &= 2 \bigscp{ \Lap(\tant^{-1}) \Lap\varphi_1 }{ \varphi_1 }_{\rmL^2\ofbbT},
                &J_{4,5} &= \varepsilon \bigscp{ \Lap\bigprt{\tbnt\tant^{-1}} \Lap\varphi_1 }{ \varphi_1 }_{\rmL^2\ofbbT}.
    		\end{array}
        \end{equation*}
		
		After having collected the useful terms, it remains to estimate the right-hand side of~\eqref{Lin:eq:est_Lap_phi1_tested_eq} as well as the lower order terms $J_{3,k}$ and $J_{4,k}$, $k\in\{1,\dots5\}$, which are moved to the right-hand side later on. 
		
		To this end, we point out that $\tphi1\in\rmH^2(\bbT)$ and therefore, due to Lemma~\ref{prelim:lemma:comp_Sobolev}, all functions depending on~$\tphi1$ (such as $\tphi2$, $\tnu{j}, \tlambda{j}$, $j=1,2$,) belong to $\rmH^2(\bbT)$ as well, while also being uniformly positive. Moreover, we recall the Banach algebra property of~$\rmH^2(\bbT)$. In the following, we denote constants depending on $\norm{\tphi1}_{\rmH^2\ofbbT}$ by $\hat C=\hat C(\norm{\tphi1}_{\rmH^2\ofbbT})$.
		
		Now we start by estimating the terms $J_{3,k}$, $k\in\{1,\dots,5\}$, beginning with
		\begin{align*}
			\abs{J_{3,1}}
			&\leq C \bignorm{\Grad \bigprt{\tant^{-1}}}_{\rmL^4\ofbbT} \norm{\Grad\varphi_1}_{\rmL^4\ofbbT} \norm{\delt\varphi_1}_{\rmL^2\ofbbT} 
			\leq \hat{C} \bigprt{ \delta \norm{\varphi_1}_{\rmH^2\ofbbT}^2 + C_\delta \norm{\delt\varphi_1}_{\rmL^2\ofbbT}^2 },
		\end{align*}
		where Young's inequality allows to absorb terms with a factor $\delta$ in the end.
		Next, we have
		\begin{align*}
			\abs{J_{3,2}}
            &\leq C \bignorm{ \Grad \bigprt{\tant^{-1}} }_{\rmL^4\ofbbT} \norm{\Lap\varphi_1}_{\rmL^2\ofbbT} \bignorm{ (-\Lap)^{-1} \Grad\delt\varphi_1}_{\rmL^4\ofbbT}  \\
            &\leq C \norm{ \tant^{-1} }_{\rmH^2\ofbbT} \norm{\varphi_1}_{\rmH^2\ofbbT} \norm{ \delt\varphi_1}_{\rmL^2\ofbbT} 
			\leq \hat{C} \bigprt{ \delta \norm{\varphi_1}_{\rmH^2\ofbbT}^2 + C_\delta \norm{\delt\varphi_1}_{\rmL^2\ofbbT}^2 }
		\end{align*}
		and, repeating the second step above,
        \begin{align*}
			\abs{J_{3,3}}
            &\leq C \bignorm{ \Lap \bigprt{\tant^{-1}} }_{\rmL^2\ofbbT} \norm{\Lap\varphi_1}_{\rmL^2\ofbbT} \bignorm{ (-\Lap)^{-1} \delt\varphi_1}_{\rmH^2\ofbbT}
			\leq \hat{C} \bigprt{ \delta \norm{\varphi_1}_{\rmH^2\ofbbT}^2 + C_\delta \norm{\delt\varphi_1}_{\rmL^2\ofbbT}^2 }.
		\end{align*}
		In light of an interpolation and Young's inequality, we observe
        \begin{align*}
			\abs{J_{3,4}}
			&\leq C \bignorm{\Grad \bigprt{\tant^{-1}}}_{\rmL^6\ofbbT} \norm{\Lap\varphi_1}_{\rmL^2\ofbbT} \norm{\Grad\varphi_1}_{\rmL^3\ofbbT} \\
            &\leq C \norm{\tant^{-1}}_{\rmH^2\ofbbT} \norm{\varphi_1}_{\rmH^2\ofbbT} \norm{\varphi_1}_{\rmH^2\ofbbT}^{\frac12} \norm{\varphi_1}_{\rmH^1\ofbbT}^{\frac12} 
			\leq \hat{C} \bigprt{ \delta \norm{\varphi_1}_{\rmH^2\ofbbT}^2 + C_\delta \norm{\varphi_1}_{\rmH^1\ofbbT}^2 }.
		\end{align*}
		Similarly, since Agmon's and Young's inequalities lead to the same expression as in the second line, we obtain
        \begin{align*}
			\abs{J_{3,5}}
			&\leq C \bignorm{\Lap \bigprt{\tant^{-1}}}_{\rmL^2\ofbbT} \norm{\Lap\varphi_1}_{\rmL^2\ofbbT} \norm{\varphi_1}_{\rmL^\infty\ofbbT}
			\leq \hat{C} \bigprt{ \delta \norm{\varphi_1}_{\rmH^2\ofbbT}^2 + C_\delta \norm{\varphi_1}_{\rmH^1\ofbbT}^2 }.
		\end{align*}
		For each $k\in\{1,\dots,5\}$, $J_{4,k}$ is estimated like $J_{3,k}$, where $\varepsilon$ is moved to the respective term with a factor $\delta$.

        Now we proceed with the estimation of the right-hand side terms of equation~\eqref{Lin:eq:est_Lap_phi1_tested_eq}, starting with $K_i$, $i\in\{1,\dots,4\}$. 
        First, after an integration by parts in $K_1$, we find
        \begin{align*}
			\abs{K_1} 
			&\leq C \norm{\delt\varphi_1}_{\rmL^2\ofbbT} \bignorm{ (\tbnt-\tant) \tant^{-1} (-\Lap)^{-1} (\delt\varphi_1-\Lap\varphi_1)}_{\rmH^2\ofbbT} \\
            &\leq C \norm{\delt\varphi_1}_{\rmL^2\ofbbT} \bignorm{ (\tbnt-\tant) \tant^{-1}}_{\rmH^2\ofbbT} \norm{ \delt\varphi_1-\Lap\varphi_1}_{\rmL^2\ofbbT} 
			\leq \hat{C}_2 \bigprt{ C_\delta \norm{\delt\varphi_1}_{\rmL^2\ofbbT}^2 + \delta \norm{\Lap\varphi_1}_{\rmL^2\ofbbT} ^2 },
		\end{align*}
		and for $K_2$, we simply observe
		\begin{align*}
			\abs{K_2} 
			\leq C \norm{ \delt g_1 }_{\rmH^{-1}\ofbbT} \norm{\delt\varphi_1-\Lap\varphi_1}_{\rmH^{-1}\ofbbT} 
			\leq C \bigprt{ \norm{\delt g_1}_{\rmH^{-1}\ofbbT}^2 + \norm{\delt\varphi_1-\Lap\varphi_1}_{\rmH^{-1}\ofbbT} ^2 }.
		\end{align*}
		Analogously to $K_1$, we get
        \begin{align*}
			\abs{K_3} 
			&\leq C \norm{g_1}_{\rmL^2\ofbbT} \bignorm{ \tbnt \tant^{-1} (-\Lap)^{-1} (\delt\varphi_1-\Lap\varphi_1)}_{\rmH^2\ofbbT}
			\leq \hat{C} \bigprt{ C_\delta \norm{g_1}_{\rmL^2\ofbbT}^2 + \norm{\delt\varphi_1}_{\rmL^2\ofbbT}^2 + \delta \norm{\Lap\varphi_1}_{\rmL^2\ofbbT}^2 } .
		\end{align*}
		For both summands $K_4^j$ of $K_4$, $j=1,2$, a similar estimation reveals
        \begin{align*}
			\abs{K_4^j} 
			&\leq C \norm{\f_j}_{\rmL^2\ofbbT} \bignorm{ \tphi{j}^{-1} \tant^{-1} (-\Lap)^{-1} (\delt\varphi_1-\Lap\varphi_1)}_{\rmH^1\ofbbT} \\
			&\leq C \norm{\f_j}_{\rmL^2\ofbbT} \norm{ \tphi{j}^{-1} \tant^{-1} }_{\rmH^2\ofbbT} \norm{ \delt\varphi_1-\Lap\varphi_1}_{\rmH^{-1}\ofbbT}
			\leq \hat{C} \bigprt{ \norm{\f_j}_{\rmL^2\ofbbT}^2 + \norm{\delt\varphi_1-\Lap\varphi_1}_{\rmH^{-1}\ofbbT} ^2 }.
		\end{align*}
	
		Next, we continue with the remaining lower order terms $L_i$, $i\in\{1,\dots,4\}$, on the right-hand side of \eqref{Lin:eq:est_Lap_phi1_tested_eq}. To this end, we study the summands of $L_i$, denoted by $L_i=L_i^1-L_i^2$. We begin by splitting $L_1^j$ into the following two terms, namely
        \begin{align*}
			L_1^j 
            &= \int_\bbT \ta{j}\Grad\tphi{j} \cdot \delt\vel_j \tant^{-1} (-\Lap)^{-1} \delt\varphi_1 \dx
            + \int_\bbT \ta{j}\Grad\tphi{j} \cdot \delt\vel_j \tant^{-1} \varphi_1 \dx 
            \eqqcolon L_{1,1}^j + L_{1,2}^j.
		\end{align*}
        The first part is estimated by
        \begin{align*}
			\abs{L_{1,1}^j} 
            &\leq C \norm{\delt\vel_j}_{\rmH^{-1}\ofbbT} \bignorm{ \ta{j} \Grad\tphi{j} \tant^{-1} (-\Lap)^{-1} \delt\varphi_1 }_{\rmH^1\ofbbT} \\
		    &\leq C \norm{\delt\vel_j}_{\rmH^{-1}\ofbbT} \norm{ \ta{j} \Grad\tphi{j} \tant^{-1} }_{\rmH^1\ofbbT} \bignorm{ (-\Lap)^{-1} \delt\varphi_1 }_{\rmH^2\ofbbT} 
			\leq \hat{C} \bigprt{ \delta \norm{\delt\vel_j}_{\rmH^{-1}\ofbbT}^2 + C_\delta \norm{\delt\varphi_1}_{\rmL^2\ofbbT}^2 }.
		\end{align*}
        For the second part, Agmon's inequality and a suitable interpolation, together with Young's inequality, reveal
        \begin{align*}
			\abs{L_{1,2}^j} 
            &\leq C \norm{\delt\vel_j}_{\rmH^{-1}\ofbbT} \norm{ \ta{j} \Grad\tphi{j} \tant^{-1}\varphi_1 }_{\rmH^1\ofbbT} \\
            &\leq \hat C \norm{\delt\vel_j}_{\rmH^{-1}\ofbbT} 
            \Bigprt{ \norm{ \Grad\tphi{j} \varphi_1 }_{\rmL^2\ofbbT} + \norm{ \Grad^2\tphi{j} \varphi_1 }_{\rmL^2\ofbbT} + \norm{ \Grad\tphi{j} \cdot \Grad\varphi_1 }_{\rmL^2\ofbbT} } \\
            &\leq \hat C \norm{\delt\vel_j}_{\rmH^{-1}\ofbbT} 
            \Bigprt{ \norm{ \Grad\tphi{j} }_{\rmL^4\ofbbT} \norm{ \varphi_1 }_{\rmL^4\ofbbT} 
            + \norm{ \Grad^2\tphi{j} }_{\rmL^2\ofbbT} \norm{ \varphi_1 }_{\rmL^\infty\ofbbT} 
            + \norm{ \Grad\tphi{j} }_{\rmL^6\ofbbT} \norm{ \Grad\varphi_1 }_{\rmL^3\ofbbT} } \\
            &\leq \hat C \norm{\delt\vel_j}_{\rmH^{-1}\ofbbT} 
            \Bigprt{ \norm{ \tphi{j} }_{\rmH^2\ofbbT} \norm{ \varphi_1 }_{\rmH^1\ofbbT} 
            + \norm{ \tphi{j} }_{\rmH^2\ofbbT} \norm{ \varphi_1 }_{\rmH^2\ofbbT}^{\frac12} \norm{ \varphi_1 }_{\rmH^1\ofbbT}^{\frac12} }\\
            &\leq \hat{C} \bigprt{ \delta \norm{\delt\vel_j}_{\rmH^{-1}\ofbbT}^2 + C_\delta \norm{\varphi_1}_{\rmH^1\ofbbT}^2 
            + \delta_1\norm{\delt\vel_j}_{\rmH^{-1}\ofbbT}^2 + \delta_2 \norm{ \varphi_1 }_{\rmH^2\ofbbT}^2 + C_{\delta_1,\delta_2} \norm{ \varphi_1 }_{\rmH^1\ofbbT}^2 }.
        \end{align*}
		Then we consider the summands of $L_2^j$. After an integration by parts in $L_{2,1}^j$, we obtain
        \begin{align*}
			\abs{L_{2,1}^j}
            &\leq C \norm{\Div\vel_j}_{\rmL^2\ofbbT} \bignorm{ \Grad\bigprt{(2\tnu{j}+\tlambda{j}) \tphi{j}^{-1} } \tant^{-1} (-\Lap)^{-1} (\delt\varphi_1-\Lap\varphi_1) }_{\rmH^1\ofbbT} \\
            &\leq C \norm{\vel_j}_{\rmH^1\ofbbT} \bignorm{ \Grad\bigprt{(2\tnu{j}+\tlambda{j}) \tphi{j}^{-1} } \tant^{-1} }_{\rmH^1\ofbbT} \bignorm{ (-\Lap)^{-1} (\delt\varphi_1-\Lap\varphi_1) }_{\rmH^2\ofbbT}\\
            &\leq \hat C \norm{\vel_j}_{\rmH^1\ofbbT} \norm{ \delt\varphi_1-\Lap\varphi_1 }_{\rmL^2\ofbbT}
			\leq \hat{C} \bigprt{ C_\delta \norm{\vel_j}_{\rmH^1\ofbbT}^2 + \norm{\delt\varphi_1}_{\rmL^2\ofbbT}^2 + \delta \norm{\Lap\varphi_1}_{\rmL^2\ofbbT}^2 }.
		\end{align*}
		Furthermore, we estimate
        \begin{align*}
			\abs{L_{2,2}^j} 
			&\leq C \bignorm{ \Grad(\tnu{j}\tphi{j}^{-1}) }_{\rmL^6\ofbbT} \norm{ \nabla \vel_j  }_{\rmL^2\ofbbT} \bignorm{\Grad \bigprt{ \tant^{-1} (-\Lap)^{-1} (\delt\varphi_1-\Lap\varphi_1) }}_{\rmL^3\ofbbT} \\
            &\leq C \norm{ \tnu{j}\tphi{j}^{-1} }_{\rmH^2\ofbbT} \norm{ \vel_j }_{\rmH^1\ofbbT} \norm{ \tant^{-1} }_{\rmH^2\ofbbT} \bignorm{ (-\Lap)^{-1} (\delt\varphi_1-\Lap\varphi_1) }_{\rmH^2\ofbbT} \\
            &\leq \hat C \norm{\vel_j}_{\rmH^1\ofbbT} \norm{ \delt\varphi_1-\Lap\varphi_1 }_{\rmL^2\ofbbT}
			\leq \hat{C} \bigprt{ C_\delta \norm{\vel_j}_{\rmH^1\ofbbT}^2 + \norm{\delt\varphi_1}_{\rmL^2\ofbbT}^2 + \delta \norm{\Lap\varphi_1}_{\rmL^2\ofbbT}^2 },
		\end{align*}
		and $L_{2,3}^j$ in a similar way. Studying the terms of $L_3$, we find, after an integration by parts,
        \begin{align*}
            \abs{L_{3,1}^j} 
            &\leq C \norm{ -\delt\varphi_1 + \varepsilon\Lap\varphi_1 + g_1 \mp \Grad\tphi{j}\cdot\vel_{j} }_{\rmL^2\ofbbT} 
            \bignorm{  (2\tnu{j}+\tlambda{j}) \tphi{j}^{-1} \Grad(\tphi{j}^{-1}) \Phi }_{\rmH^1\ofbbT} \\
            &\leq \hat C \norm{ -\delt\varphi_1 + \varepsilon\Lap\varphi_1 + g_1 \mp \Grad\tphi{j}\cdot\vel_{j} }_{\rmL^2\ofbbT} 
            \bignorm{ \Grad(\tphi{j}^{-1}) }_{\rmH^1\ofbbT} 
            \norm{ (-\Lap)^{-1} (\delt\varphi_1-\Lap\varphi_1) }_{\rmH^2\ofbbT}\\
            &\leq \hat C \norm{ -\delt\varphi_1 + \varepsilon\Lap\varphi_1 + g_1 \mp \Grad\tphi{j}\cdot\vel_{j} }_{\rmL^2\ofbbT} 
            \norm{ \delt\varphi_1-\Lap\varphi_1 }_{\rmL^2\ofbbT}\\
            &\leq \hat{C} \Bigprt{ C_\delta \bigprt{\norm{\delt\varphi_1}_{\rmL^2\ofbbT}^2 + \varepsilon\norm{\Lap\varphi_1}_{\rmL^2\ofbbT}^2 + \norm{g_1}_{\rmL^2\ofbbT}^2 + \norm{\vel_j}_{\rmH^1\ofbbT}^2
            }
            + \norm{\delt\varphi_1}_{\rmL^2\ofbbT}^2 + \delta \norm{\Lap\varphi_1}_{\rmL^2\ofbbT}^2 }.
        \end{align*}
        Next, $L_{3,2}^j$ is estimated by
        \begin{align*}
            \abs{L_{3,2}^j} 
            &\leq C \norm{ -\delt\varphi_1 + \varepsilon\Lap\varphi_1 + g_1 \mp \Grad\tphi{j}\cdot\vel_{j} }_{\rmL^2\ofbbT} 
            \norm{ (2\tnu{j}+\tlambda{j}) \tphi{j}^{-1} \tant^{-1} }_{\rmH^2\ofbbT} 
            \bignorm{ \Lap(\tphi{j}^{-1}) }_{\rmL^2\ofbbT} \\
            &\quad\;\, \cdot \norm{ (-\Lap)^{-1} (\delt\varphi_1-\Lap\varphi_1) }_{\rmH^2\ofbbT},
        \end{align*}
        which can be further controlled as in the last two lines of the estimation of $L_{3,1}^j$.
        Again, integrating by parts suitably, we have
        \begin{align*}
			\abs{L_{3,3}^j} 
            &\leq C \norm{\Grad\tphi{j}\cdot\vel_j}_{\rmL^2\ofbbT} \bignorm{ \Lap \bigprt{ \tb{j} \tant^{-1} (-\Lap)^{-1} (\delt\varphi_1-\Lap\varphi_1) } }_{\rmL^2\ofbbT} \\
            &\leq C \norm{\Grad\tphi{j}}_{\rmL^4\ofbbT} \norm{\vel_j}_{\rmL^4\ofbbT} \norm{ \tb{j} \tant^{-1} }_{\rmH^2\ofbbT} \bignorm{ (-\Lap)^{-1} (\delt\varphi_1-\Lap\varphi_1) }_{\rmH^2\ofbbT} \\
            &\leq \hat C \norm{\vel_j}_{\rmH^1\ofbbT} \norm{ \delt\varphi_1-\Lap\varphi_1 }_{\rmL^2\ofbbT}
			\leq \hat{C} \bigprt{ C_\delta \norm{\vel_j}_{\rmH^1\ofbbT}^2 + \norm{\delt\varphi_1}_{\rmL^2\ofbbT}^2 + \delta \norm{\Lap\varphi_1}_{\rmL^2\ofbbT}^2 }.
		\end{align*}
		Finally, we control
        \begin{align*}
			\abs{L_4^j} 
			&\leq C \bignorm{ \Grad(\tphi{j}^{-1}) }_{\rmL^2\ofbbT} \norm{ \f_j }_{\rmL^2\ofbbT} \bignorm{ \tant^{-1} (-\Lap)^{-1} (\delt\varphi_1-\Lap\varphi_1) }_{\rmL^\infty\ofbbT} \\
            &\leq \hat C \norm{\f_j}_{\rmL^2\ofbbT} \norm{ \delt\varphi_1-\Lap\varphi_1 }_{\rmL^2\ofbbT}
			\leq \hat{C} \bigprt{ C_\delta \norm{\f_j}_{\rmL^2\ofbbT}^2 + \norm{\delt\varphi_1}_{\rmL^2\ofbbT}^2 + \delta \norm{\Lap\varphi_1}_{\rmL^2\ofbbT}^2 }.
		\end{align*}
		
		Now we are in a position to put together all the above estimates for the terms of equation~\eqref{Lin:eq:est_Lap_phi1_tested_eq}. We further make use of the fact that the norms $\norm{\cdot}_{\rmL^2\ofbbT}$, $\norm{\tant^{-\frac12}\cdot}_{\rmL^2\ofbbT}$, $\norm{\tbnt ^{-\frac12}\cdot}_{\rmL^2\ofbbT}$, and $\norm{(\tbnt\tant^{-1})^{\frac12}\cdot}_{\rmL^2\ofbbT}$ are equivalent. After choosing $\delta$ and $\delta_2$ sufficiently small, all terms including a factor $\delta$ and $\delta_2$ are absorbed. Note that the term $\sum_{j=1,2} \delta_1\norm{ \delt\vel_j}_{\rmH^{-1}\ofbbT}^2$ from $L_1$ is still present on the right-hand side. Altogether, we end up with
		\begin{align*}
			& \frac12 \ddt \norm{ \delt\varphi_1-\Lap\varphi_1 }_{\rmH^{-1}\ofbbT}^2
			+ \ddt \norm{ \Grad\varphi_1}_{\rmL^2\ofbbT}^2
			+ 2 \norm{ \Lap\varphi_1}_{\rmL^2\ofbbT}^2 
			+ \varepsilon \frac12 \ddt \norm{ \Grad\varphi_1 }_{\rmL^2\ofbbT}^2 
			+ \varepsilon \norm{ \delt\varphi_1 }_{\rmL^2\ofbbT}^2 
			+ \varepsilon \norm{ \Lap\varphi_1 }_{\rmL^2\ofbbT}^2 \\
			&\leq \hat{C} \Bigprt{ \norm{ \delt\varphi_1 -\Lap\varphi_1}_{\rmH^{-1}\ofbbT}^2 + \norm{ \varphi_1 }_{\rmH^1\ofbbT}^2 + \norm{ \delt\varphi_1 }_{\rmL^2\ofbbT}^2 
            + \varepsilon\norm{\Lap\varphi_1}_{\rmL^2\ofbbT}^2 \\
			&\qquad+ \sum_{j=1,2} \norm{ \vel_j }_{\rmH^1\ofbbT}^2 + \sum_{j=1,2} \delta_1 \norm{ \delt\vel_j }_{\rmH^{-1}\ofbbT}^2 
			+ \sum_{j=1,2} \norm{ \f_j }_{\rmL^2\ofbbT}^2 + \norm{ g_1 }_{\rmH^1\ofbbT}^2 + \norm{ \delt g_1}_{\rmL^2\ofbbT}^2 }.
		\end{align*}
		Then we integrate with respect to time and apply Gronwall's lemma to the first term on the right-hand side. Eventually, the energy estimate from Lemma~\ref{Lin:lemma:energy_est}, along with the estimate for $\delt\vel_j$ from Lemma~\ref{Lin:lemma:est_delt_velj_dep_phi1_gen}, gives
		\begin{align*}
			&\norm{\delt\varphi_1-\Lap\varphi_1}_{\rmL^\infty(0,T;\rmH^{-1}\ofbbT)}
			+ \norm{\Grad\varphi_1}_{\rmL^\infty(0,T;\rmL^2\ofbbT)} 
			+ \norm{\Lap\varphi_1}_{\rmL^2(0,T;\rmL^2\ofbbT)}\\
			&\quad+ \sqrt\varepsilon \norm{\Grad\varphi_1}_{\rmL^\infty(0,T;\rmL^2\ofbbT)}
			+ \sqrt\varepsilon \norm{\delt\varphi_1}_{\rmL^2(0,T;\rmL^2\ofbbT)}
			+ \sqrt\varepsilon \norm{\Lap\varphi_1}_{\rmL^2(0,T;\rmL^2\ofbbT)} \\
			&\leq \hat C \Bigprt{ \delta_1\norm{\varphi_1}_{\rmL^2(0,T;\rmH^2\ofbbT)} + \sum_{j=1,2} \norm{\f_j}_{\rmL^2(0,T;\rmL^2\ofbbTd)} + \norm{g_1}_{\rmL^2(0,T;\rmH^1\ofbbT)} + \norm{g_1}_{\rmH^1(0,T;\rmH^{-1}\ofbbT)} \\
			&\qquad+ \sum_{j=1,2} \norm{\vel_{j,0}}_{\rmL^2\ofbbTd} + \norm{\varphi_{1,0}}_{\rmH^1\ofbbT} + \norm{\delt\varphi_1\vert_{t=0}}_{\rmH^{-1}\ofbbT} + \norm{\Lap\varphi_1\vert_{t=0}}_{\rmH^{-1}\ofbbT} }.
		\end{align*}
		Finally, we also absorb the first term on the right-hand side, which leads to the desired estimate and finishes the proof.
	\end{proof}

	As a direct consequence, we obtain the following estimate for $\delt\vel_j$, independent of $\varphi_1$.
	
	\begin{lemma}[Estimate for $\delt\vel_j$] 
		\label{Lin:lemma:est_delt_velj_gen}
		Let $0<T<\infty$ and $\varepsilon>0$, and let $\tphi1\in\rmH^2(\bbT)$ satisfy~$\tphi1\in(0,1)$ in $\bbT$. Suppose that
        \begin{equation*}
            \begin{alignedat}{4}
                \f_j &\in\rmL^2(0,T;\rmL^2(\bbT)^d), &\quad j&=1,2, &\qquad
                g_1&\in\rmL^2(0,T;\rmH^1(\bbT)) \cap \rmH^1(0,T;\rmH^{-1}(\bbT)), \\
                \vel_{j,0}&\in \rmH^2(\bbT)^d, &\quad j&=1,2, &\qquad
                \varphi_{1,0}&\in\rmH^3(\bbT), 
            \end{alignedat}
        \end{equation*}
        and that the assumptions of Lemma~\ref{Lin:lemma:existence_abstr}~\ref{Lin:item:existence_abstr_iii} hold. Then the weak solution~$(\vel_1,\vel_2,\varphi_1)$ to the regularized system~\eqref{Lin:eq:reg_system} in the sense of Definition~\ref{Lin:def:abstract_eq_weak} satisfies the uniform \textit{a priori} estimate
		\begin{align}
			\label{Lin:eq:est_deltvj_gen_coeffs}
			&\sum_{j=1,2} \norm{\delt\vel_j}_{\rmL^2(0,T;\rmH^{-1}\ofbbT)} 
			\leq \hat C \Bigprt{ \sum_{j=1,2} \norm{\f_j}_{\rmL^2(0,T;\rmL^2\ofbbTd)} + \norm{g_1}_{\rmL^2(0,T;\rmH^1\ofbbT)} + \norm{g_1}_{\rmH^1(0,T;\rmH^{-1}\ofbbT)} \nonumber \\
			&\qquad\qquad+ \sum_{j=1,2} \norm{\vel_{j,0}}_{\rmL^2\ofbbTd} + \norm{\varphi_{1,0}}_{\rmH^1\ofbbT} + \norm{\delt\varphi_1\vert_{t=0}}_{\rmH^{-1}\ofbbT} + \norm{\Lap\varphi_1\vert_{t=0}}_{\rmH^{-1}\ofbbT}}
		\end{align}
		with a constant $\hat C>0$ depending on $\norm{\tphi1}_{\rmH^2\ofbbT}$ and $T$.
	\end{lemma}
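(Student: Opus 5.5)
The estimate follows by inserting the bound of Lemma~\ref{Lin:lemma:est_Lap_phi1_gen} into Lemma~\ref{Lin:lemma:est_delt_velj_dep_phi1_gen}. The latter already controls $\sum_{j}\norm{\delt\vel_j}_{\rmL^2(0,T;\rmH^{-1})}$ by $\hat C$ times
\begin{align*}
\norm{\varphi_1}_{\rmL^2(0,T;\rmH^2)} + \sum_{j=1,2}\norm{\f_j}_{\rmL^2(0,T;\rmL^2)} + \norm{g_1}_{\rmL^2(0,T;\rmH^1)} + \sum_{j=1,2}\norm{\vel_{j,0}}_{\rmL^2} + \norm{\varphi_{1,0}}_{\rmH^1}.
\end{align*}
The only term still depending on the solution is $\norm{\varphi_1}_{\rmL^2(0,T;\rmH^2)}$, so it suffices to bound this quantity by the right-hand side of \eqref{Lin:eq:est_deltvj_gen_coeffs}.

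To this end, I split the $\rmH^2$-norm on the torus as
\begin{align*}
\norm{\varphi_1}_{\rmH^2} \leq C\bigl(\abs{\mean{\varphi_1}} + \norm{\Grad\varphi_1}_{\rmL^2} + \norm{\Lap\varphi_1}_{\rmL^2}\bigr),
\end{align*}
which holds by elliptic regularity (equivalently, Fourier series) combined with Poincaré's inequality.

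The three pieces are handled as follows.
\begin{itemize}
\item[(a)] The term $\norm{\Lap\varphi_1}_{\rmL^2(0,T;\rmL^2)}$ appears directly on the left-hand side of \eqref{Lin:eq:a_priori_est_Lap_phi1_gen_coeffs}.
\item[(b)] The term $\norm{\Grad\varphi_1}_{\rmL^2(0,T;\rmL^2)}$ is at most $\sqrt T\,\norm{\Grad\varphi_1}_{\rmL^\infty(0,T;\rmL^2)}$, which is also bounded by Lemma~\ref{Lin:lemma:est_Lap_phi1_gen}. Alternatively, one can use the energy estimate of Lemma~\ref{Lin:lemma:energy_est}.
\item[(c)] The mean $\mean{\varphi_1}$ is controlled by $\norm{\varphi_1}_{\rmL^\infty(0,T;\rmH^1)}$ from Lemma~\ref{Lin:lemma:energy_est}. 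It can also be computed explicitly from $\mean{g_1}$ and $\mean{\varphi_{1,0}}$ by integrating \eqref{Lin:eq:reg_system_eq_deltphi1} in space, as in the proof of Proposition~\ref{Lin:prop:ex_const_coeff}.
\end{itemize}
All three contributions are bounded by the data terms listed in \eqref{Lin:eq:a_priori_est_Lap_phi1_gen_coeffs}, with constants depending only on $\norm{\tphi1}_{\rmH^2}$ and $T$.

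Substituting this bound back into \eqref{Lin:eq:est_deltvj_dep_phi1_gen_coeffs} yields \eqref{Lin:eq:est_deltvj_gen_coeffs}. The data terms arising from the two lemmas are subsumed into the right-hand side of \eqref{Lin:eq:est_deltvj_gen_coeffs}, since $\norm{g_1}_{\rmH^1(0,T;\rmH^{-1})}$ and the initial-trace norms already appear there.

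No step here is a real obstacle. The only point needing care is the reduction to $\mean{g_1}=0$ used in the proof of Lemma~\ref{Lin:lemma:est_Lap_phi1_gen}: one must check that its estimate, applied to the shifted $\varphi_1-\mean{\varphi_1}$, transfers back to the original $\varphi_1$. This transfer holds because the mean part is spatially constant, so it does not affect $\Grad\varphi_1$, $\Lap\varphi_1$ or $\delt\vel_j$, and its size is controlled by $\norm{g_1}_{\rmL^2(0,T;\rmL^2)}$ and $\norm{\varphi_{1,0}}_{\rmL^2}$.
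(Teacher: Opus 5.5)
Your proposal is correct and follows the paper's argument: the paper also substitutes the $\rmL^2(0,T;\rmL^2)$ bound on $\Lap\varphi_1$ from Lemma~\ref{Lin:lemma:est_Lap_phi1_gen} into Lemma~\ref{Lin:lemma:est_delt_velj_dep_phi1_gen}. Your treatment of the mean and gradient parts of $\norm{\varphi_1}_{\rmL^2(0,T;\rmH^2)}$ makes explicit what the paper leaves implicit (these lower-order pieces are covered by the energy estimate, and the mean actually vanishes in the $\rmH^2_{(0)}$-based abstract framework).
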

	
	\begin{proof}
		Since $\Lap\varphi_1$ is now controlled in $\rmL^2(0,T;\rmL^2(\bbT))$ due to Lemma~\ref{Lin:lemma:est_Lap_phi1_gen}, the claimed estimate follows directly from Lemma~\ref{Lin:lemma:est_delt_velj_dep_phi1_gen}.
	\end{proof}

    Now, the original pressure can also be put under control.
	
	\begin{lemma}[Estimate for $p$] 
		\label{Lin:lemma:pressure_p_gen}
        Let $0<T<\infty$ and $\varepsilon>0$, and let $\tphi1\in\rmH^2(\bbT)$ satisfy~$\tphi1\in(0,1)$ in $\bbT$. Suppose that
        \begin{equation*}
            \begin{alignedat}{4}
                \f_j &\in\rmL^2(0,T;\rmL^2(\bbT)^d), &\quad j&=1,2, &\qquad
                g_1&\in\rmL^2(0,T;\rmH^1(\bbT)) \cap \rmH^1(0,T;\rmH^{-1}(\bbT)), \\
                \vel_{j,0}&\in \rmH^2(\bbT)^d, &\quad j&=1,2, &\qquad
                \varphi_{1,0}&\in\rmH^3(\bbT), 
            \end{alignedat}
        \end{equation*}
        and that the assumptions of Lemma~\ref{Lin:lemma:existence_abstr}~\ref{Lin:item:existence_abstr_iii} hold. Then, denoting by $(\vel_1,\vel_2,\varphi_1)$ the weak solution to the regularized system~\eqref{Lin:eq:reg_system} in the sense of Definition~\ref{Lin:def:abstract_eq_weak}, there exists a unique~$p\in\rmL^2(0,T;\rmL^2_{(0)}(\bbT))$ such that $(\vel_1,\vel_2,\varphi_1,p)$ solves \eqref{Lin:eq:reg_system_eq_deltv1}--\eqref{Lin:eq:reg_system_eq_deltv2} in the sense of distributions a.e.~in $(0,T)$ as well as \eqref{Lin:eq:reg_system_eq_deltphi1}--\eqref{Lin:eq:reg_system_eq_div} a.e.~in $(0,T)\times\bbT$. Moreover, $p$ satisfies the \textit{a priori} estimate
		\begin{align}
			\label{Lin:eq:est_p_gen_coeffs}
			\norm{p}_{\rmL^2(0,T;\rmL^2\ofbbT)} 
			&\leq \hat C \Bigprt{ \sum_{j=1,2} \norm{\f_j}_{\rmL^2(0,T;\rmL^2\ofbbTd)} + \norm{g_1}_{\rmL^2(0,T;\rmH^1\ofbbT)} + \norm{g_1}_{\rmH^1(0,T;\rmH^{-1}\ofbbT)} \nonumber \\
				&\qquad+ \sum_{j=1,2} \norm{\vel_{j,0}}_{\rmL^2\ofbbTd} + \norm{\varphi_{1,0}}_{\rmH^1\ofbbT} + \norm{\delt\varphi_1\vert_{t=0}}_{\rmH^{-1}\ofbbT} + \norm{\Lap\varphi_1\vert_{t=0}}_{\rmH^{-1}\ofbbT} }
		\end{align}
		with a constant $\hat C>0$ depending on $\norm{\tphi1}_{\rmH^2\ofbbT}$ and $T$.
	\end{lemma}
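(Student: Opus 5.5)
The plan is to recover $p$ from the modified pressure $q$ of Lemma~\ref{Lin:lemma:pressure_q_gen} and then use the bounds for $\Lap\varphi_1$ and $\delt\vel_j$ that are now available.

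\textbf{Definition of $p$.} Lemma~\ref{Lin:lemma:pressure_q_gen} provides a unique $q\in\rmL^2(0,T;\rmL^2_{(0)}(\bbT))$ solving \eqref{Lin:eq:eq_deltv1_with_q}--\eqref{Lin:eq:eq_deltv2_with_q}. We set
\begin{align*}
p \coloneqq q + \frac{\tdnt}{\tant}\Lap\varphi_1 - \Bigl\langle \frac{\tdnt}{\tant}\Lap\varphi_1 \Bigr\rangle,
\end{align*}
so that $\mean{p}=0$. The mean correction is a constant, so it does not change the gradient structure.

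\textbf{$p$ solves the momentum equations.} In \eqref{Lin:eq:eq_deltv1_with_q} we integrate by parts the term $\int_\bbT \tphi1 \Grad\bigl(\frac{\tdnt}{\tant}\Lap\varphi_1\bigr)\cdot\w_1\dx$. This is legitimate since $\Lap\varphi_1\in\rmH^1$ for a.e.\ $t$ by Lemma~\ref{Lin:lemma:existence_abstr}~\ref{Lin:item:existence_abstr_iii}. The result is $-\int_\bbT \frac{\tdnt}{\tant}\Lap\varphi_1\Div(\tphi1\w_1)\dx$. Together with the $q$-term this gives $-\int_\bbT p\,\Div(\tphi1\w_1)\dx$; the constant part of $p$ contributes nothing because $\int_\bbT\Div(\tphi1\w_1)\dx=0$. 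This is exactly the distributional form of $\tphi1\Grad p$ tested with $\w_1\in\rmH^1(\bbT)^d$. The same computation applies to the second equation, so $(\vel_1,\vel_2,\varphi_1,p)$ solves \eqref{Lin:eq:reg_system_eq_deltv1}--\eqref{Lin:eq:reg_system_eq_deltv2} in $\calD'$ for a.e.\ $t$. Equations \eqref{Lin:eq:reg_system_eq_deltphi1}--\eqref{Lin:eq:reg_system_eq_div} already hold pointwise by Remark~\ref{Lin:rmk:ptwise}.

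\textbf{Uniqueness.} Suppose $p$ and $\tilde p$ both have zero mean and solve the momentum equations. Summing the two equations after dividing by $\barrho{j}$ gives $\tant\Grad(p-\tilde p)=0$ in $\calD'$; more precisely, the difference satisfies $\int_\bbT (p-\tilde p)\Div(\tphi{j}\w_j)\dx=0$ for all $\w_j$. Choosing $\w_1=\barrho1^{-1}\Grad\xi$ and $\w_2=\barrho2^{-1}\Grad\xi$ and summing yields $\int_\bbT (p-\tilde p)\Div(\tant\Grad\xi)\dx=0$ for all $\xi\in\rmH^2$. By the isomorphism property used in Proposition~\ref{prelim:prop:very_weak_sol}, $\Div(\tant\Grad\cdot)$ maps $\rmH^2_{(0)}$ onto $\rmL^2_{(0)}$. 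Since $p-\tilde p\in\rmL^2_{(0)}$, it follows that $p=\tilde p$.

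\textbf{Estimate.} We bound
\begin{align*}
\norm{p}_{\rmL^2(0,T;\rmL^2)}\le 2\norm{q}_{\rmL^2(0,T;\rmL^2)} + 2\Bignorm{\frac{\tdnt}{\tant}}_{\rmL^\infty}\norm{\Lap\varphi_1}_{\rmL^2(0,T;\rmL^2)}.
\end{align*}
The ratio $\tdnt/\tant$ lies in $\rmL^\infty$ because $\tant\ge c>0$. Lemma~\ref{Lin:lemma:pressure_q_gen} bounds $q$ in terms of data plus $\norm{\varphi_1}_{\rmL^2(0,T;\rmH^2)}$. That norm is controlled via $\norm{\Lap\varphi_1}_{\rmL^2(\rmL^2)}$ and $\norm{\varphi_1}_{\rmL^\infty(\rmH^1)}$, both of which are bounded by Lemma~\ref{Lin:lemma:est_Lap_phi1_gen} together with the energy estimate of Lemma~\ref{Lin:lemma:energy_est}. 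Combining these gives \eqref{Lin:eq:est_p_gen_coeffs}.

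The only delicate point is justifying the integration by parts that converts between $q$ and $p$. This needs $\Lap\varphi_1\in\rmL^2(0,T;\rmH^1)$ at fixed $\varepsilon$, which holds by Lemma~\ref{Lin:lemma:existence_abstr}~\ref{Lin:item:existence_abstr_iii}, and $\tdnt/\tant\in\rmH^2\hookrightarrow\rmW^{1,6}$, so the product rule applies in $\rmL^2$. Neither ingredient is uniform in $\varepsilon$, but the final bound only uses the uniform $\rmL^2$ norm of $\Lap\varphi_1$.
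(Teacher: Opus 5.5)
Your proposal is correct and takes the paper's route: recover $p$ from the modified pressure $q$ of Lemma~\ref{Lin:lemma:pressure_q_gen} via $p = q + \frac{\tdnt}{\tant}\Lap\varphi_1$, then combine the bound for $q$ with the $\rmL^2(0,T;\rmL^2(\bbT))$ control of $\Lap\varphi_1$ from Lemma~\ref{Lin:lemma:est_Lap_phi1_gen}, using the energy estimate for the lower-order part of $\norm{\varphi_1}_{\rmL^2(0,T;\rmH^2)}$. Your mean correction, the explicit integration-by-parts check, and the uniqueness argument via the isomorphism $\Div(\tant\Grad\cdot)\colon \rmH^2_{(0)}(\bbT)\to\rmL^2_{(0)}(\bbT)$ supply details that the paper's one-line proof leaves implicit; the correction is in fact needed, since $\frac{\tdnt}{\tant}\Lap\varphi_1$ need not have zero mean when $\tphi1$ is nonconstant.
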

	
	\begin{proof}
		According to Lemma~\ref{Lin:lemma:pressure_q_gen}, the modified pressure defined as $q\coloneqq p - \frac{\tdnt}{\tant} \Lap\varphi_1$ is controlled in~$\rmL^2(0,T;\rmL^2(\bbT))$. In view of the estimate for $\Lap\varphi_1$ in the same space, see Lemma~\ref{Lin:lemma:est_Lap_phi1_gen}, the existence of $p\in\rmL^2(0,T;\rmL^2_{(0)}(\bbT))$ follows immediately, along with the claimed estimate.
	\end{proof}

	\subsection{Weak Solvability of the Linear System for General Coefficients}
    \label{subsec:lin:solution_gen_coeffs}
    The regularity estimates from Subsection~\ref{subsec:lin:est_gen_coeffs} allow us to pass to the limit in the regularized system~\eqref{Lin:eq:reg_system}. This shows a unique weak solvability of the linear system~\eqref{Lin:eq:lin_system} in the case of variable coefficients.
	
	\begin{proposition}
		\label{Lin:prop:ex_gen_coeff}
        Let $0<T<\infty$ and let $\tphi1\in\rmH^2(\bbT)$ satisfy $\tphi1\in(0,1)$ in $\bbT$. Suppose that
        \begin{equation*}
            \begin{alignedat}{4}
                \f_j &\in\rmL^2(0,T;\rmL^2(\bbT)^d), &\quad j&=1,2, &\quad
                g_j &\in\rmL^2(0,T;\rmH^1(\bbT)) \cap \rmH^1(0,T;\rmH^{-1}(\bbT)), &\quad j&=1,2,\\
                \vel_{j,0}&\in \rmH^1(\bbT)^d, &\quad j&=1,2, &\quad
                \varphi_{1,0}&\in\rmH^2(\bbT), 
            \end{alignedat}
        \end{equation*}
        such that $\Div(\tphi1\vel_{1,0} + \tphi2\vel_{2,0})=g_2\vert_{t=0}$ holds. Then there exists a unique weak solution~$(\vel_1, \vel_2, \varphi_1, p)$ to the linear system \eqref{Lin:eq:lin_system} in the sense of distributions a.e.~in~$(0,T)$, with
		\begin{align*}
			\vel_j &\in \rmL^\infty \prt{0,T;\rmL^2(\bbT)^d} \cap \rmL^2 \prt{ 0,T;\rmH^1(\bbT)^d} \cap \rmH^1 \prt{ 0,T;\rmH^{-1}(\bbT)^d}, \\
			\varphi_1 &\in \rmL^\infty \prt{ 0,T;\rmH^1(\bbT)} \cap \rmL^2 \prt{0,T;\rmH^2(\bbT)} \cap \rmH^1 \prt{ 0,T;\rmL^2(\bbT)}, \\
			p &\in \rmL^2 \prt{0,T;\rmL^2_{(0)}(\bbT)},
		\end{align*}
		and the solution depends continuously on the data in the respective norms.
	\end{proposition}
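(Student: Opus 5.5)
The proof will mirror that of Proposition~\ref{Lin:prop:ex_const_coeff}, now relying on the lower-order uniform estimates of Subsection~\ref{subsec:lin:est_gen_coeffs} (Lemmas~\ref{Lin:lemma:energy_est}, \ref{Lin:lemma:est_Lap_phi1_gen}, \ref{Lin:lemma:est_delt_velj_gen}, \ref{Lin:lemma:pressure_p_gen}).

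\textbf{Reductions.} First we reduce to $g_2=0$. We solve $\Div(\tant\Grad q_1)=g_2$ with $\mean{q_1}=0$, which is elliptic with $\rmH^2$-coefficient. We then replace $\vel_j$ by $\vel_j-\barrho{j}^{-1}\Grad q_1$, so that $\Div(\tphi1\vel_1+\tphi2\vel_2)$ loses exactly $g_2$. By elliptic regularity, $q_1\in\rmL^2(0,T;\rmH^3)\cap\rmH^1(0,T;\rmH^1)$, so the new right-hand sides $\tilde\f_j$ remain in $\rmL^2(0,T;\rmL^2)$. The new $\tilde g_1=g_1+\Div(\tphi1\barrho1^{-1}\Grad q_1)$ keeps the class $\rmL^2(0,T;\rmH^1)\cap\rmH^1(0,T;\rmH^{-1})$, and the initial data stay in $\rmH^1$ with the divergence constraint $=0$. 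Second, we remove the means: $\mean{\varphi_1}$ is determined by $\mean{g_1}$ exactly as in the constant case, since $\int\Div(\tphi1\vel_1)\dx=0$, so we may assume $\mean{g_1}=\mean{\varphi_{1,0}}=0$.

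\textbf{Approximation.} Next we approximate the data: $\f_{j,\varepsilon}$ smooth in time (so they lie in $\rmH^1(0,T;\bbV')$), converging in $\rmL^2(0,T;\rmL^2)$. We take $\vel_{j,0,\varepsilon}\in\rmH^2$ with $\Div(\tphi1\vel_{1,0,\varepsilon}+\tphi2\vel_{2,0,\varepsilon})=0$, converging in $\rmH^1$; this is done by smoothing and correcting with $\barrho{j}^{-1}\Grad\xi_\varepsilon$. We take $\varphi_{1,0,\varepsilon}\in\rmH^3$ converging in $\rmH^2$, and $g_{1,\varepsilon}$ regularised likewise. We must also ensure the compatibility condition of Lemma~\ref{Lin:lemma:existence_abstr}~\ref{Lin:item:existence_abstr_ii} holds, i.e.\ the data at $t=0$ lie in $\bbH$. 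As in the constant case, this is arranged by subtracting a gradient $\Grad q_2$ so that the weighted divergence of $\f_{j,\varepsilon}$ vanishes at $t=0$, and by taking $g_{1,\varepsilon}(0)\in\rmH^1$.

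\textbf{Bounding the initial terms.} Lemma~\ref{Lin:lemma:existence_abstr} and Lemma~\ref{Lin:lemma:pressure_p_gen} then give solutions $(\vel_{1,\varepsilon},\vel_{2,\varepsilon},\varphi_{1,\varepsilon},p_\varepsilon)$. The estimates of Lemmas~\ref{Lin:lemma:energy_est}, \ref{Lin:lemma:est_Lap_phi1_gen}, \ref{Lin:lemma:est_delt_velj_gen}, \ref{Lin:lemma:pressure_p_gen} contain $\norm{\delt\varphi_{1,\varepsilon}\vert_{t=0}}_{\rmH^{-1}}$ and $\norm{\Lap\varphi_{1,\varepsilon}\vert_{t=0}}_{\rmH^{-1}}$, which we must bound uniformly; I expect this to be the main subtle point. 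The second is at most $C\norm{\varphi_{1,0}}_{\rmH^1}$. For the first, equation~\eqref{Lin:eq:reg_system_eq_deltphi1} at $t=0$ gives
\[
\delt\varphi_{1,\varepsilon}\vert_{t=0}=g_{1,\varepsilon}(0)-\Div(\tphi1\vel_{1,0,\varepsilon})+\varepsilon\Lap\varphi_{1,0,\varepsilon}.
\]
Its $\rmH^{-1}$ norm is bounded by $\norm{g_1}_{\rmL^2(0,T;\rmH^1)\cap\rmH^1(0,T;\rmH^{-1})}$ (trace into $\rmL^2$), $\norm{\vel_{1,0}}_{\rmL^2}$, and $\norm{\varphi_{1,0}}_{\rmH^1}$, uniformly for $\varepsilon\le1$.

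\textbf{Passage to the limit and continuous dependence.} Weak(-$*$) compactness yields subsequences converging in the spaces listed in the statement. Using the control of $\norm{\varphi_1}_{\rmL^\infty(\rmH^1)}$ and $\norm{\delt\varphi_1}_{\rmL^2(\rmL^2)}$ (the latter from \eqref{Lin:eq:reg_system_eq_deltphi1} with $\varepsilon\Lap\varphi_1\to0$ in $\rmL^2$), every term converges in the distributional/$\rmL^2(0,T;\rmH^{-1})$ sense. Hence the limit solves \eqref{Lin:eq:lin_system} in $\calD'$ for a.e.\ $t$. The initial values are attained by the weak continuity provided by the $\rmH^1$-in-time bounds.

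Continuous dependence follows from the same estimates, which are preserved under weak lower semicontinuity and are linear in the data. These estimates also give uniqueness, via a limit energy estimate for the difference of two solutions. Testing the difference with itself is not directly admissible at this regularity, since $\Grad\Lap\varphi_1\notin\rmL^2$. Instead I would use the lower-order test of Lemma~\ref{Lin:lemma:est_Lap_phi1_gen}: test with $\bigl(\tphi1^{-1}\Grad\Phi,-\tphi2^{-1}\Grad\Phi,0\bigr)$, where $\Phi=\tant^{-1}(-\Lap)^{-1}(\delt\psi-\Lap\psi)$, combined with the energy-type bound. This test only requires $\psi\in\rmL^2(\rmH^2)$, $\delt\psi\in\rmL^2(\rmL^2)$ and $\w_j\in\rmL^2(\rmH^1)$, so it applies to the difference of two weak solutions. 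Gronwall's lemma then forces zero, and the pressure vanishes by its equation.
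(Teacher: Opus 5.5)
Your overall route is the paper's: reduce to $g_2=0$ and mean-free data, regularize with $-\varepsilon\Lap\varphi_1$, apply Lemmas~\ref{Lin:lemma:energy_est}, \ref{Lin:lemma:est_Lap_phi1_gen}, \ref{Lin:lemma:est_delt_velj_gen} and \ref{Lin:lemma:pressure_p_gen}, pass to weak(-$*$) limits, and read off continuous dependence from the linear bounds. In two places you are more careful than the paper.

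\textbf{Initial terms.} You bound $\norm{\delt\varphi_{1,\varepsilon}\vert_{t=0}}_{\rmH^{-1}}$ and $\norm{\Lap\varphi_{1,\varepsilon}\vert_{t=0}}_{\rmH^{-1}}$ uniformly in $\varepsilon$ via \eqref{Lin:eq:reg_system_eq_deltphi1} at $t=0$. This is correct, and the paper leaves it implicit.

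\textbf{Regularization of $g_1$.} You also regularize $g_1$. This is genuinely needed for Lemma~\ref{Lin:lemma:existence_abstr}~\ref{Lin:item:existence_abstr_ii}. Under the identification $\ang{h}{\psi}_{\bbV_2',\bbV_2}=-2\int_\bbT h\Lap\psi\dx$ one has $\bbV_2'\cong\rmL^2_{(0)}(\bbT)$. So $g_1\in\rmH^1(0,T;\rmH^{-1})$ alone does not give $\rmH^1(0,T;\bbV')$.

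\textbf{The flaw: uniqueness.} The obstruction you name is not there. For a weak solution, $\Lap\psi\in\rmL^2(0,T;\rmL^2(\bbT))$ and $\tphi1\in\rmH^2(\bbT)$ is a multiplier on $\rmH^1$. Hence $\tphi1\Grad\Lap\psi\in\rmL^2(0,T;\rmH^{-1}(\bbT)^d)$, and its pairing with $\w_1\in\rmL^2(0,T;\rmH^1)$ is $\int_\bbT\Lap\psi\,\Div(\tphi1\w_1)\dx$. The pressure terms give $-\int_\bbT q\,\Div(\tphi1\w_1+\tphi2\w_2)\dx=0$. The terms $\ang{\trho{j}\delt\w_j}{\w_j}$ and $-2\int_\bbT\Lap\psi\,\delt\psi\dx=\ddt\norm{\Grad\psi}_{\rmL^2}^2$ are justified by the Lions--Magenes chain rule. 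This applies because $\w_j\in\rmL^2(0,T;\rmH^1)\cap\rmH^1(0,T;\rmH^{-1})$ and $\psi\in\rmL^2(0,T;\rmH^2)\cap\rmH^1(0,T;\rmL^2)$. So testing the difference with itself is admissible. With zero data it yields $\w_j=0$ and $\psi=0$, and then $q=0$ from the momentum equation; this is exactly the paper's argument.

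\textbf{Why your substitute does not close.} The inequality produced by the test function $\Phi$ of Lemma~\ref{Lin:lemma:est_Lap_phi1_gen} carries $\norm{\delt\psi}_{\rmL^2}^2$, $\norm{\w_j}_{\rmH^1}^2$ and $\delta_1\norm{\delt\w_j}_{\rmH^{-1}}^2$ on its right-hand side, and its left-hand side does not control them. Closing it requires the energy estimate for the difference, plus the $\delt\w_j$ bound via the modified pressure. That energy estimate comes from precisely the test you declared inadmissible. You also cannot import the energy bound as a ``limit'' estimate from the $\varepsilon$-problems. Such an estimate controls only the solution you constructed, not an arbitrary weak solution. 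Replace that paragraph by the direct energy test.
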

	
	\begin{proof}
		As in the proof of Proposition~\ref{Lin:prop:ex_const_coeff}, before passing to the limit, we first reduce to~$\mean{\varphi_1}=0$ and $\mean{\varphi_{1,0}}= 0$, as well as to right-hand sides satisfying $g_2=0$, $\mean{g_1}=0$, and
		\begin{align*}
			\Div( \barrho1^{-1}\f_1 + \barrho2^{-1}\f_2 ) =0 \quad\text{in } \mathcal D'(\bbT) \quad\text{for a.e.~}t\in[0,T] \text{ including } t=0.
		\end{align*}
		Moreover, we approximate the initial data as before. Accordingly, for each $\varepsilon>0$, we deduce from Lemma~\ref{Lin:lemma:existence_abstr} the existence of a strong solution $(\vel_{1,\varepsilon}, \vel_{2,\varepsilon}, \varphi_{1,\varepsilon}, p_{\varepsilon})$ to the regularized linear system~\eqref{Lin:eq:reg_system} with right-hand side~$(\f_{1,\varepsilon}, \f_{2,\varepsilon},g_1,0)$, approximated as in Proposition~\ref{Lin:prop:ex_const_coeff}.
		
		Then the uniform estimates from the Lemmas~\ref{Lin:lemma:energy_est}, \ref{Lin:lemma:est_Lap_phi1_gen}, \ref{Lin:lemma:est_delt_velj_gen}, and \ref{Lin:lemma:pressure_p_gen} enable us to pass to non-relabeled subsequences converging weakly (or weakly-*, respectively) as $\varepsilon\to0$ as follows:
		\begin{alignat*}{3}
			\vel_{j,\varepsilon} &\overset{(\ast)\;\,}{\rightharpoonup} \vel_j &&\quad\text{in } \rmL^\infty \prt{ 0,T;\rmL^2(\bbT)^d} \cap \rmL^2 \prt{0,T;\rmH^1(\bbT)^d}, &\qquad j=1,2, \\
			\delt\vel_{j,\varepsilon} &\rightharpoonup \delt\vel_j &&\quad\text{in } \rmL^2 \prt{0,T;\rmH^{-1}(\bbT)^d}, &\qquad j=1,2, \\
			\varphi_{1,\varepsilon} &\overset{(\ast)\;\,}{\rightharpoonup} \varphi_1 &&\quad\text{in } \rmL^\infty \prt{ 0,T;\rmH^1(\bbT)} \cap \rmL^2 \prt{0,T;\rmH^2(\bbT)}, & \\
			\delt\varphi_{1,\varepsilon} &\overset{(\ast)\;\,}{\rightharpoonup} \delt\varphi_1 &&\quad\text{in } \rmL^\infty \prt{ 0,T;\rmH^{-1}(\bbT)} \cap \rmL^2 \prt{0,T;\rmL^2(\bbT)}, & \\
			p_\varepsilon &\rightharpoonup p &&\quad\text{in } \rmL^2 \prt{0,T;\rmL^2_{(0)}(\bbT)}. &
		\end{alignat*}
		In particular, each term in \eqref{Lin:eq:reg_system_eq_deltv1}--\eqref{Lin:eq:reg_system_eq_deltv2} converges weakly in $\rmL^2(0,T;\rmH^{-1}(\bbT)^d)$, while each term in \eqref{Lin:eq:reg_system_eq_deltphi1}--\eqref{Lin:eq:reg_system_eq_div} converges weakly in $\rmL^2(0,T;\rmL^2(\bbT))$. Since this is sufficient to pass to the limit in the system, $(\vel_1,\vel_2,\varphi_1,p)$ is a weak solution to \eqref{Lin:eq:lin_system_eq_deltv1}--\eqref{Lin:eq:lin_system_eq_div} with the claimed regularity properties. Moreover, the uniform estimates from the Lemmas cited above ensure that the solution depends continuously on the data in the respective norms.
		
		Finally, uniqueness follows as in the proof of Proposition~\ref{Lin:prop:ex_const_coeff}.
	\end{proof}

	\subsection{Higher Regularity of the Solution for General Coefficients}
    \label{subsec:lin:localization}
    At this stage, we have established the existence of a unique higher-regularity solution in the case of constant coefficients, as well as a unique lower-regularity solution for general coefficients. We now show that the latter is, in fact, of higher regularity as well. This is achieved by combining perturbation and localization techniques.

	For this subsection, we recall the definition of $Z_T$, see \eqref{mr:eq:spaces_Z_T}, and modify the space $Y_T$, defined in \eqref{mr:eq:spaces_Y_T}, by including the initial data in the space. To this end, we first set
	\begin{alignat*}{2}
		\tilde\varUpsilon_T^j &\coloneqq \rmL^2 \prt{0,T;\rmL^2(\bbT)^d}, &\quad j=1,2, \\
		\tilde\varUpsilon_T^j &\coloneqq \rmL^2 \prt{0,T;\rmH^1(\bbT)} \cap \rmH^1 \prt{ 0,T;\rmH^{-1}(\bbT)}, &\quad j=3,4, \\
		\tilde\varUpsilon_T^j &\coloneqq \rmH^1(\bbT)^d, &\quad j=5,6, \\
		\tilde\varUpsilon_T^7 &\coloneqq \rmH^2(\bbT),
	\end{alignat*}
    equipped with the usual norms, and the associated product space $\tilde\varUpsilon_T\coloneqq \prod_{j=1}^7 \tilde\varUpsilon_T^j$. Then, given $\tphi1\in\rmH^2(\bbT)$, we define the subspace
    \begin{align*}
        \varUpsilon_T \coloneqq \{ (\f_1,\f_2,g_1,g_2,\vel_{1,0},\vel_{2,0},\varphi_{1,0}) \in \tilde\varUpsilon_T \colon &\Div(\tphi1\vel_{1,0} + (1\!-\!\tphi1) \vel_{2,0}) = g_2\vert_{t=0} \text{ in } \rmL^2(\bbT) \},
    \end{align*}
    whose components will be denoted by $\varUpsilon_T = \prod_{j=1}^7 \varUpsilon_T^j$, correspondingly.
    Next, we introduce the linear operator $\mathcal{A}_T(\tphi1) \colon Z_T\to \varUpsilon_T$ that corresponds to the left-hand side of~\eqref{Lin:eq:lin_system} by
    \begin{align*}
		\mathcal{A}_T(\tphi1) (\z) \coloneqq
		\begin{pmatrix}
			\rho_1(\tphi1)\delt\vel_1 - \Div\Str_1(\tphi1,\D\vel_1) + \tphi1\Grad p - \tphi1 \Grad\Lap\varphi_1 \\[1ex]
			\rho_2(\tphi1)\delt\vel_2 - \Div\Str_2(\tphi1,\D\vel_2) + (1-\tphi1)\Grad p + (1-\tphi1)\Grad\Lap\varphi_1 \\[1ex]
			\delt\varphi_1 + \Div(\tphi1\vel_1) \\[1ex]
			\Div( \tphi1\vel_1 + (1-\tphi1)\vel_2) \\
            \vel_1\vert_{t=0} \\
            \vel_2\vert_{t=0} \\
            \varphi_1\vert_{t=0}
		\end{pmatrix},
	\end{align*}
    for every $\z=(\vel_1,\vel_2,\varphi_1,p)\in Z_T$, where $\Str_j$ is defined in \eqref{int:eq:S_j}. Note that, in contrast to~$\mathcal{L}_T$, this notation additionally highlights the dependence of the operator $\mathcal{A}_T(\tphi1)$ on the given linearization parameter~$\tphi1\in\rmH^2(\bbT)$. The components of $\mathcal{A}_T(\tphi1)$ are indicated by~$\mathcal{A}_T^j(\tphi1)$, $j\in\{1,\dots,7\}$.
    
	With these definitions at hand, abbreviating $\z=(\vel_1,\vel_2,\varphi_1,p)$ and $\F=(\f_1, \f_2, g_1, g_2)$, we are able to equivalently write the linear system~\eqref{Lin:eq:lin_system_eq_deltv1}--\eqref{Lin:eq:lin_system_eq_div} in an abstract form
	\begin{align}
		\label{Lin:eq:abstr_eq}
		\mathcal{A}_T(\tphi1) (\z) &= (\F,\vel_{1,0}, \vel_{2,0}, \varphi_{1,0})
        \eqqcolon \mathfrak F.
	\end{align}
    
    In Proposition~\ref{Lin:prop:ex_const_coeff}, we have proven invertibility of $\mathcal{A}_T(\barphi1)$ with respect to the higher regularity $Z_T$ for any constant $\barphi1$. Our next goal is to verify that for variable coefficients~$\tphi1$ that are sufficiently close to some fixed constant $\barphi1$, the operator $\mathcal{A}_T(\tphi1)$ remains invertible.

	\begin{proposition}[Perturbation argument]
		\label{Lin:prop:perturbation}
		Let $0<T_0<\infty$ be given. Let $\barphi1\in(0,1)$ be a fixed constant such that the operator~$\mathcal{A}_T(\barphi1) \colon Z_T\to \varUpsilon_T$ is invertible for all~$0<T\leq T_0$. Then, for any~$\tphi1\in\rmH^2(\bbT)$ with $\tphi1\in(0,1)$ in $\bbT$ satisfying the condition
		\begin{align}
			\label{Lin:eq:tphi_close_to_constant}
			\norm{\tphi1 - \barphi1}_{\rmL^\infty\ofbbT}
			< \delta
		\end{align}
		for some sufficiently small $\delta=\delta(\barphi1, T_0)>0$, the operator~$\mathcal{A}_T(\tphi1) \colon Z_T\to \varUpsilon_T$ is invertible for all $0<T\leq T_0$ as well.
	\end{proposition}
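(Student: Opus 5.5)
I would write $\mathcal{A}_T(\tphi1) = \mathcal{A}_T(\barphi1) + \bigprt{\mathcal{A}_T(\tphi1)-\mathcal{A}_T(\barphi1)}$ and use a Neumann series argument. The operator $\mathcal{A}_T(\tphi1)-\mathcal{A}_T(\barphi1)$ has vanishing initial-value components $5$--$7$. Its first four components are
\begin{align*}
&(\rho_1(\tphi1)-\rho_1(\barphi1))\delt\vel_1 - \Div\bigprt{2(\nu_1(\tphi1)-\nu_1(\barphi1))\D\vel_1 + (\lambda_1(\tphi1)-\lambda_1(\barphi1))\Div\vel_1\Id} \\
&\qquad+ (\tphi1-\barphi1)\Grad p - (\tphi1-\barphi1)\Grad\Lap\varphi_1,
\end{align*}
the analogous expression for the second component, and $\Div((\tphi1-\barphi1)\vel_1)$ and $\Div((\tphi1-\barphi1)(\vel_1-\vel_2))$ for the third and fourth. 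If this difference maps $Z_T$ into $\varUpsilon_T$ with small norm, uniformly in $T\le T_0$, then $\mathcal{A}_T(\tphi1) = \mathcal{A}_T(\barphi1)\bigprt{I + \mathcal{A}_T(\barphi1)^{-1}(\mathcal{A}_T(\tphi1)-\mathcal{A}_T(\barphi1))}$ is invertible. This holds provided $\norm{\mathcal{A}_T(\barphi1)^{-1}}\cdot\norm{\mathcal{A}_T(\tphi1)-\mathcal{A}_T(\barphi1)}<1$.

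First, I need $\sup_{T\le T_0}\norm{\mathcal{A}_T(\barphi1)^{-1}}_{\mathcal L(\varUpsilon_T,Z_T)}<\infty$. This follows from the a priori estimates of Lemmas~\ref{Lin:lemma:est_higher}--\ref{Lin:lemma:est_deltvj}, which carry over to the limit in Proposition~\ref{Lin:prop:ex_const_coeff}. Their constants depend on $T$ only monotonically, so they can be taken at $T_0$. The term $\norm{\delt\varphi_1|_{t=0}}_{\rmL^2}$ is controlled by the data via $\delt\varphi_1|_{t=0}=g_1|_{t=0}-\barphi1\Div\vel_{1,0}$. The $Z_T$-norms include initial values, in the spirit of Lemma~\ref{mr:lemma:extension_op}, which makes the bound $T$-uniform.

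Second, I estimate the perturbation in the velocity equations. The terms of zeroth order in the coefficient difference, namely $(\tphi1-\barphi1)\Grad p$, $(\tphi1-\barphi1)\Grad\Lap\varphi_1$ and $\bar\rho_j(\tphi1-\barphi1)\delt\vel_j$, are bounded in $\rmL^2(0,T;\rmL^2)$ by $C\delta\norm{\z}_{Z_T}$ using only the $\rmL^\infty$ smallness. The divergence of the viscous stress produces $(\nu_j(\tphi1)-\nu_j(\barphi1))\Grad^2\vel_j$, which is small in the same way, since $\nu_j$ is Lipschitz. It also produces $\Grad\nu_j(\tphi1)\,\Grad\vel_j$, which is not small in terms of $\delta$.

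Third, the continuity-type components are the main obstacle. They must be estimated in $\rmL^2(0,T;\rmH^1)\cap\rmH^1(0,T;\rmH^{-1})$. For example, $\Div((\tphi1-\barphi1)\vel_1)$ contains $\Grad\tphi1\cdot\Grad\vel_1$ and $\Grad^2\tphi1\,\vel_1$, which again involve derivatives of $\tphi1$ that are not small. The fourth component moreover has to vanish at $t=0$; this holds because the data satisfy the compatibility conditions in both $\varUpsilon_T$ spaces. My plan is to split every term into a part containing $\tphi1-\barphi1$ undifferentiated, which is small by \eqref{Lin:eq:tphi_close_to_constant}, and a lower-order part containing $\Grad\tphi1$ or $\Grad^2\tphi1$ multiplied by lower derivatives of $\z$. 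The lower-order parts I would bound, using Sobolev and interpolation inequalities with $\tphi1\in\rmH^2$, by $C(\norm{\tphi1}_{\rmH^2})\,T^{\theta}\norm{\z}_{Z_T}$ for some $\theta>0$. The factor $T^\theta$ comes from the embeddings of Lemma~\ref{mr:lemma:extension_op} into $\BUC$ and Hölder spaces.

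This yields smallness only for $T$ small. To obtain invertibility for all $T\le T_0$, I would then iterate in time, or alternatively use the weak solution from Proposition~\ref{Lin:prop:ex_gen_coeff}, which already exists globally on $[0,T_0]$ and is unique, and merely bootstrap its regularity on successive short intervals of uniform length. If the lower-order terms cannot be made small in this way, the fallback is to note that, once the weak solution is known, they are compact perturbations. Injectivity of $\mathcal{A}_T(\tphi1)$ follows from uniqueness of weak solutions, so Fredholm theory would give surjectivity.
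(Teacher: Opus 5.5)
Your architecture matches the paper's. You use a Neumann series around $\mathcal{A}_T(\barphi1)$, and you split $\mathcal{A}_T(\tphi1)-\mathcal{A}_T(\barphi1)$ into terms controlled by $\norm{\tphi1-\barphi1}_{\rmL^\infty}$ and lower-order terms carrying a positive power of $T$ times $\norm{\tphi1}_{\rmH^2}$. You then take a short interval of length $T_1=T_1(\norm{\tphi1}_{\rmH^2})$ and continue in time. However, the two places where you add justification beyond the paper's text are incorrect, and both concern $t=0$.

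First, with the usual norm on $\rmL^2(0,T;\rmH^1)\cap\rmH^1(0,T;\rmH^{-1})$, the supremum $\sup_{T\le T_0}\norm{\mathcal{A}_T(\barphi1)^{-1}}_{\mathcal L(\varUpsilon_T,Z_T)}$ is infinite. Take the data $(0,0,g_0,0,0,0,0)$ with $0\neq g_0\in\rmH^1(\bbT)$ independent of $t$. The data norm is at most $2T^{1/2}\norm{g_0}_{\rmH^1}$, whereas $\norm{\z}_{Z_T}\ge\norm{\delt\varphi_1\vert_{t=0}}_{\rmL^2}=\norm{g_0}_{\rmL^2}$. So the inverse norm grows like $T^{-1/2}$, and controlling $\delt\varphi_1\vert_{t=0}$ ``via $g_1\vert_{t=0}$'' is exactly what fails uniformly in $T$. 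Putting $\norm{g_j\vert_{t=0}}_{\rmL^2}$ into the data norm, as Lemma~\ref{mr:lemma:extension_op} would suggest, removes this obstruction. But then the third component of $(\mathcal{A}_T(\tphi1)-\mathcal{A}_T(\barphi1))\z$ has the trace $\Div\bigl((\tphi1-\barphi1)\vel_1\vert_{t=0}\bigr)$. This contains $\Grad\tphi1\cdot\vel_1\vert_{t=0}$, which relative to $\norm{\z}_{Z_T}$ is controlled neither by $\norm{\tphi1-\barphi1}_{\rmL^\infty}$ nor by a positive power of $T$.

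Second, for an arbitrary $\z\in Z_T$ the fourth component $\Div\bigl((\tphi1-\barphi1)(\vel_1-\vel_2)\bigr)$ does not vanish at $t=0$; the compatibility of the data says nothing about $\z$. Components $5$--$7$ of the difference are zero, so the difference does not map $Z_T$ into the space $\varUpsilon_T$ associated with $\barphi1$, which depends on the coefficient through the compatibility condition. Hence $\mathcal{A}_T(\barphi1)^{-1}$ cannot be applied to it, and your factorization is not available.

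The missing idea is to freeze the initial data. Given $\mathfrak F$ in the space $\varUpsilon_T$ associated with $\tphi1$, look for a fixed point of $\Phi(\z)\coloneqq\mathcal{A}_T(\barphi1)^{-1}\bigl(\mathfrak F-(\mathcal{A}_T(\tphi1)-\mathcal{A}_T(\barphi1))\z\bigr)$. Do this on the closed affine set of all $\z\in Z_T$ with $(\vel_1,\vel_2,\varphi_1)\vert_{t=0}=(\vel_{1,0},\vel_{2,0},\varphi_{1,0})$. For such $\z$:
\begin{itemize}
\item the argument of $\mathcal{A}_T(\barphi1)^{-1}$ does satisfy the compatibility condition for $\barphi1$ (this is where the compatibility of $\mathfrak F$ for $\tphi1$ enters);
\item $\Phi$ maps the affine set into itself;
\item for two such $\z,\z'$, the element $(\mathcal{A}_T(\tphi1)-\mathcal{A}_T(\barphi1))(\z-\z')$ has vanishing initial components, and its third and fourth components vanish at $t=0$.
\end{itemize}
On data of this type, $\mathcal{A}_T(\barphi1)^{-1}$ is bounded uniformly for $2T\le T_0$. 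To see this, extend $\f_j$ by zero and $g_j$ by even reflection at $t=T$ followed by zero, solve on $(0,T_0)$, restrict, and use uniqueness. Your perturbation estimates then make $\Phi$ a contraction once $\delta$ and $T_1$ are small, and the continuation in time works as you describe. The paper's proof simply defines $M$ as the above supremum and invokes openness of the set of invertible operators, so it tacitly needs the same refinement.
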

	
	\begin{proof}
		In a first step, our goal is to prove the claimed invertibility for all $0<T\leq T_1$, where~$T_1=T_1(\norm{\tphi1}_{\rmH^2\ofbbT}) \leq T_0$ is sufficiently small. To this end, let $0<T\leq T_1$. 
		Proposition~\ref{Lin:prop:ex_const_coeff} yields that for the fixed constant $\barphi1$, the operator $\mathcal{A}_T(\barphi1) \colon Z_T\to \varUpsilon_T$ is invertible. Concerning the inverse operators, we denote $M\coloneqq \sup_{0<T\leq T_0} \norm{\mathcal A(\barphi1)^{-1}}_{\mathcal L(\varUpsilon_T,Z_T)}$. At the end of this proof, we show
		\begin{align}
			\label{Lin:eq:A_Lipschitz}
			\norm{\mathcal{A}_T(\tphi1) - \mathcal{A}_T(\barphi1)}_{\mathcal L(Z_T,\varUpsilon_T)}
			\leq L \norm{\tphi1 - \barphi1}_{\rmL^\infty\ofbbT} 
			+ CT^{\frac18} \norm{\tphi1}_{\rmH^2\ofbbT} 
		\end{align}
		for all $\tphi1\in \rmH^2(\bbT)$. Once this estimate is verified, we choose $T_1=T_1(\norm{\tphi1}_{\rmH^2\ofbbT})$ sufficiently small, such that it holds $CT_1^{\frac18} \norm{\tphi1}_{\rmH^2\ofbbT} < (4M)^{-1}$. Then we have, for any~$\tphi1\in \rmH^2(\bbT)$ satisfying the closeness condition \eqref{Lin:eq:tphi_close_to_constant} with $\delta\coloneqq(4LM)^{-1}$, and for any~$0<T\leq T_1$, that
		\begin{align*}
			\norm{\mathcal{A}_T(\tphi1) - \mathcal{A}_T(\barphi1)}_{\mathcal L(Z_T,\varUpsilon_T)}
			\leq (2M)^{-1}.
		\end{align*}
		Since the set of invertible operators is open (cf.~Neumann series), it follows that the operator~$\mathcal{A}_T(\tphi1) \colon Z_T\to \varUpsilon_T$ is also invertible for any $\tphi1\in \rmH^2(\bbT)$ satisfying \eqref{Lin:eq:tphi_close_to_constant}, and for any $0<T\leq T_1$.
	
		In a second step, we consider arbitrary $T_1<T\leq T_0$ and write $T=nT_1 + T'$, where~$n\in\N_0$ and $0\leq T'< T_1$.
		Given $(\F,\vel_{1,0},\vel_{2,0},\varphi_{1,0})\in \varUpsilon_T$, we derive from the first step that for any $T\in(0,T_1]$, there exists a unique solution $\z^1=(\vel_1^1,\vel_2^1,\varphi_1^1,p^1)\in Z_{T}$ to the equation~$\mathcal{A}_T(\z^1)= (\F,\vel_{1,0},\vel_{2,0},\varphi_{1,0})$. Then we define $(\vel_1^1\vert_{t=T_1} ,\vel_2^1\vert_{t=T_1},\varphi_1^1\vert_{t=T_1})$ as new initial data and find a unique solution to the associated problem on the interval $(T_1,T_2]$, where~$T_2=2T_1$. Iterating this argument until the final interval $(nT_1, nT_1 + T']$, we conclude the invertibility of $\mathcal{A}_T(\tphi1) \colon Z_T\to \varUpsilon_T$ for all $0<T\leq T_0$ and all $\tphi1\in \rmH^2(\bbT)$ satisfying~\eqref{Lin:eq:tphi_close_to_constant} as claimed.

		\medskip
		\textit{Ad \eqref{Lin:eq:A_Lipschitz}.} It remains to prove the estimate~\eqref{Lin:eq:A_Lipschitz}. To this end, we study each component of the difference~$\norm{\mathcal{A}_T(\tphi1)(\z) - \mathcal{A}_T(\barphi1)(\z)}_{\varUpsilon_T}$ separately, where we point out that it suffices to consider the first four components. In the following, we repeatedly make use of the fact that $\vel_1\in Z_T^1$ implies $\Grad\vel_1\in \rmL^{\frac83}\prt{0,T;\rmL^4(\bbT)^{d\times d}}$, which allows us to estimate~$\norm{ \Grad\vel_1 }_{\rmL^2\prt{0,T;\rmL^4(\bbT)^d}} \leq T^{\frac18} \norm{ \Grad\vel_1 }_{\rmL^{\frac83}\prt{0,T;\rmL^4(\bbT)^d}} \leq C T^{\frac18} \norm{\vel_1}_{Z_T^1}$. For the first component, we obtain
		\begin{align*}
			&\bignorm{\mathcal{A}_T^1(\tphi1)(\z) - \mathcal{A}_T^1(\barphi1)(\z)}_{\varUpsilon_T^1} \\
			&\leq C \norm{\tphi1 - \barphi1}_{\rmL^\infty\ofbbT} 
			\Bigprt{ \norm{\delt\vel_1}_{\rmL^2 \prt{0,T;\rmL^2\ofbbTd}}
			+ \norm{\Lap\vel_1}_{\rmL^2 \prt{0,T;\rmL^2\ofbbTd}} 
			+ \norm{\Grad\Div\vel_1}_{\rmL^2 \prt{0,T;\rmL^2\ofbbTd}} \\
			&\qquad\qquad\qquad\qquad\quad
			+ \norm{\Grad p}_{\rmL^2 \prt{0,T;\rmL^2\ofbbT}}
			+ \norm{\Grad\Lap\varphi_1}_{\rmL^2 \prt{0,T;\rmL^2\ofbbT}} }
			+ C \norm{ \Grad\vel_1 }_{\rmL^2\prt{0,T;\rmL^4\ofbbTd}} \bignorm{ \Grad\tphi1 }_{\rmL^4\ofbbTd} \\
			&\leq C\norm{\tphi1 - \barphi1}_{\rmL^\infty\ofbbT} \norm{\z}_{Z_T} + CT^{\frac18} \norm{ \tphi1 }_{\rmH^2\ofbbTd} \norm{\z}_{Z_T},
		\end{align*}
		due to the Lipschitz continuity of $\rho_1$, $\nu_1$ and $\lambda_1$ following from Assumption~\eqref{ass:coeffs}. The second component is estimated correspondingly. 
		
		Proceeding with the third component, it is necessary to take into account the definition of~$\varUpsilon_T^3= \rmL^2 \prt{0,T;\rmH^1(\bbT)} \cap \rmH^1\prt{0,T;\rmH^{-1}(\bbT)}$. Regarding the first norm, we compute 
		\begin{align*}
			&\bignorm{\mathcal{A}_T^3(\tphi1)(\z) - \mathcal{A}_T^3(\barphi1)(\z)}_{\rmL^2\prt{0,T;\rmH^1\ofbbT}} 
			= \norm{\Div \bigprt{(\tphi1-\barphi1)\vel_1} }_{\rmL^2\prt{0,T;\rmH^1\ofbbTd}} \\
			&\leq \norm{\prt{\tphi1-\barphi1} \Div\vel_1 }_{\rmL^2\prt{0,T;\rmH^1\ofbbTd}} 
			+ \norm{\Grad\tphi1\cdot\vel_1 }_{\rmL^2\prt{0,T;\rmH^1\ofbbTd}}
			\eqqcolon I + II.
		\end{align*}
		Invoking the above estimate for $\Grad\vel_1$, we find
		\begin{align*}
			I
			&\leq \norm{\prt{\tphi1-\barphi1} \Div\vel_1 }_{\rmL^2\prt{0,T;\rmL^2\ofbbTd}} 
			+ \norm{\prt{\tphi1-\barphi1} \Grad\Div\vel_1 }_{\rmL^2\prt{0,T;\rmL^2\ofbbTd}} \\
			&\quad+ \norm{ \Div\vel_1 \Grad\tphi1 }_{\rmL^2\prt{0,T;\rmL^2\ofbbTd}} \\
			&\leq \norm{\tphi1 - \barphi1}_{\rmL^\infty\ofbbT} \! \bigprt{ \norm{\Div\vel_1}_{\rmL^2\prt{0,T;\rmL^2\ofbbTd}} + \norm{\Grad\Div\vel_1}_{\rmL^2\prt{0,T;\rmL^2\ofbbTd}} }
			+ \norm{ \Div\vel_1 }_{\rmL^2\prt{0,T;\rmL^4\ofbbTd}} \norm{ \Grad\tphi1 }_{\rmL^4\ofbbTd} \\
			&\leq C \norm{\tphi1 - \barphi1}_{\rmL^\infty\ofbbT} \norm{\z}_{Z_T} + C T^{\frac18} \norm{ \tphi1 }_{\rmH^2\ofbbTd} \norm{\z}_{Z_T}.
		\end{align*}
		By interpolating suitably and using Agmon's inequality, we obtain
		\begin{align*}
			II
			&\leq \norm{\Grad\tphi1\cdot\vel_1 }_{\rmL^2\prt{0,T;\rmL^2\ofbbTd}} 
			+ \norm{\Grad\tphi1\Grad\vel_1 }_{\rmL^2\prt{0,T;\rmL^2\ofbbTd}} 
			+ \norm{ \Grad^2\tphi1\vel_1 }_{\rmL^2\prt{0,T;\rmL^2\ofbbTd}} \\
			&\leq \norm{ \Grad\tphi1 }_{\rmL^4\ofbbTd} \norm{ \vel_1 }_{\rmL^2\prt{0,T;\rmL^4\ofbbTd}}  
			+ \norm{ \Grad\tphi1 }_{\rmL^4\ofbbTd} \norm{ \Grad\vel_1 }_{\rmL^2\prt{0,T;\rmL^4\ofbbTd}} 
			+ \norm{ \Grad^2\tphi1 }_{\rmL^2\ofbbTd} \norm{ \vel_1 }_{\rmL^2\prt{0,T;\rmL^\infty\ofbbTd}}  \\
			&\leq CT^{\frac12} \norm{ \tphi1 }_{\rmH^2\ofbbTd} \norm{ \vel_1 }_{\rmL^\infty\prt{0,T;\rmH^1\ofbbTd}} 
			+ CT^{\frac18} \norm{ \tphi1 }_{\rmH^2\ofbbTd} \norm{ \Grad\vel_1 }_{\rmL^{\frac83}\prt{0,T;\rmL^4\ofbbTd}} \\
			&\quad+ CT^{\frac14} \norm{ \tphi1 }_{\rmH^2\ofbbTd} \norm{ \vel_1 }_{\rmL^2\prt{0,T;\rmH^2\ofbbTd}}^{\frac12} \norm{ \vel_1 }_{\rmL^\infty\prt{0,T;\rmH^1\ofbbTd}}^{\frac12}.
		\end{align*}
		In the other norm, we have
		\begin{align*}
			&\bignorm{\mathcal{A}_T^3(\tphi1)(\z) - \mathcal{A}_T^3(\barphi1)(\z)}_{\rmH^1\prt{0,T;\rmH^{-1}\ofbbT}} 
			= \norm{\Div \bigprt{(\tphi1-\barphi1)\vel_1} }_{\rmH^1\prt{0,T;\rmH^{-1}\ofbbTd}} \\
			&\leq \norm{(\tphi1-\barphi1)\vel_1 }_{\rmL^2\prt{0,T;\rmL^2\ofbbTd}} 
			+ \norm{(\tphi1-\barphi1)\delt\vel_1 }_{\rmL^2\prt{0,T;\rmL^2\ofbbTd}} \\
			&\leq C \norm{\tphi1 - \barphi1}_{\rmL^\infty\ofbbT} 
			\bigprt{ \norm{\vel_1}_{\rmL^2 \prt{0,T;\rmL^2\ofbbTd}} + \norm{\delt\vel_1}_{\rmL^2 \prt{0,T;\rmL^2\ofbbTd}} } \\
			&\leq C \norm{\tphi1 - \barphi1}_{\rmL^\infty\ofbbT} \norm{\z}_{Z_T}.
		\end{align*}
		We point out that the fourth component may be treated similarly and that the respective expression for the remaining components vanishes. Eventually, taking the supremum over all $\norm{\z}_{Z_T}\leq1$ leads to the claimed estimate \eqref{Lin:eq:A_Lipschitz} for the operator norm.
	\end{proof}

    We conclude by proving that the weak solution to the linear system~\eqref{Lin:eq:lin_system} constructed in Proposition~\ref{Lin:prop:ex_gen_coeff} actually enjoys strong regularity. The argument relies on a localization procedure, to which we apply the perturbation result from Proposition~\ref{Lin:prop:perturbation}.
	
	\begin{proposition}[Localization argument]
		\label{Lin:prop:localization}
        Let $0<T<\infty$ and let $\tphi1\in\rmH^2(\bbT)$ satisfy~$\tphi1\in(0,1)$ in $\bbT$. Suppose that
        \begin{equation*}
            \begin{alignedat}{4}
                \f_j &\in\rmL^2(0,T;\rmL^2(\bbT)^d), &\quad j&=1,2, &\quad
                g_j &\in\rmL^2(0,T;\rmH^1(\bbT)) \cap \rmH^1(0,T;\rmH^{-1}(\bbT)), &\quad j&=1,2,\\
                \vel_{j,0}&\in \rmH^1(\bbT)^d, &\quad j&=1,2, &\quad
                \varphi_{1,0}&\in\rmH^2(\bbT), 
            \end{alignedat}
        \end{equation*}
        such that $\Div(\tphi1\vel_{1,0} + \tphi2\vel_{2,0})=g_2\vert_{t=0}$ holds. Then the weak solution $(\vel_1,\vel_2,\varphi_1,p)$ to the linear system~\eqref{Lin:eq:lin_system} from Proposition~\ref{Lin:prop:ex_gen_coeff} has higher regularity
		\begin{align*}
			\vel_j &\in \rmL^2 \prt{0,T;\rmH^2(\bbT)^d} \cap \rmH^1 \prt{ 0,T;\rmL^2(\bbT)^d}, \\
			\varphi_1 &\in \rmL^2 \prt{0,T;\rmH^3(\bbT)} \cap \rmH^1 \prt{ 0,T;\rmH^1(\bbT)} \cap \rmH^2 \prt{ 0,T;\rmH^{-1}(\bbT)}, \\
			p &\in \rmL^2 \prt{0,T;\rmH^1_{(0)}(\bbT)}
		\end{align*}
		and is thus a strong solution in $Z_T$.
	\end{proposition}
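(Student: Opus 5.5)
Since $\tphi1\in\rmH^2(\bbT)\hookrightarrow\rmC^{0}(\bbT)$ is uniformly continuous on the compact torus, I will cover $\bbT$ by finitely many small balls $B_k=B_r(x_k)$, $k=1,\dots,N$, so that the oscillation of $\tphi1$ on each $B_{2r}(x_k)$ is below $\delta/2$. Here $\delta=\delta(\barphi{1}^{(k)},T)$ is taken from Proposition~\ref{Lin:prop:perturbation} with $\barphi{1}^{(k)}\coloneqq\tphi1(x_k)\in(0,1)$. Because $\tphi1$ takes values in a compact subset $[m,1-m]\subset(0,1)$, I would first check that $\delta$ can be chosen uniformly for constants in $[m,1-m]$. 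This uses the fact that the bound $M$ on $\mathcal A_T(\barphi1)^{-1}$ from Proposition~\ref{Lin:prop:ex_const_coeff} depends continuously on the constant. Next I pick a smooth partition of unity $\{\chi_k\}$ subordinate to $\{B_k\}$ and cutoffs $\eta_k$ equal to one on $\supp\chi_k$ and supported in $B_{2r}(x_k)$. I then set $\hat\varphi^{(k)}\coloneqq\eta_k\tphi1+(1-\eta_k)\barphi{1}^{(k)}$. This function lies in $\rmH^2(\bbT)$, takes values in $(0,1)$, satisfies $\norm{\hat\varphi^{(k)}-\barphi{1}^{(k)}}_{\rmL^\infty}<\delta$, and coincides with $\tphi1$ on $\supp\chi_k$. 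By Proposition~\ref{Lin:prop:perturbation}, each $\mathcal A_T(\hat\varphi^{(k)})\colon Z_T\to\varUpsilon_T$ is invertible.

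Now let $\z=(\vel_1,\vel_2,\varphi_1,p)$ be the weak solution from Proposition~\ref{Lin:prop:ex_gen_coeff}, and define the localized quantities $\z_k\coloneqq(\chi_k\vel_1,\chi_k\vel_2,\chi_k\varphi_1,\chi_k p-\mean{\chi_k p})$. Since $\hat\varphi^{(k)}=\tphi1$ on $\supp\chi_k$, $\z_k$ solves, in the distributional sense, $\mathcal A_T(\hat\varphi^{(k)})(\z_k)=\mathfrak F_k$. The right-hand side $\mathfrak F_k$ consists of $\chi_k\mathfrak F$ plus commutator terms with $\Grad\chi_k$. In the momentum equations these commutators are $\Grad\chi_k\cdot\Str_j$, $\Div(\vel_j\otimes\Grad\chi_k)$-type terms, $\hat\varphi^{(k)}p\Grad\chi_k$, and the third-order commutator $\hat\varphi^{(k)}\bigl(\Grad\Lap(\chi_k\varphi_1)-\chi_k\Grad\Lap\varphi_1\bigr)$. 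The latter involves at most $\Grad^2\varphi_1$ and $\Grad\varphi_1$, together with a constant pressure-mean term. In the transport and divergence equations the commutators are $\tphi1\vel_1\cdot\Grad\chi_k$ and $\Grad\chi_k\cdot(\tphi1\vel_1+\tphi2\vel_2)$. The compatibility condition at $t=0$ in $\varUpsilon_T$ is inherited from the original one.

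The key step is to verify that $\mathfrak F_k\in\varUpsilon_T$ using only the weak regularity of Proposition~\ref{Lin:prop:ex_gen_coeff}. The terms in the momentum equations lie in $\rmL^2(0,T;\rmL^2)$, because $\vel_j\in\rmL^2(\rmH^1)$, $p\in\rmL^2(\rmL^2)$ and $\varphi_1\in\rmL^2(\rmH^2)$. For the scalar commutators, $\rmL^2(0,T;\rmH^1)$ follows from $\vel_j\in\rmL^2(\rmH^1)$ and $\tphi1\in\rmH^2\hookrightarrow\rmW^{1,6}$. The bound in $\rmH^1(0,T;\rmH^{-1})$ needs $\delt\vel_j\in\rmL^2(\rmH^{-1})$ multiplied by $\tphi1\Grad\chi_k\in\rmH^2$, which is a multiplier on $\rmH^1$ and hence, by duality, on $\rmH^{-1}$. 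I expect this last time-regularity check, together with matching the initial-data compatibility, to be the most delicate point. If the product estimate fails, the fallback is to rewrite $\Grad\chi_k\cdot(\tphi1\vel_1)$ via the equation $\delt\varphi_1=g_1-\Div(\tphi1\vel_1)$ in order to trade regularity.

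Invertibility of $\mathcal A_T(\hat\varphi^{(k)})$ then produces a strong solution $\tilde\z_k\in Z_T$ with data $\mathfrak F_k$. Uniqueness of weak solutions for the coefficient $\hat\varphi^{(k)}$, obtained by the energy argument of Proposition~\ref{Lin:prop:ex_gen_coeff}, gives $\tilde\z_k=\z_k$. Summing over $k$, $\z=\sum_k\z_k$ up to a pressure mean, and therefore $\z\in Z_T$. In particular $p=\sum_k\chi_kp-\text{const}\in\rmL^2(0,T;\rmH^1_{(0)})$, after normalizing the mean.
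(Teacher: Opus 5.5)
Your proposal is correct and follows essentially the same route as the paper. Both arguments use a finite partition of unity and a cut-off coefficient $\hat\varphi^{(k)}=\eta_k\tphi1+(1-\eta_k)\tphi1(x_k)$ that agrees with $\tphi1$ on $\supp\chi_k$ and is $\rmL^\infty$-close to a constant, so that Proposition~\ref{Lin:prop:perturbation} applies; both then observe that all commutator terms are of lower order and lie in $\varUpsilon_T$ by the weak regularity from Proposition~\ref{Lin:prop:ex_gen_coeff}. The differences are minor. The paper avoids your uniform-$\delta$ step, and with it the continuity of $M$ in the constant that you would still have to justify, by letting the radius depend on the center $x_0$ through $\delta_{x_0}$ and then extracting a finite subcover. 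Conversely, your mean-normalization of $\chi_k p$ and your explicit appeal to uniqueness of weak solutions to identify $\tilde\z_k=\z_k$ make precise two points the paper leaves implicit.
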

	
	\begin{proof}
		To show that $(\vel_1,\vel_2,\varphi_1,p)$ has higher regularity, we aim to apply Proposition~\ref{Lin:prop:perturbation} in a localization argument and adapt the notation of its proof.
		For each $x_0\in\bbT$, there exists a radius $r_{x_0}>0$ such that
		\begin{align*}
			\norm{\tphi1 - \tphi1(x_0)}_{\rmL^\infty(B_{r_{x_0}}(x_0))} \leq \frac{\delta_{x_0}}{2}
		\end{align*}
		with $\delta_{x_0} \coloneqq \bigprt{4L \norm{\mathcal{A}_T(\tphi1(x_0))^{-1}}_{\mathcal L(\varUpsilon_T,Z_T)} }^{-1}$, cf.~\eqref{Lin:eq:A_Lipschitz}. 
		Since~$\bbT$ is compact, there exists a finite open covering $\bbT = \bigcup_{k=1}^N B_{r_k/2}(x_{0,k})$, where $r_k\coloneqq r_{x_{0,k}}$. Next, we fix a partition of unity $\{\eta_k\}_{k=1}^N$ of class $\rmC^\infty$ subordinate to this covering.
		Furthermore, let $\psi_k \in \rmC_0^\infty(B_{r_k}(x_{0,k}))$ be a cutoff function satisfying $0\leq\psi_k\leq1$ in~$B_{r_k}(x_{0,k})$ and~$\psi_k=1$ in $\overline{B_{r_k/2}(x_{0,k})}$. Defining
		\begin{align*}
			\hat\varphi_{1,0}^k(x)
			\coloneqq \psi_k(x)\tphi1(x) + \bigprt{1-\psi_k(x)}\tphi1(x_{0,k}),
		\end{align*}
		we observe that 
		\begin{alignat*}{2}
			\hat\varphi_{1,0}^k(x) &= \tphi1(x)
			&&\qquad \text{for all } x\in \overline{B_{r_k/2}(x_{0,k})}, \\
			\hat\varphi_{1,0}^k(x) &= \tphi1(x_{0,k})
			&&\qquad \text{for all } x\in \bbT\setminus B_{r_k}(x_{0,k}),
		\end{alignat*}
		as well as
		\begin{align}
			\label{Lin:eq:hatphik_close_to_const}
			&\bignorm{\hat\varphi_{1,0}^k - \tphi1(x_{0,k})}_{\rmL^\infty(B_{r_k}(x_{0,k}))} 
			= \bignorm{\psi_k \bigprt{ \tphi1 - \tphi1(x_{0,k})} }_{\rmL^\infty(B_{r_k}(x_{0,k}))} \nonumber \\[-0.2mm]
			&\leq \norm{\psi_k}_{\rmL^\infty(B_{r_k}(x_{0,k}))}  \norm{ \tphi1 - \tphi1(x_{0,k}) }_{\rmL^\infty(B_{r_k}(x_{0,k}))}  
			\leq \frac{\delta_k}{2}
			\leq \frac{\delta}{2},
		\end{align}
		where $\delta_k\coloneqq\delta_{x_{0,k}}$ and $\delta\coloneqq\max_{k\in\{1,\dots,N\}}\delta_k$. In the following, localized functions are denoted by $\zeta^k\coloneqq\eta_k\zeta$ for any occurring function $\zeta$ and, accordingly, we write $\z^k= (\vel_1^k,\vel_2^k,\varphi_1^k,p^k)$ as well as~$\mathfrak F^k=(\f_1^k,\f_2^k, g_1^k, g_2^k, (\vel_1\vert_{t=0})^k, (\vel_2\vert_{t=0})^k, (\varphi_1\vert_{t=0})^k) $. With the commutator defined by~$[\mathcal B,\eta] \coloneqq \mathcal B(\eta\cdot) - \eta\mathcal B(\cdot)$, multiplying the abstract equation~\eqref{Lin:eq:abstr_eq} by $\eta_k$ gives
		\begin{align}
			\label{Lin:eq:localized_abstr_eq}
			&\mathcal{A}_T\prt{\hat\varphi_{1,0}^k} (\z^k) 
			= \eta_k \mathcal{A}_T\prt{\hat\varphi_{1,0}^k} (\z)
			+ \big[\mathcal{A}_T\prt{\hat\varphi_{1,0}^k}, \eta_k \big] (\z) \nonumber \\
			&= \mathfrak F^k
			+ \eta_k \bigprt{ \mathcal{A}_T\prt{\hat\varphi_{1,0}^k} - \mathcal{A}_T(\tphi1) } (\z) 
			+ \big[\mathcal{A}_T\prt{\hat\varphi_{1,0}^k}, \eta_k \big] (\z) 
			\eqqcolon \mathfrak F^k + R_1^k + R_2^k
			\coloneqq \mathfrak G^k.
		\end{align}
		Here, we have the remainder terms $R_1^k$  and $R_2^k$, where the first is a perturbation term and the second a commutator term. 
		
		In order to apply Proposition~\ref{Lin:prop:perturbation} to show that \eqref{Lin:eq:localized_abstr_eq} admits a unique solution $\z^k\in Z_T$, we need to verify that $\mathfrak G^k$ satisfies the assumptions of that proposition. To this end, we check that $\mathfrak G^k$ has the correct regularity, i.e., that $\mathfrak G^k\in\varUpsilon_T$, and only depends on the original data~$\mathfrak F\coloneqq (\f_1,\f_2, g_1, g_2, \vel_1\vert_{t=0}, \vel_2\vert_{t=0}, \varphi_1\vert_{t=0})$. We study the parts of $\mathfrak G^k$ separately and observe that the perturbation term~$R_1^k$ vanishes in view of the properties of $\hat\varphi_{1,0}^k$ and~$\eta_k$. On the other hand, the commutator term~$R_2^k$ is of lower order, which allows us to prove~$R_2^k\in \varUpsilon_T$ as follows. In order to estimate the first component of $R_2^k$, we study the terms
		\begin{align*}
			&\bignorm{ \big[\mathcal{A}_T^1\prt{\hat\varphi_{1,0}^k}, \eta_k \big] (\z) }_{\varUpsilon_T^1} 
			= \bignorm{ \mathcal{A}_T^1\prt{\hat\varphi_{1,0}^k} (\eta_k\z) 
				- \eta_k \mathcal{A}_T^1\prt{\hat\varphi_{1,0}^k} (\z)}_{\varUpsilon_T^1} \\
			&\leq\bignorm{\rho_1\prt{\hat\varphi_{1,0}^k} \delt(\eta_k\vel_1) - \eta_k \rho_1\prt{\hat\varphi_{1,0}^k} \delt\vel_1 }_{\rmL^2 \prt{0,T;\rmL^2\ofbbTd}} \\
			&\quad+ \bignorm{\nu_1\prt{\hat\varphi_{1,0}^k} \Lap(\eta_k\vel_1) - \eta_k \nu_1\prt{\hat\varphi_{1,0}^k} \Lap\vel_1 }_{\rmL^2 \prt{0,T;\rmL^2\ofbbTd}}\\
			&\quad+ \bignorm{ \bigprt{ \nu_1\prt{\hat\varphi_{1,0}^k} + \lambda_1\prt{\hat\varphi_{1,0}^k} } \Grad\Div(\eta_k\vel_1)
			- \eta_k \bigprt{ \nu_1\prt{\hat\varphi_{1,0}^k} + \lambda_1\prt{\hat\varphi_{1,0}^k} } \Grad\Div\vel_1 }_{\rmL^2 \prt{0,T;\rmL^2\ofbbTd}} \\
			&\quad+ \bignorm{ \hat\varphi_{1,0}^k \Grad (\eta_k p) - \eta_k \hat\varphi_{1,0}^k \Grad p }_{\rmL^2 \prt{0,T;\rmL^2\ofbbTd}} 
			+ \bignorm{ \hat\varphi_{1,0}^k \Grad\Lap (\eta_k \varphi_1) - \eta_k  \hat\varphi_{1,0}^k \Grad\Lap\varphi_1
			}_{\rmL^2 \prt{0,T;\rmL^2\ofbbTd}} \\
			&\eqqcolon I + II + III + IV + V.
		\end{align*}
		We observe that, since $\eta_k$ is independent of $t$, the term $I$ vanishes. Continuing with $II$, we find
		\begin{align*}
			II
			\leq \bignorm{ \nu_1\prt{\hat\varphi_{1,0}^k}}_{\rmL^\infty\ofbbT} \norm{2\Grad\vel_1\Grad\eta_k + \Lap\eta_k\vel_1}_{\rmL^2 \prt{0,T;\rmL^2\ofbbTd}} 
			\leq C \norm{ \hat\varphi_{1,0}^k}_{\rmW^{1,4}\ofbbT} \norm{\vel_1}_{\rmL^2 \prt{0,T;\rmH^1\ofbbTd}},
		\end{align*}
		and the analogous estimate for $III$. Similarly, we obtain
		\begin{align*}
			IV
			\leq \norm{\hat\varphi_{1,0}^k}_{\rmL^\infty\ofbbT} \norm{p\Grad\eta_k}_{\rmL^2 \prt{0,T;\rmL^2\ofbbTd}} 
			\leq C \norm{ \hat\varphi_{1,0}^k}_{\rmW^{1,4}\ofbbT} \norm{p}_{\rmL^2 \prt{0,T;\rmL^2\ofbbTd}}
		\end{align*}
		as well as
		\begin{align*}
			V
			&\leq \norm{\hat\varphi_{1,0}^k}_{\rmL^\infty\ofbbT} \bignorm{\Lap\varphi_1\Grad\eta_k + 2\Grad^2\varphi_1\Grad\eta_k + \Lap\eta_k\Grad\varphi_1 + 2\Grad^2\eta_k\Grad\varphi_1 + \varphi_1\Grad\Lap\eta_k}_{\rmL^2 \prt{0,T;\rmL^2\ofbbTd}} \\
			&\leq C \norm{ \hat\varphi_{1,0}^k}_{\rmW^{1,4}\ofbbT} \norm{\varphi_1}_{\rmL^2 \prt{0,T;\rmH^2\ofbbT}}.
		\end{align*}
		An estimate for the second component of $R_2^k$ follows accordingly. 
		
		The third component of $R_2^k$ yields the parts
		\begin{align*}
			&\bignorm{ \big[\mathcal{A}_T^3\prt{\hat\varphi_{1,0}^k}, \eta_k \big] (\z) }_{\varUpsilon_T^3} 
			= \bignorm{ \mathcal{A}_T^3\prt{\hat\varphi_{1,0}^k} (\eta_k\z) 
				- \eta_k \mathcal{A}_T^3\prt{\hat\varphi_{1,0}^k} (\z)}_{\varUpsilon_T^3} \\
			&\leq \norm{ \delt(\eta_k\vel_1) - \eta_k \delt\vel_1 }_{\varUpsilon_T^3} 
			+ \norm{ \hat\varphi_{1,0}^k \Div(\eta_k \vel_1) - \eta_k \hat\varphi_{1,0}^k \Div\vel_1 }_{\varUpsilon_T^3} 
			= \norm{ \hat\varphi_{1,0}^k \Grad\eta_k \cdot \vel_1 }_{\varUpsilon_T^3}.
		\end{align*} 
		Recalling the definition of $\varUpsilon_T^3$, we proceed with estimating
		\begin{align*}
			&\norm{ \hat\varphi_{1,0}^k \Grad\eta_k \cdot \vel_1 }_{\rmL^2\prt{0,T;\rmH^1\ofbbT}} 
			\leq \norm{ \hat\varphi_{1,0}^k \Grad\eta_k \cdot \vel_1 }_{\rmL^2\prt{0,T;\rmL^2\ofbbT}} 
			+ \norm{ \hat\varphi_{1,0}^k \Grad\vel_1 \Grad\eta_k }_{\rmL^2\prt{0,T;\rmL^2\ofbbT}} \\
			&\qquad\qquad\qquad\qquad\qquad\quad+ \norm{ \hat\varphi_{1,0}^k \Grad^2\eta_k \vel_1 }_{\rmL^2\prt{0,T;\rmL^2\ofbbT}} 
			+ \norm{ \Grad\eta_k \cdot \vel_1 \Grad\hat\varphi_{1,0}^k }_{\rmL^2\prt{0,T;\rmL^2\ofbbT}} \\
			&\leq C \norm{\hat\varphi_{1,0}^k}_{\rmL^\infty\ofbbT} \norm{ \vel_1 }_{\rmL^2\prt{0,T;\rmH^1\ofbbT}} 
			+ C \norm{\Grad\hat\varphi_{1,0}^k}_{\rmL^4\ofbbT} \norm{ \vel_1 }_{\rmL^2\prt{0,T;\rmL^4\ofbbT}} 
			\leq C \norm{\hat\varphi_{1,0}^k}_{\rmW^{1,4}\ofbbT} \norm{ \vel_1 }_{\rmL^2\prt{0,T;\rmH^1\ofbbT}} 
		\end{align*}
		and 
		\begin{align*}
			\norm{ \hat\varphi_{1,0}^k \Grad\eta_k \cdot \vel_1 }_{\rmH^1\prt{0,T;\rmH^{-1}\ofbbT}} 
			&\leq \norm{ \hat\varphi_{1,0}^k \Grad\eta_k \cdot \vel_1 }_{\rmL^2\prt{0,T;\rmH^{-1}\ofbbT}} 
			+ \norm{ \hat\varphi_{1,0}^k \Grad\eta_k \cdot \delt\vel_1 }_{\rmL^2\prt{0,T;\rmH^{-1}\ofbbT}} \\
			&\leq C \norm{ \hat\varphi_{1,0}^k \Grad\eta_k \cdot \vel_1 }_{\rmL^2\prt{0,T;\rmL^2\ofbbT}} 
			+ C \norm{\hat\varphi_{1,0}^k}_{\rmW^{1,4}\ofbbT} \norm{ \delt\vel_1 }_{\rmL^2\prt{0,T;\rmH^{-1}\ofbbT}} \\
			&\leq C \norm{\hat\varphi_{1,0}^k}_{\rmW^{1,4}\ofbbT} 
			\bigprt{ \norm{ \vel_1 }_{\rmL^2\prt{0,T;\rmL^2\ofbbT}} + \norm{ \delt\vel_1 }_{\rmL^2\prt{0,T;\rmH^{-1}\ofbbT}} }.
		\end{align*}
		Furthermore, the fourth component of $R_2^k$ has the same structure as the third. 
		
		Altogether, we have established that $R_2^k$ is estimated by
		\begin{align*}
			&\bignorm{ \big[\mathcal{A}_T\prt{\hat\varphi_{1,0}^k}, \eta_k \big] (\z) }_{\varUpsilon_T} \\
			&\leq C \norm{ \hat\varphi_{1,0}^k}_{\rmW^{1,4}\ofbbT} 
			\Bigprt{ \sum_{j=1,2} \bigprt{ \norm{\vel_j}_{\rmL^2 \prt{0,T;\rmH^1\ofbbTd}} + \norm{ \delt\vel_j }_{\rmL^2\prt{0,T;\rmH^{-1}\ofbbT}} }
			+ \norm{\varphi_1}_{\rmL^2\prt{0,T;\rmH^2\ofbbT}} + \norm{p}_{\rmL^2 \prt{0,T;\rmL^2\ofbbT}} },
		\end{align*}
		which shows that $R_2^k\in \varUpsilon_T$.
		
		In summary, since we naturally have $\norm{\mathfrak F^k}_{\varUpsilon_T} \leq \norm{\mathfrak F}_{\varUpsilon_T}$, the right-hand side $\mathfrak G^k$ of \eqref{Lin:eq:localized_abstr_eq} is controlled by
		\begin{align*}
			\norm{ \mathfrak G^k }_{\varUpsilon_T}
			&\leq C \norm{ \hat\varphi_{1,0}^k}_{\rmW^{1,4}\ofbbT} 
			\Bigprt{ \sum_{j=1,2} \bigprt{ \norm{\vel_j}_{\rmL^2 \prt{0,T;\rmH^1\ofbbTd}} + \norm{ \delt\vel_j }_{\rmL^2\prt{0,T;\rmH^{-1}\ofbbT}} } \\
				&\qquad\qquad\qquad\qquad+ \norm{\varphi_1}_{\rmL^2\prt{0,T;\rmH^2\ofbbT}} + \norm{p}_{\rmL^2 \prt{0,T;\rmL^2\ofbbT}} }
			+ \norm{\mathfrak F^k}_{\varUpsilon_T} 
			\leq \hat C \norm{\mathfrak F^k }_{\varUpsilon_T},
		\end{align*}
		with $\hat C=\hat C(\norm{\tphi1}_{\rmH^2\ofbbT}, T)$, where we used the dependence of the solution in the respective norms on the data in $\varUpsilon_T$, cf.~Proposition~\ref{Lin:prop:ex_gen_coeff}.
		
		Eventually, we are in a position to apply Proposition~\ref{Lin:prop:perturbation} to the localized equation~\eqref{Lin:eq:localized_abstr_eq}. In light of the closeness condition~\eqref{Lin:eq:hatphik_close_to_const}, we obtain that $\mathcal{A}_T(\hat\varphi_{1,0}^k) \colon Z_T \to \varUpsilon_T$ is invertible. Thus, there exists a unique solution $\mathcal{A}_T(\hat\varphi_{1,0}^k)^{-1} (\mathfrak G^k) = \z^k \in Z_T$.
		With $\{\eta_k\}_{k=1}^N$ being a partition of unity, we conclude
		\begin{align*}
			\norm{\z}_{Z_T} 
			= \Bignorm{\sum_{k=1}^N \eta_k \z}_{Z_T} 
			\leq \tilde C \sum_{k=1}^N \norm{\z^k}_{Z_T},
		\end{align*}
		with a constant $\tilde C>0$ that depends on $\{\eta_k\}_{k=1}^N$. This verifies the higher regularity of~$(\vel_1,\vel_2,\varphi_1,p)=\z\in Z_T$ as claimed .
	\end{proof}

	\subsection{Invertibility of $\mathcal{L}_T$ and Uniform Bound for $\mathcal{L}_T^{-1}$}

    In the final part of this section, we complete the proof of Proposition~\ref{mr:prop:L_invertible}. In fact, the invertibility of $\mathcal{L}_T$ is already established. It remains to verify that the inverse of $\mathcal{L}_T$ is bounded uniformly with respect to the time~$T$. 

    Beforehand, for the sake of completeness, we restate Proposition~\ref{Lin:prop:localization} using the notation of Section~\ref{sec:mr}.
	
	\begin{lemma}[Invertibility of $\mathcal{L}_T$]
		\label{Lin:lemma:L_invertible}
        Let $\vel_{j,0}\in\rmH^1(\bbT)^d$, $j=1,2$, and $\varphi_{1,0}\in\rmH^2(\bbT)$ with~$\varphi_{1,0}\in(0,1)$ in $\bbT$ be given such that~$\Div(\varphi_{1,0}\vel_{1,0} + (1-\varphi_{1,0}) \vel_{2,0}) = 0$. Then, for every~$0<T<\infty$, the linear operator $\mathcal{L}_T \colon X_T\to Y_T$ is invertible. 
	\end{lemma}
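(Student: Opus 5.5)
The plan is to reduce the statement to Propositions~\ref{Lin:prop:ex_gen_coeff} and \ref{Lin:prop:localization}, applied with the linearization coefficient $\tphi1 \coloneqq \varphi_{1,0}$. The operator $\mathcal{L}_T$ from \eqref{mr:eq:L_T} then coincides with the first four components of $\mathcal{A}_T(\varphi_{1,0})$. The affine structure of $X_T$ carries the initial data $(\vel_{1,0},\vel_{2,0},\varphi_{1,0})$, which here are the same functions that define the coefficients. The hypotheses of those propositions are exactly $\varphi_{1,0}\in\rmH^2(\bbT)$ with values in $(0,1)$ and $\vel_{j,0}\in\rmH^1(\bbT)^d$. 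The compatibility condition $\Div(\tphi1\vel_{1,0}+\tphi2\vel_{2,0})=g_2\vert_{t=0}$ becomes $0=g_2\vert_{t=0}$, and this is built into $Y_T^4$.

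\textbf{Surjectivity.} Given $\F=(\f_1,\f_2,g_1,g_2)\in Y_T$, the tuple $(\F,\vel_{1,0},\vel_{2,0},\varphi_{1,0})$ belongs to $\varUpsilon_T$. Proposition~\ref{Lin:prop:ex_gen_coeff} yields a weak solution $\z=(\vel_1,\vel_2,\varphi_1,p)$ of \eqref{Lin:eq:lin_system} with the prescribed initial values. Proposition~\ref{Lin:prop:localization} upgrades it to $\z\in Z_T$. Since $\z$ attains the initial data, $\z\in X_T$ and $\mathcal{L}_T\z=\F$.

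A small check is needed because the $Z_T^3$-norm also contains $\norm{\delt\varphi_1\vert_{t=0}}_{\rmL^2}$. This quantity is finite, since $\delt\varphi_1\vert_{t=0}=g_1\vert_{t=0}-\Div(\varphi_{1,0}\vel_{1,0})\in\rmL^2(\bbT)$. Here $g_1\in\BUC([0,T];\rmL^2)$ by \eqref{prelim:interpol:BUC} with $X_0=\rmH^{-1}$, $X_1=\rmH^1$, and $\Div(\varphi_{1,0}\vel_{1,0})\in\rmL^2$ because $\varphi_{1,0}\in\rmH^2\hookrightarrow\rmW^{1,4}\cap\rmL^\infty$ and $\vel_{1,0}\in\rmH^1\hookrightarrow\rmL^4$.

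\textbf{Injectivity.} If $\z,\hat\z\in X_T$ satisfy $\mathcal{L}_T\z=\mathcal{L}_T\hat\z$, then their difference solves \eqref{Lin:eq:lin_system} with zero right-hand side and zero initial data. Every element of $Z_T$ has at least the regularity of the weak class in Proposition~\ref{Lin:prop:ex_gen_coeff}, so the uniqueness part of that proposition gives $\z=\hat\z$. That uniqueness comes from the energy-estimate argument of Proposition~\ref{Lin:prop:ex_const_coeff}.

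\textbf{Expected obstacle.} The main point requiring care is the correct identification of $\mathcal{L}_T$ as a map between these affine/linear spaces. The differences of two preimages lie in the linear space $\{\z\in Z_T:\z\vert_{t=0}=0\}$. In addition, the pressure normalization $p\in\rmH^1_{(0)}$ used in the propositions must match $X_T^4=\rmL^2(0,T;\rmH^1_{(0)})$. Both checks are bookkeeping, and the substantive analysis has already been done in Propositions~\ref{Lin:prop:ex_gen_coeff} and \ref{Lin:prop:localization}.
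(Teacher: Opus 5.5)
Your proposal is correct and follows the paper's route. The paper's entire proof is the observation that Proposition~\ref{Lin:prop:localization} with $\tilde\varphi_{1,0}=\varphi_{1,0}$ already gives invertibility, and your split into surjectivity (Propositions~\ref{Lin:prop:ex_gen_coeff} and \ref{Lin:prop:localization}, with compatibility supplied by $g_2\vert_{t=0}=0$ in $Y_T^4$) and injectivity (weak-class uniqueness) simply spells out that reference.
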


	\begin{proof}
		In Proposition~\ref{Lin:prop:localization}, choosing $\tphi1=\varphi_{1,0},$ we have already established the claimed invertibility.
	\end{proof}

	Now, given $T_0>0$, we prove that $\mathcal{L}_T^{-1}$ is bounded independently of $0<T<T_0$, for which we proceed as in \cite[Lemma~9.3]{AbelsHaselboeck2026}.
	
	\begin{lemma}
        \label{Lin:lemma:inverse_of_L_unif_bdd}
		Let $\vel_{j,0}\in\rmH^1(\bbT)^d$, $j=1,2$, and $\varphi_{1,0}\in\rmH^2(\bbT)$ with~$\varphi_{1,0}\in(0,1)$ in $\bbT$ be given such that~$\Div(\varphi_{1,0}\vel_{1,0} + (1-\varphi_{1,0}) \vel_{2,0}) = 0$. Then, for any fixed $0<T_0<\infty$, there exists a constant $C_{\mathcal{L}_{T_0}^{-1}}>0$ independent of~$T$ such that
		\begin{align*}
			\bignorm{ \mathcal{L}_T^{-1}(\F) }_{Z_T}
			\leq C_{\mathcal{L}_{T_0}^{-1}} \Bigprt{ \norm{\F}_{Y_T} + \sum_{j=1,2}\norm{\vel_{j,0}}_{\rmH^1\ofbbTd} + \norm{\varphi_{1,0}}_{\rmH^2\ofbbT} }
		\end{align*}
		and 
		\begin{align*}
			\bignorm{ \mathcal{L}_T^{-1}(\F) - \mathcal{L}_T^{-1}(\G) }_{Z_T}
			\leq C_{\mathcal{L}_{T_0}^{-1}} \norm{ \F-\G }_{Y_T}
		\end{align*}
		for all $0<T\leq T_0$ and all $\F,\G\in Y_T$.
	\end{lemma}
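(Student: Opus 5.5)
We will follow the extension strategy of \cite[Lemma~9.3]{AbelsHaselboeck2026}. The basic idea is that the bounded operator $\mathcal{L}_{T_0}^{-1}$ on the fixed interval $(0,T_0)$ already has some finite norm. Any problem on a shorter interval $(0,T)$ will be reduced to a problem on $(0,T_0)$ by extending the right-hand side with a $T$-independent bound. Uniqueness then guarantees that restricting the $T_0$-solution recovers $\mathcal{L}_T^{-1}(\F)$.

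\textbf{Step 1: bound on the fixed interval.} Fix $T_0$. Lemma~\ref{Lin:lemma:L_invertible} gives bijectivity of $\mathcal{L}_{T_0}$, and $\mathcal{L}_{T_0}$ is clearly bounded. Since the equation is linear in the data $(\F,\vel_{1,0},\vel_{2,0},\varphi_{1,0})$ (equivalently, $\mathcal{A}_{T_0}(\varphi_{1,0})\colon Z_{T_0}\to\varUpsilon_{T_0}$ is bijective and bounded), the open mapping theorem yields
\[
\norm{\z}_{Z_{T_0}}\le C_0\Bigprt{\norm{\F}_{Y_{T_0}}+\sum_{j=1,2}\norm{\vel_{j,0}}_{\rmH^1}+\norm{\varphi_{1,0}}_{\rmH^2}}
\]
with $C_0=C_0(T_0)$.

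\textbf{Step 2: extension of the data.} For $0<T\le T_0$ and $\F=(\f_1,\f_2,g_1,g_2)\in Y_T$ we extend each component to $(0,T_0)$ with a bound independent of $T$.
\begin{itemize}
\item[(i)] The components $\f_j\in\rmL^2(0,T;\rmL^2)$ are extended by zero.
\item[(ii)] For $g_1,g_2\in\rmL^2(0,T;\rmH^1)\cap\rmH^1(0,T;\rmH^{-1})$ we use the extension operator of Lemma~\ref{mr:lemma:extension_op} with $\mathcal{X}_1=\rmH^1$, $\mathcal{X}_0=\rmH^{-1}$. Its norm includes $\norm{g\vert_{t=0}}_{\rmL^2}$, which is controlled by $\tilde C(T_0)\norm{g}_{Y_T^3}$ on any fixed reference interval via \eqref{prelim:interpol:BUC}.
\item[(iii)] Reflection is a cleaner alternative for $T\le T_0/2$: reflect evenly about $t=T$ and then cut off. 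This preserves $\rmH^1$-in-time regularity with constants independent of $T$, and it keeps $g_2\vert_{t=0}=0$.
\end{itemize}
The extended data $E\F$ still satisfy the compatibility condition $\Div(\varphi_{1,0}\vel_{1,0}+(1-\varphi_{1,0})\vel_{2,0})=0=(Eg_2)\vert_{t=0}$, so $E\F\in Y_{T_0}$ with $\norm{E\F}_{Y_{T_0}}\le C\norm{\F}_{Y_T}$. For $T\in(T_0/2,T_0]$ we can extend over an interval of length at most $T_0$ with $2T_0$ as reference horizon, or simply enlarge $T_0$ to $2T_0$ at the outset.

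\textbf{Step 3: restriction and uniqueness.} Let $\z=\mathcal{L}_{T_0}^{-1}(E\F)$. Its restriction $\z\vert_{(0,T)}$ lies in $X_T$ and solves $\mathcal{L}_T\z\vert_{(0,T)}=\F$. By uniqueness (Lemma~\ref{Lin:lemma:L_invertible} on $(0,T)$) it equals $\mathcal{L}_T^{-1}(\F)$. The $Z_T$-norm is monotone under restriction: the $\rmL^2$-in-time parts decrease, and the initial-value terms $\norm{\vel_j\vert_{t=0}}_{\rmH^1}$, $\norm{\varphi_1\vert_{t=0}}_{\rmH^2}$, $\norm{\delt\varphi_1\vert_{t=0}}_{\rmL^2}$ are identical. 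Hence $\norm{\mathcal{L}_T^{-1}\F}_{Z_T}\le\norm{\z}_{Z_{T_0}}$, and Steps 1 and 2 give the first estimate.

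\textbf{Step 4: the difference estimate.} Apply the same argument to $\F-\G$, solving the linear problem with zero initial data and data $\F-\G$ (note $(g_2-\tilde g_2)\vert_{t=0}=0$). This gives $\mathcal{L}_T^{-1}\F-\mathcal{L}_T^{-1}\G$ by linearity and uniqueness, and the bound follows from the same estimate with zero initial data.

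\textbf{Main obstacle.} The delicate point is the $T$-uniform extension of $g_1,g_2$ in the intersection space $\rmL^2(0,T;\rmH^1)\cap\rmH^1(0,T;\rmH^{-1})$, while preserving $g_2\vert_{t=0}=0$. In particular, the time trace must be handled without a constant blowing up as $T\to0$. Reflection about $t=T$ is exactly what avoids the $1/T$ factor that naive extensions would produce.

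One further point needs checking: $\norm{\delt\varphi_1\vert_{t=0}}_{\rmL^2}$ in the $Z_T$-norm must be bounded by the data. From the third equation, $\delt\varphi_1\vert_{t=0}=g_1\vert_{t=0}-\Div(\varphi_{1,0}\vel_{1,0})$. This is controlled by $\norm{g_1}_{Y_T^3}$, using the extension/trace bound of Lemma~\ref{mr:lemma:extension_op}, together with $\norm{\varphi_{1,0}}_{\rmH^2}\norm{\vel_{1,0}}_{\rmH^1}$. The dependence on $\norm{\varphi_{1,0}}_{\rmH^2}$ is absorbed into the $\varphi_{1,0}$-dependent constant $C_{\mathcal{L}_{T_0}^{-1}}$.
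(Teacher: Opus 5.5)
Your route is the same as the paper's: extend $\f_j$ by zero and $g_1,g_2$ with the extension operator of Lemma~\ref{mr:lemma:extension_op}, solve on $(0,T_0)$, restrict using uniqueness, and treat the difference with zero initial data. The gap is in the step you yourself flag as the main obstacle, and your resolution of it is incorrect.

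\textbf{The trace bound for $g_1$.} The bound $\norm{g_1\vert_{t=0}}_{\rmL^2}\le\tilde C(T_0)\norm{g_1}_{Y_T^3}$, uniformly in $T$, is false. The trace estimate in Lemma~\ref{mr:lemma:extension_op} is stated for the $\mathcal{X}_T$-norm, which already contains $\norm{u\vert_{t=0}}_{(\mathcal{X}_0,\mathcal{X}_1)_{\frac12,2}}$, so invoking it here is circular. Concretely, for $g_1(t)\equiv h\in\rmH^1(\bbT)$ one has $\norm{g_1}_{Y_T^3}=O(\sqrt T)$, while $g_1\vert_{t=0}=h$.

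\textbf{The reflection variant.} This fails for the same reason. After reflecting about $t=T$ you must still reach $t=T_0$. A cutoff that drops from $1$ to $0$ on an interval of length $O(T)$ produces a factor $T^{-1}$ from its derivative in $\rmH^1(0,T_0;\rmH^{-1})$. Reflecting $O(T_0/T)$ times instead produces a factor $(T_0/T)^{1/2}$. Reflection is $T$-uniform only when $g(0)=0$: then the reflected function vanishes at $t=2T$ and can be continued by zero with no cutoff. That covers $g_2$ but not $g_1$.

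\textbf{No argument closes this step.} With the plain norm on $Y_T^3$ the estimate itself fails. Take zero initial data and $\F-\G=(0,0,h,0)$ with $h$ independent of $t$. The $Z_T$-norm of the solution contains $\norm{\delt\varphi_1\vert_{t=0}}_{\rmL^2}=\norm{h}_{\rmL^2}$, whereas $\norm{\F-\G}_{Y_T}\to0$ as $T\to0$. So the second estimate cannot hold as stated.

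\textbf{How to repair it.} The paper's proof hides the same dependence in the bound $\norm{E^3 g_1}_{Y^3_{T_0}}\le C\norm{g_1}_{Y^3_T}$, which Lemma~\ref{mr:lemma:extension_op} supplies only for the trace-inclusive norm. There are two fixes.
\begin{enumerate}
\item Add $\norm{g_1\vert_{t=0}}_{\rmL^2}$ to the norm of $Y_T^3$.
\item Restrict to data with $g_1\vert_{t=0}=0$. This is all Section~\ref{sec:mr} needs, because $\mathcal{F}_T^3(\z)\vert_{t=0}=-\Div\bigprt{(\varphi_1-\varphi_{1,0})\vel_1}\vert_{t=0}=0$ for every $\z\in X_T$.
\end{enumerate}
With either fix, your Steps~1, 3 and~4 go through as written, and $\delt\varphi_1\vert_{t=0}=g_1\vert_{t=0}-\Div(\varphi_{1,0}\vel_{1,0})$ is controlled as you describe.
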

	
	\begin{proof}
		We start by preparing the proof of the second estimate. 
		For any $0<T<\infty$, the difference~$\mathcal{L}_T^{-1}(\F) - \mathcal{L}_T^{-1}(\G) \eqqcolon \z \eqqcolon (\vel_1,\vel_2,\varphi_1,  p)$ solves the problem with homogeneous initial data, i.e.,
		\begin{alignat*}{2}
			\mathcal{L}_T(\z) &= \F-\G &&\quad\text{in }(0,T)\times\bbT, \\
			(\vel_1,\vel_2,\varphi_1)\vert_{t=0} &= (0,0,0) &&\quad\text{in }\bbT.
		\end{alignat*}
		Due to Lemma~\ref{Lin:lemma:L_invertible}, this problem has a unique solution, and, consequently, there exists some~$C(T)>0$ (which coincides with the operator norm of $\mathcal{L}_T^{-1}$) such that
		\begin{align}
			\label{Lin:eq:inverse_of_L_Lipschitz}
			\norm{ \mathcal{L}_T^{-1}(\F) - \mathcal{L}_T^{-1}(\G) }_{Z_T}
			= \norm{ \mathcal{L}_T^{-1}(\F-\G) }_{Z_T}
			\leq C(T) \norm{ \F-\G }_{Y_T}.
		\end{align}
		
		Now we prove the claimed estimates independent of $T$, beginning with the first one. To~this end, let some arbitrary~$\F=(\f_1,\f_2,g_1,g_2)\in Y_T$ be given, and we define the extension~$\tilde\F=(\tilde\f_1, \tilde\f_2,\tilde g_1, \tilde g_2)\in Y_{T_0}$ by
		\begin{align*}
			\tilde\f_j(t) \coloneqq
			\begin{cases}
				f(t), &t\in[0,T], \\
				0, &t\in(T,T_0),
			\end{cases}
			\quad j=1,2,
			\qquad
			\tilde g_1 \coloneqq E^3(g_1), 
			\qquad \tilde g_2 \coloneqq E^4(g_2),
		\end{align*}
		where $E^i\colon Y_T^i\to Y_{T_0}^i$, $i=3,4$, is the extension operator from Lemma \ref{mr:lemma:extension_op}.
		Since, by Lemma~\ref{Lin:lemma:L_invertible}, the operator $\mathcal{L}_T\colon X_T\to Y_T$ is invertible for every $0<T<\infty$, we find unique solutions $\z\in X_T$ to $\mathcal{L}_T(\z)=\F$ on $(0,T)$ and $\tilde\z\in X_{T_0}$ to $\mathcal{L}_{T_0}(\tilde\z)=\tilde\F$ on $(0,T_0)$. From~$\z$ and $\tilde\z$ solving the same equation on $(0,T)$, we conclude by uniqueness that $\tilde\z\vert_{(0,T)} = \z$. Therefore, it holds (with a constant $C>0$ independent of $T$, cf.~Lemma \ref{mr:lemma:extension_op})
		\begin{align*}
			\bignorm{ \mathcal{L}_T^{-1}(\F) }_{Z_T}
			&= \norm{\z}_{Z_T}
			\leq C \norm{\tilde\z}_{Z_{T_0}}
			= C \bignorm{ \mathcal{L}_{T_0}^{-1}(\F) }_{Z_{T_0}} \\
			&\leq C_{\mathcal{L}_{T_0}^{-1}} \Bigprt{ \norm{\F}_{Y_T} + \sum_{j=1,2}\norm{\vel_{j,0}}_{\rmH^1\ofbbTd} + \norm{\varphi_{1,0}}_{\rmH^2\ofbbT} }
		\end{align*}
        This proves the first estimate, and, invoking \eqref{Lin:eq:inverse_of_L_Lipschitz}, the second one follows analogously.
	\end{proof}

    Now we are finally in a position to finish the proof of Proposition~\ref{mr:prop:L_invertible}, which states the invertibility of $\mathcal{L}_T$, along with the uniform bound of its inverse.

    \begin{proof}[Proof of Proposition~\ref{mr:prop:L_invertible}]
        The combination of Lemma~\ref{Lin:lemma:L_invertible} and Lemma~\ref{Lin:lemma:inverse_of_L_unif_bdd} completes the claim.
    \end{proof}

	\section{Local Lipschitz Continuity of $\mathcal{F}_T$} \label{sec:Lip} 
    
    This section is devoted to proving the Lipschitz continuity of the nonlinear operator~$\mathcal{F}_T$  stated in Proposition~\ref{mr:prop:F_Lipschitz}. We begin by collecting several embedding properties of the components of the solution space $Z_T$, which will play a key role in the verification of the required Lipschitz estimates. Throughout, we repeatedly make use of the interpolation results from Subsection~\ref{subsec:prelim_results}.
    
	\medskip
	\noindent\textbf{Embeddings for $Z_T^j$, $j=1,2$.}
	From the definition of $Z_T^j$, see \eqref{mr:eq:spaces_Z_T}, and the interpolation embedding~\eqref{prelim:interpol:BUC}, we deduce
	\begin{align}
		\label{Lip:emb:XT1_1}
		Z_T^j 
		\hookrightarrow \BUC\Bigprt{ [0,T]; \bigprt{ \rmL^2(\bbT)^d, \rmH^2(\bbT)^d}_{\frac12, 2} } 
		= \BUC\bigprt{ [0,T]; \rmH^1(\bbT)^d }.
	\end{align}
	In particular, this implies for $\vel\in Z_T^j$, that
	\begin{align}
		\nabla\vel 
		&\in \rmL^\infty\prt{0,T;\rmL^2(\bbT)^{d\times d}} \cap \rmL^2\prt{0,T;\rmL^6(\bbT)^{d\times d}}
		\hookrightarrow \rmL^4 \prt{0,T;\rmL^3(\bbT)^{d\times d}}.
		\label{Lip:emb:nablavel_2}
	\end{align}
	Moreover, standard Sobolev embedding gives
	\begin{align}
		\label{Lip:emb:XT1_2}
		Z_T^j 
		\hookrightarrow \rmH^1\prt{0,T;\rmL^2(\bbT)^d}
		\hookrightarrow \rmC^{\frac12}\bigprt{ [0,T]; \rmL^2(\bbT)^d }.
	\end{align}
	Combining \eqref{Lip:emb:XT1_1} and \eqref{Lip:emb:XT1_2}, the interpolation result \eqref{prelim:interpol:Hölder} yields embeddings into Hölder spaces with values in interpolation spaces between $\rmL^2(\bbT)^d$ and $\rmH^1(\bbT)^d$, in particular
	\begin{align}
		Z_T^j 
		&\hookrightarrow \rmC^{\frac{\theta}{2}}\bigprt{ [0,T]; \rmH^{1-\theta}(\bbT)^d } \qquad \text{for all } \theta\in(0,1),
		\label{Lip:emb:XT1_3} \\
		Z_T^j 
		&\hookrightarrow \rmC^{\frac{3}{2q}-\frac14}\bigprt{ [0,T]; \rmL^q(\bbT)^d } \qquad \text{for all } q\in(2,6),
		\label{Lip:emb:XT1_4}
	\end{align}
	where \eqref{Lip:emb:XT1_4} is deduced from \eqref{Lip:emb:XT1_3} in view of $\rmH^{1-\theta}(\bbT) \hookrightarrow \rmL^q(\bbT)$ for $q=\frac{6}{1+2\theta}$, $q\in(2,6)$, or equivalently for $\theta=\frac{3}{q}-\frac12$, $\theta\in(0,1)$.

	\medskip
	\noindent\textbf{Embeddings for $Z_T^3$.}
	Similarly, the definition of $Z_T^3$ and \eqref{prelim:interpol:BUC} imply
	\begin{align}
		\label{Lip:emb:XT3_1}
		Z_T^3 
		\hookrightarrow \BUC\Bigprt{ [0,T]; \bigprt{ \rmH^1(\bbT), \rmH^3(\bbT)}_{\frac12, 2} } 
		= \BUC\bigprt{ [0,T]; \rmH^2(\bbT) },
	\end{align}
	and therefore, we additionally obtain
	\begin{align}
		Z_T^3
		&\hookrightarrow \BUC\bigprt{[0,T];\rmC^{\frac12}(\bbT)}.
		\label{Lip:emb:XT3_2}
	\end{align}
	Furthermore, we derive
	\begin{align}
		\label{Lip:emb:XT3_3}
		Z_T^3 
		\hookrightarrow \rmH^1\prt{0,T;\rmH^1(\bbT)}
		\hookrightarrow \rmC^{\frac12}\bigprt{ [0,T]; \rmH^1(\bbT) }.
	\end{align}
	In view of \eqref{Lip:emb:XT3_1} together with \eqref{Lip:emb:XT3_3}, \eqref{prelim:interpol:Hölder} provides embeddings into Hölder spaces with values in the following interpolation spaces between $\rmH^1(\bbT)$ and $\rmH^2(\bbT)$:
	\begin{align}
		Z_T^3
		&\hookrightarrow \rmC^{\frac14}\bigprt{ [0,T]; \rmL^\infty(\bbT) },
		\label{Lip:emb:XT3_4} \\
		Z_T^3
		&\hookrightarrow \rmC^{\frac{\theta}{2}}\bigprt{ [0,T]; \rmH^{2-\theta}(\bbT) } \qquad \text{for all } \theta\in(0,1),
		\label{Lip:emb:XT3_5} \\
		Z_T^3
		&\hookrightarrow \rmC^{\frac{3}{2q}-\frac14}\bigprt{ [0,T]; \rmW^{1,q}(\bbT) } \qquad \text{for all } q\in(2,6).
		\label{Lip:emb:XT3_6}
	\end{align}
	Here, we used Agmon's inequality for \eqref{Lip:emb:XT3_4}, while \eqref{Lip:emb:XT3_6} followed directly from \eqref{Lip:emb:XT3_5} in light of $\rmH^{2-\theta}(\bbT) \hookrightarrow \rmW^{1,q}(\bbT)$ for $q=\frac{6}{1+2\theta}$, $q\in(2,6)$, or equivalently for $\theta=\frac{3}{q}-\frac12$, $\theta\in(0,1)$.

	The following result will be applied to the functions $\nu_j$, $\lambda_j\in\rmCb{2}(\R)$ and $F' \in \rmC^2(\R)$, which depend nonlinearly on $\varphi_1$.
	
	\medskip
	\noindent\textbf{Composition with functions in $Z_T^3$.}
	Let $f\in\rmC^2(\R)$ and $\varphi\in Z_T^3$ with $\norm{\varphi}_{Z_T^3}\leq R$ for some $R>0$. We now state in which norms $f(\varphi)$ is bounded by some constant $C(R)>0$ independent of $T$, and in which norm $f(\varphi)$ is locally Lipschitz with a constant $L(R)>0$ independent of $T$.
	
	First, due to \eqref{Lip:emb:XT3_1}, we see that $\norm{\varphi(t)}_{\rmH^2\ofbbT} \leq C(R)$ for all $t\in[0,T]$. Lemma~\ref{prelim:lemma:comp_Sobolev} implies that $\norm{f(\varphi(t))}_{\rmH^2\ofbbT} \leq C(R)$ for all $t\in[0,T]$ and therefore,
	\begin{align}
		\label{Lip:emb:f(phi)_bdd_H2}
		\norm{f(\varphi)}_{\rmL^\infty\prt{0,T; \rmH^2\ofbbT}} \leq C(R).
	\end{align}
	Analogously, Lemma~\ref{prelim:lemma:comp_Sobolev} yields for any $q>d$ that
	\begin{align}
		\label{Lip:emb:f(phi)_bdd_W1q}
		\norm{f(\varphi)}_{\rmL^\infty\prt{0,T; \rmW^{1,q}\ofbbT}} \leq C(R),
	\end{align}
	and that for all $R>0$, there exists a constant $L(R)>0$ such that
	\begin{align}
		\label{Lip:emb:f(phi)_Lipsch}
		\norm{f(\varphi)-f(\psi)}_{\rmL^\infty\prt{0,T; \rmW^{1,q}\ofbbT}} 
		\leq L(R) \norm{\varphi-\psi}_{\rmL^\infty\prt{0,T; \rmW^{1,q}\ofbbT}}
	\end{align}
	for all $\varphi,\psi\in \rmL^\infty\prt{0,T; \rmW^{1,q}(\bbT)}$ with $\norm{\varphi}_{\rmL^\infty\prt{0,T; \rmW^{1,q}\ofbbT}}, \norm{\psi}_{\rmL^\infty\prt{0,T; \rmW^{1,q}\ofbbT}} \leq R$.

	\medskip
	Now we are in a position to carry out the proof of Proposition~\ref{mr:prop:F_Lipschitz}, that is, the local Lipschitz continuity of $\mathcal{F}_T$.
	
	\begin{proof}[Proof of Proposition~\ref{mr:prop:F_Lipschitz}]
		In order to prove local Lipschitz continuity of $\mathcal{F}_T$, defined in~\eqref{mr:eq:F_T}, we study the components $\mathcal{F}_T^1$ and $\mathcal{F}_T^3$ separately. The arguments for the components $\mathcal{F}_T^2$ and $\mathcal{F}_T^4$, respectively, are exactly the same. 
		
		Hence, for fixed $R>0$, let some arbitrary $(\vel_1,\vel_2,\varphi_1,p),(\w_1,\w_2,\psi_1,q) \in X_T$ be given that satisfy $\norm{ (\vel_1,\vel_2,\varphi_1,p)}_{X_T}, \norm{(\w_1,\w_2,\psi_1,q) }_{X_T} \leq R$. For the definition of $Y_T$, see \eqref{mr:eq:spaces_Y_T}.

		\textit{Ad $\mathcal{F}_T^1$.} The first component of the expression to be analyzed consists of the following terms, where we write $\psi_{1,0}=\varphi_{1,0}$ for the initial value of $\psi_1$:
		\begin{align*}
			&\norm{ \mathcal{F}_T^1(\vel_1,\vel_2,\varphi_1,p) - \mathcal{F}_T^1(\w_1,\w_2,\psi_1,q) }_{Y_T^1} \\
			&\leq \bignorm{ \bigprt{\rho_1(\varphi_1)-\rho_1(\varphi_{1,0})}\delt\vel_1   - \bigprt{\rho_1(\psi_1)-\rho_1(\psi_{1,0})}\delt\w_1}_{Y_T^1} \\
			&\quad + \bignorm{ \bigprt{\rho_1(\varphi_1)\vel_1\cdot\nabla} \vel_1 - \bigprt{\rho_1(\psi_1)\w_1\cdot\nabla} \w_1}_{Y_T^1} \\
			&\quad+ \bignorm{ 2\Div\bigprt{ \bigprt{\nu_1(\varphi_1)-\nu_1(\varphi_{1,0})} \D\vel_1 } - 2\Div\bigprt{ \bigprt{\nu_1(\psi_1)-\nu_1(\psi_{1,0})} \D\w_1} }_{Y_T^1} \\ 
			&\quad+	\bignorm{ \Div\bigprt{ \bigprt{\lambda_1(\varphi_1)-\lambda_1(\varphi_{1,0})} \Div\vel_1 \Id }- \Div\bigprt{ \bigprt{\lambda_1(\psi_1)-\lambda_1(\psi_{1,0})} \Div\w_1 \Id } }_{Y_T^1} \\
			&\quad+	\bignorm{ \prt{\varphi_1-\varphi_{1,0}}\Grad p - \prt{\psi_1-\psi_{1,0}}\Grad q }_{Y_T^1}
			+ \bignorm{\varphi_1 \Grad F'(\varphi_1) - \psi_1 \Grad F'(\psi_1) }_{Y_T^1} \\
			&\quad+ \bignorm{  \prt{\varphi_1-\varphi_{1,0}} \Grad\Lap\varphi_1 -  \prt{\psi_1-\psi_{1,0}} \Grad\Lap\psi_1 }_{Y_T^1}
			+ \bignorm{ \Rcal(\varphi_1)(\vel_2-\vel_1) - \Rcal(\psi_1)(\w_2-\w_1) }_{Y_T^1} \\
			&\eqqcolon I + II + III + IV + V + VI + VII + VIII.
		\end{align*}

		\textit{Ad $\mathcal{F}_T^1.I$.} Recalling $\psi_{1,0}=\varphi_{1,0}$ due to the definition of $X_T^3$, cf.~\eqref{mr:eq:spaces_X_T}, as well as the linearity of $\rho_1$ and the embedding \eqref{Lip:emb:XT3_4}, we find
			\begin{align*}
			I
			&=\bignorm{ \bigprt{\rho_1(\varphi_1)-\rho_1(\varphi_{1,0})}\delt\vel_1   - \bigprt{\rho_1(\psi_1)-\rho_1(\psi_{1,0})}\delt\w_1}_{\rmL^2\prt{0,T;\rmL^2}} \\
			&\leq \bignorm{\rho_1(\varphi_1)-\rho_1(\varphi_{1,0}) }_{\rmL^\infty\prt{0,T; \rmL^\infty}} \bignorm{ \delt\prt{\vel_1-\w_1} }_{\rmL^2\prt{0,T; \rmL^2}} \\
			&\quad+ \bignorm{\rho_1(\varphi_1)-\rho_1(\psi) }_{\rmL^\infty\prt{0,T; \rmL^\infty}} \norm{ \delt\w_1 }_{\rmL^2\prt{0,T; \rmL^2}} \\
			&\leq C T^{\frac14} \norm{ \varphi_1 }_{\rmC^{\frac14}\prt{[0,T]; \rmL^\infty}} \norm{\vel_1-\w_1}_{\rmH^1\prt{0,T; \rmL^2}} 
			+ C T^{\frac14} \norm{ \varphi_1-\psi_1 }_{\rmC^{\frac14}\prt{[0,T]; \rmL^\infty}} \norm{\w_1}_{\rmH^1\prt{0,T; \rmL^2}} \\
			&\leq C(R) T^{\frac14} \norm{\vel_1-\w_1}_{Z_T^1}
			+ C(R) T^{\frac14} \norm{\varphi_1-\psi_1}_{Z_T^3}.
		\end{align*}

		\textit{Ad $\mathcal{F}_T^1.II$.} To estimate this term, we use again $\psi_{1,0}=\varphi_{1,0}$, the linearity of $\rho_1$ together with \eqref{Lip:emb:XT3_2}, and the embeddings \eqref{Lip:emb:XT1_1} and \eqref{Lip:emb:nablavel_2}. This leads to
		\begin{align*}
			II
			&= \bignorm{ \bigprt{\rho_1(\varphi_1)\vel_1\cdot\nabla} \vel_1 - \bigprt{\rho_1(\psi_1)\w_1\cdot\nabla} \w_1}_{\rmL^2\prt{0,T;\rmL^2}} \\
			&\leq \bignorm{ \rho_1(\varphi_1)-\rho_1(\psi_1) }_{\rmL^\infty\prt{0,T; \rmL^\infty}} 
				\norm{ \vel_1 }_{\rmL^4\prt{0,T; \rmL^6}} 
				\norm{ \nabla \vel_1 }_{\rmL^4\prt{0,T; \rmL^3}} \\
			&\quad+ \bignorm{ \rho_1(\psi_1) }_{\rmL^\infty\prt{0,T; \rmL^\infty}} 
				\norm{ \w_1-\vel_1 }_{\rmL^4\prt{0,T; \rmL^6}} 
				\norm{ \nabla \vel_1 }_{\rmL^4\prt{0,T; \rmL^3}} \\
			&\quad+ \bignorm{ \rho_1(\psi_1) }_{\rmL^\infty\prt{0,T; \rmL^\infty}} 
				\norm{ \w_1 }_{\rmL^4\prt{0,T; \rmL^6}} 
				\norm{ \nabla (\vel_1-\w_1) }_{\rmL^4\prt{0,T; \rmL^3}} \\
			&\leq C\norm{ \varphi_1-\psi_1 }_{Z_T^3} 
				T^{\frac14} \norm{ \vel_1 }_{\rmL^\infty\prt{0,T; \rmH^1}} 
				\norm{ \vel_1 }_{Z_T^1} 
			+  C\norm{ \psi_1 }_{Z_T^3} 
				T^{\frac14} \norm{ \w_1-\vel_1 }_{\rmL^\infty\prt{0,T; \rmH^1}} 
				\norm{ \vel_1 }_{Z_T^1} \\
			&\quad+ C\norm{ \psi_1 }_{Z_T^3} 
				T^{\frac14} \norm{ \w_1 }_{\rmL^\infty\prt{0,T; \rmH^1}} 
				\norm{ \vel_1-\w_1 }_{Z_T^1} \\
			&\leq C(R) T^{\frac14} \norm{ \varphi_1-\psi_1 }_{Z_T^3}
			+ C(R) T^{\frac14} \norm{\vel_1-\w_1}_{Z_T^1}.
		\end{align*}

		\textit{Ad $\mathcal{F}_T^1.III$.} In light of $\psi_{1,0}=\varphi_{1,0}$, we start by splitting $III$ into the two following terms
		\begin{align*}
			III
			&= \bignorm{ 2\Div\bigprt{ \bigprt{\nu_1(\varphi_1)-\nu_1(\varphi_{1,0})} \D\vel_1 } - 2\Div\bigprt{ \bigprt{\nu_1(\psi_1)-\nu_1(\psi_{1,0})} \D\w_1} }_{Y_T^1} \\ 
			&\leq \bignorm{ 2\Div\bigprt{ \bigprt{\nu_1(\varphi_1)-\nu_1(\varphi_{1,0})} \D\prt{ \vel_1-\w_1} } }_{Y_T^1} 
			+ \bignorm{ 2\Div\bigprt{ \bigprt{\nu_1(\varphi_1)-\nu_1(\psi_1)} \D\w_1 } }_{Y_T^1} \\
			&\eqqcolon III.1 + III.2,
		\end{align*}
		where the first one is further broken down to
		\begin{align*}
			III.1
			&\leq \bignorm{ 2\D(\vel_1-\w_1) \Grad\bigprt{\nu_1(\varphi_1)-\nu_1(\varphi_{1,0})} }_{Y_T^1} 
			+ \bignorm{ \bigprt{\nu_1(\varphi_1)-\nu_1(\varphi_{1,0})} \Lap(\vel_1-\w_1) }_{Y_T^1} \\ 
			&\quad+ \bignorm{ \bigprt{\nu_1(\varphi_1)-\nu_1(\varphi_{1,0})} \Grad\Div(\vel_1-\w_1) }_{Y_T^1}  
			\eqqcolon III.1.1 + III.1.2 + III.1.3.
		\end{align*}
		Studying the first summand, the local Lipschitz continuity of $\nu_1$, cf.~\eqref{Lip:emb:f(phi)_Lipsch}, along with the embedding~\eqref{Lip:emb:XT3_6} implies
		\begin{align*}
			III.1.1
			&\leq C\bignorm{ \D(\vel_1-\w_1) }_{\rmL^2\prt{0,T; \rmL^4}} 
			\bignorm{ \Grad\bigprt{\nu_1(\varphi_1)-\nu_1(\varphi_{1,0})} }_{\rmL^\infty\prt{0,T; \rmL^4}} \\ 
			&\leq C(R)\norm{ \vel_1-\w_1 }_{\rmL^2\prt{0,T; \rmH^2}} 
			\norm{ \varphi_1-\varphi_{1,0} }_{\rmL^\infty\prt{0,T; \rmW^{1,4}}} \\ 
			&\leq C(R)\norm{ \vel_1-\w_1 }_{\rmL^2\prt{0,T; \rmH^2}} 
			T^{\frac18} \norm{ \varphi_1 }_{\rmC^{\frac18}\prt{[0,T]; \rmW^{1,4}}}  
			\leq C(R) T^{\frac18} \norm{\vel_1-\w_1}_{Z_T^1}.
		\end{align*}
		For the second term, applying Agmon's inequality, \eqref{Lip:emb:f(phi)_Lipsch}, \eqref{Lip:emb:XT3_6}, and \eqref{Lip:emb:f(phi)_bdd_H2}, we end up with
		\begin{align*}
			III.1.2
			&\leq \bignorm{ \nu_1(\varphi_1)-\nu_1(\varphi_{1,0}) }_{\rmL^\infty\prt{0,T; \rmL^\infty}} 
			\bignorm{ \Lap(\vel_1-\w_1) }_{\rmL^2\prt{0,T; \rmL^2}} \\ 
			&\leq C \bignorm{ \nu_1(\varphi_1)-\nu_1(\varphi_{1,0}) }_{\rmL^\infty\prt{0,T; \rmH^1}}^{\frac12}
			\bignorm{ \nu_1(\varphi_1)-\nu_1(\varphi_{1,0}) }_{\rmL^\infty\prt{0,T; \rmH^2}}^{\frac12}
			\norm{ \vel_1-\w_1 }_{\rmL^2\prt{0,T; \rmH^2}} \\ 
			&\leq C(R) T^{\frac{1}{16}} \norm{ \varphi_1 }_{\rmC^{\frac18}\prt{[0,T]; \rmW^{1,4}}}^{\frac12} 
			\bignorm{ \nu_1(\varphi_1)-\nu_1(\varphi_{1,0}) }_{\rmL^\infty\prt{0,T; \rmH^2}}^{\frac12}
			\norm{ \vel_1-\w_1 }_{\rmL^2\prt{0,T; \rmH^2}} \\ 
			&\leq C(R) T^{\frac{1}{16}} \norm{\vel_1-\w_1}_{Z_T^1},
		\end{align*}
		with $III.1.3$ being handled analogously. Estimating $III.2$ accordingly to $III.1$ reveals
		\begin{align*}
			III.2 
			\leq C(R) T^{\frac{1}{16}} \norm{ \varphi_1-\psi_1 }_{Z_T^3}.
		\end{align*}
	
		\textit{Ad $\mathcal{F}_T^1.IV$.} This expression may be estimated exactly like $\mathcal{F}_T^1.III$.

		\textit{Ad $\mathcal{F}_T^1.V$.} Since $\Grad p,\Grad q\in \rmL^2\prt{0,T; \rmL^2(\bbT)^d}$, we treat this term like $\mathcal{F}_T^1.I$, obtaining
		\begin{align*}
			V 
			&\leq C(R) T^{\frac14} \norm{p-q}_{Z_T^4}
			+ C(R) T^{\frac14} \norm{\varphi_1-\psi_1}_{Z_T^3}.
		\end{align*}

        \textit{Ad $\mathcal{F}_T^1.VI$.} 
        Here, we again use the fact that $\varphi_{1,0}$ and $\psi_{1,0}$ coincide. Combining \eqref{Lip:emb:XT3_2} and~\eqref{Lip:emb:f(phi)_bdd_W1q} on the one hand, and \eqref{Lip:emb:f(phi)_Lipsch} and \eqref{Lip:emb:XT3_6} on the other hand gives
        \begin{align*}
			VI
			&= \bignorm{ \varphi_1 \Grad F'(\varphi_1) - \psi_1 \Grad F'(\psi_1) }_{\rmL^2\prt{0,T; \rmL^2}} \\
			&\leq \norm{ \varphi_1-\psi_1 }_{\rmL^2\prt{0,T; \rmL^4}\!}
			\norm{ \Grad F'(\varphi_1) }_{\rmL^\infty\prt{0,T; \rmL^4}\!} 
            + \norm{ \psi_1 }_{\rmL^2\prt{0,T; \rmL^4}\!} 
			\norm{ \Grad\prt{F'(\varphi_1)\! -\! F'(\psi_1)} }_{\rmL^\infty\prt{0,T; \rmL^4}\!} \\
			&\leq C T^{\frac12} \norm{ \varphi_1-\psi_1 }_{\rmL^\infty\prt{0,T; \rmL^4}}
			\bignorm{ F'(\varphi_1) }_{\rmL^\infty\prt{0,T; \rmW^{1,4}}} \\
            &\quad+ C \norm{ \psi_1 }_{\rmL^2\prt{0,T; \rmH^3}} 
			T^{\frac18} \norm{ \varphi_1-\psi_1 }_{\rmC^{\frac18}\prt{[0,T]; \rmW^{1,4}}} \\
			&\leq C(R) T^{\frac12} \norm{\varphi_1-\psi_1}_{Z_T^3}
            + C(R) T^{\frac18} \norm{\varphi_1-\psi_1}_{Z_T^3}.
		\end{align*}

		\textit{Ad $\mathcal{F}_T^1.VII$.} Noting that $\Grad \Lap\varphi_1,\Grad\Lap\psi_1\in \rmL^2\prt{0,T; \rmL^2(\bbT)^d}$, this term can be treated analogously to $\mathcal{F}_T^1.I$, which leads to
		\begin{align*}
			VII
			&\leq C(R) T^{\frac14} \norm{\varphi_1-\psi_1}_{Z_T^3}.
		\end{align*}
	
		\textit{Ad $\mathcal{F}_T^1.VIII$.} Eventually, we study the last summand of $\mathcal{F}_T^1$, namely
		\begin{align*}
			VIII
			&= \bignorm{ \Rcal(\varphi_1)(\vel_2-\vel_1) - \Rcal(\psi_1)(\w_2-\w_1) }_{Y_T^1} \\
			&\leq \bignorm{ \Rcal(\varphi_1)(\vel_2-\w_2) }_{Y_T^1} 
			+ \bignorm{ \bigprt{\Rcal(\varphi_1) - \Rcal(\psi_1)} \w_2 }_{Y_T^1} 
			+ \bignorm{ \Rcal(\varphi_1)(\w_1-\vel_1) }_{Y_T^1} \\
			&\quad+ \bignorm{ \bigprt{\Rcal(\psi_1) - \Rcal(\varphi_1) } \w_1 }_{Y_T^1} 
			\eqqcolon VIII.1 + VIII.2 + VIII.3 + VIII.4.
		\end{align*}
		Here, we use the local Lipschitz property of $\mathcal{R}$ from Assumption~\eqref{ass:R}, together with the embeddings \eqref{Lip:emb:XT3_2} and \eqref{Lip:emb:XT1_1}, to estimate the third term
	 	\begin{align*}
	 		VIII.3 
	 		&\leq \bignorm{ \Rcal(\varphi_1) }_{\rmL^\infty\prt{0,T; \rmL^\infty}} 
	 		\norm{ \w_1-\vel_1 }_{\rmL^2\prt{0,T; \rmL^2}} \\
	 		&\leq C(R) \bignorm{ \varphi_1 }_{\rmL^\infty\prt{0,T; \rmL^\infty}} 
	 		T^{\frac12}	\norm{ \w_1-\vel_1 }_{\rmL^\infty\prt{0,T; \rmH^1}} 
	 		\leq C(R) T^{\frac12} \norm{ \w_1-\vel_1 }_{Z_T^1},
	 	\end{align*}
 		where the corresponding estimate holds true for $VIII.1$.
 		With the same tools, it follows
 		\begin{align*}
 			VIII.4 
 			&\leq \bignorm{ \Rcal(\varphi_1) - \Rcal(\psi_1) }_{\rmL^\infty\prt{0,T; \rmL^\infty}} 
 			\norm{ \w_1}_{\rmL^2\prt{0,T; \rmL^2}} \\
 			&\leq C(R) \norm{ \varphi_1 - \psi_1 }_{\rmL^\infty\prt{0,T; \rmL^\infty}} 
 			T^{\frac12}	\norm{ \w_1 }_{\rmL^\infty\prt{0,T; \rmH^1}} 
 			\leq C(R) T^{\frac12} \norm{ \varphi_1 - \psi_1 }_{Z_T^3},
 		\end{align*}
		and accordingly, one may treat $VIII.2$.
		
		Altogether, this proves the desired local Lipschitz estimate for $\mathcal{F}_T^1$.

		\medskip
		\textit{Ad $\mathcal{F}_T^3$.} By the definition of $Y_T^3 = \rmL^2 \prt{0,T;\rmH^1(\bbT)} \cap \rmH^1 \prt{ 0,T;\rmH^{-1}(\bbT)}$, see \eqref{mr:eq:spaces_Y_T}, we now need to estimate $\norm{ \mathcal{F}_T^3(\vel_1,\vel_2,\varphi_1,p) - \mathcal{F}_T^3(\w_1,\w_2,\psi_1,q) }_{Y_T^3}$ in two different norms, which we carry out separately, beginning with the first. We recall $\psi_{1,0}=\varphi_{1,0}$ and directly start by breaking the expression down into manageable parts
		\begin{align*}
			&\norm{ \mathcal{F}_T^3(\vel_1,\vel_2,\varphi_1,p) - \mathcal{F}_T^3(\w_1,\w_2,\psi_1,q) }_{\rmL^2\prt{0,T; \rmH^1}} \\
			&= \bignorm{ \Div\bigprt{(\varphi_1-\varphi_{1,0})\vel_1} - \Div\bigprt{(\psi_1-\psi_{1,0})\w_1}}_{\rmL^2\prt{0,T; \rmH^1}} \\
			&\leq \bignorm{ (\varphi_1-\varphi_{1,0}) \Div(\vel_1-\w_1) }_{\rmL^2\prt{0,T; \rmH^1}}
			+ \bignorm{ \Grad(\varphi_1-\varphi_{1,0}) \cdot (\vel_1-\w_1) }_{\rmL^2\prt{0,T; \rmH^1}} \\
			&\quad+ \bignorm{ (\varphi_1-\psi_1)\Div\w_1 }_{\rmL^2\prt{0,T; \rmH^1}} 
			+ \bignorm{ \Grad(\varphi_1-\psi_1)\cdot\w_1 }_{\rmL^2\prt{0,T; \rmH^1}} 
			\eqqcolon I_1 + II_1 + III_1 + IV_1.
		\end{align*}

		\textit{Ad $\mathcal{F}_T^3.I_1$.} The parts of the first term we study,
		\begin{align*}
			I_1 
			&\leq \bignorm{ (\varphi_1-\varphi_{1,0}) \Div(\vel_1-\w_1) }_{\rmL^2\prt{0,T; \rmL^2}} 
			+ \bignorm{ \Div(\vel_1-\w_1) \Grad(\varphi_1-\varphi_{1,0}) }_{\rmL^2\prt{0,T; \rmL^2}} \\
			&\quad+ \bignorm{ (\varphi_1-\varphi_{1,0}) \Grad\Div(\vel_1-\w_1) }_{\rmL^2\prt{0,T; \rmL^2}},
		\end{align*}
		behave like the terms in $\mathcal{F}_T^1.I$ and like $III.1.1$ and $III.1.2$, each in $\mathcal{F}_T^1$, respectively.

		\textit{Ad $\mathcal{F}_T^3.II_1$.} We continue with computing
		\begin{align*}
			II_1
			&\leq \bignorm{ \Grad(\varphi_1-\varphi_{1,0}) \cdot (\vel_1-\w_1) }_{\rmL^2\prt{0,T; \rmL^2}} 
			+ \bignorm{ \Grad^2(\varphi_1-\varphi_{1,0}) (\vel_1-\w_1) }_{\rmL^2\prt{0,T; \rmL^2}} \\
			&\quad+ \bignorm{ \trans{\Grad(\vel_1-\w_1)} \Grad(\varphi_1-\varphi_{1,0}) }_{\rmL^2\prt{0,T; \rmL^2}} 
			\eqqcolon II_1.1 + II_1.2 + II_1.3.
		\end{align*}
		Both $II_1.1$ and $II_1.3$ may be estimated analogously to $III.1.1$ in $\mathcal{F}_T^1$, while for $II_1.2$, \eqref{Lip:emb:XT1_4} and Agmon's inequality yield
		\begin{align*}
			II_1.2
			&\leq \bignorm{ \Grad^2\varphi_1 (\vel_1-\w_1) }_{\rmL^2\prt{0,T; \rmL^2}}
            + \bignorm{ \Grad^2\varphi_{1,0} (\vel_1-\w_1) }_{\rmL^2\prt{0,T; \rmL^2}}\\
			&\leq C \norm{ \Grad^2\varphi_1 }_{\rmL^2\prt{0,T; \rmL^4}}
			\norm{ \vel_1-\w_1 }_{\rmL^\infty\prt{0,T; \rmL^4}} 
            + C \norm{ \Grad^2\varphi_{1,0} }_{\rmL^2}
			\norm{ \vel_1-\w_1 }_{\rmL^2\prt{0,T; \rmL^\infty}} \\
			&\leq C \norm{ \varphi_1 }_{\rmL^2\prt{0,T; \rmH^3}}
			T^{\frac14} \norm{ \vel_1-\w_1 }_{\rmC^{\frac14}\prt{[0,T]; \rmL^4}} \\
            &\quad+ C \norm{ \varphi_{1,0} }_{\rmH^2}
			\norm{ \vel_1-\w_1 }_{\rmL^\infty\prt{0,T; \rmH^1}}^{\frac12} T^{\frac14} \norm{ \vel_1-\w_1 }_{\rmL^2\prt{0,T; \rmH^2}}^{\frac12}\\
			&\leq C(R) T^{\frac14} \norm{ \w_1-\vel_1 }_{Z_T^1}.
		\end{align*}

		\textit{Ad $\mathcal{F}_T^3.III_1$.} To estimate this term, one proceeds analogously to each step in $\mathcal{F}_T^3.I_1$.

		\textit{Ad $\mathcal{F}_T^3.IV_1$.} This summand is treated by the same means as $\mathcal{F}_T^3.II_1$.

        Now we consider the second norm of $Y_T^3$ and obtain
        {\allowdisplaybreaks
        \begin{align*}
			&\norm{ \mathcal{F}_T^3(\vel_1,\vel_2,\varphi_1,p) - \mathcal{F}_T^3(\w_1,\w_2,\psi_1,q) }_{\rmH^1\prt{0,T; \rmH^{-1}}} \\
			&= \bignorm{ \Div\bigprt{(\varphi_1-\varphi_{1,0})\vel_1} - \Div\bigprt{(\psi_1-\psi_{1,0})\w_1}}_{\rmH^1\prt{0,T; \rmH^{-1}}} \\
            &\leq \bignorm{ (\varphi_1-\varphi_{1,0})(\vel_1-\w_1) 
            + (\varphi_1-\psi_1)\w_1 }_{\rmL^2\prt{0,T; \rmL^2}} \\
            &\quad+ \bignorm{ (\varphi_1-\varphi_{1,0})\delt(\vel_1-\w_1) 
            + (\varphi_1-\psi_1)\delt\w_1 }_{\rmL^2\prt{0,T; \rmL^2}} \\
            &\quad+ \bignorm{ \delt\varphi_1(\vel_1-\w_1) 
            + \delt(\varphi_1-\psi_1)\w_1 }_{\rmL^2\prt{0,T; \rmL^2}}
            \coloneqq I_2 + II_2 + III_2,
		\end{align*}
        }
        where $I_2$ and $II_2$ can be estimated as $\mathcal{F}_T^1.I$. For $III_2$, using \eqref{Lip:emb:XT1_4} reveals
        \begin{align*}
			III_2
			&\leq C \norm{ \delt\varphi_1 }_{\rmL^2\prt{0,T; \rmL^4}}
			\norm{ \vel_1-\w_1 }_{\rmL^\infty\prt{0,T; \rmL^4}}
            + C \norm{ \delt(\varphi_1-\psi_1) }_{\rmL^2\prt{0,T; \rmL^4}}
			\norm{ \w_1 }_{\rmL^\infty\prt{0,T; \rmL^4}}\\
			&\leq C \norm{ \varphi_1 }_{\rmH^1\prt{0,T; \rmH^1}}
			T^{\frac14} \norm{ \vel_1-\w_1 }_{\rmC^{\frac14}\prt{[0,T]; \rmL^4}}
            + C \norm{ \varphi_1-\psi_1 }_{\rmH^1\prt{0,T; \rmH^1}}
			T^{\frac14} \norm{ \w_1 }_{\rmC^{\frac14}\prt{[0,T]; \rmL^4}}\\
			&\leq C(R) T^{\frac14} \norm{ \w_1-\vel_1 }_{Z_T^1}
            + C(R) T^{\frac14} \norm{ \varphi_1-\psi_1 }_{Z_T^3}.
		\end{align*}
		
		In summary, this shows the local Lipschitz continuity of $\mathcal{F}_T^3$. 
		
		Altogether, since $\mathcal{F}_T^2$ behaves like $\mathcal{F}_T^1$, and $\mathcal{F}_T^4$ like $\mathcal{F}_T^3$, we have verified the claimed local Lipschitz estimate
		\begin{align*}
			\norm{ \mathcal{F}_T(\vel_1,\vel_2,\varphi_1,p) - \mathcal{F}_T(\w_1,\w_2,\psi_1,q) }_{Y_T} 
			\leq C(R,T) \norm{ (\vel_1,\vel_2,\varphi_1,p) - (\w_1,\w_2,\psi_1,q) }_{Z_T}
		\end{align*}
	    with a constant satisfying $C(R,T)\to0$ as $T\to0$.
    \end{proof}

    \section*{Acknowledgments}
    
    The third author was funded by the Deutsche Forschungsgemeinschaft (DFG, German Research Foundation) Research Training Group~2339 ``Interfaces, Complex Structures, and Singular Limits in Continuum Mechanics -- Analysis and Numerics''. The support is gratefully acknowledged.

    \medskip
    \noindent \textbf{Declaration of interest and data availability statement.} 
    The authors report no conflict of interest. There is no associated data for the manuscript.

    \bibliographystyle{abbrv}
    \bibliography{AGG_LT_tenEikelder.bib}

\end{document}